\documentclass[a4paper,12pt,leqno]{amsart}
\usepackage{latexsym}
\usepackage[all]{xy}

\usepackage{amssymb} 
\usepackage{amsmath} 
\usepackage{amscd}
\usepackage{color}
\usepackage{comment}
\usepackage{mathtools}
\usepackage{fancybox}
\usepackage{graphicx}

\definecolor{gray}{gray}{0.5}

\textwidth=15.6cm
\textheight=23.0cm
\topmargin=0.2cm
\oddsidemargin=0.25cm
\evensidemargin=0.25cm

\numberwithin{equation}{section} 

\newtheorem{theorem}{Theorem}[section]
\newtheorem{lemma}[theorem]{Lemma} 
\newtheorem{corollary}[theorem]{Corollary}
\newtheorem{proposition}[theorem]{Proposition} 
 
\newtheorem{remark}[theorem]{Remark}
\newtheorem{example}[theorem]{Example}
\newtheorem{definition}[theorem]{Definition}

\newtheorem*{theoremA*}{Theorem~A}
\newtheorem*{theoremB*}{Theorem~B}

\allowdisplaybreaks[3]

\def\C{\mathbb C}

\def\Z{\mathbb Z}

\DeclareMathOperator{\Lie}{Lie}

\DeclareMathOperator{\SL}{\textit{SL}}

\newcommand{\Hess}[2]{{\rm Hess}(#1,#2)}
\newcommand{\w}[1]{w_{#1}}
\newcommand{\rb}[1]{k_{#1}}
\newcommand{\lb}[1]{k_{#1,-}}
\newcommand{\rp}[1]{p_{#1}}
\newcommand{\lp}[1]{p_{#1,-}}
\newcommand{\m}[1]{m_{#1}}
\newcommand{\q}[1]{q_{#1}}
\newcommand{\LL}[1]{L_{#1}}
\newcommand{\DD}[1]{D({#1})}
\newcommand{\LB}{L}
\newcommand{\twosteps}{T}
\newcommand{\stable}{S}
\newcommand{\tk}[1]{h(1)+t_{#1}}
\newcommand{\tj}[1]{h(1)+t_{#1}-1}
\newcommand{\jh}{h(h(1))}
\newcommand{\Mh}{M}
\begin{document}
  
\title[Fano and weak Fano Hessenberg varieties]{Fano and weak Fano Hessenberg varieties}
\author {Hiraku Abe}
\address{Faculty of Liberal Arts and Sciences, Osaka Prefecture University
1-1 Gakuen-cho, Naka-ku, Sakai, Osaka 599-8531, Japan}
\email{hirakuabe@globe.ocn.ne.jp}

\author {Naoki Fujita}
\address{Graduate School of Mathematical Sciences, The University of Tokyo, 3-8-1 Komaba, Meguro-ku, Tokyo 153-8914, Japan}
\email{nfujita@ms.u-tokyo.ac.jp}

\author {Haozhi Zeng}
\address{School of Mathematics and Statistics, Huazhong University of Science and Technology, Wuhan, 430074, P.R. China}
\email{zenghaozhi@icloud.com}

\keywords{Hessenberg varieties, Richardson varieties, Fano varieties, weak Fano varieties.} 

\subjclass[2010]{Primary: 14M15, Secondary: 05E10, 14J45}

\begin{abstract}
Regular semisimple Hessenberg varieties are smooth subvarieties of the flag variety, and their examples contain the flag variety itself and the permutohedral variety which is a toric variety. We give a complete classification of Fano and weak Fano regular semisimple Hessenberg varieties in type A in terms of combinatorics of Hessenberg functions.  In particular, we show that if the anti-canonical bundle of a regular semisimple Hessenberg variety is nef, then it is in fact nef and big. 
\end{abstract}

\maketitle

\setcounter{tocdepth}{1}

\section{Introduction}\label{sec: intro}
Hessenberg varieties in Lie type A$_{n-1}$ are subvarieties of the flag variety of nested linear subspaces of $\C^n$. They were introduced by De Mari-Procesi-Shayman \cite{ma-sh,ma-pr-sh}, and they have been studied from the perspective of geometry, representation theory, and combinatorics. For an $n\times  n$ matrix $X$ and a Hessenberg function $h\colon\{1,2,\ldots,n\}\rightarrow\{1,2,\ldots,n\}$, the Hessenberg variety associated with $X$ and $h$ is given as
\begin{align*}
 \Hess{X}{h} \coloneqq \{ V_{\bullet} \in Fl(\C^n) \mid XV_i\subseteq V_{h(i)} \text{ for all $1\le i\le n$}\},
\end{align*}
where $Fl(\C^n)$ is the flag variety of $\C^n$ consisting of sequences $V_{\bullet}=(V_1\subset V_2\subset \cdots \subset V_n=\C^n)$ of linear subspaces of $\C^n$ such that $\dim_{\C}V_{i}=i$ for $1\le i\le n$.
If $S$ is an $n\times n$ regular semisimple matrix (i.e.\ an $n\times n$ matrix with $n$ distinct eigenvalues), then $\Hess{S}{h}$ is smooth, which is called a regular semisimple Hessenberg variety. There are two extremal examples of regular semisimple Hessenberg varieties: the flag variety itself and the permutohedral variety which is a toric variety associated with the fan consisting of the collection of Weyl chambers of type A$_{n-1}$. The flag variety is a Fano variety (see \cite[Propositions 1.4.1 and 2.2.8 (iv)]{Bri}), whereas the permutohedral variety is not except for very small ranks. However, the permutohedral variety is a weak Fano variety (\cite{Ba-Bl,Huh}). Here, a complex algebraic variety is said to be Fano (resp.\ weak Fano) if its anti-canonical bundle is ample (resp.\ nef and big). In this paper, we give a complete classification of Fano and weak Fano regular semisimple Hessenberg varieties in terms of the combinatorics of the Hessenberg functions.

For each $1\le k\le n-1$, let $h_k:\{1,2,\ldots,n\}\rightarrow\{1,2,\ldots,n\}$ be the Hessenberg function given by $h_k(i)=k+i$ for $1\le i\le n-k$. 
This Hessenberg function is called the ``$k$-banded form", and $\Hess{S}{h_k}$ are the Hessenberg varieties studied in \cite{ma-sh}.
For example, $h_1$ gives the permutohedral variety $\Hess{S}{h_1}$, and $h_{n-1}$ gives the flag variety $\Hess{S}{h_{n-1}}=Fl(\C^n)$. 
The following theorem characterizes when a regular semisimple Hessenberg variety is Fano.

\bigskip

\begin{theoremA*}\label{thm: the first main theorem}
Let $X=\Hess{S}{h}$ be a regular semisimple Hessenberg variety with $h(i)\ge i+1$ for all $1\le i<n$.
Then the following are equivalent:
\begin{itemize}
\item[(i)] the anti-canonical bundle of $X$ is ample $($that is, $X$ is Fano$)$; 
\item[(ii)] $h=h_k$ for some $k$ such that $\frac{n-1}{2}\le k\le n-1$.
\end{itemize}
\end{theoremA*}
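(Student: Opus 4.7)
The plan is to establish ampleness of $-K_X$ via the Kleiman / Nakai--Moishezon numerical criterion, reducing the question to positivity of $-K_X \cdot C$ for a finite generating set of curve classes in the Mori cone $\overline{NE}(X)$. The starting point is an explicit formula for $-K_X$ as a combination $\sum c_i(h) L_i$ of divisor classes (restrictions of tautological line bundles from $Fl(\C^n)$), which we assume has been established in earlier sections. Ampleness then reduces to a finite system of strict inequalities in the coefficients $c_i(h)$.

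For the implication (ii)$\Rightarrow$(i), we specialize to $h = h_k$, write out the coefficients $c_i(h_k)$ explicitly, and verify the resulting inequalities. The bound $k \ge (n-1)/2$ should be governed by a single ``critical'' test curve: a natural candidate is a $T_S$-invariant $\P^1$ in $\Hess{S}{h_k}$ (where $T_S$ is the maximal torus centralizing $S$) joining two combinatorially distinguished fixed points, or equivalently a Richardson curve inside $X$, whose intersection number with $-K_X$ is a simple linear expression in $n$ and $k$ that is strictly positive precisely when $k \ge (n-1)/2$. Positivity on the remaining curve classes should follow uniformly from the shape of the coefficients $c_i(h_k)$, independent of the precise value of $k$.

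For the converse (i)$\Rightarrow$(ii), we argue by contrapositive in two stages. First, if $h$ is not of banded form $h_k$ for any $k$, there is an index at which $h$ deviates from being a banded function; near such an index one constructs a $T_S$-invariant $\P^1$ lying in $X$ on which $-K_X$ has non-positive degree, contradicting the Fano assumption and forcing $h = h_k$ for some $k$. Second, if $h = h_k$ with $k < (n-1)/2$, the same critical curve from the forward direction yields a non-positive intersection number with $-K_X$, completing the characterization.

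The main obstacle is the combinatorial case analysis in the converse direction. While intersection numbers with $T_S$-invariant curves can in principle be computed by GKM-type techniques together with the $-K_X$ formula, systematically locating a ``bad'' curve for every non-banded Hessenberg function $h$, and doing so uniformly rather than by a proliferation of sub-cases, is the central technical difficulty. Identifying which $T_S$-invariant $\P^1$'s actually lie in $\Hess{S}{h}$, matching them with the right sub-patterns of $h$, and then showing that the resulting intersection number has the correct sign, is the step where we expect most of the combinatorial work to be concentrated.
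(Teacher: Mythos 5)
There is a genuine gap in your sufficiency direction (ii)$\Rightarrow$(i). You propose to certify ampleness of $-K_X$ by checking positivity of $-K_X\cdot C$ against ``a finite generating set of curve classes in the Mori cone,'' but you neither prove that $\overline{NE}(X)$ is rational polyhedral nor identify its generators, and positivity on curves alone does not give ampleness: Nakai--Moishezon requires strict positivity on subvarieties of every dimension, and Kleiman requires strict positivity on all of $\overline{NE}(X)\setminus\{0\}$, which reduces to a finite check only after a cone-generation statement (e.g.\ that the $T_S$-invariant curves generate the cone) has been established and the degrees on \emph{all} invariant curves have been computed. The paper never needs any of this: since the anti-canonical bundle is the restriction to $\Hess{S}{h}$ of the line bundle $L_{\xi_h}$ on $Fl(\C^n)$, and $\xi_{h_k}=\sum_{i=1}^{k}\varpi_i+\sum_{i=n-k}^{n-1}\varpi_i$ has all coefficients strictly positive exactly when $k\ge\frac{n-1}{2}$, the bundle is ample on the flag variety and hence its restriction to the closed subvariety $\Hess{S}{h_k}$ is ample. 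Your ``single critical test curve'' correctly locates where the bound comes from (the middle coefficient of $\xi_{h_k}$ vanishes when $k<\frac{n-1}{2}$, so the corresponding curve has degree $0$), but as stated it can only rule out ampleness for small $k$; it cannot establish ampleness for large $k$.

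Your necessity direction is in the same spirit as the paper's, but the part you flag as the ``central technical difficulty'' is exactly what the paper resolves, and it collapses to something much smaller than a search for bad invariant curves adapted to each non-banded $h$. The point is that the $n-1$ Schubert curves $X_{s_i}$ lie in \emph{every} $\Hess{S}{h}$ with $h(i)\ge i+1$ (Lemma~\ref{lem:3.3}), and $\int_{X_{s_i}}c_1(L_\mu)$ is simply the $i$-th coefficient of $\mu$ in the fundamental-weight basis (Lemma~\ref{lem:plus}); so ampleness forces every coefficient $d_i$ of $\xi_h$ to be strictly positive (Lemma~\ref{lem:3.4}). The remaining combinatorics is then uniform and short, using the transpose $h^*$: positivity of all $d_i$ forces $h(i+1)\le h(i)+1$ and $h^*(i+1)\le h^*(i)+1$, and any equality $h(i)=h(i+1)<n$ would force $h^*$ to jump by $2$, a contradiction; hence $h=h_k$, and the explicit form of $\xi_{h_k}$ excludes $k<\frac{n-1}{2}$. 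So the two ingredients missing from your plan are the observation that the bad curves can always be taken among the Schubert curves (no case-by-case GKM construction needed) and the use of $h^*$, which is what makes the case analysis finite and uniform.
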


The permutohedral variety $\Hess{S}{h_1}$ is not Fano unless $n \le 3$, but it is always weak Fano as we explained above.
The next theorem characterizes when a regular semisimple Hessenberg variety is weak Fano.

\begin{theoremB*}\label{thm: the second main theorem}
Let $X=\Hess{S}{h}$ be a regular semisimple Hessenberg variety with $h(i)\ge i+1$ for all $1\le i<n$.
Then the following are equivalent:
\begin{itemize}
\item[(i)] the anti-canonical bundle of $X$ is nef; 
\item[(ii)] the anti-canonical bundle of $X$ is nef and big $($that is, $X$ is weak Fano$)$; 
\item[(iii)] the inequality
\begin{align*}
h(i)-h(i+1)+2-h^*(n+1-i)+h^*(n-i) \ge 0
\end{align*} 
holds for all $1\le i\le n-1$, where $h^*$ denotes the transpose of $h$.
\end{itemize}
\end{theoremB*}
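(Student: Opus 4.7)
The plan is to describe $-K_X$ explicitly as a combination of natural divisors, identify a collection of effective curves that detect nefness, and then exploit the particular structure of regular semisimple Hessenberg varieties to upgrade nefness to weak Fano.

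First, I would obtain a formula for $-K_X$ in a convenient basis of $\text{Pic}(X)$. Since $X=\Hess{S}{h}$ is smooth, with tangent bundle sitting inside the restricted tangent bundle of $Fl(\C^n)$ exactly at the subsheaves cut out by the Hessenberg condition, computing $\det T_X$ should yield $-K_X$ as a linear combination of pullbacks of the tautological line bundles $\LL{i}$ from $Fl(\C^n)$, with integer coefficients built from $h$. The transpose $h^*$ enters naturally from the dual of the Hessenberg condition at each flag step, which is why the expression in (iii) has its particular symmetric shape.

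Next, I would identify a family of test curves $C_1,\dots,C_{n-1}$ whose pairings with $-K_X$ recover precisely the quantity $h(i)-h(i+1)+2-h^*(n+1-i)+h^*(n-i)$. Natural candidates are $T$-invariant $\mathbb{P}^1$'s in $X$ associated with simple reflections, or Richardson-type $\mathbb{P}^1$'s arising from the description of certain subvarieties of $X$ as intersections of Schubert and opposite Schubert varieties inside $Fl(\C^n)$. The computation $-K_X\cdot C_i=h(i)-h(i+1)+2-h^*(n+1-i)+h^*(n-i)$ then gives the implication (i) $\Rightarrow$ (iii) directly. For the converse (iii) $\Rightarrow$ (i), I would argue that the $C_i$ generate a subcone of the Mori cone that is either everything, or that every other effective curve class pairs automatically non-negatively with $-K_X$ thanks to the Bruhat-type stratification; this reduces nefness to the $n-1$ inequalities in (iii), matching the Picard number.

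The main obstacle will be the implication (i) $\Rightarrow$ (ii), since nefness does not in general imply bigness and this is where the geometry of Hessenberg varieties must be used in a serious way. The cleanest strategy is to verify that the top self-intersection $(-K_X)^{\dim X}$ is strictly positive whenever (iii) holds, because for a nef divisor on a smooth projective variety bigness is equivalent to positive top self-intersection. Using the description of $-K_X$ above, this should reduce to a polynomial inequality in the values $h(i)$ and $h^*(j)$ that follows from (iii) together with the standing hypothesis $h(i)\ge i+1$, presumably proved by induction on $n$ or by a telescoping argument along the sequence $h_1\le h\le h_{n-1}$. An alternative route, which would also sidestep a direct self-intersection computation, is to exhibit a decomposition $-K_X = A + E$ with $A$ ample and $E$ effective, using the fact that at $h=h_{n-1}$ one recovers the Fano flag variety and that enlarging $h$ interpolates between the weak Fano permutohedral variety and the full Fano $Fl(\C^n)$. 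I expect the verification that such a decomposition (or the positivity of $(-K_X)^{\dim X}$) persists throughout the weak-Fano range defined by (iii) to require the most delicate combinatorial work.
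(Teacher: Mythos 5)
Your skeleton matches the paper's up to a point: computing $-K_X$ as the restriction of a line bundle $L_{\xi_h}$ from $Fl(\C^n)$, pairing it with the Schubert curves $X_{s_i}\subset \Hess{S}{h}$ to get the coefficients $h(i)-h(i+1)+2-h^*(n+1-i)+h^*(n-i)$, and reducing bigness of a nef divisor to $(-K_X)^{\dim X}>0$ are all exactly what the paper does. One small correction on (iii)$\Rightarrow$(i): your suggestion that the curves $C_i$ generate the Mori cone ``matching the Picard number'' is off --- the Picard rank of $\Hess{S}{h}$ is in general much larger than $n-1$ (already the permutohedral surface for $n=3$ has rank $4$), so no such cone description is available; the correct and much simpler argument is that $\xi_h$ dominant makes $L_{\xi_h}$ nef on $Fl(\C^n)$, and nefness passes to the subvariety.

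The genuine gap is the implication to (ii). Saying that $(-K_X)^{\dim X}>0$ ``should reduce to a polynomial inequality in $h(i)$, $h^*(j)$ proved by induction or telescoping,'' or alternatively that one can decompose $-K_X=A+E$ by interpolating between the permutohedral variety and the flag variety, is not a proof and there is no evident way to make either route work: the coefficients of $\xi_h$ can vanish, so $-K_X$ is not ample plus effective in any obvious way, and the top self-intersection is a volume whose positivity is precisely the hard content. The paper's actual argument writes $\int_{\Hess{S}{h}} c_1(L_{\xi_h})^d$ via the Anderson--Tymoczko formula for $[\Hess{S}{h}]$ as a sum of integrals over Richardson varieties $X_{u\w{h}}^{u}$, each nonnegative by nefness, and then must exhibit \emph{one} strictly positive term. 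Positivity of a term is characterized through Knutson--Lam--Speyer's theory of projected Richardson varieties: $L_{\xi_h}$ is big on $X_{u\w{h}}^u$ iff $u\le_{P_{\xi_h}} u\w{h}$ in the $P$-Bruhat order. Producing a permutation $u$ with $\ell(u)+\ell(\w{h})=\ell(u\w{h})$ and $u\le_{P_{\xi_h}} u\w{h}$ occupies all of Section~5 of the paper, via a delicate induction on $n$ carrying two auxiliary conditions and a ``principle of similar shapes'' for nef Hessenberg functions; nothing in your proposal supplies, or even points toward, this construction, so the bigness step remains unproven.
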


For example, $\Hess{S}{h}$ for $h=(3,3,4,4)$ is a weak Fano variety since $h=(3,3,4,4)$ satisfies condition (iii) of Theorem~B.
Similarly, we obtain the following.

\begin{corollary}
For $1\le k\le n-1$, the regular semisimple Hessenberg variety ${\rm Hess}(S, h_k)$ with the $k$-banded form $h_k$ is a weak Fano variety.
\end{corollary}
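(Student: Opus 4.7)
The plan is to apply Theorem~B: once the hypothesis and condition~(iii) are verified for $h = h_k$, the corollary follows. The hypothesis $h_k(i) \ge i+1$ for $1 \le i < n$ is immediate, since $h_k(i) = \min(k+i,n) \ge i+1$ whenever $k \ge 1$.

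The key observation I would use is that $h_k$ is self-transpose, that is $h_k^* = h_k$. Indeed, the subset of cells associated with $h_k$ in the upper-triangular staircase, namely $\{(i,j) : 0 < j-i \le k\}$, is manifestly invariant under the involution $(i,j) \mapsto (n+1-j,\,n+1-i)$ that implements the transpose of a Hessenberg function. This self-duality eliminates all dependence on $h^*$ from condition~(iii); one may also check it numerically by computing $h_k^*$ directly from the definition in a few small examples.

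I would then compute the two differences explicitly. A direct inspection shows $h_k(j) - h_k(j+1) = -1$ when $j+1 \le n-k$ and $0$ otherwise, because $h_k$ increases by $1$ along its banded range and then stabilizes at $n$. Applying this with $j=i$ and with $j = n-i$ reduces the left-hand side of~(iii) to $2$ minus at most two indicator contributions, and a short four-case analysis (based on whether $i \le n-k-1$ and whether $i \ge k+1$) shows the expression always lies in $\{0,1,2\}$. The only step that is not mechanical is the identification $h_k^* = h_k$; everything else is a direct evaluation, which is precisely why the statement is packaged as a corollary of Theorem~B.
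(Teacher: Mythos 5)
Your proposal is correct and follows essentially the same route as the paper, which obtains the corollary simply by checking condition (iii) of Theorem~B for $h=h_k$. Your key identity $h_k^{*}=h_k$ is indeed valid (both sides equal $\min(k+i,n)$, and the anti-diagonal reflection $(i,j)\mapsto(n+1-j,n+1-i)$ does implement the transpose while preserving $j-i$), after which the left-hand side of (iii) is $2$ minus at most two indicator terms and hence non-negative, exactly as you describe.
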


To give the above classifications, we first compute the anti-canonical bundles of regular semisimple Hessenberg varieties explicitly, and we study their volumes by using the theory of line bundles over Richardson varieties. We note that the method of using Richardson varieties for computations of volumes of line bundles over Hessenberg varieties is motivated by Anderson-Tymoczko~\cite{AT} and Harada-Horiguchi-Masuda-Park~\cite{ha-ho-ma-pa}.

Let us see some geometric application of Theorem~B. By the Kawamata-Viehweg vanishing \cite[Theorem~4.3.1]{Laz}, we see that $H^i (Z, L) = 0$, $i > 0$, for a smooth weak Fano variety $Z$ and a nef line bundle $L$ over $Z$. Hence we obtain the following.

\begin{corollary}
Let $X = {\rm Hess}(S, h)$ be a regular semisimple Hessenberg variety with $h(i) \ge i + 1$ for all $1 \le i < n$. If $h$ satisfies condition {\rm (iii)} in Theorem~{\rm B}, then the equality
\[H^i ({\rm Hess}(S, h), L_\mu) = 0\] 
holds for all $i > 0$ and dominant integral weights $\mu$; see Section {\rm 2.2} and Lemma {\rm 3.5} for more details on the line bundle $L_\mu$.
\end{corollary}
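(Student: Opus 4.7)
The plan is to reduce the statement to Kawamata-Viehweg vanishing, as hinted by the paragraph preceding the corollary. The excerpt already points out that $H^i(Z,L)=0$ for $i>0$ whenever $Z$ is smooth weak Fano and $L$ is nef on $Z$; so there are two things to verify in our setting: that $X=\Hess{S}{h}$ meets the hypotheses of Kawamata-Viehweg, and that $L_\mu$ is nef on $X$.

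First I would record that $X$ is a smooth projective variety (smoothness is built into the definition of a regular semisimple Hessenberg variety, and projectivity follows from its embedding into the flag variety). Next, I would invoke Theorem~B: the hypothesis that $h$ satisfies condition (iii), together with $h(i)\ge i+1$, gives that the anti-canonical bundle $-K_X$ is nef and big, i.e.\ $X$ is weak Fano. At this point the Kawamata-Viehweg vanishing theorem (\cite[Theorem~4.3.1]{Laz}) applies to $X$, and what remains is to guarantee nefness of $L_\mu$.

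For the nefness of $L_\mu$ I would use that, by the definition recalled in Section~2.2 and Lemma~3.5, $L_\mu$ on $\Hess{S}{h}$ is the pullback (equivalently, restriction along the closed embedding $\Hess{S}{h}\hookrightarrow Fl(\C^n)$) of the standard line bundle associated to $\mu$ on the flag variety $Fl(\C^n)$. For dominant integral $\mu$, this bundle on $Fl(\C^n)$ is globally generated (its global sections realize the irreducible representation of highest weight $\mu$, by Borel-Weil), hence nef, and the restriction of a nef line bundle to a closed subvariety is again nef. Combining nefness of $L_\mu$ with the weak Fano property and applying Kawamata-Viehweg in the form $H^i(X,K_X+N)=0$ for $i>0$ and $N=L_\mu-K_X$ nef and big yields the desired vanishing $H^i(X,L_\mu)=0$ for $i>0$.

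The only non-routine point is matching the conventions: one must confirm that the $L_\mu$ of Section~2.2 / Lemma~3.5 really coincides with the restriction from the flag variety of the standard dominant line bundle (not a dual), so that dominance of $\mu$ translates into global generation on $Fl(\C^n)$; once that identification is in place, everything else is a direct application of Theorem~B and Kawamata-Viehweg, with no further computation required.
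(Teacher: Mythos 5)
Your argument is correct and matches the paper's: the corollary is deduced exactly as you describe, by combining Theorem~B (condition (iii) gives nef and big anti-canonical bundle, so $X$ is smooth weak Fano) with Kawamata--Viehweg vanishing and the nefness of $L_\mu$ for dominant $\mu$. Your Borel--Weil/global-generation justification of that nefness is just a re-derivation of the paper's Lemma~3.4\,(2) (nefness on $Fl(\C^n)$ restricted to $\Hess{S}{h}$), so the proof is essentially identical to the intended one.
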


Moreover, if $h$ satisfies condition {\rm (iii)} in Theorem~{\rm B}, then ${\rm Hess}(S, h)$ is a smooth Mori dream space since Theorem~{\rm B} implies that it is a smooth log Fano variety (cf.\ \cite{McKernan}). Hence, according to Postinghel-Urbinati \cite[Theorem 4.9]{Postinghel-Urbinati}, such ${\rm Hess}(S, h)$ admits a Newton-Okounkov body with desirable properties. 
In particular, it follows by Anderson \cite{Anderson} that there exists a toric degeneration of ${\rm Hess}(S, h)$ for which we can apply Harada-Kaveh's result \cite[Theorems A and B]{Harada-Kaveh} to ensure the existence of a completely integrable system on ${\rm Hess}(S, h)$. For example, the flag variety $Fl(\C^n)={\rm Hess}(S, h_{n-1})$ and the permutohedral variety ${\rm Hess}(S, h_{1})$ have these properties. It would be interesting to find explicit completely integrable systems on ${\rm Hess}(S, h_{k})$ for $1\le k\le n-1$.

\bigskip
\noindent \textbf{Acknowledgments}.
A part of the research for this paper was carried out at the Fields Institute; the first and second authors would like to thank the institute for its hospitality.
This research is supported in part by 
Osaka City University Advanced Mathematical Institute (MEXT Joint
Usage/Research Center on Mathematics and Theoretical Physics): geometry and topology of torus actions.
The first author is supported in part by JSPS Grant-in-Aid for Early-Career Scientists: 18K13413. The second author is supported by Grant-in-Aid for JSPS Fellows: 19J00123. The third author is supported in part by NSFC: 11901218.

\bigskip
\section{Basic definitions and notations}\label{sec: background and notation}
In this section, we recall some basic definitions and notations on Hessenberg varieties, which we will use throughout this paper.

\subsection{Regular semisimple Hessenberg varieties}
Let $n$ be a positive integer, and we denote by $[n]$ the set $\{1,2,\ldots,n\}$.
A function  $h\colon[n]\rightarrow [n]$ is called a \textbf{Hessenberg function} if it satisfies the following conditions:
\begin{itemize}
\item[(i)] $h(1)\le h(2)\le \cdots \le h(n)$, 
\item[(ii)] $h(i)\ge i$ for all $1\le i\le n$.
\end{itemize}
We frequently express this function by listing its values as $h=(h(1),h(2),\ldots,h(n))$.
Also, we may think of it as the boundary path of the configuration of boxes on the square grid of size $n$ which consists of boxes in the $i$-th row and the $j$-th column satisfying $i \leq h(j)$ for $i,j\in[n]$.
For example, if $n=5$ and $h=(3,4,4,5,5)$, then the corresponding boundary path is drawn in Figure $\ref{pic:stair-shape}$. 
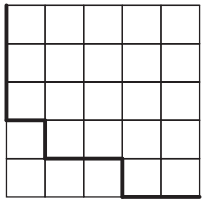
\begin{figure}[h]
\[
{\unitlength 0.1in%
\begin{picture}(10.0000,10.0000)(50.0000,-18.0000)%
%
\special{pn 8}%
\special{pa 5000 800}%
\special{pa 5200 800}%
\special{pa 5200 1000}%
\special{pa 5000 1000}%
\special{pa 5000 800}%
\special{pa 5200 800}%
\special{fp}%
%
\special{pn 8}%
\special{pa 5200 800}%
\special{pa 5400 800}%
\special{pa 5400 1000}%
\special{pa 5200 1000}%
\special{pa 5200 800}%
\special{pa 5400 800}%
\special{fp}%
%
\special{pn 8}%
\special{pa 5400 800}%
\special{pa 5600 800}%
\special{pa 5600 1000}%
\special{pa 5400 1000}%
\special{pa 5400 800}%
\special{pa 5600 800}%
\special{fp}%
%
\special{pn 8}%
\special{pa 5600 800}%
\special{pa 5800 800}%
\special{pa 5800 1000}%
\special{pa 5600 1000}%
\special{pa 5600 800}%
\special{pa 5800 800}%
\special{fp}%
%
\special{pn 8}%
\special{pa 5800 800}%
\special{pa 6000 800}%
\special{pa 6000 1000}%
\special{pa 5800 1000}%
\special{pa 5800 800}%
\special{pa 6000 800}%
\special{fp}%
%
\special{pn 8}%
\special{pa 5000 1000}%
\special{pa 5200 1000}%
\special{pa 5200 1200}%
\special{pa 5000 1200}%
\special{pa 5000 1000}%
\special{pa 5200 1000}%
\special{fp}%
%
\special{pn 8}%
\special{pa 5200 1000}%
\special{pa 5400 1000}%
\special{pa 5400 1200}%
\special{pa 5200 1200}%
\special{pa 5200 1000}%
\special{pa 5400 1000}%
\special{fp}%
%
\special{pn 8}%
\special{pa 5400 1000}%
\special{pa 5600 1000}%
\special{pa 5600 1200}%
\special{pa 5400 1200}%
\special{pa 5400 1000}%
\special{pa 5600 1000}%
\special{fp}%
%
\special{pn 8}%
\special{pa 5600 1000}%
\special{pa 5800 1000}%
\special{pa 5800 1200}%
\special{pa 5600 1200}%
\special{pa 5600 1000}%
\special{pa 5800 1000}%
\special{fp}%
%
\special{pn 8}%
\special{pa 5800 1000}%
\special{pa 6000 1000}%
\special{pa 6000 1200}%
\special{pa 5800 1200}%
\special{pa 5800 1000}%
\special{pa 6000 1000}%
\special{fp}%
%
\special{pn 8}%
\special{pa 5000 1200}%
\special{pa 5200 1200}%
\special{pa 5200 1400}%
\special{pa 5000 1400}%
\special{pa 5000 1200}%
\special{pa 5200 1200}%
\special{fp}%
%
\special{pn 8}%
\special{pa 5200 1200}%
\special{pa 5400 1200}%
\special{pa 5400 1400}%
\special{pa 5200 1400}%
\special{pa 5200 1200}%
\special{pa 5400 1200}%
\special{fp}%
%
\special{pn 8}%
\special{pa 5400 1200}%
\special{pa 5600 1200}%
\special{pa 5600 1400}%
\special{pa 5400 1400}%
\special{pa 5400 1200}%
\special{pa 5600 1200}%
\special{fp}%
%
\special{pn 8}%
\special{pa 5600 1200}%
\special{pa 5800 1200}%
\special{pa 5800 1400}%
\special{pa 5600 1400}%
\special{pa 5600 1200}%
\special{pa 5800 1200}%
\special{fp}%
%
\special{pn 8}%
\special{pa 5800 1200}%
\special{pa 6000 1200}%
\special{pa 6000 1400}%
\special{pa 5800 1400}%
\special{pa 5800 1200}%
\special{pa 6000 1200}%
\special{fp}%
%
\special{pn 8}%
\special{pa 5000 1400}%
\special{pa 5200 1400}%
\special{pa 5200 1600}%
\special{pa 5000 1600}%
\special{pa 5000 1400}%
\special{pa 5200 1400}%
\special{fp}%
%
\special{pn 8}%
\special{pa 5200 1400}%
\special{pa 5400 1400}%
\special{pa 5400 1600}%
\special{pa 5200 1600}%
\special{pa 5200 1400}%
\special{pa 5400 1400}%
\special{fp}%
%
\special{pn 8}%
\special{pa 5400 1400}%
\special{pa 5600 1400}%
\special{pa 5600 1600}%
\special{pa 5400 1600}%
\special{pa 5400 1400}%
\special{pa 5600 1400}%
\special{fp}%
%
\special{pn 8}%
\special{pa 5600 1400}%
\special{pa 5800 1400}%
\special{pa 5800 1600}%
\special{pa 5600 1600}%
\special{pa 5600 1400}%
\special{pa 5800 1400}%
\special{fp}%
%
\special{pn 8}%
\special{pa 5800 1400}%
\special{pa 6000 1400}%
\special{pa 6000 1600}%
\special{pa 5800 1600}%
\special{pa 5800 1400}%
\special{pa 6000 1400}%
\special{fp}%
%
\special{pn 8}%
\special{pa 5000 1600}%
\special{pa 5200 1600}%
\special{pa 5200 1800}%
\special{pa 5000 1800}%
\special{pa 5000 1600}%
\special{pa 5200 1600}%
\special{fp}%
%
\special{pn 8}%
\special{pa 5200 1600}%
\special{pa 5400 1600}%
\special{pa 5400 1800}%
\special{pa 5200 1800}%
\special{pa 5200 1600}%
\special{pa 5400 1600}%
\special{fp}%
%
\special{pn 8}%
\special{pa 5400 1600}%
\special{pa 5600 1600}%
\special{pa 5600 1800}%
\special{pa 5400 1800}%
\special{pa 5400 1600}%
\special{pa 5600 1600}%
\special{fp}%
%
\special{pn 8}%
\special{pa 5600 1600}%
\special{pa 5800 1600}%
\special{pa 5800 1800}%
\special{pa 5600 1800}%
\special{pa 5600 1600}%
\special{pa 5800 1600}%
\special{fp}%
%
\special{pn 8}%
\special{pa 5800 1600}%
\special{pa 6000 1600}%
\special{pa 6000 1800}%
\special{pa 5800 1800}%
\special{pa 5800 1600}%
\special{pa 6000 1600}%
\special{fp}%
%
\special{pn 20}%
\special{pa 5000 800}%
\special{pa 5000 1400}%
\special{fp}%
\special{pa 5000 1400}%
\special{pa 5200 1400}%
\special{fp}%
\special{pa 5200 1400}%
\special{pa 5200 1600}%
\special{fp}%
\special{pa 5200 1600}%
\special{pa 5600 1600}%
\special{fp}%
\special{pa 5600 1600}%
\special{pa 5600 1800}%
\special{fp}%
\special{pa 5600 1800}%
\special{pa 6000 1800}%
\special{fp}%
\end{picture}}%
\]
\caption{The boundary path corresponding to $h=(3,4,4,5,5)$.}
\label{pic:stair-shape}
\end{figure}

For an $n\times  n$ matrix $X$ and a Hessenberg function $h\colon[n]\rightarrow[n]$, the \textbf{Hessenberg variety} associated with $X$ and $h$ is defined to be
\begin{align*}
 \Hess{X}{h} \coloneqq \{ V_{\bullet} \in Fl(\C^n) \mid XV_i\subseteq V_{h(i)} \text{ for all $1\le i\le n$}\},
\end{align*}
where $Fl(\C^n)$ is the flag variety of $\C^n$ consisting of sequences $V_{\bullet}=(V_1\subset V_2\subset \cdots \subset V_n=\C^n)$ of linear subspaces of $\C^n$ such that $\dim_{\C}V_{i}=i$ for $1\le i\le n$.
Let $S$ be a complex $n\times n$ regular semisimple matrix (i.e.\ a complex $n\times n$ matrix with $n$ distinct eigenvalues). Then $\Hess{S}{h}$ is called a \textbf{regular semisimple Hessenberg variety}. 
It is known that $\Hess{S}{h}$ is a smooth projective variety (\cite{ma-sh,ma-pr-sh}). 

In this paper, we always assume that
\begin{align}\label{eq: assumption on h}
 h(i)\ge i+1 \qquad (1\le i< n)
\end{align}
so that the corresponding regular semisimple Hessenberg variety $\Hess{S}{h}$ is irreducible. In addition, since we have $\Hess{S}{h}\cong \Hess{g^{-1}Sg}{h}$ for $g\in \SL_n(\C)$, we assume that $S$ is a diagonal matrix.

\begin{remark}\label{rem: connected}
\normalfont{If we have $n\ge2$ and $h(j)=j$ for some $j<n$, then $\Hess{S}{h}$ is not connected, and each connected component is isomorphic to a product of regular semisimple Hessenberg varieties of smaller ranks. See \cite{Teff11} for details.}
\end{remark}

\subsection{Line bundles over flag varieties}\label{subsec: line bundles on flag}

Let $G=\SL_n (\C)$ be the complex special linear group of degree $n$. Let $B\subseteq G$ be the Borel subgroup consisting of the upper-triangular matrices, and $T\subseteq B$ the maximal torus of $B$ consisting of the diagonal matrices. 
We may identify the flag variety $Fl(\C^n)$ with $G/B$ by sending $gB\in G/B$ to the flag $V_{\bullet}\in Fl(\C^n)$ given by $V_i = \sum_{j=1}^i \C g_j$, where $g_j$ is the $j$-th column vector of $g$.
Let $\mu \colon T\rightarrow \C^{\times}$ be a weight of $T$. By composing this with the canonical projection $B \twoheadrightarrow T$, we obtain a homomorphism $\mu \colon B\rightarrow \C^{\times}$ which we also denote by $\mu$. Let $\C_{\mu}=\C$ be the $1$-dimensional representation of $B$ given by $b\cdot z=\mu(b)z$ for $b\in B$ and $z\in\C$. We denote by $\C_{\mu}^*$ its dual representation. Since the quotient map $p \colon G\rightarrow G/B$ is a principal $B$-bundle, we obtain the associated line bundle over $Fl(\C^n)=G/B$: 
\begin{align*}
 \LB_{\mu} \coloneqq G\times^B \C_{\mu}^*.
\end{align*}
Namely, it is the quotient of the product $G\times \C$ by the right $B$-action given by $(g,z)\cdot b=(gb,\mu^{-1}(b^{-1})z)=(gb,\mu(b)z)$ for $b\in B$ and $(g,z)\in G\times \C$. For a subvariety $Z \subseteq G/B$, we will also denote by $\LB_{\mu}$ the restriction of $\LB_{\mu}$ to $Z$ by abusing notation.

For $1\le i\le n$, let $x_i$ be the weight of $T$ which sends $t=\text{diag}(t_1,\ldots,t_n)\in T$ to $t_i \in \C^{\times}$. 
We identify weights of $T$ as induced homomorphisms $\Lie(T)\rightarrow \Lie(\C^{\times})=\C$  
to use the additive notation, e.g.\ $x_1+x_2+\cdots+x_n=0$.
The standard $i$-th fundamental weight of $T$ is given by $\varpi_i=x_1+x_2+\cdots+x_i$ for $1\le i\le n-1$. Also, for $1\le i<j\le n$, let $\alpha_{i,j}=x_i-x_j$ be the standard $(i,j)$-th positive root.
A weight $\mu$ of $T$ is called a dominant integral weight if we can write $\mu=\sum_{i=1}^{n-1} a_i\varpi_i$ with $a_i\geq0$ for all $1\le i\le n-1$. We denote by $P_+$ the semigroup of the dominant integral weights.

\bigskip
\section{Fano Hessenberg varieties}\label{sec: Fano Hessenberg}
In this section, we describe the anti-canonical bundle of $\Hess{S}{h}$ in  terms of a line bundle over the flag variety $Fl(\C^n)$, and give a proof of Theorem~A which is stated in Section~\ref{sec: intro}.

\subsection{The anti-canonical bundles of Hessenberg varieties}\label{subsec: anti-canonical bundles}
For a Hessenberg function $h\colon[n]\rightarrow[n]$, let $h^*\colon[n]\rightarrow[n]$ be the transpose of $h$, that is,
\begin{align*}
h^*(i)
&\coloneqq |\{ k\in[n] \mid n+1-i \le h(k) \}|. 
\end{align*}
For example, if $n=5$ and $h=(3,4,4,5,5)$, then we have $h^*=(2,4,5,5,5)$. See Figure \ref{pic:stair-transpose}.
\begin{figure}[h]
\[
{\unitlength 0.1in%
\begin{picture}(32.0000,11.0000)(18.0000,-19.0000)%
%
\special{pn 8}%
\special{pa 1800 800}%
\special{pa 2000 800}%
\special{pa 2000 1000}%
\special{pa 1800 1000}%
\special{pa 1800 800}%
\special{pa 2000 800}%
\special{fp}%
%
\special{pn 8}%
\special{pa 2000 800}%
\special{pa 2200 800}%
\special{pa 2200 1000}%
\special{pa 2000 1000}%
\special{pa 2000 800}%
\special{pa 2200 800}%
\special{fp}%
%
\special{pn 8}%
\special{pa 2200 800}%
\special{pa 2400 800}%
\special{pa 2400 1000}%
\special{pa 2200 1000}%
\special{pa 2200 800}%
\special{pa 2400 800}%
\special{fp}%
%
\special{pn 8}%
\special{pa 2400 800}%
\special{pa 2600 800}%
\special{pa 2600 1000}%
\special{pa 2400 1000}%
\special{pa 2400 800}%
\special{pa 2600 800}%
\special{fp}%
%
\special{pn 8}%
\special{pa 2600 800}%
\special{pa 2800 800}%
\special{pa 2800 1000}%
\special{pa 2600 1000}%
\special{pa 2600 800}%
\special{pa 2800 800}%
\special{fp}%
%
\special{pn 8}%
\special{pa 1800 1000}%
\special{pa 2000 1000}%
\special{pa 2000 1200}%
\special{pa 1800 1200}%
\special{pa 1800 1000}%
\special{pa 2000 1000}%
\special{fp}%
%
\special{pn 8}%
\special{pa 2000 1000}%
\special{pa 2200 1000}%
\special{pa 2200 1200}%
\special{pa 2000 1200}%
\special{pa 2000 1000}%
\special{pa 2200 1000}%
\special{fp}%
%
\special{pn 8}%
\special{pa 2200 1000}%
\special{pa 2400 1000}%
\special{pa 2400 1200}%
\special{pa 2200 1200}%
\special{pa 2200 1000}%
\special{pa 2400 1000}%
\special{fp}%
%
\special{pn 8}%
\special{pa 2400 1000}%
\special{pa 2600 1000}%
\special{pa 2600 1200}%
\special{pa 2400 1200}%
\special{pa 2400 1000}%
\special{pa 2600 1000}%
\special{fp}%
%
\special{pn 8}%
\special{pa 2600 1000}%
\special{pa 2800 1000}%
\special{pa 2800 1200}%
\special{pa 2600 1200}%
\special{pa 2600 1000}%
\special{pa 2800 1000}%
\special{fp}%
%
\special{pn 8}%
\special{pa 1800 1200}%
\special{pa 2000 1200}%
\special{pa 2000 1400}%
\special{pa 1800 1400}%
\special{pa 1800 1200}%
\special{pa 2000 1200}%
\special{fp}%
%
\special{pn 8}%
\special{pa 2000 1200}%
\special{pa 2200 1200}%
\special{pa 2200 1400}%
\special{pa 2000 1400}%
\special{pa 2000 1200}%
\special{pa 2200 1200}%
\special{fp}%
%
\special{pn 8}%
\special{pa 2200 1200}%
\special{pa 2400 1200}%
\special{pa 2400 1400}%
\special{pa 2200 1400}%
\special{pa 2200 1200}%
\special{pa 2400 1200}%
\special{fp}%
%
\special{pn 8}%
\special{pa 2400 1200}%
\special{pa 2600 1200}%
\special{pa 2600 1400}%
\special{pa 2400 1400}%
\special{pa 2400 1200}%
\special{pa 2600 1200}%
\special{fp}%
%
\special{pn 8}%
\special{pa 2600 1200}%
\special{pa 2800 1200}%
\special{pa 2800 1400}%
\special{pa 2600 1400}%
\special{pa 2600 1200}%
\special{pa 2800 1200}%
\special{fp}%
%
\special{pn 8}%
\special{pa 1800 1400}%
\special{pa 2000 1400}%
\special{pa 2000 1600}%
\special{pa 1800 1600}%
\special{pa 1800 1400}%
\special{pa 2000 1400}%
\special{fp}%
%
\special{pn 8}%
\special{pa 2000 1400}%
\special{pa 2200 1400}%
\special{pa 2200 1600}%
\special{pa 2000 1600}%
\special{pa 2000 1400}%
\special{pa 2200 1400}%
\special{fp}%
%
\special{pn 8}%
\special{pa 2200 1400}%
\special{pa 2400 1400}%
\special{pa 2400 1600}%
\special{pa 2200 1600}%
\special{pa 2200 1400}%
\special{pa 2400 1400}%
\special{fp}%
%
\special{pn 8}%
\special{pa 2400 1400}%
\special{pa 2600 1400}%
\special{pa 2600 1600}%
\special{pa 2400 1600}%
\special{pa 2400 1400}%
\special{pa 2600 1400}%
\special{fp}%
%
\special{pn 8}%
\special{pa 2600 1400}%
\special{pa 2800 1400}%
\special{pa 2800 1600}%
\special{pa 2600 1600}%
\special{pa 2600 1400}%
\special{pa 2800 1400}%
\special{fp}%
%
\special{pn 8}%
\special{pa 1800 1600}%
\special{pa 2000 1600}%
\special{pa 2000 1800}%
\special{pa 1800 1800}%
\special{pa 1800 1600}%
\special{pa 2000 1600}%
\special{fp}%
%
\special{pn 8}%
\special{pa 2000 1600}%
\special{pa 2200 1600}%
\special{pa 2200 1800}%
\special{pa 2000 1800}%
\special{pa 2000 1600}%
\special{pa 2200 1600}%
\special{fp}%
%
\special{pn 8}%
\special{pa 2200 1600}%
\special{pa 2400 1600}%
\special{pa 2400 1800}%
\special{pa 2200 1800}%
\special{pa 2200 1600}%
\special{pa 2400 1600}%
\special{fp}%
%
\special{pn 8}%
\special{pa 2400 1600}%
\special{pa 2600 1600}%
\special{pa 2600 1800}%
\special{pa 2400 1800}%
\special{pa 2400 1600}%
\special{pa 2600 1600}%
\special{fp}%
%
\special{pn 8}%
\special{pa 2600 1600}%
\special{pa 2800 1600}%
\special{pa 2800 1800}%
\special{pa 2600 1800}%
\special{pa 2600 1600}%
\special{pa 2800 1600}%
\special{fp}%
%
\special{pn 20}%
\special{pa 1800 800}%
\special{pa 1800 1400}%
\special{fp}%
\special{pa 1800 1400}%
\special{pa 2000 1400}%
\special{fp}%
\special{pa 2000 1400}%
\special{pa 2000 1600}%
\special{fp}%
\special{pa 2000 1600}%
\special{pa 2400 1600}%
\special{fp}%
\special{pa 2400 1600}%
\special{pa 2400 1800}%
\special{fp}%
\special{pa 2400 1800}%
\special{pa 2800 1800}%
\special{fp}%
%
\special{pn 8}%
\special{pa 4000 800}%
\special{pa 4200 800}%
\special{pa 4200 1000}%
\special{pa 4000 1000}%
\special{pa 4000 800}%
\special{pa 4200 800}%
\special{fp}%
%
\special{pn 8}%
\special{pa 4200 800}%
\special{pa 4400 800}%
\special{pa 4400 1000}%
\special{pa 4200 1000}%
\special{pa 4200 800}%
\special{pa 4400 800}%
\special{fp}%
%
\special{pn 8}%
\special{pa 4400 800}%
\special{pa 4600 800}%
\special{pa 4600 1000}%
\special{pa 4400 1000}%
\special{pa 4400 800}%
\special{pa 4600 800}%
\special{fp}%
%
\special{pn 8}%
\special{pa 4600 800}%
\special{pa 4800 800}%
\special{pa 4800 1000}%
\special{pa 4600 1000}%
\special{pa 4600 800}%
\special{pa 4800 800}%
\special{fp}%
%
\special{pn 8}%
\special{pa 4800 800}%
\special{pa 5000 800}%
\special{pa 5000 1000}%
\special{pa 4800 1000}%
\special{pa 4800 800}%
\special{pa 5000 800}%
\special{fp}%
%
\special{pn 8}%
\special{pa 4000 1000}%
\special{pa 4200 1000}%
\special{pa 4200 1200}%
\special{pa 4000 1200}%
\special{pa 4000 1000}%
\special{pa 4200 1000}%
\special{fp}%
%
\special{pn 8}%
\special{pa 4200 1000}%
\special{pa 4400 1000}%
\special{pa 4400 1200}%
\special{pa 4200 1200}%
\special{pa 4200 1000}%
\special{pa 4400 1000}%
\special{fp}%
%
\special{pn 8}%
\special{pa 4400 1000}%
\special{pa 4600 1000}%
\special{pa 4600 1200}%
\special{pa 4400 1200}%
\special{pa 4400 1000}%
\special{pa 4600 1000}%
\special{fp}%
%
\special{pn 8}%
\special{pa 4600 1000}%
\special{pa 4800 1000}%
\special{pa 4800 1200}%
\special{pa 4600 1200}%
\special{pa 4600 1000}%
\special{pa 4800 1000}%
\special{fp}%
%
\special{pn 8}%
\special{pa 4800 1000}%
\special{pa 5000 1000}%
\special{pa 5000 1200}%
\special{pa 4800 1200}%
\special{pa 4800 1000}%
\special{pa 5000 1000}%
\special{fp}%
%
\special{pn 8}%
\special{pa 4000 1200}%
\special{pa 4200 1200}%
\special{pa 4200 1400}%
\special{pa 4000 1400}%
\special{pa 4000 1200}%
\special{pa 4200 1200}%
\special{fp}%
%
\special{pn 8}%
\special{pa 4200 1200}%
\special{pa 4400 1200}%
\special{pa 4400 1400}%
\special{pa 4200 1400}%
\special{pa 4200 1200}%
\special{pa 4400 1200}%
\special{fp}%
%
\special{pn 8}%
\special{pa 4400 1200}%
\special{pa 4600 1200}%
\special{pa 4600 1400}%
\special{pa 4400 1400}%
\special{pa 4400 1200}%
\special{pa 4600 1200}%
\special{fp}%
%
\special{pn 8}%
\special{pa 4600 1200}%
\special{pa 4800 1200}%
\special{pa 4800 1400}%
\special{pa 4600 1400}%
\special{pa 4600 1200}%
\special{pa 4800 1200}%
\special{fp}%
%
\special{pn 8}%
\special{pa 4800 1200}%
\special{pa 5000 1200}%
\special{pa 5000 1400}%
\special{pa 4800 1400}%
\special{pa 4800 1200}%
\special{pa 5000 1200}%
\special{fp}%
%
\special{pn 8}%
\special{pa 4000 1400}%
\special{pa 4200 1400}%
\special{pa 4200 1600}%
\special{pa 4000 1600}%
\special{pa 4000 1400}%
\special{pa 4200 1400}%
\special{fp}%
%
\special{pn 8}%
\special{pa 4200 1400}%
\special{pa 4400 1400}%
\special{pa 4400 1600}%
\special{pa 4200 1600}%
\special{pa 4200 1400}%
\special{pa 4400 1400}%
\special{fp}%
%
\special{pn 8}%
\special{pa 4400 1400}%
\special{pa 4600 1400}%
\special{pa 4600 1600}%
\special{pa 4400 1600}%
\special{pa 4400 1400}%
\special{pa 4600 1400}%
\special{fp}%
%
\special{pn 8}%
\special{pa 4600 1400}%
\special{pa 4800 1400}%
\special{pa 4800 1600}%
\special{pa 4600 1600}%
\special{pa 4600 1400}%
\special{pa 4800 1400}%
\special{fp}%
%
\special{pn 8}%
\special{pa 4800 1400}%
\special{pa 5000 1400}%
\special{pa 5000 1600}%
\special{pa 4800 1600}%
\special{pa 4800 1400}%
\special{pa 5000 1400}%
\special{fp}%
%
\special{pn 8}%
\special{pa 4000 1600}%
\special{pa 4200 1600}%
\special{pa 4200 1800}%
\special{pa 4000 1800}%
\special{pa 4000 1600}%
\special{pa 4200 1600}%
\special{fp}%
%
\special{pn 8}%
\special{pa 4200 1600}%
\special{pa 4400 1600}%
\special{pa 4400 1800}%
\special{pa 4200 1800}%
\special{pa 4200 1600}%
\special{pa 4400 1600}%
\special{fp}%
%
\special{pn 8}%
\special{pa 4400 1600}%
\special{pa 4600 1600}%
\special{pa 4600 1800}%
\special{pa 4400 1800}%
\special{pa 4400 1600}%
\special{pa 4600 1600}%
\special{fp}%
%
\special{pn 8}%
\special{pa 4600 1600}%
\special{pa 4800 1600}%
\special{pa 4800 1800}%
\special{pa 4600 1800}%
\special{pa 4600 1600}%
\special{pa 4800 1600}%
\special{fp}%
%
\special{pn 8}%
\special{pa 4800 1600}%
\special{pa 5000 1600}%
\special{pa 5000 1800}%
\special{pa 4800 1800}%
\special{pa 4800 1600}%
\special{pa 5000 1600}%
\special{fp}%
%
\special{pn 20}%
\special{pa 5000 1800}%
\special{pa 4400 1800}%
\special{fp}%
\special{pa 4400 1800}%
\special{pa 4400 1600}%
\special{fp}%
\special{pa 4400 1600}%
\special{pa 4200 1600}%
\special{fp}%
\special{pa 4200 1600}%
\special{pa 4200 1200}%
\special{fp}%
\special{pa 4200 1200}%
\special{pa 4000 1200}%
\special{fp}%
\special{pa 4000 1200}%
\special{pa 4000 800}%
\special{fp}%
\put(22.5000,-20.3000){\makebox(0,0)[lb]{$h$}}%
\put(44.5000,-20.3000){\makebox(0,0)[lb]{$h^*$}}%
\end{picture}}%
\]
\caption{$h=(3,4,4,5,5)$ and its transpose $h^*=(2,4,5,5,5)$}
\label{pic:stair-transpose}
\end{figure}
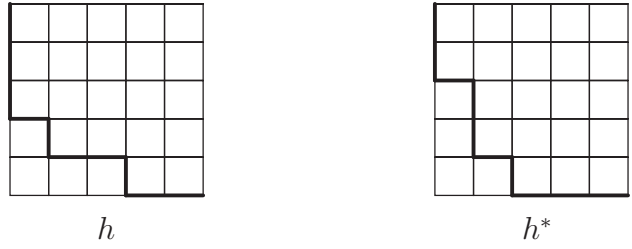
Since we are assuming that $h$ satisfies \eqref{eq: assumption on h}, so does $h^*$, that is, $h^*(i)\ge i+1$ $(1\le i<n)$.
Set
\begin{align*}
\xi_h \coloneqq \sum_{1\le i<j\le h(i)}\alpha_{i,j}
=\sum_{1\le i<j\le h(i)}(x_i-x_j).
\end{align*}
\begin{lemma}\label{lemm: new}
The  following equality holds:
\begin{align*}
\xi_h = \sum_{i=1}^{n-1} \big( h(i)-h(i+1)+2-h^*(n+1-i)+h^*(n-i) \big)\varpi_i .
\end{align*}
\end{lemma}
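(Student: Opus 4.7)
The plan is a direct computation of the coefficient of each $x_k$ in $\xi_h$, followed by conversion to the fundamental weight basis using the relation $x_1+\cdots+x_n=0$.

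First I would expand
\[
\xi_h=\sum_{1\le i<j\le h(i)}(x_i-x_j)=\sum_{k=1}^n c_k x_k
\]
and read off $c_k$ from the two kinds of contributions. The positive contribution to $c_k$ comes from pairs with $i=k$ and $k<j\le h(k)$, giving $h(k)-k$. The negative contribution counts pairs with $j=k$ and $i<k$, $h(i)\ge k$, i.e. $|\{i<k:h(i)\ge k\}|$. From the definition of $h^*$, $h^*(n+1-k)=|\{i:h(i)\ge k\}|$; and since $h$ is weakly increasing with $h(i)\ge i$, every $i\ge k$ automatically satisfies $h(i)\ge k$, contributing exactly $n-k+1$ indices. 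Therefore the count $|\{i<k:h(i)\ge k\}|$ equals $h^*(n+1-k)-(n-k+1)$, and we arrive at
\[
c_k \;=\; h(k)-k-\bigl(h^*(n+1-k)-(n-k+1)\bigr)\;=\;h(k)-2k+n+1-h^*(n+1-k).
\]

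Next I would verify $\sum_{k=1}^n c_k=0$, which is exactly the assertion that $\xi_h$ lies in the $\SL_n$-weight lattice. This follows by summing: $\sum_k h(k)=\sum_k h^*(k)$ (both sides equal the number of boxes in the Hessenberg diagram), and $\sum_k h^*(n+1-k)=\sum_k h^*(k)$; the remaining contribution $\sum_k(n+1-2k)$ is zero.

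Finally I would pass to fundamental weights. Since $\sum_k c_k=0$, a standard computation in the $\SL_n$-weight lattice gives
\[
\sum_{k=1}^n c_k x_k \;=\; \sum_{i=1}^{n-1}(c_i-c_{i+1})\,\varpi_i,
\]
because $\sum_{i=1}^{n-1}(c_i-c_{i+1})(x_1+\cdots+x_i)$ has $x_k$-coefficient $c_k-c_n$ for $k\le n-1$, which agrees with $c_k$ modulo $x_1+\cdots+x_n$. Substituting the formula for $c_k$ yields
\[
c_i-c_{i+1} \;=\; h(i)-h(i+1)+2-h^*(n+1-i)+h^*(n-i),
\]
which matches the claim.

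The computation is essentially bookkeeping; the one place to be careful is the step where the negative contribution is rewritten in terms of $h^*$, which uses both the monotonicity of $h$ and the Hessenberg condition $h(i)\ge i$ to account for the indices $i\ge k$ that are automatically counted in $h^*(n+1-k)$.
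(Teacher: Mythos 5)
Your proposal is correct and follows essentially the same route as the paper: expand $\xi_h$ in the $x_k$ basis, identify the negative contribution as $h^*(n+1-k)-(n+1-k)$ using the definition of the transpose, and then pass to fundamental weights (the paper does this via $x_n=-(x_1+\cdots+x_{n-1})$ and $x_i=\varpi_i-\varpi_{i-1}$, which is the same bookkeeping as your $c_i-c_{i+1}$ conversion).
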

\begin{proof}
By definition, we have
\begin{align*}
\xi_h
=\sum_{1\le k<\ell\le h(k)}(x_{k}-x_{\ell})
=\sum_{1\le k<\ell\le h(k)} x_{k} - \sum_{1\le k<\ell\le h(k)} x_{\ell} .
\end{align*}
For each $1\le i\le n$, we count the number of $x_i$ appearing in the right-most expression. In the former summand, the number of $x_i$ is $|\{\ell\in[n]\mid i<\ell\le h(i)\}|$ which is equal to $h(i)-i$. In the latter summand, the number of $x_i$ is $|\{k\in[n]\mid k<i\le h(k)\}|$ which is equal to $h^*(n+1-i)-(n+1-i)$ by the definition of $h^*$. Thus we obtain
\begin{align*}
\xi_h
&=\sum_{i=1}^n \big( h(i)-h^*(n+1-i) +n+1-2i\big)x_i \\
&=\sum_{i=1}^{n-1} \big( h(i)-h^*(n+1-i) + n - 2i + h^*(1) \big)x_i, 
\end{align*}
where we used $x_n=-(x_1+x_2+\cdots+x_{n-1})$ for the second equality. Since we have $x_i=\varpi_{i}-\varpi_{i-1}$ with the convention $\varpi_0=0$, this means the desired equality.
\end{proof}

\begin{proposition}\label{prop: anti-can of Hess}
The anti-canonical bundle of $\Hess{S}{h}$ is isomorphic to $L_{\xi_h}$.
\end{proposition}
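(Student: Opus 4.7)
The plan is to realize $X \coloneqq \Hess{S}{h}$ as the zero locus of a regular section of an explicit vector bundle on $Fl(\C^n)$ and then apply the adjunction formula. Let $\mathcal{H} \subseteq Fl(\C^n) \times \mathrm{End}(\C^n)$ be the subbundle of the trivial bundle whose fiber over $V_\bullet$ is $\{A \in \mathrm{End}(\C^n) \mid AV_i \subseteq V_{h(i)} \text{ for all } i\}$, and write $E_h$ for the quotient $(Fl(\C^n) \times \mathrm{End}(\C^n))/\mathcal{H}$. The constant section $V_\bullet \mapsto S$ of the trivial bundle descends to a section $\sigma$ of $E_h$ whose zero locus equals $X$ set-theoretically. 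Since $X$ is smooth with $\dim X = \sum_i (h(i) - i) = \dim Fl(\C^n) - \rank E_h$, the section $\sigma$ is regular, and hence the conormal computation gives $N_{X/Fl(\C^n)} \cong E_h|_X$.

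The key step is to compute $\det E_h$ by means of an equivariant filtration. The tautological filtration $V_\bullet$ induces a filtration on the trivial bundle $\mathrm{End}(\C^n) = (\C^n)^* \otimes \C^n$ whose associated graded is
\[
\bigoplus_{1 \le i, j \le n} (V_i/V_{i-1})^* \otimes (V_j/V_{j-1}),
\]
and the Hessenberg condition exhibits $\mathcal{H}$ as the subbundle corresponding to the pieces with $j \le h(i)$. Consequently $E_h$ has associated graded indexed by the pairs $(i,j)$ with $j > h(i)$. In the convention of Section~2.2, the line bundle $(V_i/V_{i-1})^* \otimes (V_j/V_{j-1})$ is isomorphic to $L_{x_i - x_j}$, which equals $L_{\alpha_{i,j}}$ whenever $i < j$. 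Because $h(i) \ge i$, the inequality $j > h(i)$ automatically yields $i < j$, so passing to determinants,
\[
\det E_h \cong L_\eta, \qquad \eta = \sum_{\substack{1 \le i < j \le n \\ j > h(i)}} \alpha_{i,j} = 2\rho - \xi_h,
\]
where $2\rho = \sum_{1 \le i < j \le n} \alpha_{i,j}$.

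Combining this with the standard identification $K_{Fl(\C^n)} \cong L_{-2\rho}$, the adjunction formula gives
\[
K_X \cong K_{Fl(\C^n)}|_X \otimes \det E_h|_X \cong L_{-2\rho + \eta}|_X \cong L_{-\xi_h}|_X,
\]
so $K_X^{-1} \cong L_{\xi_h}|_X$, as asserted. The principal obstacle is the global identification of the associated graded of $\mathcal{H}$ as specific line bundles on all of $Fl(\C^n)$ (not merely at $T$-fixed points); I handle this by writing down the column-wise filtration $F^k = \{A \in \mathcal{H} \mid A|_{V_{k-1}} = 0\}$ with canonical isomorphisms $F^k/F^{k+1} \cong (V_k/V_{k-1})^* \otimes V_{h(k)}$ and then further filtering $V_{h(k)}$ by its tautological subquotients. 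The regularity of $\sigma$ follows from smoothness and the dimension count, and the root-sum identity $\eta = 2\rho - \xi_h$ is a direct bookkeeping check.
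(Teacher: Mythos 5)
Your route is genuinely different from the paper's: the paper argues by descending induction on $h$, removing one box at a time and applying adjunction to the codimension-one subvariety $\Hess{S}{h'}\subset \Hess{S}{h}$, whose normal bundle $L_{\alpha_{p,h(p)}}$ is quoted from \cite{AHMMS}; you instead realize $\Hess{S}{h}$ in one stroke as the zero locus of a section of the quotient bundle $E_h$ on $Fl(\C^n)$ (in the spirit of \cite{AT}) and compute $\det E_h$ by filtering. Your bookkeeping is correct: with the paper's convention $L_\mu=G\times^B\C_\mu^*$ one has $(V_i/V_{i-1})^*\otimes (V_j/V_{j-1})\cong L_{x_i-x_j}$, the graded pieces of $E_h$ are exactly those with $j>h(i)$, and $\eta+\xi_h=\sum_{i<j}\alpha_{i,j}$, so $\det E_h\cong L_{2\rho-\xi_h}$ and adjunction gives $K_X^{-1}\cong L_{\xi_h}|_X$, in agreement with the proposition.

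There is, however, one genuine gap: the inference ``$X$ is smooth of dimension $\dim Fl(\C^n)-\rank E_h$, hence $\sigma$ is regular and $N_{X/Fl(\C^n)}\cong E_h|_X$.'' The dimension count does make $\sigma$ a regular section, but regularity only identifies the conormal sheaf of the \emph{scheme-theoretic} zero locus $Z(\sigma)$ with $E_h^*|_{Z(\sigma)}$; to conclude anything about $X$ you need $Z(\sigma)=X$ as schemes, i.e.\ that $Z(\sigma)$ is reduced, equivalently that $d\sigma$ is surjective along (at least a dense subset of) each component of $X$. Smoothness of the reduced zero set together with the expected codimension does not give this: the section $x^2$ of $\mathcal{O}_{\mathbb{P}^1}(2)$ has smooth reduced zero set of the expected codimension, yet its zero scheme is non-reduced and adjunction computed from it would be wrong. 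This transversality is exactly where the regular semisimplicity of $S$ must enter beyond citing smoothness from \cite{ma-sh,ma-pr-sh}. It is true and provable in a few lines: surjectivity of $d\sigma$ at a zero $gB$ amounts to $\mathfrak{g}=[\mathfrak{g},S]+\mathrm{Ad}(g)\mathcal{H}$, and already $[\mathfrak{g},S]+\mathrm{Ad}(g)\mathfrak{b}=\mathfrak{g}$ because, with respect to the Killing form, $[\mathfrak{g},S]^{\perp}=\ker(\operatorname{ad}S)$ is a Cartan subalgebra while $(\mathrm{Ad}(g)\mathfrak{b})^{\perp}$ is a nilradical, and these meet only in $0$; alternatively one can quote the reducedness/scheme-structure results for regular Hessenberg varieties in \cite{AFZ1,AHMMS}. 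With that step supplied, your proof is complete and gives a clean alternative to the paper's induction.
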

\begin{proof}
Since $\Hess{S}{h}$ is a smooth projective variety which admits a torus action with finite fixed points \cite{ma-pr-sh}, the higher cohomology groups of the structure sheaf vanish \cite{CL}. This means that there is a natural isomorphism $\text{Pic}(\Hess{S}{h})\cong H^2(\Hess{S}{h};\Z)$ so that 
algebraic line bundles $L$ and $L'$ over $\Hess{S}{h}$ are isomorphic if and only if their first Chern classes coincide (see for instance \cite[Corollary 5.3]{AFZ1}).

For Hessenberg functions $h\colon[n]\rightarrow[n]$ and $h'\colon[n]\rightarrow[n]$, we say $h'\subseteq h$ if and only if $h'(i)\le h(i)$ for all $1\le i\le n$. This gives a partial order on the set of Hessenberg functions on $[n]$.
We prove the claim by descending induction on $h$ with respect to the partial order $\subseteq$ given above.

When $h=(n,n,\ldots,n)$, we know that $\Hess{S}{h}=Fl(\C^n)=G/B$.
Writing $\mathfrak{g}=\Lie (G)$ and $\mathfrak{b}=\Lie (B)$,
the tangent bundle of $G/B$ is isomorphic to the vector bundle
$G\times^B(\mathfrak{g}/\mathfrak{b})$, which is the quotient of the product $G\times (\mathfrak{g}/\mathfrak{b})$ by the $B$-action given by $b\cdot(g,v)=(gb^{-1}, b\cdot v)$, where $b\cdot v$ is induced from the adjoint action of $B$ on $\mathfrak{g}$.
This means that the anti-canonical bundle of $Fl(\C^n)$ is given by the line bundle $G\times^B\wedge^{N}(\mathfrak{g}/\mathfrak{b})$, where $N=\dim_{\C} Fl(\C^n)=\dim_{\C}\mathfrak{g}/\mathfrak{b}$. 
Thus it is isomorphic to $L_{\xi}$, where $\xi$ is the sum of all positive roots (\cite[Proposition 2.2.7 (ii)]{Bri}):
\[\xi=\sum_{1\le i<j\le n}\alpha_{i,j}.\]
This verifies the case of $h=(n,n,\ldots,n)$.

Suppose by induction that the anti-canonical bundle of $\Hess{S}{h}$ is isomorphic to $L_{\xi_h}$ for a Hessenberg function $h\colon[n]\rightarrow[n]$ satisfying condition (\ref{eq: assumption on h}).
Let $p \coloneqq \min\{j \in [n] \mid h(j) \ge j + 2\}$ if exists, and $h' \subset h$ the Hessenberg function given by
\begin{align*}
h'(i) =
\begin{cases}
 h(i) \quad &\text{if $i\ne p$}, \\
 h(i)-1 &\text{if $i= p$}.
\end{cases}
\end{align*}
Then $h'$ also satisfies condition (\ref{eq: assumption on h}), and
$\Hess{S}{h'}$ is a nonsingular subvariety of $\Hess{S}{h}$ with codimension $1$.
By \cite[Lemma 5.2 and the proof of Lemma 8.11]{AHMMS}, 
the normal bundle of $\Hess{S}{h'}$ in $\Hess{S}{h}$ is isomorphic to $L_{\alpha_{p,h(p)}}$. 
Denote by $\omega_{h}$ and $\omega_{h'}$ the canonical bundles of $\Hess{S}{h}$ and $\Hess{S}{h'}$, respectively.
Then the adjunction formula tells us that
\begin{align*}
\omega_{h'}\cong \omega_{h}|_{\Hess{S}{h'}}\otimes L_{\alpha_{p,h(p)}}.
\end{align*}
By the induction hypothesis, the dual of the line bundle in the right-hand side is isomorphic to $L_{\xi_{h}}\otimes L_{-\alpha_{p,h(p)}}\cong L_{\xi_{h'}}$, as desired.
\end{proof}

For $w \in \mathfrak{S}_n$, we denote by \[X_w \subseteq G/B\quad ({\rm resp.},\ \Omega_w \subseteq G/B)\] the Schubert variety (resp., the dual Schubert variety) associated with $w$, that is, 
\begin{align*}
X_w = \overline{B w B/B}\quad({\rm resp.},\ \Omega_w = \overline{B^- w B/B}), 
\end{align*}
where $B^- \subseteq G$ is the Borel subgroup of lower-triangular matrices.
Then we have \[\dim_\C (X_w) = \dim_\C (G/B) - \dim_\C (\Omega_w) = \ell (w).\]
Let $s_1, s_2, \ldots, s_{n-1} \in \mathfrak{S}_n$ be the simple transpositions, and $e \in \mathfrak{S}_n$ the identity element.
We call each $X_{s_i}$ $(1\le i\le n-1)$ a Schubert curve.

\begin{lemma}\label{lem:plus}
$($\cite[Proposition~1.4.3]{Bri}$)$
Let $\mu=\sum_{i=1}^{n-1}a_i\varpi_i\ (a_i \in \mathbb{Z})$, and $L_{\mu}$ the corresponding line bundle over $Fl(\C^n).$ 
Then 
\[a_i = \int_{X_{s_i}}c_1(L_\mu) \qquad (1\le i\le n-1).
\]
\end{lemma}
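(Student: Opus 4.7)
The plan is to reduce to the case of a fundamental weight and then perform a direct computation on one Schubert curve at a time, identifying each such curve with $\mathbb{P}^1$ via a root $SL_2$.

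By the $\Z$-linearity of $\mu \mapsto c_1(L_\mu)$ (which follows from $L_{\mu+\mu'} \cong L_\mu \otimes L_{\mu'}$ as $B$-equivariant line bundles on $G/B$), and by the linearity of integration, it suffices to establish the identity
\begin{align*}
\int_{X_{s_i}} c_1(L_{\varpi_j}) = \delta_{ij} \qquad (1\le i,j\le n-1).
\end{align*}
The right-hand side is what is forced on us, given that the left-hand side must reproduce the coefficient $a_i$ of $\varpi_i$ when $\mu = \sum_k a_k \varpi_k$.

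To carry this out, I would first identify $X_{s_i}$ with $\mathbb{P}^1$ by means of the minimal parabolic $P_i = B \sqcup Bs_iB$: the closed immersion $P_i/B \hookrightarrow G/B$ has image $\overline{Bs_iB/B} = X_{s_i}$, and $P_i/B \cong \mathbb{P}^1$. Next, I would use the root homomorphism $\phi_i : SL_2(\C) \to G$ attached to the simple root $\alpha_i$, together with its restriction to the diagonal torus of $SL_2$, to obtain a surjection $SL_2/B_{SL_2} \twoheadrightarrow P_i/B$ (an isomorphism in the simply-connected type A case), through which the pullback of $L_\mu = G\times^B \C_\mu^*$ becomes the $SL_2$-equivariant line bundle on $\mathbb{P}^1$ associated with the weight $\mu \circ \phi_i$ of $T_{SL_2}$. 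This weight is the integer $\langle \mu, \alpha_i^\vee\rangle$, so the pulled-back line bundle is $\mathcal{O}_{\mathbb{P}^1}(\langle \mu, \alpha_i^\vee\rangle)$, whose degree is $\langle \mu, \alpha_i^\vee\rangle$. Applying this with $\mu = \varpi_j$ and using the standard pairing $\langle \varpi_j, \alpha_i^\vee \rangle = \delta_{ij}$ completes the argument.

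The main obstacle will be fixing signs and conventions consistently. Because the paper takes $L_\mu = G\times^B \C_\mu^*$ (with the \emph{dual} of the $B$-representation $\C_\mu$) and uses $B$ upper-triangular with $X_w = \overline{BwB/B}$, one must verify that these conventions match the sign appearing in Brion's statement, i.e.\ that the line bundle $L_{\varpi_j}$ is globally generated (in particular, has nonnegative degree on every Schubert curve) and restricts to $\mathcal{O}_{\mathbb{P}^1}(1)$ precisely on $X_{s_j}$. Once the convention is fixed at the level of the minuscule case $\mu = \varpi_j$ (which can be seen geometrically since $L_{\varpi_j}$ is the pullback of the tautological line bundle from the Grassmannian $Gr(j,n)$, and $X_{s_i}$ projects to a line in $Gr(j,n)$ iff $i = j$), the rest of the argument is a formal consequence of linearity. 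Since this statement is standard and the reference \cite[Proposition 1.4.3]{Bri} is cited, one may simply invoke it after matching conventions.
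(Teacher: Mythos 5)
Your proposal is correct and is essentially the argument behind the paper's treatment: the paper gives no proof of its own, citing Brion's Proposition 1.4.3, whose proof is exactly your reduction by linearity to fundamental weights together with the identification $X_{s_i}\cong P_i/B\cong\mathbb{P}^1$ and the computation $\int_{X_{s_i}}c_1(L_\mu)=\langle\mu,\alpha_i^\vee\rangle$. The only slight imprecision is in your convention-checking aside: with $L_{\mu}=G\times^B\C_{\mu}^*$ the bundle $L_{\varpi_j}$ is the pullback of $\mathcal{O}(1)$ (the dual of the determinant of the tautological subbundle) from $Gr(j,n)$, not the tautological bundle itself, but this does not affect the argument.
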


For a complex algebraic variety $Y$, a line bundle $L$ over $Y$ is called \textbf{nef} (or \textbf{numerically effective}) if the intersection number with an arbitrary irreducible curve in $Y$ is non-negative.
$L$ is called \textbf{ample} if the global sections of $L^{\otimes m}$ give an embedding of $Y$ into a projective space for a large enough integer $m>0$. Note that if $L$ is ample, then $L$ is nef (\cite[Example 1.4.5]{Laz}). For the definitions of Fano varieties and weak Fano varieties, we refer Section \ref{sec: intro}.

\begin{lemma}\label{lem:3.3}
$\Hess{S}{h}$ contains all the Schubert curves. 
\end{lemma}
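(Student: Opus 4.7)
My plan is to prove the lemma by describing the Schubert curves $X_{s_i}$ explicitly as one-parameter families of flags and then checking the Hessenberg condition pointwise, using the hypothesis $h(i)\ge i+1$ from \eqref{eq: assumption on h} and the assumption that $S$ is diagonal.

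First I would observe that because $S$ is diagonal, every coordinate flag $(e_1,\dots,e_n)$-type flag $wB$ (for $w\in\mathfrak{S}_n$) lies in $\Hess{S}{h}$: each $SV_i$ equals $V_i$ and $h(i)\ge i$. In particular, $eB$ and $s_iB$ are both in $\Hess{S}{h}$. Next, I would parametrize $X_{s_i}$ concretely. The Schubert curve $X_{s_i}=\overline{Bs_iB/B}\cong\P^1$ consists precisely of those flags $V_\bullet$ for which $V_j=\mathrm{span}(e_1,\dots,e_j)$ whenever $j\ne i$, while $V_i$ ranges over all $i$-dimensional subspaces of $\mathrm{span}(e_1,\dots,e_{i+1})$ containing $\mathrm{span}(e_1,\dots,e_{i-1})$. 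Thus $V_i=\mathrm{span}(e_1,\dots,e_{i-1})+\C\cdot v$ with $v=ae_i+be_{i+1}$ for some $[a:b]\in\P^1$.

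Now I would verify the Hessenberg condition $SV_j\subseteq V_{h(j)}$ for such a flag. For $j\ne i$ the subspace $V_j$ is a coordinate subspace, hence $S$-invariant, and $SV_j=V_j\subseteq V_{h(j)}$ since $h(j)\ge j$. For $j=i$, since $S$ is diagonal we have $Sv\in\mathrm{span}(e_i,e_{i+1})$, so
\[
SV_i\subseteq\mathrm{span}(e_1,\dots,e_{i+1})=V_{i+1}\subseteq V_{h(i)},
\]
where the last inclusion uses the standing assumption $h(i)\ge i+1$ together with the fact that $V_{h(i)}$ is the standard coordinate subspace of dimension $h(i)$ (as $h(i)\ne i$). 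This shows $X_{s_i}\subseteq\Hess{S}{h}$, which proves the lemma.

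There is no real obstacle here: the argument is a direct unpacking of the definitions. The only small subtlety worth flagging is that it is essential to use both the diagonality of $S$ (so that $S$ preserves each coordinate axis, and in particular the plane $\mathrm{span}(e_i,e_{i+1})$) and the standing hypothesis $h(i)\ge i+1$, which is exactly what guarantees that the ``new'' subspace $V_i$ is mapped into an already-standard subspace of the flag. Without $h(i)\ge i+1$ the inclusion $SV_i\subseteq V_{h(i)}=V_i$ would fail for generic $[a:b]$.
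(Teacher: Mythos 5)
Your proof is correct and is essentially the paper's argument: both reduce to the direct observation that along the curve only $V_i$ fails to be a coordinate subspace, and diagonality of $S$ together with $h(i)\ge i+1$ gives $SV_i\subseteq V_{i+1}\subseteq V_{h(i)}$, while the remaining $V_j$ are $S$-invariant. The paper merely packages this by factoring through the minimal Hessenberg function $h^{(k)}$ and taking the closure of the $B$-orbit $Bs_kB/B$, whereas you parametrize $X_{s_i}\cong\P^1$ explicitly, but the verification is the same.
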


\begin{proof}
For $1\le k\le n-1$, let $h^{(k)}\colon[n]\rightarrow[n]$ be the Hessenberg function defined by 
\begin{align*}
 h^{(k)}(i) = 
 \begin{cases}
  i+1 \quad &(i=k),\\
  i &(i\neq k).
 \end{cases}
\end{align*}
By the assumption \eqref{eq: assumption on h}, we know that $\Hess{S}{h^{(k)}}\subset \Hess{S}{h}$ (cf.\ Remark~\ref{rem: connected}).
Recall that $S$ is a diagonal matrix with distinct eigenvalues.
One can verify from the definition that the $B$-orbit $Bs_kB/B$ is contained in $\Hess{S}{h^{(k)}}$, and hence that $X_{s_k}=\overline{Bs_kB/B}\subset \Hess{S}{h^{(k)}}$ by taking the closure\footnote{In fact, $X_{s_k}$ is the connected component of $\Hess{S}{h^{(k)}}$
containing the $T$-fixed point $s_k B/B$.}. 
\end{proof}

\begin{lemma}\label{lem:3.4}
Let $\mu$ be a weight of $T$.
Then the following hold.
\begin{itemize}
\item[(1)] $L_{\mu}$ is ample on $\Hess{S}{h}$ if and only if $\mu\in \sum_{i=1}^{n-1}\Z_{>0}\varpi_i$.
\item[(2)] $L_{\mu}$ is nef on $\Hess{S}{h}$ if and only if $\mu\in \sum_{i=1}^{n-1}\Z_{\ge0}\varpi_i$.
\end{itemize}
\end{lemma}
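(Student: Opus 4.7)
The plan is to reduce both parts of the lemma to the case of the full flag variety by pulling intersection numbers back to Schubert curves. The key inputs are: Lemma~\ref{lem:3.3}, which places every Schubert curve $X_{s_i}$ inside $\Hess{S}{h}$; Lemma~\ref{lem:plus}, which identifies $\int_{X_{s_i}}c_1(L_\mu)$ with the coefficient $a_i$ when $\mu=\sum_{i=1}^{n-1}a_i\varpi_i$; and the classical fact that on $Fl(\C^n)=G/B$ the line bundle $L_\mu$ is ample exactly when $\mu$ is strictly dominant (see \cite[Proposition~1.4.1]{Bri}) and globally generated (hence nef) when $\mu$ is dominant (Borel--Weil).

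For the ``if'' directions, I would argue as follows. If $\mu\in\sum_{i=1}^{n-1}\Z_{>0}\varpi_i$, then $L_\mu$ is ample on $Fl(\C^n)$; since $\Hess{S}{h}$ is a closed subvariety of $Fl(\C^n)$, the restriction $L_\mu|_{\Hess{S}{h}}$ is also ample (ampleness is preserved under restriction to closed subschemes, cf.\ \cite[Example 1.2.6]{Laz}). If only $\mu\in\sum_{i=1}^{n-1}\Z_{\ge 0}\varpi_i$, the same reasoning replacing ``ample'' by ``globally generated'' shows that $L_\mu$ is globally generated on $Fl(\C^n)$, hence nef on $Fl(\C^n)$, and therefore nef on $\Hess{S}{h}$ (nefness is preserved under pullback to closed subvarieties).

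For the ``only if'' directions, write $\mu=\sum_{i=1}^{n-1}a_i\varpi_i$ with $a_i\in\Z$. By Lemma~\ref{lem:3.3}, every Schubert curve $X_{s_i}$ is an irreducible curve contained in $\Hess{S}{h}$. If $L_\mu$ is nef on $\Hess{S}{h}$, then
\begin{align*}
a_i=\int_{X_{s_i}} c_1(L_\mu) \ge 0 \qquad (1\le i\le n-1)
\end{align*}
by Lemma~\ref{lem:plus}, proving the necessity in~(2). If $L_\mu$ is ample on $\Hess{S}{h}$, the Nakai--Moishezon criterion (or directly the fact that an ample class has strictly positive degree on every irreducible curve) forces the same intersection numbers to be strictly positive, giving $a_i>0$ for all $i$, which is the necessity in~(1).

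There is no real obstacle in this argument; the only delicate point is citing the correct facts. I would flag that the ``if'' directions rely on the two standard properties (i) restriction of an ample line bundle along a closed immersion is ample and (ii) restriction of a nef line bundle along a closed immersion is nef, together with the classical identification of ample/dominant weights on $G/B$. The ``only if'' directions are a direct application of Lemmas~\ref{lem:3.3} and~\ref{lem:plus}. Thus the proof is short and organized as two matching implications for each of (1) and (2).
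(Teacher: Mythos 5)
Your proposal is correct and follows essentially the same route as the paper's proof: the ``if'' directions by restricting ample/nef line bundles from $Fl(\C^n)$ to the closed subvariety $\Hess{S}{h}$, and the ``only if'' directions by pairing $c_1(L_\mu)$ with the Schubert curves supplied by Lemma~\ref{lem:3.3} and reading off the coefficients via Lemma~\ref{lem:plus} (with Nakai--Moishezon giving strict positivity in the ample case). The only cosmetic difference is in the references cited for positivity on $G/B$.
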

\begin{proof}
(1) If $\mu\in \sum_{i=1}^{n-1}\Z_{>0}\varpi_i$, then we see by \cite[the proof of Proposition 1.4.1]{Bri} that $L_{\mu}$ is ample on $Fl(\C^n)$. Hence $L_{\mu}$ is ample on $\Hess{S}{h}$ by definition.
Conversely, suppose that $L_{\mu}$ is ample on $\Hess{S}{h}$.
We know from Lemma~\ref{lem:3.3} that $\Hess{S}{h}$ contains all the Schubert curves $X_{s_i}$.
Hence, for all $1\le i\le n-1$, we have 
\begin{align*}
\int_{X_{s_i}}c_1(L_\mu)>0
\end{align*}
by the Nakai-Moishezon-Kleiman criterion (see \cite[Theorem 1.2.23]{Laz}). 
By Lemma \ref{lem:plus}, this means that $\mu\in \sum_{i=1}^{n-1}\Z_{>0}\varpi_i$.

(2) If $\mu\in \sum_{i=1}^{n-1}\Z_{\ge0}\varpi_i$, then we see that $L_{\mu}$ is a nef line bundle over $Fl(\C^n)$ (see \cite[Example 1.4.4 (i)]{Laz} and \cite[the proof of Proposition 1.4.1]{Bri}). Hence $L_{\mu}$ is nef on $\Hess{S}{h}$ by definition. Conversely, suppose that $L_{\mu}$ is nef on $\Hess{S}{h}$. Then we have
\begin{align*}
\int_{X_{s_i}}c_1(L_\mu)\ge 0
\end{align*}
for all $1\le i\le n-1$ by Lemma~\ref{lem:3.3}.
Thus Lemma \ref{lem:plus} shows that $\mu\in \sum_{i=1}^{n-1}\Z_{\ge0}\varpi_i$.
\end{proof}

Proposition~\ref{prop: anti-can of Hess} and Lemma~\ref{lem:3.4} now implies the following claim which establishes the equivalence of (i) and (iii) in Theorem~B stated in Section \ref{sec: intro}.

\begin{proposition}\label{prop: equiv condition for nef}
The anti-canonical bundle of $\Hess{S}{h}$ is nef if and only if the inequality
\begin{align}\label{eq: nef}
d_i \coloneqq h(i)-h(i+1)+2-h^*(n+1-i)+h^*(n-i) \ge 0
\end{align} 
holds for all $1\le i\le n-1$.
\end{proposition}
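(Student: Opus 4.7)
The plan is to chain together the three preceding results in the section: Proposition~\ref{prop: anti-can of Hess}, Lemma~\ref{lemm: new}, and Lemma~\ref{lem:3.4}(2). None of these requires any further work, so the proof reduces to a short bookkeeping argument.

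First, I would invoke Proposition~\ref{prop: anti-can of Hess} to identify the anti-canonical bundle of $\Hess{S}{h}$ with $L_{\xi_h}$. This reduces the question to deciding when the line bundle $L_{\xi_h}$ is nef on $\Hess{S}{h}$. Next, I would apply Lemma~\ref{lemm: new}, which expresses $\xi_h$ in the basis of fundamental weights as
\[
\xi_h = \sum_{i=1}^{n-1} d_i\, \varpi_i,
\]
where $d_i = h(i)-h(i+1)+2-h^*(n+1-i)+h^*(n-i)$ is precisely the quantity appearing in \eqref{eq: nef}.

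Finally, I would apply Lemma~\ref{lem:3.4}(2): a line bundle $L_\mu$ is nef on $\Hess{S}{h}$ if and only if the weight $\mu$ lies in the cone $\sum_{i=1}^{n-1}\Z_{\ge 0}\varpi_i$, i.e.\ all coefficients of $\mu$ with respect to the fundamental weights are non-negative. Combined with the previous identification, this yields that $L_{\xi_h}$ is nef on $\Hess{S}{h}$ exactly when $d_i\ge 0$ for every $1\le i\le n-1$, which is the asserted equivalence.

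There is essentially no obstacle here since all the technical content has been placed into the earlier results: Proposition~\ref{prop: anti-can of Hess} does the algebro-geometric identification of the anti-canonical bundle (the inductive step via adjunction being the real work), Lemma~\ref{lemm: new} does the combinatorial rewriting of $\xi_h$ in terms of fundamental weights, and Lemma~\ref{lem:3.4}(2) uses Lemma~\ref{lem:3.3} (containment of all Schubert curves in $\Hess{S}{h}$) together with Lemma~\ref{lem:plus} to pin down the nef cone. The only small point worth making explicit in the write-up is that the ``only if'' direction uses the fact that the Schubert curves $X_{s_i}$ actually lie in $\Hess{S}{h}$, so that pairing $c_1(L_{\xi_h})$ against them recovers the coefficients $d_i$ and forces their non-negativity.
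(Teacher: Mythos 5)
Your proposal is correct and follows exactly the route the paper takes: it deduces the proposition by combining Proposition~\ref{prop: anti-can of Hess} (identifying the anti-canonical bundle with $L_{\xi_h}$), Lemma~\ref{lemm: new} (the expansion $\xi_h=\sum_i d_i\varpi_i$), and Lemma~\ref{lem:3.4}~(2) (the nef criterion via Schubert curves). Nothing is missing; the remark about the ``only if'' direction relying on Lemma~\ref{lem:3.3} is precisely the content already packaged into Lemma~\ref{lem:3.4}.
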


Motivated by Proposition~\ref{prop: equiv condition for nef}, we say a Hessenberg function $h\colon[n]\rightarrow[n]$ is \textbf{nef} if it satisfies the inequality \eqref{eq: nef} for all $1\le i\le n-1$.
This definition implies the following property which we will use in Section~\ref{sec: proof of Thm B}.

\begin{lemma}\label{cor: nef condition}
Suppose that $h$ is nef.
Then the following hold for all $1\le i\le n-2$.
\begin{itemize}
 \item[(1)] If $h(i)=h(i+1)<n$, then $h(i+1)<h(i+2)$.
 \item[(2)] If $h^*(i)=h^*(i+1)<n$, then $h^*(i+1)<h^*(i+2)$.
\end{itemize}
\end{lemma}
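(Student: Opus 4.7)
The plan is to reformulate the nef inequality in a more combinatorial form and then apply it at a cleverly chosen index. Using the definition of $h^*$, one checks directly that
\[ h^*(n+1-i) - h^*(n-i) = |\{k \in [n] : h(k) = i\}|, \]
since in general $h^*(j) - h^*(j-1)$ counts the $k$ with $h(k) = n+1-j$. Substituting this into the expression for $d_i$, the nef condition of Proposition~\ref{prop: equiv condition for nef} becomes the clean combinatorial bound
\[ |\{k \in [n] : h(k) = i\}| \le h(i) - h(i+1) + 2 \qquad (1 \le i \le n-1). \]

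For (1), I argue by contradiction. Suppose $h(i) = h(i+1) < n$ but $h(i+1) = h(i+2)$ (the only alternative to the desired strict inequality by monotonicity). Set $v := h(i) = h(i+1) = h(i+2)$. By condition \eqref{eq: assumption on h} applied to $i+1<n$, we have $v \ge i+2 \ge 1$, and $v < n$ by hypothesis, so $v \in [1,n-1]$ and the reformulated nef inequality is available at index $v$. Because $h$ is non-decreasing, $h(v) - h(v+1) \le 0$, which forces $|\{k : h(k) = v\}| \le 2$. But $k = i, i+1, i+2$ furnish three distinct indices with $h(k) = v$, a contradiction.

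For (2), I invoke the symmetry of the nef condition under transpose. A direct substitution, using $(h^*)^* = h$ (clear from the boundary-path picture of Figure~\ref{pic:stair-transpose}), yields the identity $d_j(h^*) = d_{n-j}(h)$ for $1 \le j \le n-1$. Hence $h^*$ is nef whenever $h$ is; since $h^*$ also satisfies \eqref{eq: assumption on h} (as remarked just before Lemma~\ref{lemm: new}), applying (1) to $h^*$ gives (2).

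The only real content is the rewriting of $d_i$ as a bound on the multiplicities of the values of $h$; once that reformulation is in hand, the rest is bookkeeping, and I anticipate no significant obstacle beyond verifying that the chosen index $v$ falls in the range where the nef inequality is defined.
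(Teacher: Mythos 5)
Your proof is correct and follows essentially the same route as the paper: you apply the nef inequality $d_v\ge 0$ at $v=h(i)$, observing that $h^*(n+1-v)-h^*(n-v)$ counts the multiplicity of the value $v$, which cannot be $\ge 3$ while $h(v)\le h(v+1)$; the paper phrases this as $h^*(j+1)>h^*(j)+2$ contradicting $d_{h(i)}\ge 0$. For (2) the paper simply says the same argument works with $h$ replaced by $h^*$, and your identity $d_j(h^*)=d_{n-j}(h)$ together with $(h^*)^*=h$ is just an explicit justification of that step.
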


\begin{proof}
(1) Suppose for a contradiction that $h(i+1)=h(i+2)$. Then we have $h(i)=h(i+1)=h(i+2)$ by the assumption, which implies that 
$h^*(j+1)>h^*(j)+2$ for $j=n-h(i)$ since $h^*$ is the transpose of $h$. However, this is impossible since we have $d_{h(i)}\geq0$.
The same argument works for (2) by replacing $h$ with $h^*$.
\end{proof}

\subsection{Proof of Theorem~A}
In this subsection, we prove Theorem~A which is stated in Section \ref{sec: intro}. For this purpose, we prepare the following two lemmas.
\begin{lemma}\label{lemm:3-1}
Assume that $\Hess{S}{h}$ is Fano. Then the inequalities $h(i+1)\le h(i)+1$ and $h^*(i+1)\le h^*(i)+1$ hold for all $1\le i\le n-1$.
\end{lemma}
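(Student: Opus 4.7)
The plan is to reduce the Fano hypothesis to a numerical constraint on the coefficients $d_i$ appearing in the expansion of $\xi_h$ in the fundamental weight basis, and then read off both inequalities by exploiting the monotonicity of $h$ and $h^*$.

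First I would combine Proposition~\ref{prop: anti-can of Hess}, Lemma~\ref{lemm: new}, and Lemma~\ref{lem:3.4}(1): the anti-canonical bundle of $\Hess{S}{h}$ equals $L_{\xi_h}$, it is expressed as $L_{\sum_i d_i \varpi_i}$ with
\[
d_i = h(i)-h(i+1)+2-h^*(n+1-i)+h^*(n-i),
\]
and ampleness on $\Hess{S}{h}$ forces each $d_i$ to be a strictly positive integer, i.e.\ $d_i \ge 1$ for all $1 \le i \le n-1$.

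Next, I would rewrite $d_i \ge 1$ as
\[
\bigl(h(i+1)-h(i)\bigr) + \bigl(h^*(n+1-i)-h^*(n-i)\bigr) \le 1.
\]
Since $h$ and $h^*$ are both weakly increasing Hessenberg functions, each of the two summands on the left-hand side is a non-negative integer. Hence each summand is individually at most $1$. From the first summand we immediately get $h(i+1) \le h(i)+1$ for all $1 \le i \le n-1$. From the second summand we get $h^*(n+1-i) \le h^*(n-i)+1$; replacing the index $i$ by $n-i$ (which again ranges over $\{1,\dots,n-1\}$), this becomes $h^*(i+1) \le h^*(i)+1$, as required.

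There is no real obstacle here: the work has already been done in Proposition~\ref{prop: anti-can of Hess}, Lemma~\ref{lemm: new}, and Lemma~\ref{lem:3.4}, so the lemma reduces to an elementary observation that a sum of two non-negative integers bounded by $1$ forces each summand to be $\le 1$. The only subtlety worth highlighting is the strict positivity $d_i \ge 1$ (as opposed to just $d_i \ge 0$), which is what separates the Fano hypothesis here from the merely nef hypothesis of Proposition~\ref{prop: equiv condition for nef}.
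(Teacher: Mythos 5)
Your proposal is correct and follows essentially the same route as the paper: both extract the positivity $d_i>0$ of the fundamental-weight coefficients of $\xi_h$ from Proposition~\ref{prop: anti-can of Hess}, Lemma~\ref{lemm: new}, and Lemma~\ref{lem:3.4}(1), then use that $h$ and $h^*$ are weakly increasing so each of the two difference terms is forced to be at most $1$, with the same re-indexing $i\mapsto n-i$ for $h^*$. No gaps.
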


\begin{proof}
Since the anti-canonical bundle of $\Hess{S}{h}$ is ample by the assumption, Proposition \ref{prop: anti-can of Hess} and Lemma \ref{lem:3.4} (1) imply that the coefficients of $\xi_h$ with respect to the fundamental weights $\varpi_i$ must be positive, that is,
\begin{align*}
h(i)-h(i+1)+2+h^*(n-i)-h^*(n+1-i)>0
\end{align*} 
by Lemma~\ref{lemm: new}.
In the left-hand side, we know that $h(i)-h(i+1)$ and $h^*(n-i)-h^*(n+1-i)$ are both less than or equal to $0$. Thus the inequality means that we must have 
$-1\le h(i)-h(i+1)$  and $-1\le h^*(n-i)-h^*(n+1-i)$ for all $1\leq i\leq n-1$, which implies the desired inequalities.
\end{proof}
\begin{lemma}\label{lemm:3-2}
Assume that $\Hess{S}{h}$ is Fano. If $h(i+1)=h(i)$, then $h(i)=n$.
\end{lemma}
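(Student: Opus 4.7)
The plan is to argue by contradiction: assume $h(i+1) = h(i) = m$ with $m < n$ and derive a violation of Lemma~\ref{lemm:3-1}. The key observation is that the equality $h(i) = h(i+1)$ produces a fiber of $h$ of size at least two, and the size of a fiber of $h$ is exactly a consecutive difference of values of $h^*$.

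More precisely, I would first note that by the definition of the transpose,
\begin{align*}
h^*(n+1-m) - h^*(n-m) &= |\{k\in[n]\mid h(k)\ge m\}| - |\{k\in[n]\mid h(k)\ge m+1\}| \\
&= |\{k\in[n]\mid h(k)=m\}|.
\end{align*}
Since both $k=i$ and $k=i+1$ lie in this set, the right-hand side is at least $2$.

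On the other hand, because $\Hess{S}{h}$ is Fano, Lemma~\ref{lemm:3-1} gives $h^*(j+1)\le h^*(j)+1$ for every $1\le j\le n-1$. The assumption $m < n$ together with $m = h(i)\ge 1$ ensures $1 \le n-m \le n-1$, so we may set $j = n-m$ and conclude $h^*(n+1-m) - h^*(n-m) \le 1$, contradicting the inequality above. Therefore $h(i) = n$.

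The argument is essentially immediate once one sees that differences of consecutive values of $h^*$ count fibers of $h$, so I do not anticipate a real obstacle; the whole proof is just the translation between $h$ and $h^*$ combined with Lemma~\ref{lemm:3-1}.
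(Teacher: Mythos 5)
Your argument is correct and is essentially the same as the paper's: the paper also sets $j=n-h(i)$ and observes that $h(i)=h(i+1)<n$ forces $h^*(j+1)\ge h^*(j)+2$, contradicting Lemma~\ref{lemm:3-1}. You merely spell out the fiber-counting reason for that inequality, which the paper leaves implicit.
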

\begin{proof}
Suppose for a contradiction that we have $h(i)=h(i+1)<n$ for some $i\in[n-1]$. Writing $j=n-h(i)$, this means that $h^*(j+1)\ge h^*(j)+2$, which contradicts Lemma~\ref{lemm:3-1}.
\end{proof}

We now give a proof of Theorem~A.

\begin{proof}[Proof of Theorem~A]
Assume that $\Hess{S}{h}$ is Fano. Then Lemmas~\ref{lemm:3-1} and \ref{lemm:3-2} imply that $h$ must be of the form $(k+1,k+2,...,n-1,n,...,n)$ for some $1\le k\le n-1$.
This means that 
\begin{align}\label{eq: anti-can of banded}
\xi_h = 
\sum_{i=1}^{k} \varpi_i + \sum_{i=n-k}^{n-1} \varpi_i .
\end{align}
Since $L_{\xi_h}$ is ample by the assumption, Lemma~\ref{lem:3.4} (1) now implies that all coefficients of $\xi_h$ with respect to the fundamental weights must be positive. Hence it follows that
$k\geq n-k-1$ by \eqref{eq: anti-can of banded}, which is equivalent to $\frac{n-1}{2}\le k \ (\le n-1)$.

If $h=(k+1,k+2,...,n-1,n,...,n)$ for some $\frac{n-1}{2}\le k\le n-1$, then one can directly verify that \eqref{eq: anti-can of banded} holds. Thus, the coefficients of $\xi_h$ with respect to the fundamental weights are positive. This means by Lemma \ref{lem:3.4} (1) that $L_{\xi_h}$ is ample on $\Hess{S}{h}$, which implies that $\Hess{S}{h}$ is Fano.
\end{proof}

\bigskip
\section{Relation with Richardson varieties}\label{sec: Using Richardson varieties}
In this section, we study bigness of the anti-canonical bundles of regular semisimple Hessenberg varieties via positivity of line bundles over Richardson varieties.

\subsection{Richardson varieties}
Denote by $\le$ the Bruhat order on $\mathfrak{S}_n$, and by $\lessdot$ a cover in the Bruhat order, that is, $u \lessdot v$ if and only if $u < v$ and  $\ell (v) = \ell (u) +1$.
For $v, w \in \mathfrak{S}_n$ such that $v \le w$, the subvariety \[X_w ^v \coloneqq X_w \cap \Omega_v \subseteq G/B\] is called a \emph{Richardson variety}, where $X_w$ is a Schubert variety, and $\Omega_w$ is a dual Schubert variety (see Section~\ref{subsec: anti-canonical bundles}).
The Richardson variety $X_w ^v$ is irreducible, and we have \[\dim_\C (X_w ^v) = \ell(w) - \ell(v).\] 

\subsection{Bigness of line bundles over Richardson varieties}\label{subsec: Richardson}

A line bundle $L$ over a normal projective variety $Y$ is said to be \textbf{big} if 
the Iitaka dimension $\kappa(Y,L)$ takes the maximum possible value, i.e.\ the dimension of $Y$ (\cite[Definition 2.2.1]{Laz}). It is equivalent to the inequality
\begin{align*}
\limsup_{m\rightarrow \infty} \frac{h^0(X,L^{\otimes m})}{m^d} >0,
\end{align*}
where $d=\dim_{\C}Y$.
In this subsection, we study big line bundles over Richardson varieties, which come from line bundles over the flag variety. Our main reference is \cite{KLS}. 

Recall from Section \ref{subsec: line bundles on flag} that each weight $\mu$ of $T$ defines a line bundle $L_{\mu}$ over $X_w ^v\subseteq G/B$. In Corollary \ref{c:big line bundles on Richardson}, we give a necessary and sufficient condition for $L_\mu$ to be a big line bundle over $X_w ^v$ under the assumption that $\mu \in P_+$, which is a straightforward consequence of \cite{KLS}. Fix a parabolic subgroup $B \subseteq P \subseteq G$. Let $\mathfrak{S}_P \subset \mathfrak{S}_n$ denote the corresponding parabolic subgroup, and \[\pi_P \colon \mathfrak{S}_n \twoheadrightarrow \mathfrak{S}_n/\mathfrak{S}_P\] the canonical projection onto the set of left cosets. We use the $P$-Bruhat order on $\mathfrak{S}_n$, which is a specific lift of the Bruhat order on $\mathfrak{S}_n/\mathfrak{S}_P$ (see \cite[Ch.\ 2]{BB} and \cite[Ch.\ 4]{Ses} for references on the Bruhat order on $\mathfrak{S}_n/\mathfrak{S}_P$).

\begin{definition}[{see \cite[Sect.\ 2]{KLS}}]\normalfont
The \emph{$P$-Bruhat order} $\le_P$ on $\mathfrak{S}_n$ is defined by: $v \le_P w$ if and only if there is a chain \[v = u_0 \lessdot u_1 \lessdot u_2 \lessdot \cdots \lessdot u_k = w\] such that \[\pi_P (u_0) < \pi_P (u_1) < \pi_P (u_2) < \cdots < \pi_P (u_k)\] in the Bruhat order on $\mathfrak{S}_n/\mathfrak{S}_P$.
\end{definition}

\begin{example}\normalfont
Let $n = 3$, and take a parabolic subgroup $B \subset P \subset G$ such that $\mathfrak{S}_P$ is generated by $s_1$. Since 
\[\pi_P (e) = \pi_P (s_1) < \pi_P (s_2) = \pi_P (s_2 s_1) < \pi_P (s_1 s_2) = \pi_P (s_1 s_2 s_1),\]
the $P$-Bruhat order $\le_P$ on $\mathfrak{S}_3$ is given by the following:
\begin{align*}
&e <_P s_2 <_P s_1 s_2,\\
&s_1 <_P s_2 s_1 <_P s_1 s_2 s_1, \\
&s_1 <_P s_1s_2.
\end{align*}
\end{example}

By abuse of notation, let \[\pi_P \colon G/B \twoheadrightarrow G/P\] denote the canonical projection. For $v, w \in \mathfrak{S}_n$ such that $v \le w$, we set \[\Pi_w ^v \coloneqq \pi_P (X_w ^v) \subseteq G/P.\] This variety $\Pi_w ^v$ is called a \emph{projected Richardson variety}. The projected Richardson variety was studied by Lusztig \cite{Lus} and Rietsch \cite{Rie} in the context of total positivity, and by Goodearl-Yakimov \cite{GY} in the context of Poisson geometry. In order to study big line bundles over $X_w ^v$, we use the following relations (Propositions \ref{p:isomorphism}, \ref{p:birational}) between $X_w ^v$ and $\Pi_w ^v$, which are given in \cite{KLS}.

\begin{proposition}[{see the proof of \cite[Theorem~4.5]{KLS}}]\label{p:isomorphism}
For $v, w \in \mathfrak{S}_n$ such that $v \le w$ and an ample line bundle $L$ over $G/P$, the map \[\pi_P ^\ast \colon H^0 (\Pi_w ^v, L) \rightarrow H^0 (X_w ^v, \pi_P ^\ast L)\] is a $\C$-linear isomorphism.
\end{proposition}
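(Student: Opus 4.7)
The plan is to reduce the claim to the sheaf-theoretic identity $(\pi_P)_\ast \mathcal{O}_{X_w^v} = \mathcal{O}_{\Pi_w^v}$ on $\Pi_w^v$, after which the conclusion follows formally from the projection formula. Explicitly, I would factor the pullback map as
\[
\pi_P^\ast \colon H^0(\Pi_w^v, L) \longrightarrow H^0\bigl(\Pi_w^v,\, L \otimes (\pi_P)_\ast \mathcal{O}_{X_w^v}\bigr) \overset{\sim}{\longrightarrow} H^0(X_w^v, \pi_P^\ast L),
\]
where the right-hand isomorphism combines the Leray identity $H^0(\Pi_w^v, (\pi_P)_\ast \pi_P^\ast L) \cong H^0(X_w^v, \pi_P^\ast L)$ with the projection formula $(\pi_P)_\ast \pi_P^\ast L \cong L \otimes (\pi_P)_\ast \mathcal{O}_{X_w^v}$, valid since $L$ is locally free. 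If $(\pi_P)_\ast \mathcal{O}_{X_w^v} = \mathcal{O}_{\Pi_w^v}$, then the first arrow becomes the identity and $\pi_P^\ast$ is the required isomorphism.

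To establish $(\pi_P)_\ast \mathcal{O}_{X_w^v} = \mathcal{O}_{\Pi_w^v}$, I would proceed as follows. By construction $\pi_P \colon X_w^v \to \Pi_w^v$ is proper and surjective. Using the Bruhat decomposition, each Schubert variety $X_w$ is $P$-saturated over its image in $G/P$, and the fibers of $\pi_P|_{X_w}$ are Schubert subvarieties of the fiber $gP/B \simeq P/B$; likewise $\Omega_v$ meets fibers in dual Schubert varieties. Intersecting, the generic fiber of $\pi_P|_{X_w^v}$ is a genuine Richardson variety inside $P/B$, which is irreducible and in particular connected. Combined with the normality of $\Pi_w^v$ (proved in \cite{KLS}), Zariski's Main Theorem (or equivalently Stein factorization) then forces $(\pi_P)_\ast \mathcal{O}_{X_w^v} = \mathcal{O}_{\Pi_w^v}$.

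The hard part will be the geometric input from \cite{KLS}, namely the normality of the projected Richardson variety $\Pi_w^v$ together with the precise fibered structure of $\pi_P|_{X_w^v}$ in terms of Richardson varieties in $P/B$. Once those facts are granted, the rest of the argument is purely formal sheaf theory. It is worth remarking that ampleness of $L$ plays no role in the proof of the isomorphism itself; what matters is only that $L$ descends from $G/P$. Ampleness will however become essential later, when this isomorphism is combined with vanishing/positivity theorems on $\Pi_w^v$ in order to transport bigness statements back to $X_w^v$ via $\pi_P^\ast L$.
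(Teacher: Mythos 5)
Your skeleton is fine, and it is worth saying up front that the paper itself offers no argument here: Proposition \ref{p:isomorphism} is quoted from the proof of \cite[Theorem~4.5]{KLS}, where it is obtained by Frobenius-splitting techniques that yield the stronger statement $R\pi_{P*}\mathcal{O}_{X_w^v}=\mathcal{O}_{\Pi_w^v}$ together with normality, Cohen--Macaulayness and rational singularities of $\Pi_w^v$. Your reduction --- the projection formula identifies $H^0(X_w^v,\pi_P^*L)$ with $H^0(\Pi_w^v,L\otimes\pi_{P*}\mathcal{O}_{X_w^v})$, and $\pi_{P*}\mathcal{O}_{X_w^v}=\mathcal{O}_{\Pi_w^v}$ follows from Stein factorization once $\pi_P$ is proper surjective with connected generic fibre onto a normal target --- is correct as a scheme of proof, and your observation that ampleness of $L$ plays no role in this route (only that $L$ descends from $G/P$) is accurate. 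Be aware, however, that the normality of $\Pi_w^v$ you import is proved in \cite{KLS} in the same circle of results as the statement you are proving, so what you have is a repackaging of their theorem rather than an independent argument.

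The genuine gap is in your justification that the generic fibre is connected. First, the assertion that ``each Schubert variety $X_w$ is $P$-saturated over its image'' is false: $\pi_P^{-1}(\pi_P(X_w))$ is the Schubert variety indexed by the maximal element of the coset $wW_P$, and it equals $X_w$ only when $w$ is already maximal in its coset. Second, even the corrected fibre description is delicate: over a generic point $gP\in\Pi_w^v$ the fibres of $X_w$ and of $\Omega_v$ inside $gP/B\cong P/B$ are a Schubert variety and an opposite-Schubert-type variety with respect to two Borel subgroups of the Levi that depend on $g$ and need not be opposite, so concluding that their intersection is an irreducible Richardson variety requires showing these Borels are in general position and invoking Richardson's theorem --- this is exactly the nontrivial Lusztig--Rietsch fibration statement recorded in \cite[Sect.\ 3]{KLS}, and your ``intersecting'' sentence is a hand-wave at precisely the hard point. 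Third, even granting that the fibre of the open stratum $\mathring{X}_w^v$ over a generic point is irreducible, you must still rule out extra connected components of the fibre of the closed map supported on the boundary $X_w^v\setminus\mathring{X}_w^v$; this can be repaired (every component of a fibre has dimension at least $\dim_\C X_w^v-\dim_\C\Pi_w^v$, whereas generic fibres of the boundary Richardson varieties have strictly smaller dimension, so every component of the generic fibre meets the open stratum), but no such argument appears in your proposal. With these points supplied --- or, more economically, by quoting $\pi_{P*}\mathcal{O}_{X_w^v}=\mathcal{O}_{\Pi_w^v}$ directly from \cite{KLS} --- your formal projection-formula step does complete the proof.
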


\begin{proposition}[{see \cite[Sect.\ 3]{KLS}}]\label{p:birational}
For $v, w \in \mathfrak{S}_n$ such that $v \le w$, the morphism \[\pi_P \colon X_w ^v \twoheadrightarrow \Pi_w ^v\] is birational if and only if $v \le_{P} w$. In addition, this is equivalent to $\dim_\C (X_w ^v) = \dim_\C (\Pi_w ^v)$.
\end{proposition}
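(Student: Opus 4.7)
The plan is to establish the three-way equivalence via the cycle
$$\pi_P \text{ birational} \;\Longrightarrow\; \dim_\C(X_w^v) = \dim_\C(\Pi_w^v) \;\Longrightarrow\; v \le_P w \;\Longrightarrow\; \pi_P \text{ birational}.$$
The first implication is immediate from the definition of birationality. The main content lies in the remaining two, both of which hinge on identifying $\Pi_w^v$ explicitly as a Richardson variety inside $G/P$ and on the combinatorics of covers under the projection $\pi_P \colon \mathfrak{S}_n \twoheadrightarrow \mathfrak{S}_n/\mathfrak{S}_P$.

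First I would establish the identifications $\pi_P(X_w) = X_{w^{\min}}$ and $\pi_P(\Omega_v) = \Omega_{v^{\min}}$ in $G/P$, where $w^{\min}$ and $v^{\min}$ are the minimal coset representatives of $w\mathfrak{S}_P$ and $v\mathfrak{S}_P$, with factorizations $w = w^{\min} w_P$ and $v = v^{\min} v_P$ satisfying $\ell(w) = \ell(w^{\min}) + \ell(w_P)$ and $\ell(v) = \ell(v^{\min}) + \ell(v_P)$. Using the standard compatibility of the Bruhat decomposition with the fibration $G/B \to G/P$, this identifies the projected Richardson variety as $\Pi_w^v = X_{w^{\min}}^{v^{\min}}$ (noting that $v^{\min} \le w^{\min}$ follows from $v \le w$), whose dimension is $\ell(w^{\min}) - \ell(v^{\min})$. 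The condition $\dim_\C(X_w^v) = \dim_\C(\Pi_w^v)$ thereby translates into the length identity $\ell(w) - \ell(v) = \ell(w^{\min}) - \ell(v^{\min})$, equivalently $\ell(w_P) = \ell(v_P)$.

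Second, I would show that this length identity is equivalent to $v \le_P w$. The key elementary observation is that any cover $u \lessdot u'$ in $\mathfrak{S}_n$ satisfies either $\pi_P(u) = \pi_P(u')$ or $\pi_P(u) \lessdot \pi_P(u')$ in the Bruhat order on $\mathfrak{S}_n/\mathfrak{S}_P$, since covers correspond to multiplication by reflections. Consequently, for any maximal chain of covers from $v$ to $w$, the number of quotient-increasing steps is bounded above by $\ell(w^{\min}) - \ell(v^{\min})$ while the total length equals $\ell(w) - \ell(v)$. Under the length identity the two numbers coincide, forcing every cover in such a chain to be quotient-increasing, which witnesses $v \le_P w$. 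Conversely, any chain realizing $v \le_P w$ has total length $\ell(w) - \ell(v)$ and, by the length-one increase of $\pi_P$ at each step, exactly $\ell(w^{\min}) - \ell(v^{\min})$ steps, so the two lengths must agree.

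Finally, and this is the main obstacle, I would show $v \le_P w \Rightarrow \pi_P$ birational. Dimension equality alone only guarantees generic finiteness, so one must verify generic injectivity. The plan is to examine the fiber of $\pi_P \colon X_w^v \twoheadrightarrow \Pi_w^v$ over a generic point $gP \in \Pi_w^v$: using the $P/B$-bundle structure of $\pi_P \colon G/B \to G/P$ together with the Bruhat decomposition inside the fiber $gP/B \cong P/B$, this fiber identifies with a Richardson variety in $P/B$ with indexing lengths $\ell(w_P)$ and $\ell(v_P)$. The equality $\ell(w_P) = \ell(v_P)$ established above forces this Richardson variety to be zero-dimensional, hence a single reduced point by irreducibility. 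Combined with the projectivity of $\pi_P$ and the irreducibility of $X_w^v$, generic injectivity then yields birationality. The crux is to make the generic-fiber identification rigorous: one should restrict attention to the open Richardson cell $(BwB \cap B^- v B)/B \subseteq X_w^v$, verify that $\pi_P$ maps it onto a dense open subset of $\Pi_w^v$, and check that the product structure of the intersection of Bruhat cells is preserved under restriction, so that the fiber genuinely decomposes as an open Richardson cell inside $P/B$ of the predicted dimension zero.
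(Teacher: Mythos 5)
You cannot be compared against a proof in the paper, because the paper gives none: Proposition~\ref{p:birational} is quoted from \cite[Sect.~3]{KLS}. Your argument therefore has to stand on its own, and it has a genuine gap at its foundation: the identification $\Pi_w^v = X_{w^P}^{v^P}$ (image of the intersection equals intersection of the images) is false. One only has $\Pi_w^v \subseteq \pi_P(X_w)\cap\pi_P(\Omega_v) = X_{w^P}\cap\Omega_{v^P}$, and the inclusion is typically strict; indeed the whole point of \cite{KLS} is that projected Richardson varieties form a strictly larger class than Richardson varieties of $G/P$ (in the Grassmannian case they are the positroid varieties). The paper's own example of the $P$-Bruhat order on $\mathfrak{S}_3$ already defeats your key steps: take $n=3$, $\mathfrak{S}_P=\langle s_1\rangle$, so $G/P\cong \mathrm{Gr}(2,\C^3)$, and $v=s_1$, $w=s_1s_2$. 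Then $X_w^v=\{V_\bullet \mid V_1=\langle e_2\rangle\}\cong\mathbb{P}^1$, and $\pi_P$ maps it isomorphically onto $\{V_2 \mid e_2\in V_2\}$, which is not a Richardson variety of $G/P$; here $w^P=s_1s_2$ and $v^P=e$, so $X_{w^P}^{v^P}=\mathrm{Gr}(2,\C^3)$ has dimension $2$ while $\dim_\C\Pi_w^v=1$. Consequently your dimension formula $\dim_\C\Pi_w^v=\ell(w^P)-\ell(v^P)$, and your translation of $\dim_\C X_w^v=\dim_\C\Pi_w^v$ into $\ell(w_P)=\ell(v_P)$, are both wrong: in this example $\ell(v_P)=1\neq 0=\ell(w_P)$, yet all three conditions of the proposition hold.

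The combinatorial half collapses for the same reason. Your ``key elementary observation'' that a cover $u\lessdot u'$ projects to a cover or an equality in $\mathfrak{S}_n/\mathfrak{S}_P$ is false: with $\mathfrak{S}_P=\langle s_1\rangle$, the cover $s_1\lessdot s_1s_2$ projects from the coset of $e$ to the coset of $s_1s_2$, whose lengths in the quotient differ by $2$; this is precisely the relation $s_1<_P s_1s_2$ listed in the paper's example, so the asserted equivalence between $v\le_P w$ and $\ell(v_P)=\ell(w_P)$ fails in the first direction you need. The generic-fibre analysis in your last step inherits the error: it would predict a generic fibre of dimension $\ell(w_P)-\ell(v_P)$, which is $-1$ in the example above (the actual generic fibre is a point). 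The restriction of the $P/B$-fibration to $X_w^v$ does not have fibres given by the Richardson datum $(v_P,w_P)$, because the relative position of the fibre with respect to the two translated flags varies with the base point. A correct proof really does require the analysis of \cite[Sect.~3]{KLS}, where one shows that every projected open Richardson stratum coincides with one indexed by a $P$-Bruhat comparable pair and studies injectivity of $\pi_P$ on the open strata directly; none of this is recovered by the Schubert-image and length bookkeeping you propose.
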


For $\mu \in P_+$, let $\mathfrak{S}_\mu \subseteq \mathfrak{S}_n$ be the parabolic subgroup generated by 
\[\{s_i \mid 1 \le i \le n-1,\ s_i (\mu) = \mu\}.\] 
The equality $s_i (\mu) = \mu$ is equivalent to the condition that $\mu_i = 0$ when we write $\mu = \sum_{j = 1} ^{n-1} \mu_j \varpi_j$. We denote by $B \subseteq P_\mu \subseteq G$ the unique parabolic subgroup such that $\mathfrak{S}_{P_\mu} = \mathfrak{S}_\mu$. 

\begin{corollary}\label{c:big line bundles on Richardson}
For $\mu \in P_+$ and $v, w \in \mathfrak{S}_n$ such that $v \le w$, the line bundle $L_\mu$ over $X_w ^v$ is big if and only if $v \le_{P_\mu} w$.
\end{corollary}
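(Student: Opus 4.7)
The plan is to reduce the question on $X_w^v \subseteq G/B$ to one on the projected Richardson variety $\Pi_w^v \subseteq G/P_\mu$ via the two propositions already stated, and then to exploit ampleness on $G/P_\mu$.

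First I would observe that, by the very definition of $\mathfrak{S}_\mu$, the dominant weight $\mu$ pairs to zero with every simple coroot of the Levi of $P_\mu$, so $\mu$ extends to a character of $P_\mu$ and defines a line bundle $\bar{L}_\mu$ on $G/P_\mu$ with $L_\mu \cong \pi_{P_\mu}^* \bar{L}_\mu$. Moreover $\mu$ is strictly positive on the simple coroots \emph{not} in the Levi, so $\bar{L}_\mu$ is ample on $G/P_\mu$ (this is the standard characterization of ample bundles on partial flag varieties). Consequently, $\bar{L}_\mu|_{\Pi_w^v}$ is ample on the projective variety $\Pi_w^v$, and the same holds for all positive tensor powers $\bar{L}_\mu^{\otimes m}$.

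Next I would apply Proposition~\ref{p:isomorphism} to $\bar{L}_\mu^{\otimes m}$ for every $m \ge 1$: since $\pi_{P_\mu}^* \bar{L}_\mu^{\otimes m} \cong L_\mu^{\otimes m}$, the pullback induces an isomorphism
\[
H^0(\Pi_w^v,\bar{L}_\mu^{\otimes m}) \xrightarrow{\ \sim\ } H^0(X_w^v,L_\mu^{\otimes m}).
\]
Because $\bar{L}_\mu|_{\Pi_w^v}$ is ample, its asymptotic Riemann--Roch gives
\[
h^0(\Pi_w^v,\bar{L}_\mu^{\otimes m}) \sim C\, m^{\dim_{\C} \Pi_w^v}
\]
for some constant $C>0$ as $m\to\infty$. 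Combining these two facts,
\[
\limsup_{m \to \infty} \frac{h^0(X_w^v, L_\mu^{\otimes m})}{m^{\dim_\C X_w^v}}
\]
is positive if and only if $\dim_{\C} \Pi_w^v = \dim_{\C} X_w^v$.

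Finally, Proposition~\ref{p:birational} characterizes precisely when these two dimensions agree: it holds if and only if $v \le_{P_\mu} w$. This gives the desired equivalence. The one step that needs a little care is the first one, identifying $L_\mu$ with the pullback of an ample bundle on $G/P_\mu$; everything else is a direct application of the two propositions already recorded, together with the standard asymptotic growth of sections of an ample line bundle on a projective variety (implicitly using that $\Pi_w^v$ is projective, which follows from it being a closed subvariety of $G/P_\mu$).
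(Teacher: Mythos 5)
Your proposal is correct and follows essentially the same route as the paper: identify $L_\mu$ with the pullback of the ample line bundle $L_\mu$ on $G/P_\mu$, use Proposition~\ref{p:isomorphism} to match sections on $\Pi_w^v$ and $X_w^v$ in all tensor powers, compare growth rates to reduce bigness to the equality $\dim_\C(X_w^v)=\dim_\C(\Pi_w^v)$, and conclude by Proposition~\ref{p:birational}. The only difference is cosmetic: you spell out the asymptotic growth of sections of an ample bundle, whereas the paper simply invokes the definition of bigness.
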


\begin{proof}
By the definition of $P_\mu$, the line bundle $L_\mu$ over $G/B$ is the pull-back of the ample line bundle $L_\mu$ over $G/P_\mu$ (see \cite[Sect.\ II.4.4]{Jan}). 
Thus $L_\mu$ on $\Pi_w ^v\subseteq G/P_\mu$ is big since it is ample. Since we have \[H^0 (\Pi_w ^v, L_\mu ^{\otimes k}) \simeq H^0 (X_w ^v, L_\mu ^{\otimes k})\] 
for all $k \in \Z_{> 0}$ by Proposition~\ref{p:isomorphism}, this and the definition of big line bundles imply that the line bundle $L_\mu$ over $X_w ^v$ is big if and only if $\dim_\C (X_w ^v) = \dim_\C (\Pi_w ^v)$. Since this is equivalent to $v \le_{P_\mu} w$ by Proposition~\ref{p:birational}, we obtain the assertion.
\end{proof}

\subsection{Hessenberg varieties and Richardson varieties}\label{subsec: Hessenberg and Richardson}

Anderson-Tymoczko \cite{AT} introduced a permutation associated with a Hessenberg function to express the cohomology classes of Hessenberg varieties in terms of Schubert classes. We use a similar but slightly different notation.

\begin{definition}
\normalfont{For a Hessenberg function $h\colon[n]\rightarrow [n]$, we define $\w{h}\in\mathfrak{S}_n$ as follows: let $\w{h}(1)=h(1)$, and take $\w{h}(i)$ to be the $(n+1-h(i))$-th largest element of $[n]\setminus\{\w{h}(1),\ldots,\w{h}(i-1)\}$.}
\end{definition}

For example, if $n=5$ and $h=(3,4,4,5,5)$ as in Figure \ref{pic:stair-shape}, then $\w{h}= 3\ 4\ 2\ 5\ 1$ in one-line notation. The positions of $1$'s of the permutation matrix associated with $\w{h}$ are depicted as the dots in Figure \ref{pic:wh}.
\begin{figure}[h]
\[
{\unitlength 0.1in%
\begin{picture}(50.0000,10.0000)(13.0000,-18.0000)%
%
\special{pn 8}%
\special{pa 2200 800}%
\special{pa 2400 800}%
\special{pa 2400 1000}%
\special{pa 2200 1000}%
\special{pa 2200 800}%
\special{pa 2400 800}%
\special{fp}%
%
\special{pn 8}%
\special{pa 2400 800}%
\special{pa 2600 800}%
\special{pa 2600 1000}%
\special{pa 2400 1000}%
\special{pa 2400 800}%
\special{pa 2600 800}%
\special{fp}%
%
\special{pn 8}%
\special{pa 2600 800}%
\special{pa 2800 800}%
\special{pa 2800 1000}%
\special{pa 2600 1000}%
\special{pa 2600 800}%
\special{pa 2800 800}%
\special{fp}%
%
\special{pn 8}%
\special{pa 2800 800}%
\special{pa 3000 800}%
\special{pa 3000 1000}%
\special{pa 2800 1000}%
\special{pa 2800 800}%
\special{pa 3000 800}%
\special{fp}%
%
\special{pn 8}%
\special{pa 3000 800}%
\special{pa 3200 800}%
\special{pa 3200 1000}%
\special{pa 3000 1000}%
\special{pa 3000 800}%
\special{pa 3200 800}%
\special{fp}%
%
\special{pn 8}%
\special{pa 2200 1000}%
\special{pa 2400 1000}%
\special{pa 2400 1200}%
\special{pa 2200 1200}%
\special{pa 2200 1000}%
\special{pa 2400 1000}%
\special{fp}%
%
\special{pn 8}%
\special{pa 2400 1000}%
\special{pa 2600 1000}%
\special{pa 2600 1200}%
\special{pa 2400 1200}%
\special{pa 2400 1000}%
\special{pa 2600 1000}%
\special{fp}%
%
\special{pn 8}%
\special{pa 2600 1000}%
\special{pa 2800 1000}%
\special{pa 2800 1200}%
\special{pa 2600 1200}%
\special{pa 2600 1000}%
\special{pa 2800 1000}%
\special{fp}%
%
\special{pn 8}%
\special{pa 2800 1000}%
\special{pa 3000 1000}%
\special{pa 3000 1200}%
\special{pa 2800 1200}%
\special{pa 2800 1000}%
\special{pa 3000 1000}%
\special{fp}%
%
\special{pn 8}%
\special{pa 3000 1000}%
\special{pa 3200 1000}%
\special{pa 3200 1200}%
\special{pa 3000 1200}%
\special{pa 3000 1000}%
\special{pa 3200 1000}%
\special{fp}%
%
\special{pn 8}%
\special{pa 2200 1200}%
\special{pa 2400 1200}%
\special{pa 2400 1400}%
\special{pa 2200 1400}%
\special{pa 2200 1200}%
\special{pa 2400 1200}%
\special{fp}%
%
\special{pn 8}%
\special{pa 2400 1200}%
\special{pa 2600 1200}%
\special{pa 2600 1400}%
\special{pa 2400 1400}%
\special{pa 2400 1200}%
\special{pa 2600 1200}%
\special{fp}%
%
\special{pn 8}%
\special{pa 2600 1200}%
\special{pa 2800 1200}%
\special{pa 2800 1400}%
\special{pa 2600 1400}%
\special{pa 2600 1200}%
\special{pa 2800 1200}%
\special{fp}%
%
\special{pn 8}%
\special{pa 2800 1200}%
\special{pa 3000 1200}%
\special{pa 3000 1400}%
\special{pa 2800 1400}%
\special{pa 2800 1200}%
\special{pa 3000 1200}%
\special{fp}%
%
\special{pn 8}%
\special{pa 3000 1200}%
\special{pa 3200 1200}%
\special{pa 3200 1400}%
\special{pa 3000 1400}%
\special{pa 3000 1200}%
\special{pa 3200 1200}%
\special{fp}%
%
\special{pn 8}%
\special{pa 2200 1400}%
\special{pa 2400 1400}%
\special{pa 2400 1600}%
\special{pa 2200 1600}%
\special{pa 2200 1400}%
\special{pa 2400 1400}%
\special{fp}%
%
\special{pn 8}%
\special{pa 2400 1400}%
\special{pa 2600 1400}%
\special{pa 2600 1600}%
\special{pa 2400 1600}%
\special{pa 2400 1400}%
\special{pa 2600 1400}%
\special{fp}%
%
\special{pn 8}%
\special{pa 2600 1400}%
\special{pa 2800 1400}%
\special{pa 2800 1600}%
\special{pa 2600 1600}%
\special{pa 2600 1400}%
\special{pa 2800 1400}%
\special{fp}%
%
\special{pn 8}%
\special{pa 2800 1400}%
\special{pa 3000 1400}%
\special{pa 3000 1600}%
\special{pa 2800 1600}%
\special{pa 2800 1400}%
\special{pa 3000 1400}%
\special{fp}%
%
\special{pn 8}%
\special{pa 3000 1400}%
\special{pa 3200 1400}%
\special{pa 3200 1600}%
\special{pa 3000 1600}%
\special{pa 3000 1400}%
\special{pa 3200 1400}%
\special{fp}%
%
\special{pn 8}%
\special{pa 2200 1600}%
\special{pa 2400 1600}%
\special{pa 2400 1800}%
\special{pa 2200 1800}%
\special{pa 2200 1600}%
\special{pa 2400 1600}%
\special{fp}%
%
\special{pn 8}%
\special{pa 2400 1600}%
\special{pa 2600 1600}%
\special{pa 2600 1800}%
\special{pa 2400 1800}%
\special{pa 2400 1600}%
\special{pa 2600 1600}%
\special{fp}%
%
\special{pn 8}%
\special{pa 2600 1600}%
\special{pa 2800 1600}%
\special{pa 2800 1800}%
\special{pa 2600 1800}%
\special{pa 2600 1600}%
\special{pa 2800 1600}%
\special{fp}%
%
\special{pn 8}%
\special{pa 2800 1600}%
\special{pa 3000 1600}%
\special{pa 3000 1800}%
\special{pa 2800 1800}%
\special{pa 2800 1600}%
\special{pa 3000 1600}%
\special{fp}%
%
\special{pn 8}%
\special{pa 3000 1600}%
\special{pa 3200 1600}%
\special{pa 3200 1800}%
\special{pa 3000 1800}%
\special{pa 3000 1600}%
\special{pa 3200 1600}%
\special{fp}%
%
\special{pn 20}%
\special{pa 2200 800}%
\special{pa 2200 1400}%
\special{fp}%
\special{pa 2200 1400}%
\special{pa 2400 1400}%
\special{fp}%
\special{pa 2400 1400}%
\special{pa 2400 1600}%
\special{fp}%
\special{pa 2400 1600}%
\special{pa 2800 1600}%
\special{fp}%
\special{pa 2800 1600}%
\special{pa 2800 1800}%
\special{fp}%
\special{pa 2800 1800}%
\special{pa 3200 1800}%
\special{fp}%
%
\special{sh 1.000}%
\special{ia 2300 1300 28 28 0.0000000 6.2831853}%
\special{pn 8}%
\special{ar 2300 1300 28 28 0.0000000 6.2831853}%
%
\special{sh 1.000}%
\special{ia 2500 1500 28 28 0.0000000 6.2831853}%
\special{pn 8}%
\special{ar 2500 1500 28 28 0.0000000 6.2831853}%
%
\special{sh 1.000}%
\special{ia 2700 1100 28 28 0.0000000 6.2831853}%
\special{pn 8}%
\special{ar 2700 1100 28 28 0.0000000 6.2831853}%
%
\special{sh 1.000}%
\special{ia 2900 1700 28 28 0.0000000 6.2831853}%
\special{pn 8}%
\special{ar 2900 1700 28 28 0.0000000 6.2831853}%
%
\special{sh 1.000}%
\special{ia 3100 900 28 28 0.0000000 6.2831853}%
\special{pn 8}%
\special{ar 3100 900 28 28 0.0000000 6.2831853}%
\put(40.0000,-18.0000){\makebox(0,0)[lb]{$\begin{pmatrix} 0&0&0&0&1 \\ 0&0&1&0&0 \\ 1&0&0&0&0 \\ 0&1&0&0&0 \\ 0&0&0&1&0 \end{pmatrix}$}}%
%
\special{pn 8}%
\special{pa 1300 800}%
\special{pa 6300 800}%
\special{pa 6300 1800}%
\special{pa 1300 1800}%
\special{pa 1300 800}%
\special{ip}%
\end{picture}}%
\]
\caption{The positions of $1$'s of the permutation matrix associated with $\w{h}$ for $h=(3,4,4,5,5)$.}
\label{pic:wh}
\end{figure}
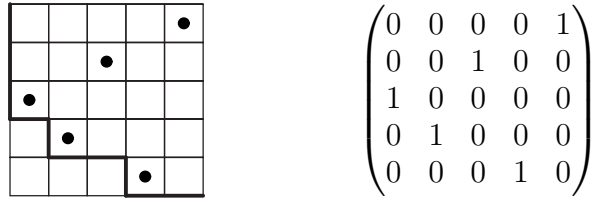
\begin{remark}\label{rem: w(h)}
\normalfont{The permutation $w(h)\coloneqq (w_0\w{h})^{-1}$ is precisely the one which was considered in \cite{AT}.}
\end{remark}

Let $[\Hess{S}{h}]\in H^*(Fl(\C^n))$ be the cohomology class of $\Hess{S}{h}$. We have the following formula\footnote{In \cite{AT}, it was described in terms of the permutation $w(h)$ which is explained in Remark \ref{rem: w(h)}.} for $[\Hess{S}{h}]$ in terms of products of Schubert classes due to \cite[Corollary 3.3 and equation (14)]{AT}: \begin{align}\label{eq: additional 450}
[\Hess{S}{h}] = \sum_{\substack{u\in \mathfrak{S}_n; \\ \ell (u)+\ell(\w{h})=\ell(u\w{h}) }} [\Omega_{u}][\Omega_{w_0u\w{h}}].
\end{align}
Using this formula, we deduce a sufficient condition for the anti-canonical bundle $L_{\xi_h}$ of $\Hess{S}{h}$ to be big when it is assumed to be nef.

\begin{proposition}\label{cor: using Richardson 20}
Assume that $\xi_h \in P_+$, that is, $L_{\xi_h}$ is a nef line bundle over $G/B$. If there exists $u \in \mathfrak{S}_n$ such that \begin{equation}\label{eq:key conditions ver2}
\begin{aligned}
\ell(u) + \ell (\w{h}) = \ell(u\w{h}), \quad
u \le_{P_{\xi_h}} u\w{h},
\end{aligned}
\end{equation}
then $L_{\xi_h}$ is a big line bundle over $\Hess{S}{h}$.
\end{proposition}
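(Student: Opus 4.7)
The plan is to reduce bigness of $L_{\xi_h}$ on $\Hess{S}{h}$ to a positivity statement for its top self-intersection number, and then evaluate that intersection term by term using \eqref{eq: additional 450} together with Corollary \ref{c:big line bundles on Richardson}. Since $\xi_h \in P_+$ ensures $L_{\xi_h}$ is nef on $\Hess{S}{h}$ by Lemma \ref{lem:3.4}(2), and since for a nef line bundle on an irreducible projective variety bigness is equivalent to strict positivity of the top self-intersection (e.g.\ \cite[Theorem~2.2.16]{Laz}), it suffices to show that
\[ \int_{\Hess{S}{h}} c_1(L_{\xi_h})^{\ell(\w{h})} > 0. \]

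The first step is to reinterpret the summands of \eqref{eq: additional 450} as classes of Richardson varieties. Using the standard identification $[\Omega_{w_0 w}] = [X_w]$ in $H^\ast (G/B)$, together with the fact that when $v \le w$ the Schubert and dual Schubert varieties meet properly with $[X_w][\Omega_v] = [X_w^v]$, each summand becomes
\[ [\Omega_u][\Omega_{w_0 u \w{h}}] \;=\; [\Omega_u][X_{u\w{h}}] \;=\; [X_{u\w{h}}^u], \]
where the length-additivity condition $\ell(u) + \ell(\w{h}) = \ell(u\w{h})$ from \eqref{eq: additional 450} ensures $u \le u\w{h}$, so that $X_{u\w{h}}^u$ is a non-empty Richardson variety of dimension exactly $\ell(\w{h})$. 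Pulling the top intersection computation over to $G/B$ then gives
\[ \int_{\Hess{S}{h}} c_1(L_{\xi_h})^{\ell(\w{h})} \;=\; \sum_{u} \int_{X_{u\w{h}}^u} c_1 \bigl( L_{\xi_h}|_{X_{u\w{h}}^u} \bigr)^{\ell(\w{h})}, \]
the sum being taken over $u \in \mathfrak{S}_n$ satisfying $\ell(u) + \ell(\w{h}) = \ell(u\w{h})$.

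Each summand is non-negative because $L_{\xi_h}$ restricts to a nef line bundle on every subvariety. Applying once more the equivalence ``nef $+$ positive top self-intersection $\Longleftrightarrow$ big'' on the (irreducible) Richardson variety $X_{u\w{h}}^u$, Corollary \ref{c:big line bundles on Richardson} identifies the $u$-th summand as strictly positive precisely when $u \le_{P_{\xi_h}} u\w{h}$. The hypothesis \eqref{eq:key conditions ver2} supplies at least one such $u$, so at least one summand is strictly positive while all others are non-negative, hence the total is strictly positive and $L_{\xi_h}$ is big on $\Hess{S}{h}$. The main subtlety to verify carefully is the cohomological identity $[\Omega_u][\Omega_{w_0 u \w{h}}] = [X_{u\w{h}}^u]$, which rests on the proper-intersection property of Schubert and opposite Schubert varieties and on the length-additive hypothesis forcing $u \le u\w{h}$.
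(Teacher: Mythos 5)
Your proposal is correct and follows essentially the same route as the paper: reduce bigness to positivity of the top self-intersection via \cite[Theorem~2.2.16]{Laz}, expand that intersection number against the Anderson--Tymoczko class \eqref{eq: additional 450}, and apply Corollary~\ref{c:big line bundles on Richardson} to get strict positivity of the summand indexed by the hypothesized $u$. The only (harmless) variation is that you convert every summand into an integral over a Richardson variety $X_{u\w{h}}^{u}$ by noting that length-additivity already forces $u\le u\w{h}$, whereas the paper only does this for the distinguished $u$ and handles the remaining summands by expanding $[\Omega_{u}][\Omega_{w_0u\w{h}}]$ into non-negative combinations of dual Schubert classes via Kleiman transversality; both give the required non-negativity.
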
 

\begin{proof}
According to \cite[Theorem~2.2.16]{Laz}, it suffices to prove that
\begin{align*}
\int_{\Hess{S}{h}} c_1(L_{\xi_h})^d >0, 
\end{align*}
where $d=\dim_{\C} \Hess{S}{h}$.
By multiplying the class $[\Hess{S}{h}]\in H^*(Fl(\C^n))$, we may express the integral on $\Hess{S}{h}$ as an integral on $Fl(\C^n)$:
\begin{align}\notag
\int_{\Hess{S}{h}} c_1(L_{\xi_h})^d 
= \int_{Fl(\C^n)} c_1(L_{\xi_h})^d [\Hess{S}{h}]. 
\end{align}
Combining this with \eqref{eq: additional 450}, we obtain
\begin{align}\label{eq: additional 460}
\int_{\Hess{S}{h}} c_1(L_{\xi_h})^d 
= \sum_{\substack{u\in \mathfrak{S}_n; \\ \ell (u)+\ell(\w{h})=\ell(u\w{h}) }} \int_{Fl(\C^n)} c_1(L_{\xi_h})^d [\Omega_{u}][\Omega_{w_0u\w{h}}].
\end{align}
We claim that each summand in the right-hand side is non-negative.
This is because we may expand the product $[\Omega_{u}][\Omega_{w_0u\w{h}}]$ as a non-negative sum of the (dual) Schubert classes by Kleiman's transversality theorem (\cite[Sect.\ 1.3]{Bri}): 
 \begin{align*}
[\Omega_{u}][\Omega_{w_0u\w{h}}] 
= \sum_{v\in\mathfrak{S}_n} c_v [\Omega_{v}] \qquad (c_v\ge0).
\end{align*}
Hence each integral in the right-hand side of \eqref{eq: additional 460} is expressed as
\begin{align*}
\int_{Fl(\C^n)} c_1(L_{\xi_h})^d [\Omega_{u}][\Omega_{w_0u\w{h}}] 
&= \sum_{v\in\mathfrak{S}_n} c_v \int_{Fl(\C^n)} c_1(L_{\xi_h})^d  [\Omega_{v}] \\ \label{eq: additional 470}
&= \sum_{v\in\mathfrak{S}_n} c_v \int_{\Omega_{v}} c_1(L_{\xi_h})^d.
\end{align*}
Since $L_{\xi_h}$ is nef on $\Omega_{v}$, this is a non-negative integer, as claimed above. Thus it suffices to find a permutation $u\in\mathfrak{S}_n$ in \eqref{eq: additional 460} such that $\int_{Fl(\C^n)} c_1(L_{\xi_h})^d [\Omega_{u}][\Omega_{w_0u\w{h}}]>0$.

Now, take $u\in \mathfrak{S}_n$ which satisfies the assumption \eqref{eq:key conditions ver2}. Then
the integral
\[
\int_{Fl(\C^n)} c_1(L_{\xi_h})^d [\Omega_{u}][\Omega_{w_0u\w{h}}]
\]
appears as a summand in \eqref{eq: additional 460}. 
Note that the second condition of \eqref{eq:key conditions ver2} implies that $u \le u\w{h}$ in the Bruhat order.
Since 
$
[\Omega_{w_0u\w{h}}] = [X_{u\w{h}}]
$ 
by \cite[Lemma~3 in Sect.\ 10.2]{Ful}, we have
\begin{align*}
[\Omega_{u}][\Omega_{w_0u\w{h}}] 
= [\Omega_{u}][X_{u\w{h}}] = [X_{u\w{h}} ^{u}], 
\end{align*}
where the second equality follows from $u \le u\w{h}$ and \cite[Sect.\ 1.3]{Bri}. Hence it follows that
 \begin{align*}
\int_{Fl(\C^n)} c_1(L_{\xi_h})^d [\Omega_{u}][\Omega_{w_0u\w{h}}] = \int_{X_{u\w{h}} ^{u}} c_1(L_{\xi_h}) ^{d},
\end{align*}
where we note that $d = \dim_\C (\Hess{S}{h}) = \dim_\C (X_{u\w{h}} ^{u})$. 
Since $u \le_{P_{\xi_h}} u\w{h}$, we see by Corollary \ref{c:big line bundles on Richardson} that $L_{\xi_h}$ on $X_{u\w{h}} ^{u}$ is nef and big, which implies that \[\int_{X_{u\w{h}} ^{u}} c_1(L_{\xi_h}) ^{d} > 0\]
by \cite[Theorem~2.2.16]{Laz}.
From this and the argument above, it follows that \[\int_{\Hess{S}{h}} c_1(L_{\xi_h}) ^{d} > 0.\] 
\end{proof}

Let $\mathfrak{S}_P \subset \mathfrak{S}_n$ be a parabolic subgroup as in Section \ref{subsec: Richardson}.
Note that for $w\in \mathfrak{S}_n$, there is a unique factorization 
\[
w = w^P w_P
\]
with $w^P\in \mathfrak{S}^P$ and $w_P\in \mathfrak{S}_P$, where $\mathfrak{S}^P$ is the set of minimal length representatives for $\mathfrak{S}_n/\mathfrak{S}_P$ (cf.\ \cite[Sect.\ 1.2]{Bri}). 
In the next section, we will use the following claim to find the desired $u\in \mathfrak{S}_n$ in the previous proposition.

\begin{lemma}\label{cor: using Richardson 30}
If $u\in\mathfrak{S}_P$ and $u_P = (u\w{h})_P$, then we have $u \le_P u \w{h}$.
\end{lemma}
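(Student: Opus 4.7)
The plan is to verify $u \leq_P u\w{h}$ directly from the definition of the $P$-Bruhat order by constructing an explicit saturated chain. Since $u \in \mathfrak{S}_P$ and $(u\w{h})_P = u_P = u$, the parabolic decomposition of $u\w{h}$ reads $u\w{h} = v \cdot u$ with $v \coloneqq (u\w{h})^P \in \mathfrak{S}^P$, and this decomposition is length-additive, so $\ell(u\w{h}) = \ell(v) + \ell(u)$.

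The key input is the existence of a saturated chain
\[
e = a_0 \lessdot a_1 \lessdot \cdots \lessdot a_{\ell(v)} = v
\]
in the Bruhat order on $\mathfrak{S}_n$ with every $a_j \in \mathfrak{S}^P$. This follows from the standard fact that the Bruhat order on the parabolic quotient $\mathfrak{S}_n/\mathfrak{S}_P$ (identified with $\mathfrak{S}^P$) is a graded poset ranked by Coxeter length (cf.\ \cite[Ch.~2]{BB}), which for length reasons forces covers between elements of $\mathfrak{S}^P$ to coincide with covers in $\mathfrak{S}_n$. Right-multiplying the chain by $u$ then produces
\[
u = a_0 u \lessdot a_1 u \lessdot \cdots \lessdot a_{\ell(v)} u = v u = u\w{h}.
\]
Indeed, length-additivity of the parabolic decomposition gives $\ell(a_j u) = \ell(a_j) + \ell(u)$, and if $a_{j+1} = t_j a_j$ expresses the cover $a_j \lessdot a_{j+1}$ via a reflection $t_j$, then $a_{j+1} u = t_j (a_j u)$, which together with the length computation shows that each step remains a Bruhat cover in $\mathfrak{S}_n$. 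Finally, since $u \in \mathfrak{S}_P$ we have $\pi_P(a_j u) = a_j u \mathfrak{S}_P = a_j \mathfrak{S}_P$, represented by $a_j \in \mathfrak{S}^P$, so the projections strictly increase in $\mathfrak{S}_n/\mathfrak{S}_P$, which is exactly what $u \leq_P u\w{h}$ requires.

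The hard part is the first step: producing a saturated Bruhat chain from $e$ to $v$ that stays inside $\mathfrak{S}^P$. The naive idea of using prefixes of a reduced expression of $v$ fails in general, because a prefix of a reduced word for $v \in \mathfrak{S}^P$ need not itself lie in $\mathfrak{S}^P$ (for instance, $v = s_2 s_3 \in \mathfrak{S}_4$ lies in $\mathfrak{S}^P$ for $\mathfrak{S}_P = \langle s_2 \rangle$, but the prefix $s_2$ does not). One must therefore genuinely appeal to the gradedness of Bruhat order on $\mathfrak{S}^P$ rather than merely pick a reduced expression of $v$.
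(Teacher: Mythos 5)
Your proof is correct, and it reaches the chain in a genuinely different way from the paper. The paper obtains a chain from $e$ to $(u\w{h})^P$ by first invoking \cite[Proposition~2.5]{KLS} to get $e \le_{P} (u\w{h})^P$ (a $P$-Bruhat chain with strictly increasing projections) and then proving by an induction on lengths of the projections that every element of that chain actually lies in $\mathfrak{S}^P$; you instead bypass the KLS input entirely and appeal to the classical chain property (gradedness by length) of the Bruhat order on the parabolic quotient, cf.\ \cite[Ch.\ 2]{BB}, to produce a saturated chain from $e$ to $v=(u\w{h})^P$ that stays inside $\mathfrak{S}^P$ by construction. After that both arguments coincide in outline: right-multiply the chain by $u=u_P$ and observe that the cosets $a_j\mathfrak{S}_P$ strictly increase. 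You additionally spell out why right multiplication preserves Bruhat covers (length-additivity of the parabolic factorization plus the reflection characterization of covers), a point the paper asserts without comment; this makes your version self-contained modulo standard Coxeter combinatorics, whereas the paper's version leans on the $P$-Bruhat machinery of Knutson--Lam--Speyer that is already in play for the rest of Section 4. Your remark that prefixes of a reduced word for $v\in\mathfrak{S}^P$ need not lie in $\mathfrak{S}^P$ correctly identifies why the gradedness of the quotient (or some substitute, such as the paper's induction) is genuinely needed rather than a naive reduced-expression argument.
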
 

\begin{proof}
Since we have $e\le (u\w{h})^P$ and $(u\w{h})^P\in\mathfrak{S}^P$, we obtain
$e\le_P (u\w{h})^P$ by \cite[Proposition~2.5]{KLS}.
Hence there exists a chain 
\[e = u_0 \lessdot u_1 \lessdot u_2 \lessdot \cdots \lessdot u_k = (u\w{h})^P
\] 
of permutations $u_0,\ldots ,u_k\in\mathfrak{S}_n$ such that 
\[
\pi_P (u_0) < \pi_P (u_1) < \pi_P (u_2) < \cdots < \pi_P (u_k).
\] 
It follows that $u_i \in \mathfrak{S}^P$ for all $0 \le i \le k$ by induction on $i$.
We prove this as follows.
Since $u_0=e \in \mathfrak{S}^P$, we have $\ell(\pi_P (u_0)) = \ell(u_0)$, and hence we obtain that
\[\ell(\pi_P (u_1)) -\ell(\pi_P (u_0)) \le \ell (u_1) -\ell(u_0) = 1.\]
Since $\pi_P (u_0) < \pi_P (u_1)$, it also follows that $\ell(\pi_P (u_1)) -\ell(\pi_P (u_0)) \ge 1$, and hence that $\ell(\pi_P (u_1)) -\ell(\pi_P (u_0)) = 1$. Thus we obtain $\ell(\pi_P (u_1)) = \ell(u_1)$ by $\ell(\pi_P (u_0)) = \ell(u_0)$, and this means that $u_1 \in \mathfrak{S}^P$. Continuing this argument, we have $u_i \in \mathfrak{S}^P$ for all $0 \le i \le k$.
From these, it follows that 
\begin{align*}
&u_P= u_0 u_P \lessdot u_1 u_P \lessdot u_2 u_P \lessdot \cdots \lessdot u_k u_P = (u\w{h})^P u_P, \\ 
&\pi_P (u_0 u_P) < \pi_P (u_1 u_P) < \pi_P (u_2 u_P) < \cdots < \pi_P (u_k u_P).
\end{align*}
The left-most permutation is $u$ by the assumption $u\in\mathfrak{S}_P$, and the right-most permutation is $u\w{h}$ by the assumption $u_P = (u\w{h})_P$.
Thus we have proved $u \le_P u \w{h}$. 
\end{proof}

\bigskip
\section{Weak Fano Hessenberg  varieties}\label{sec: proof of Thm B}

In this section, we prove Theorem~B which is stated in Section \ref{sec: intro}. 
We first prepare some notations and lemmas in Sections \ref{subsec: preliminary notations} and \ref{subsec: similar shapes}. A proof of Theorem B is given in Section \ref{subsect: Proof of Thm B}. To exhibit our argument, we provide a pair of running examples for $n=20$ and $n=19$ in Section \ref{subsec: Example}, which we will refer repeatedly.
Throughout this section, we always assume that $h$ is nef, that is, we assume that
\begin{align*}
h(i)-h(i+1)+2-h^*(n+1-i)+h^*(n-i) \ge 0
\end{align*} 
for all $1\le i\le n-1$.

\subsection{Preliminary notations}\label{subsec: preliminary notations}
Let $h\colon[n]\rightarrow [n]$ be a nef Hessenberg function satisfying the assumption \eqref{eq: assumption on h}, that is, $h(i)\geq i+1$ for $1\le i<n$. The weight $\xi_h \in P_+$ of the anti-canonical bundle $L_{\xi_h}$ of $\Hess{S}{h}$ defines a parabolic subgroup $\mathfrak{S}_{P_{\xi_h}}\subseteq \mathfrak{S}_n$ as in Section \ref{subsec: Richardson}. This subgroup is generated by the simple transpositions $s_{i}$ satisfying $s_i(\xi_h)=\xi_h$, that is, $d_i=0$ when we write $\xi_h=\sum_{i=1}^{n-1}d_i\varpi_i$. Let us describe this more explicitly in what follows.
For $1 \le i \le n-1$ such that $s_i (\xi_h) = \xi_h$, we set
\begin{align*}
&\rb{i} \coloneqq \max\{k \ge 0 \mid s_i (\xi_h) = s_{i +1} (\xi_h) = \cdots = s_{i +k} (\xi_h) = \xi_h\},\\
&\lb{i} \coloneqq \max\{k \ge 0 \mid s_{i-k} (\xi_h) = \cdots = s_{i -1} (\xi_h) = s_i (\xi_h) = \xi_h\}, \\
&J_i \coloneqq \{i-\lb{i}, \ldots, i-1, i, i+1, \ldots, i +\rb{i} +1\}.
\end{align*} 
Noticing that $J_{i-\lb{i}}=\cdots=J_i=\cdots=J_{i +\rb{i}}$, let $J\coloneqq \{J_i \mid 1\le i\le n-1, \ s_i(\xi_h)=\xi_h\}$. For example, $J=\{J_9, J_{19}\}$ in the running example in Section \ref{subsec: Example}.
The parabolic subgroup $\mathfrak{S}_{P_{\xi_h}}$ is now given by $\prod_{J_i\in J} \mathfrak{S}_{J_i} \subseteq \mathfrak{S}_n$, where each $\mathfrak{S}_{J_i}$ is the permutation group on $J_i$ which is regarded as a subgroup of $\mathfrak{S}_n$ in the natural way. 
For simplicity, we denote this parabolic subgroup by $\mathfrak{S}_J$, that is,
\begin{align*}
\mathfrak{S}_J = \prod_{J_i\in J} \mathfrak{S}_{J_i} = \mathfrak{S}_{P_{\xi_h}} \subseteq \mathfrak{S}_n.
\end{align*} 
When $J=\emptyset$, we mean that $\mathfrak{S}_J$ is the trivial subgroup consisting of the identity element. For example, if $n=7$ and $h=(2,4,5,6,7,7,7)$, then $\xi_h=0\varpi_1+0\varpi_2+\varpi_3+0\varpi_4+\varpi_5+\varpi_6$ so that $\mathfrak{S}_J\cong \mathfrak{S}_3\times \mathfrak{S}_2$. When we indicate the dependence of $J_i$ and $J$ on the Hessenberg function $h$, we will also denote them by $J_i(h)$ and $J(h)$, respectively.

Recall from Section \ref{subsec: Hessenberg and Richardson} that for a permutation $w\in\mathfrak{S}_n$, there is a unique factorization 
\[
w = w^J w_J
\]
with $w^J\in\mathfrak{S}^J$ and $w_J\in\mathfrak{S}_J$, where $\mathfrak{S}^J$ is the set of minimal length representatives for $\mathfrak{S}_n/\mathfrak{S}_J$. 
Note that $w_J$ encodes the order of the numbers of $w$ on each $J_i$ in one-line notation. More specifically, for $v,w\in\mathfrak{S}_n$, the equality $v_J=w_J$ is equivalent to the condition that the following statement holds for each 
$J_i\in J$: 
\begin{align*}
\text{$v (j_1) < v (j_2)$ if and only if $w (j_1) < w (j_2)$ for $j_1, j_2 \in J_i$.}
\end{align*} 

For $2\le i\le n$, we say that $h$ is \textbf{stable at} $i$ if 
\begin{align*}
h(i)=h(i-1).
\end{align*} 
For example, $h$ is stable at $i=3$ in the running example in Section \ref{subsec: Example}.

For $1 \le i \le n-1$, we consider the following two conditions 
\begin{equation}\label{eq:def of L 10}
\begin{split}
&\{j \ge i \mid h(j +1) = h(j) +2\} = \emptyset \text{ or } \\
&\min\{j \ge i \mid h(j +1) = h(j)\} < \min\{j \ge i \mid h(j +1) = h(j) +2\},
\end{split}
\end{equation}
and 
\begin{equation}\label{eq:def of L 20}
\begin{split}
&\{j \ge i \mid h(j +1) = h(j) +2\} \neq \emptyset \text{ and} \\
&\min\{j \ge i \mid h(j +1) = h(j)\} > \min\{j \ge i \mid h(j +1) = h(j) +2\}.
\end{split}
\end{equation}
Noticing that either \eqref{eq:def of L 10} or \eqref{eq:def of L 20} holds for all $1\le i\le n-1$, we set
\begin{align*}
i^{(+)} \coloneqq 
\begin{cases}
i \qquad &(\text{if $i$ satisfies \eqref{eq:def of L 10}}), \\
\w{h}^{-1} (h(\hat{i})+1) &(\text{if $i$ satisfies \eqref{eq:def of L 20}}),
\end{cases}
\end{align*}
for $1\le i\le n-1$, where $\hat{i} \coloneqq \min\{j \ge i \mid h(j +1) = h(j) +2\}$. See Figure \ref{pic:def of Li}. Note that $i\le i^{(+)}\le n-1$ since $n=\w{h}^{-1}(1)$. It is obvious that the $(+)$-operation will be trivial after repeating it on $i$ sufficiently many times $i \mapsto i^{(+)} \mapsto (i^{(+)})^{(+)} \mapsto \cdots$, and we denote by $i^{(+\infty)}(\le n-1)$ the limit of this sequence.

For $1\le i\le n-1$, we define 
\begin{align}\label{eq: def of Li}
\LL{i} \coloneqq \text{min}\{j\ge i^{(+\infty)} \mid h(j)=h(j+1)\}+1,
\end{align}
where the set appearing in the right-hand side is non-empty since we have $h(n-1)=h(n)$. For example,  $\LL{4}=6$ and $\LL{6}=15$ in the running example of $n=20$ in Section \ref{subsec: Example}. When we indicate the dependence of these operations on the Hessenberg function $h$, we will also denote them by $i^{(+)_h}$ and $\LL{i}(h)$, respectively.
\begin{figure}[htbp]
{\unitlength 0.1in%
\begin{picture}(56.0000,26.9000)(19.0000,-34.5000)%
%
\special{sh 1.000}%
\special{ia 5300 1700 28 28 0.0000000 6.2831853}%
\special{pn 8}%
\special{ar 5300 1700 28 28 0.0000000 6.2831853}%
%
\special{pn 8}%
\special{pa 4700 800}%
\special{pa 4700 3200}%
\special{dt 0.045}%
\put(46.8000,-34.0000){\makebox(0,0)[lb]{$i$}}%
%
\special{sh 1.000}%
\special{ia 6100 2500 28 28 0.0000000 6.2831853}%
\special{pn 8}%
\special{ar 6100 2500 28 28 0.0000000 6.2831853}%
\put(62.4000,-34.2000){\makebox(0,0)[lb]{$i^{(+)}$}}%
%
\special{pn 8}%
\special{pa 6000 2392}%
\special{pa 6000 2592}%
\special{fp}%
\special{pa 6000 2592}%
\special{pa 6400 2592}%
\special{fp}%
\put(56.0000,-20.4000){\makebox(0,0)[lb]{$\ddots$}}%
%
\special{pn 8}%
\special{pa 5000 1400}%
\special{pa 5200 1400}%
\special{fp}%
\special{pa 5200 1400}%
\special{pa 5200 1800}%
\special{fp}%
\special{pa 5200 1800}%
\special{pa 5400 1800}%
\special{fp}%
%
\special{pn 8}%
\special{pa 6400 2592}%
\special{pa 6400 2792}%
\special{fp}%
%
\special{pn 8}%
\special{pa 6300 800}%
\special{pa 6300 3200}%
\special{dt 0.045}%
%
\special{pn 8}%
\special{pa 1900 760}%
\special{pa 7500 760}%
\special{pa 7500 3450}%
\special{pa 1900 3450}%
\special{pa 1900 760}%
\special{ip}%
%
\special{pn 8}%
\special{pa 5000 1400}%
\special{pa 5000 1200}%
\special{fp}%
\special{pa 5000 1200}%
\special{pa 4800 1200}%
\special{fp}%
%
\special{pn 8}%
\special{pa 4600 1500}%
\special{pa 7200 1500}%
\special{dt 0.045}%
%
\special{sh 1.000}%
\special{ia 6300 1500 28 28 0.0000000 6.2831853}%
\special{pn 8}%
\special{ar 6300 1500 28 28 0.0000000 6.2831853}%
%
\special{pn 8}%
\special{pa 2300 800}%
\special{pa 2300 3200}%
\special{dt 0.045}%
\put(21.2000,-34.0000){\makebox(0,0)[lb]{$i=i^{(+)}$}}%
\put(30.4000,-34.2000){\makebox(0,0)[lb]{$\LL{i}$}}%
%
\special{pn 8}%
\special{pa 2400 1800}%
\special{pa 2400 1600}%
\special{fp}%
\special{pa 2400 1600}%
\special{pa 2200 1600}%
\special{fp}%
%
\special{pn 8}%
\special{pa 2400 1800}%
\special{pa 2600 1800}%
\special{fp}%
\special{pa 2600 1800}%
\special{pa 2600 2000}%
\special{fp}%
\special{pa 2600 2000}%
\special{pa 2800 2000}%
\special{fp}%
\special{pa 2800 2000}%
\special{pa 2800 2200}%
\special{fp}%
\special{pa 2800 2200}%
\special{pa 3200 2200}%
\special{fp}%
%
\special{pn 8}%
\special{pa 3100 800}%
\special{pa 3100 3200}%
\special{dt 0.045}%
%
\special{pn 8}%
\special{pa 6400 2790}%
\special{pa 6600 2790}%
\special{fp}%
\special{pa 6600 2790}%
\special{pa 6600 2990}%
\special{fp}%
\special{pa 6600 2990}%
\special{pa 7000 2990}%
\special{fp}%
%
\special{pn 8}%
\special{pa 6900 800}%
\special{pa 6900 3200}%
\special{dt 0.045}%
\put(68.4000,-34.2000){\makebox(0,0)[lb]{$\LL{i}$}}%
%
\special{sh 1.000}%
\special{ia 2900 2100 28 28 0.0000000 6.2831853}%
\special{pn 8}%
\special{ar 2900 2100 28 28 0.0000000 6.2831853}%
%
\special{sh 1.000}%
\special{ia 2700 1900 28 28 0.0000000 6.2831853}%
\special{pn 8}%
\special{ar 2700 1900 28 28 0.0000000 6.2831853}%
%
\special{sh 1.000}%
\special{ia 2500 1700 28 28 0.0000000 6.2831853}%
\special{pn 8}%
\special{ar 2500 1700 28 28 0.0000000 6.2831853}%
%
\special{sh 1.000}%
\special{ia 5100 1300 28 28 0.0000000 6.2831853}%
\special{pn 8}%
\special{ar 5100 1300 28 28 0.0000000 6.2831853}%
%
\special{sh 1.000}%
\special{ia 6500 2700 28 28 0.0000000 6.2831853}%
\special{pn 8}%
\special{ar 6500 2700 28 28 0.0000000 6.2831853}%
%
\special{sh 1.000}%
\special{ia 6700 2900 28 28 0.0000000 6.2831853}%
\special{pn 8}%
\special{ar 6700 2900 28 28 0.0000000 6.2831853}%
\put(39.0000,-21.4000){\makebox(0,0)[lb]{or}}%
%
\special{pn 8}%
\special{pa 5100 800}%
\special{pa 5100 3200}%
\special{dt 0.045}%
\put(50.8000,-34.0000){\makebox(0,0)[lb]{$\hat{i}$}}%
%
\special{pn 8}%
\special{pa 4800 1200}%
\special{pa 4800 1200}%
\special{fp}%
%
\special{pn 8}%
\special{pa 4800 1200}%
\special{pa 4800 1000}%
\special{fp}%
\special{pa 4800 1000}%
\special{pa 4600 1000}%
\special{fp}%
%
\special{sh 1.000}%
\special{ia 4900 1100 28 28 0.0000000 6.2831853}%
\special{pn 8}%
\special{ar 4900 1100 28 28 0.0000000 6.2831853}%
\end{picture}}%
\caption{$i^{(+)}$ and $\LL{i}$.}
\label{pic:def of Li}
\end{figure}
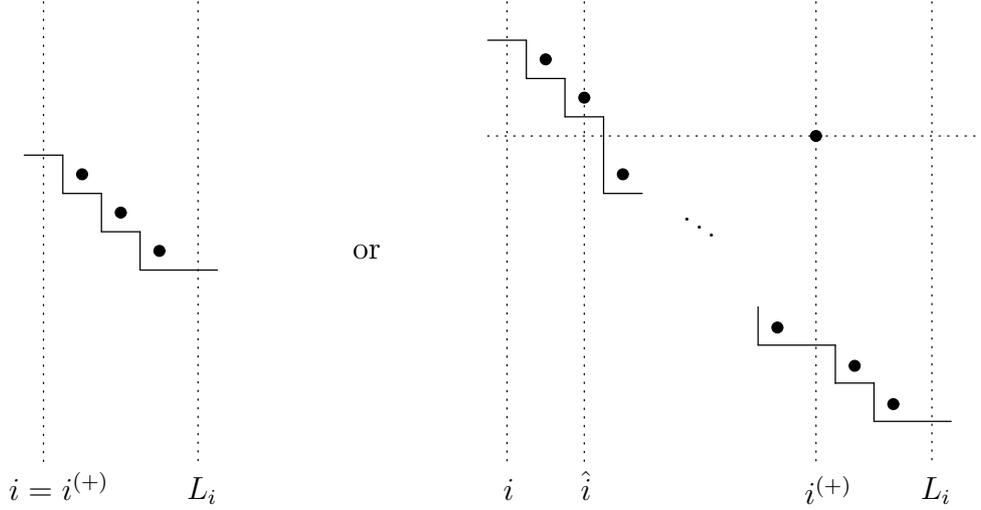

\begin{lemma}\label{lem: wh and L}
Let $h\colon[n]\rightarrow[n]$ be a nef Hessenberg function.
For $1 \le i \le n-1$ and $i +1 \le j < \LL{i} $, we have
\begin{align*}
\w{h} (i) < \w{h} (j).
\end{align*}
\end{lemma}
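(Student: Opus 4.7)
The plan is to induct on the number $d(i)$ of iterations of the map $i \mapsto i^{(+)}$ required to reach $i^{(+\infty)}$. Two elementary properties of $\w{h}$ will be used throughout:
(a) $\w{h}(k) \le h(k)$ for every $k$, since for any $A \subseteq [n]$ the $(n+1-h(k))$-th largest element of $A$ is at most the $(n+1-h(k))$-th largest of $[n]$, which equals $h(k)$; and
(b) if $h(k+1) > h(k)$, then $\w{h}(k+1) > \w{h}(k)$, by a direct rank count after deleting $\w{h}(k)$ from the available pool.

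For the base case $d(i)=0$, i.e.\ $i = i^{(+\infty)}$, the definition of $\LL{i}$ forces $h(j+1) > h(j)$ for every $j \in [i, \LL{i}-2]$, so (b) yields the strictly increasing chain $\w{h}(i) < \w{h}(i+1) < \cdots < \w{h}(\LL{i}-1)$ and the conclusion is immediate. For the inductive step $d(i) \ge 1$, the index $i$ satisfies \eqref{eq:def of L 20}, so $h(j+1) > h(j)$ for all $j \in [i, \hat{i}-1]$ and $h(\hat{i}+1) = h(\hat{i}) + 2$; property (b) applied iteratively then handles the range $j \in [i+1, \hat{i}]$. At the other end, applying the induction hypothesis to $i^{(+)}$ (noting $\LL{i^{(+)}} = \LL{i}$ and $d(i^{(+)}) = d(i)-1$) together with (a) gives $\w{h}(j) > \w{h}(i^{(+)}) = h(\hat{i}) + 1 > h(i) \ge \w{h}(i)$ for $j \in [i^{(+)}+1, \LL{i}-1]$. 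Everything then reduces to the middle range $[\hat{i}+1, i^{(+)}]$.

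This middle range is the main obstacle and the technical heart of the argument. To handle it I would track the rank of the value $h(\hat{i})+1$ inside the available pool across successive steps. By property (a), every value chosen up through step $\hat{i}$ lies in $\{1, \ldots, h(\hat{i})\}$, so at step $\hat{i}+1$ the block $\{h(\hat{i})+1, \ldots, n\}$ is intact in the pool and $h(\hat{i})+1$ sits at rank $n - h(\hat{i})$ from the top. Comparing this running rank with the target rank $n+1-h(k)$, I would prove by induction on $k \in [\hat{i}+1, i^{(+)}]$ that $h(k) \ge h(\hat{i}) + (k-\hat{i})$, with equality precisely at $k = i^{(+)}$; the nontrivial point is ruling out a \emph{pick strictly below $h(\hat{i})+1$} scenario, which is impossible because the strict inequality at step $k-1$ combined with the monotonicity $h(k) \ge h(k-1)$ forces $h(k) \ge h(\hat{i}) + (k-\hat{i})$. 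Simultaneously this shows that on $[\hat{i}+1, i^{(+)}-1]$ one always picks an element strictly above $h(\hat{i})+1$, while at step $i^{(+)}$ one picks $h(\hat{i})+1$ itself. In either case $\w{h}(j) \ge h(\hat{i})+1 > h(i) \ge \w{h}(i)$, closing the middle range and completing the induction.
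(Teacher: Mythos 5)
Your proof is correct and follows essentially the same route as the paper: iterate the $(+)$-operation (your induction on $d(i)$ is the paper's ``continuing this argument''), treat the stretch from $i$ up to $i^{(+)}$ as the key step, and finish at $i^{(+\infty)}$, where $h$ increases strictly up to $\LL{i}-1$. Your rank-tracking argument for the middle range $[\hat{i}+1,i^{(+)}]$ is a correct, more detailed justification of the paper's inequality \eqref{eq: prop of L 10}, which the paper asserts as clear from the definition of $i^{(+)}$ and the maximality of the values of $\w{h}$.
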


\begin{proof}
We first consider the case $i=i^{(+)}$.
In this case, we have 
\begin{align*}
h(k)=h(k-1)+1 \quad (i+1\le k< \LL{i}),
\end{align*}
which means that $\w{h}(k)=h(k)$ for $i+1\le k< \LL{i}$ by the definition of $\w{h}$. Taking this equality in the case $k=j$, we obtain
\begin{align*}
\w{h}(i)\le h(i)\le h(j)=\w{h}(j),
\end{align*}
where the first equality follows from the definition of $\w{h}$. Since $i\neq j$, we obtain the desired claim in this case.

We next consider the case $i<i^{(+)}$. In this case, it is clear that 
\begin{align}\label{eq: prop of L 10}
\w{h} (i) < \w{h} (k) \quad (i+1\le k\le i^{(+)})
\end{align}
by the definition of $i^{(+)}$, and the maximality of $\w{h}(k)$. Let us prove that we can extend the range of $k$ as
\begin{align}\label{eq: prop of L 20}
\w{h} (i) < \w{h} (k) \quad (i+1\le k\le (i^{(+)})^{(+)}).
\end{align}
We take cases.
If $i^{(+)}=(i^{(+)})^{(+)}$, then \eqref{eq: prop of L 20} is the same as \eqref{eq: prop of L 10}.
If $i^{(+)}<(i^{(+)})^{(+)}$, then we have
\begin{align}\label{eq: prop of L 30}
\w{h} (i^{(+)}) < \w{h} (k) \quad (i^{(+)}+1\le k\le (i^{(+)})^{(+)})
\end{align}
as we obtained \eqref{eq: prop of L 10}.
Combining \eqref{eq: prop of L 10} and \eqref{eq: prop of L 30}, we obtain \eqref{eq: prop of L 20} in this case as well. 
By continuing this argument, we see that 
\begin{align}\label{eq: prop of L 40}
\w{h} (i) < \w{h} (k) \quad (i+1\le k\le i^{(+\infty)}).
\end{align}
Hence we assume $i^{(+\infty)}+1\le j$ in the following.
Then, since $(i^{(+\infty)})^{(+)}=i^{(+\infty)}$, 
the same argument as in the case $i=i^{(+)}$ implies that $\w{h} (i^{(+\infty)})<\w{h}(j)$. Combining this with \eqref{eq: prop of L 40}, we obtain $\w{h} (i) < \w{h} (j)$.
\end{proof}

\begin{lemma}\label{l:connected_component_containing_1}
If $s_1 (\xi_h) = \xi_h$, then 
\[h(1) < h(2) < \cdots < h(\rb{1} +2).\] 
In particular, $\w{h} (k) = h(k)$ for $k\in J_1$.
\end{lemma}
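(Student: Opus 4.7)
The plan is to first rewrite the coefficients $d_i$ of $\xi_h$ (from Lemma~\ref{lemm: new}) in a more combinatorial form, then run an induction on position. Directly from the definition of the transpose, $h^*(n+1-i) = |\{k \in [n] : h(k) \ge i\}|$ and $h^*(n-i) = |\{k \in [n] : h(k) \ge i+1\}|$, so
\[d_i = h(i) - h(i+1) + 2 - |\{k \in [n] : h(k) = i\}|.\]
By the definition of $\rb{1}$, the hypothesis $s_1(\xi_h) = \xi_h$ gives $d_1 = d_2 = \cdots = d_{\rb{1}+1} = 0$, so the task reduces to controlling the jumps $h(j+1) - h(j)$ for $1 \le j \le \rb{1}+1$.

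The core step is an induction on $j$ showing $h(j+1) - h(j) \in \{1, 2\}$. For the base case $j=1$, the Hessenberg inequality $h(k) \ge k+1$ for $k<n$ together with $h(n) = n$ forces $h(k) \ge 2$ for every $k$, so $|\{k : h(k) = 1\}| = 0$ and $d_1 = 0$ gives $h(2) = h(1) + 2$. For the inductive step with $j \ge 2$, the earlier cases provide strict monotonicity $h(1) < h(2) < \cdots < h(j)$, so $h$ is injective on $\{1,\ldots,j\}$; combined with $h(k) \ge h(j) \ge j+1 > j$ for $k \ge j$, this forces $\{k : h(k) = j\} \subseteq \{1,\ldots,j-1\}$ with cardinality at most $1$. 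Then $d_j = 0$ yields $h(j+1) - h(j) = 2 - |\{k : h(k) = j\}| \in \{1, 2\}$, closing the induction and producing the strict chain $h(1) < h(2) < \cdots < h(\rb{1}+2)$ on $J_1$.

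For the \emph{in particular} assertion, a second induction on $k$ shows $\w{h}(k) = h(k)$ for $k \in J_1$. Given $\w{h}(i) = h(i)$ for $i < k$, the strict increase just established implies that the elements of $[n] \setminus \{\w{h}(1),\ldots,\w{h}(k-1)\} = [n] \setminus \{h(1),\ldots,h(k-1)\}$ which are $\ge h(k)$ are precisely $\{h(k), h(k)+1, \ldots, n\}$, a set of cardinality $n+1-h(k)$; therefore $h(k)$ is the $(n+1-h(k))$-th largest element of that complement, which matches the definition of $\w{h}(k)$.

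I expect the principal obstacle to be the interplay in the inductive step between the strict monotonicity coming from the induction hypothesis and the Hessenberg inequality $h(k) \ge k+1$, which together pin down the count $|\{k : h(k) = j\}| \le 1$; once this bookkeeping is in place, the remaining arguments (including the auxiliary induction identifying $\w{h}$ with $h$ on $J_1$) are routine.
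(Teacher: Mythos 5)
Your proof is correct. The identity you start from, namely $h^*(n+1-i)-h^*(n-i)=|\{k\in[n]\mid h(k)=i\}|$, so that $d_i=h(i)-h(i+1)+2-|\{k\mid h(k)=i\}|$, is a valid reformulation of Lemma \ref{lemm: new}, and your forward induction is airtight: the fiber over $j$ avoids all indices $k\ge j$ because $h(k)\ge h(j)\ge j+1$ (using \eqref{eq: assumption on h}, legitimate since $j\le \rb{1}+1\le n-1$), and meets $\{1,\dots,j-1\}$ in at most one point by the strict increase already established, so $d_j=0$ forces $h(j+1)>h(j)$. The paper argues differently in organization though with the same ingredients: it assumes a stall $h(q)=h(q+1)$ with $q\le \rb{1}+1$, uses $d_q=0$ to get $h^*(n+1-q)=h^*(n-q)+2$, and then runs a descent $q\mapsto q'=n+1-h^*(n+1-q)<q$ (using that $h^*$ also satisfies \eqref{eq: assumption on h}) which stays inside the zero-stretch, bottoming out at $h^*(n)=h^*(n-1)+2$, i.e.\ $h(1)=h(2)=1$, contradicting the Hessenberg condition. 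So the paper does a contradiction-plus-descent through the transpose, while you do a direct bottom-up induction after trading the $h^*$ difference for a fiber count; your version is arguably more transparent and makes the extra information $h(2)=h(1)+2$ explicit, whereas the paper's descent isolates exactly where a stall would propagate to. You also spell out the ``in particular'' statement ($\w{h}(k)=h(k)$ on $J_1$) by induction, which the paper treats as immediate from the strict increase; your verification of it is correct.
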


\begin{proof}
Suppose that there exists $1\leq q\leq \rb{1} +1$ such that 
$h(q)=h(q+1)$.
Then the definition of $k_1$ implies that $s_q (\xi_h) = \xi_h$. Hence we have
\begin{align}\label{eq: additional 320}
h^*(n+1-q)=h^*(n-q)+2.
\end{align}
This in fact implies 
\begin{align*}
h^*(n)=h^*(n-1)+2
\end{align*}
as follows.
If $q=1$, then the claim is obvious. 
If not, then let $q'\coloneqq n+1-h^*(n+1-q)$. We then have $h(q')=h(q'+1)$ by \eqref{eq: additional 320}, and $1\le q'<q$ by
\[
q' = n+1-h^*(n+1-q) < n+1-(n+1-q) = q.
\]
This means that $q'\le \rb{1} +1$, and hence we have 
\begin{align*}
h^*(n+1-q')=h^*(n-q')+2
\end{align*}
as above.
By continuing this argument, it follows that 
$h^*(n)=h^*(n-1)+2$, as claimed above. However, this implies that $h(1)=h(2)=1$, which contradicts the definition of a Hessenberg function.
\end{proof}

\subsection{Principle of similar shapes}\label{subsec: similar shapes}
For each $1\le i\le n$, let 
\begin{align*}
\DD{i}\coloneqq n-h^*(n+1-i).
\end{align*} 
This measures the horizontal distance between the left-side wall and the boundary of $h$ on the $i$-th row; see Figure \ref{pic:def of D(i)}.
For example, $\DD{11}=3$ and $\DD{13}=6$ in the running example for $n=20$ in Section \ref{subsec: Example}.
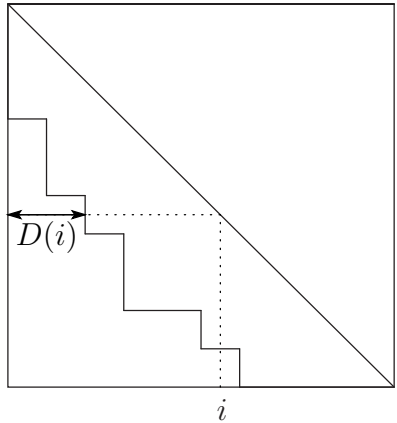
\begin{figure}[htbp]
{\unitlength 0.1in%
\begin{picture}(24.0000,24.0000)(30.0000,-42.7000)%
%
\special{pn 8}%
\special{pa 3200 2000}%
\special{pa 5200 4000}%
\special{fp}%
%
\special{pn 8}%
\special{pa 3200 2000}%
\special{pa 5200 2000}%
\special{pa 5200 4000}%
\special{pa 3200 4000}%
\special{pa 3200 2000}%
\special{pa 5200 2000}%
\special{fp}%
%
\special{pn 8}%
\special{pa 3200 2000}%
\special{pa 3200 2600}%
\special{fp}%
\special{pa 3200 2600}%
\special{pa 3400 2600}%
\special{fp}%
\special{pa 3400 2600}%
\special{pa 3400 3000}%
\special{fp}%
\special{pa 3400 3000}%
\special{pa 3600 3000}%
\special{fp}%
\special{pa 3600 3000}%
\special{pa 3600 3200}%
\special{fp}%
\special{pa 3600 3200}%
\special{pa 3800 3200}%
\special{fp}%
\special{pa 3800 3200}%
\special{pa 3800 3600}%
\special{fp}%
\special{pa 3800 3600}%
\special{pa 4200 3600}%
\special{fp}%
\special{pa 4200 3600}%
\special{pa 4200 3800}%
\special{fp}%
\special{pa 4200 3800}%
\special{pa 4400 3800}%
\special{fp}%
\special{pa 4400 3800}%
\special{pa 4400 4000}%
\special{fp}%
\special{pa 4400 4000}%
\special{pa 5200 4000}%
\special{fp}%
\put(42.8000,-41.7000){\makebox(0,0)[lb]{$i$}}%
%
\special{pn 8}%
\special{pa 3200 3100}%
\special{pa 3600 3100}%
\special{fp}%
\special{sh 1}%
\special{pa 3600 3100}%
\special{pa 3533 3080}%
\special{pa 3547 3100}%
\special{pa 3533 3120}%
\special{pa 3600 3100}%
\special{fp}%
\special{pa 3600 3100}%
\special{pa 3200 3100}%
\special{fp}%
\special{sh 1}%
\special{pa 3200 3100}%
\special{pa 3267 3120}%
\special{pa 3253 3100}%
\special{pa 3267 3080}%
\special{pa 3200 3100}%
\special{fp}%
\put(32.4000,-33.0000){\makebox(0,0)[lb]{$\DD{i}$}}%
%
\special{pn 8}%
\special{pa 3000 1870}%
\special{pa 5400 1870}%
\special{pa 5400 4270}%
\special{pa 3000 4270}%
\special{pa 3000 1870}%
\special{ip}%
%
\special{pn 8}%
\special{pa 4300 4000}%
\special{pa 4300 3100}%
\special{dt 0.045}%
\special{pa 4300 3100}%
\special{pa 3200 3100}%
\special{dt 0.045}%
\end{picture}}%
\caption{The pictorial meaning of $\DD{i}$.}
\label{pic:def of D(i)}
\end{figure}

\begin{lemma}\label{lem: addition D and wh}
$\DD{i}<\w{h}^{-1}(j)$ for $1\le i\le j\le n$.
\end{lemma}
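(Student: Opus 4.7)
The plan is to reduce the claim to a direct comparison by unpacking the two definitions. First, I would rewrite $\DD{i}$ combinatorially. Since $h^*(n+1-i) = |\{k \in [n] \mid h(k) \ge i\}|$ by the definition of the transpose, we have
\[
\DD{i} = n - h^*(n+1-i) = |\{k \in [n] \mid h(k) < i\}|.
\]
Because $h$ is nondecreasing, this set is an initial segment $\{1, 2, \ldots, \DD{i}\}$ of $[n]$.

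Next, I would establish the auxiliary bound $\w{h}(k) \le h(k)$ for every $1 \le k \le n$. This comes directly from the recursive recipe for $\w{h}$: the element $\w{h}(k)$ is the $(n+1-h(k))$-th largest, equivalently the $(h(k)-k+1)$-th smallest, element of the $(n-k+1)$-element set $[n] \setminus \{\w{h}(1), \ldots, \w{h}(k-1)\}$. Since at most $k-1$ of the $h(k)$ integers in $\{1, \ldots, h(k)\}$ can have been used among $\w{h}(1), \ldots, \w{h}(k-1)$, at least $h(k)-(k-1) = h(k)-k+1$ elements of $\{1,\ldots,h(k)\}$ remain in the allowed set. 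Hence the $(h(k)-k+1)$-th smallest remaining element is itself $\le h(k)$, giving $\w{h}(k) \le h(k)$.

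Combining these two observations finishes the argument. For any index $k$ with $1 \le k \le \DD{i}$ we have $\w{h}(k) \le h(k) < i \le j$, so in particular $\w{h}(k) \neq j$. Therefore $j$ does not appear among the first $\DD{i}$ values of $\w{h}$, which forces $\w{h}^{-1}(j) \ge \DD{i}+1$, i.e., $\DD{i} < \w{h}^{-1}(j)$, as desired.

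I do not expect any real obstacle here; the statement is essentially a bookkeeping exercise once $\DD{i}$ is interpreted as counting rows that have fallen off the left side of $h$ and once the elementary inequality $\w{h}(k) \le h(k)$ is noted. The only point that deserves care is the pigeonhole step in the bound $\w{h}(k) \le h(k)$, namely that removing $k-1$ elements from $[n]$ can delete at most $k-1$ of the integers $\{1,\ldots,h(k)\}$, but this is immediate.
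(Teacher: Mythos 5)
Your proof is correct and follows essentially the same route as the paper: interpret $\DD{i}$ as the number of indices $k$ with $h(k)<i$, note $\w{h}(l)\le h(l)<i\le j$ for $l\le \DD{i}$, and conclude $j$ cannot occur among the first $\DD{i}$ values of $\w{h}$. The only difference is that you spell out the pigeonhole justification of $\w{h}(k)\le h(k)$, which the paper treats as immediate from the definition of $\w{h}$.
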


\begin{proof}
If $\DD{i}=0$, then the claim is obvious. Thus we may assume $\DD{i}\ge 1$.
It suffices to show that we have $\w{h}(l)<i$ for all $1\le l\le \DD{i}$. 
Suppose that $1\le l\le \DD{i}$.
Since we are assuming $\DD{i}\ge 1$, it is clear that we have $h(l)<i$. This implies that $\w{h}(l)\le h(l)<i$, as desired.
\end{proof}

Suppose that $s_i(\xi_h)=\xi_h$. Then we have 
\begin{align*}
h(i)-h(i+1)+2-h^*(n+1-i)+h^*(n-i) = 0, 
\end{align*} 
which is equivalent to
\begin{align}\label{eq: vanishing coefficient}
h(i)-h(i+1)+2+\DD{i}-\DD{i+1} = 0.
\end{align} 
This condition and Lemma~\ref{cor: nef condition} impose a strong restriction on the shape of $h$ as we observe in what follows.
We first consider the following relation among positions of $1\le i<n$ for which we have $h(i+1)=h(i)$.

\begin{lemma}\label{lem: principle of similar shapes 1}
Let $h\colon[n]\rightarrow[n]$ be a nef Hessenberg function, and suppose that $s_i(\xi_h)=\xi_h$.
If $h(i+1)=h(i)$, then $h(k+2)=h(k+1)$, where $k=\DD{i}$.
\end{lemma}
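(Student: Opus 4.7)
The plan is to translate the hypotheses into a single numerical identity among the quantities $\DD{i}$ and then read off the conclusion from the monotonicity of $h$.

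First, I would unpack the hypothesis $s_i(\xi_h) = \xi_h$. By Lemma~\ref{lemm: new}, this says exactly that the coefficient $d_i$ of $\varpi_i$ in $\xi_h$ vanishes, i.e.\ the expression $h(i)-h(i+1)+2-h^*(n+1-i)+h^*(n-i)$ equals $0$. Rewriting this via \eqref{eq: vanishing coefficient} and applying the additional assumption $h(i+1)=h(i)$ collapses the whole relation to the single equation $\DD{i+1}-\DD{i}=2$.

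Second, I would give a combinatorial interpretation of this difference. Since $h^*(n+1-i)=|\{k\in[n]\mid i\le h(k)\}|$ by the definition of the transpose, one has the cleaner identity $\DD{i}=|\{k\in[n]\mid h(k)<i\}|$. Consequently $\DD{i+1}-\DD{i}=|\{k\in[n]\mid h(k)=i\}|$, so the previous step is exactly the statement that precisely two columns take the value $i$ under $h$.

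Finally, I would invoke monotonicity of $h$. Because $h$ is weakly increasing, the set $\{k\mid h(k)=i\}$ is a contiguous interval whose minimum equals $\DD{i}+1=k+1$. Since this interval has cardinality two, it must be exactly $\{k+1,k+2\}$, and therefore $h(k+1)=h(k+2)=i$, which gives the desired equality $h(k+2)=h(k+1)$.

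I do not foresee a real obstacle: once the vanishing of $d_i$ is recognized as the content of \eqref{eq: vanishing coefficient} and $\DD{i}$ is rewritten as a column count, the lemma is an immediate consequence of the monotonicity of Hessenberg functions. The nef hypothesis enters only through the identity $d_i=0$, and Lemma~\ref{cor: nef condition} is not needed here.
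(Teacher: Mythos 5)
Your proof is correct and follows essentially the same route as the paper: both derive $\DD{i+1}-\DD{i}=2$ from the vanishing of $d_i$ together with $h(i+1)=h(i)$, and then conclude from the meaning of $\DD{i}$ as the count $|\{k\mid h(k)<i\}|$. The only difference is that where the paper appeals to the ``pictorial meaning'' of $\DD{i}$, you spell out the counting and monotonicity argument explicitly, which is a fair and complete justification of the same step.
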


\begin{proof}
If $h(i+1)=h(i)$, then we have $\DD{i+1}-\DD{i} = 2$ since $s_i(\xi_h)=\xi_h$. This means that $h(k+2)=h(k+1)$ by the pictorial meaning of $k=\DD{i}$. 
\end{proof}

As the converse of Lemma 5.4, we obtain the following.
\begin{lemma}
Let $h \colon [n] \rightarrow [n]$ be a nef Hessenberg function, and $k = D(i)$ for some $1 \le i \le n$. If $k \ge 1$ and $h(k+2) = h(k+1)$, then we have
\[
{\rm (1)}\ D(i+1) = D(i) +2,\ {\it or}\ {\rm (2)}\ D(i+2) = D(i+1) +2=D(i) +2.
\]
In case {\rm (1)}, if $s_i (\xi_h) = \xi_h$ in addition, then $h(i+1) = h(i)$. In case {\rm (2)}, if $s_{i+1} (\xi_h) = \xi_h$ in addition, then $h(i+2) = h(i+1)$.
\end{lemma}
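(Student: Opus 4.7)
The plan is to first unpack $\DD{i}$ combinatorially. Since $h^*(n+1-i)$ counts the indices $l$ with $h(l)\ge i$, the identity $\DD{i}=n-h^*(n+1-i)$ rewrites as $\DD{i}=\max\{l\in\{0,1,\ldots,n\}\mid h(l)<i\}$ (with the convention $h(0)\coloneqq 0$). In particular, the hypothesis $k=\DD{i}$ with $k\ge 1$ is equivalent to the two‐sided bound $h(k)<i\le h(k+1)$.

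The crucial step is to show that the additional hypothesis $h(k+2)=h(k+1)$ forces $h(k+1)\in\{i,i+1\}$. Set $j\coloneqq h(k+1)=h(k+2)$; then for every $l$ with $i\le l\le j$ one has $\DD{l}=k$, because $h(k)<i\le l$ while $h(k+1)=j\ge l$. Translating via $h^*(n+1-l)=n-\DD{l}$ yields
\[
h^*(n+1-j)=h^*(n+2-j)=\cdots =h^*(n+1-i)=n-k.
\]
If $j\ge i+2$, these are at least three consecutive equal values of $h^*$; since $k\ge 1$, the common value $n-k$ is strictly less than $n$, so Lemma~\ref{cor: nef condition}(2) applied to any two successive equal entries below $n$ forces a strict increase at the third position, a contradiction. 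Hence $j\in\{i,i+1\}$.

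Next I split into the two alternatives. If $j=i$, then $h(k+1)=h(k+2)=i$; invoking Lemma~\ref{cor: nef condition}(1) (with the boundary case $k+2=n$ handled by the trivial bound $\DD{i+1}\le n$) yields $h(k+3)>i$, whence $\DD{i+1}=k+2=\DD{i}+2$, which is Case~(1). If $j=i+1$, then $h(k+1)=h(k+2)=i+1$, so $\DD{i+1}=k$ (since $h(k+1)>i$), and applying the same nef argument one row higher gives $\DD{i+2}=k+2=\DD{i+1}+2=\DD{i}+2$, which is Case~(2).

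Finally, the subsidiary assertions follow from formula~\eqref{eq: vanishing coefficient}, namely $d_i=h(i)-h(i+1)+2+\DD{i}-\DD{i+1}$. In Case~(1) under the assumption $s_i(\xi_h)=\xi_h$, i.e.\ $d_i=0$, substituting $\DD{i+1}=\DD{i}+2$ immediately gives $h(i+1)=h(i)$; analogously, in Case~(2) under $s_{i+1}(\xi_h)=\xi_h$, the vanishing $d_{i+1}=0$ combined with $\DD{i+2}=\DD{i+1}+2$ yields $h(i+2)=h(i+1)$. The main obstacle is the restriction $h(k+1)\le i+1$: this is where the nef hypothesis is pushed onto the transpose through the three-consecutive-equalities obstruction in Lemma~\ref{cor: nef condition}(2); once this is established, the remaining deductions are routine bookkeeping with the formula for $d_i$ and the combinatorial description of $\DD{i}$.
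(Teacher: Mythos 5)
Your proof is correct and takes essentially the same route as the paper's (very terse) argument: your combinatorial unpacking of $D(i)$ is exactly the paper's ``pictorial meaning'', the restriction $h(k+1)\in\{i,i+1\}$ via the no-three-consecutive-equal-values consequence of nefness is the paper's appeal to Lemma~\ref{cor: nef condition}\,(2) (you additionally invoke part (1) of that lemma to pin down the exact equalities, a step the paper leaves implicit in the picture), and the final deductions from $d_i=0$, resp.\ $d_{i+1}=0$, are precisely the paper's use of \eqref{eq: vanishing coefficient}. Nothing essential differs.
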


\begin{proof}
The former claim follows by the pictorial meaning of $\DD{i}$ and Lemma~\ref{cor: nef condition} (2).
The latter claim is a direct consequence of \eqref{eq: vanishing coefficient}.
\end{proof}

We next consider the following relation among positions of $1\le i<n$ for which we have $h(i+1)=h(i)+2$.

\begin{lemma}\label{lem: principle of similar shapes 2}
Let $h\colon[n]\rightarrow[n]$ be a nef Hessenberg function, and suppose that $s_i(\xi_h)=\xi_h$.
If $\DD{i}\ge1$ and $h(i+1)=h(i)+2$, then $h(k+1)=h(k)+2$, where $k=\DD{i}$.
\end{lemma}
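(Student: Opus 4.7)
The plan is to translate the condition $s_i(\xi_h)=\xi_h$ into the numerical identity \eqref{eq: vanishing coefficient},
\[
h(i)-h(i+1)+2+\DD{i}-\DD{i+1}=0,
\]
and combine it with the hypothesis $h(i+1)=h(i)+2$. The $h$-terms cancel and force $\DD{i+1}=\DD{i}=k$.

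Next, I would exploit the column description of $\DD{\cdot}$: unpacking the definition of $h^*$ shows that
\[
\DD{i}=\min\{j\in[n]\mid h(j)\ge i\}-1.
\]
Applying this to both $i$ and $i+1$, the equalities $\DD{i}=\DD{i+1}=k$ give $\min\{j\mid h(j)\ge i\}=\min\{j\mid h(j)\ge i+1\}=k+1$. Since $k\ge1$ by hypothesis, we obtain $h(k)<i$ and $h(k+1)\ge i+1$, hence $h(k+1)-h(k)\ge 2$.

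The reverse inequality comes for free from nefness alone. The inequality $d_k\ge0$, combined with the monotonicity $h^*(n+1-k)\ge h^*(n-k)$, yields
\[
h(k+1)-h(k)\le 2-\bigl(h^*(n+1-k)-h^*(n-k)\bigr)\le 2.
\]
Combining the two bounds gives the required $h(k+1)=h(k)+2$.

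Conceptually, this is the exact mirror of Lemma~\ref{lem: principle of similar shapes 1}: whereas that lemma transports a stable step of $h$ at column $i$ to a stable step of $h$ at column $\DD{i}+1$, here a jump of size $2$ at column $i$ gets transported to a jump of size $2$ at column $\DD{i}$. There is no real obstacle; the only point requiring care is the bookkeeping in the identity $\DD{i}=n-h^*(n+1-i)$, which converts between the row-wise and column-wise descriptions of the Hessenberg shape.
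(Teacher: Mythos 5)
Your proof is correct, and it shares its first step with the paper's argument: both use the identity \eqref{eq: vanishing coefficient} together with $h(i+1)=h(i)+2$ to conclude $\DD{i}=\DD{i+1}=k$. After that the two routes diverge. The paper invokes Lemma~\ref{cor: nef condition}(2) (applied to $h^*$) to obtain the strict bracketing $\DD{i-1}<\DD{i}=\DD{i+1}<\DD{i+2}$ and then reads off $h(k)=i-1$, $h(k+1)=i+1$ from the picture of the boundary path. You instead make the column description $\DD{i}=\min\{j\in[n]\mid h(j)\ge i\}-1=|\{j\mid h(j)<i\}|$ explicit, which from $\DD{i}=\DD{i+1}=k\ge 1$ gives only the one-sided bound $h(k)\le i-1$, $h(k+1)\ge i+1$, hence $h(k+1)-h(k)\ge 2$, and you close the gap with the nef inequality $d_k\ge 0$ at the index $k$ itself (valid since $k=\DD{i}\le n-1$), which together with the monotonicity of $h^*$ gives $h(k+1)-h(k)\le 2$. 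This is a slightly more elementary and self-contained variant: it bypasses Lemma~\ref{cor: nef condition} and replaces the ``pictorial meaning'' step by two explicit inequalities, at the modest cost of not pinning down the exact values $h(k)=i-1$ and $h(k+1)=i+1$ (which the statement does not require). Both arguments ultimately use nefness, just at different indices.
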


\begin{proof}
Note first that $i>1$ since $\DD{1}=0$.
Since $s_i(\xi_h)=\xi_h$, the assumption $h(i+1)=h(i)+2$ means that $\DD{i}=\DD{i+1}$. Corollary \ref{cor: nef condition} now implies that we must have 
$\DD{i-1}<\DD{i}=\DD{i+1}<\DD{i+2}$, which means that $h(k+1)=h(k)+2$ by the pictorial meaning of $k=\DD{i}$.
\end{proof}

As the converse of Lemma~\ref{lem: principle of similar shapes 2}, we have the following claim.

\begin{lemma}\label{lem: principle of similar shapes 3}
Let $h\colon[n]\rightarrow[n]$ be a nef Hessenberg function, and $k=\DD{i}$ for some $1\le i\le n$.
If $k\ge1$ and $h(k+1)=h(k)+2$, then we have either 
\begin{align*}
\text{$(1)$ $\DD{i}=\DD{i+1}$, \  or \ $(2)$ $\DD{i-1}=\DD{i}$.}
\end{align*}
In case $(1)$, if $s_i(\xi_h)=\xi_h$ in addition, then $h(i+1)=h(i)+2$. In case $(2)$, if $s_{i-1}(\xi_h)=\xi_h$ in addition, then $h(i)=h(i-1)+2$.
\end{lemma}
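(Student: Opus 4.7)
The plan is to read both assertions directly from the pictorial meaning of $\DD{\cdot}$, supplemented by the coefficient identity~\eqref{eq: vanishing coefficient}. The key observation is that the formula $\DD{i}=n-h^*(n+1-i)$ translates into $\DD{i}=|\{j\in[n]\mid h(j)<i\}|$, so that the equality $\DD{i}=k\ge 1$ is equivalent to the sandwich
\[
h(k)<i\le h(k+1).
\]

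First I would combine this sandwich with the hypothesis $h(k+1)=h(k)+2$ to pin down $i\in\{h(k)+1,\,h(k)+2\}$. If $i=h(k)+1$, then the columns $j$ with $h(j)<i+1$ are exactly $\{1,\dots,k\}$, because $h(k+1)=h(k)+2\ge i+1$ excludes column $k+1$; hence $\DD{i+1}=k=\DD{i}$, which is case~(1). Symmetrically, if $i=h(k)+2$, then the columns with $h(j)<i-1$ are again $\{1,\dots,k\}$, so $\DD{i-1}=k=\DD{i}$, which is case~(2). This exhausts both possibilities and gives the desired dichotomy.

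For the additional clauses, I would appeal to~\eqref{eq: vanishing coefficient}, which says that $s_j(\xi_h)=\xi_h$ is equivalent to $h(j)-h(j+1)+2+\DD{j}-\DD{j+1}=0$. In case~(1), substituting $\DD{i}=\DD{i+1}$ into this identity with $j=i$ collapses it to $h(i+1)-h(i)=2$. In case~(2), substituting $\DD{i-1}=\DD{i}$ with $j=i-1$ yields $h(i)-h(i-1)=2$. Both conclusions then follow at once.

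I do not anticipate any real obstacle: the argument is a short combinatorial unwinding of $\DD{i}$, and the only non-trivial input, namely the linear identity relating the coefficient $d_j$ of $\varpi_j$ in $\xi_h$ with $h$ and $\DD{\cdot}$, is already available as~\eqref{eq: vanishing coefficient}. The proof is essentially the mirror of Lemma~\ref{lem: principle of similar shapes 2}, with the roles of $i$ and $k=\DD{i}$ interchanged.
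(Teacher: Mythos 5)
Your proposal is correct and follows essentially the same route as the paper: the dichotomy is exactly the "pictorial meaning of $\DD{i}$" (which you make explicit via $\DD{i}=|\{j\mid h(j)<i\}|$, pinning $i$ to $h(k)+1$ or $h(k)+2$), and the two additional clauses are read off from \eqref{eq: vanishing coefficient} just as in the paper. The only difference is that you spell out rigorously what the paper declares obvious, which is fine.
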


\begin{proof}
The former claim is obvious by the pictorial meaning of $\DD{i}$.
The latter claim follows immediately by \eqref{eq: vanishing coefficient}.
\end{proof}

Let $I=[a-1,b]\subseteq [n]$ for some $1< a < b \le n$, and suppose that $s_i(\xi_h)=\xi_h$ for all $a-1\le i\le b-1$. 
Then the four lemmas above imply that if $\DD{a}\ge1$, $h(a)\ne h(a-1)+2$, $h(b) \neq h(b-1) +2$, then the information in what order the positions $i$ satisfying $h(i+1)=h(i)$ and the positions $j$ satisfying $h(j+1)=h(j)+2$ appear must be the same for the intervals $[a,b]$ and $[\DD{a},\DD{b}]$. 
Here, we need $h(a) \ne h(a-1)+2$ because of Lemma~\ref{lem: principle of similar shapes 3}. We also assume $h(b)\ne h(b-1)+2$ because of Lemma~\ref{lem: principle of similar shapes 2}. We call this \textbf{the principle of similar shapes} on $[a,b]$ and $[\DD{a},\DD{b}]$. We use the word ``similar" because we ignore the information how the positions $k$ satisfying $h(k+1)=h(k)+1$ appear when we consider this principle.
For example, if we take $[a,b]=\{10,11,12,13\}$ in the running example for $n=20$, then the shape of $h$ on $[a,b]$ and that of $h$ on $[\DD{a},\DD{b}]=\{1,2,3,4,5,6\}$ are similar in this sense. 

\begin{remark}\label{rem: the modified principle}
\normalfont{If $a= h(1)$, then we have $\DD{a}=0$. In this case, however, the principle of similar shapes on the intervals $[h(1),b]$ and $[1,\DD{b}]$ is valid if $s_i(\xi_h)=\xi_h$ for all $h(1)-1\le i\le b-1$ and $h(b)\ne h(b-1)+2$ without the assumptions $\DD{a}\ge1$ and $h(a)\ne h(a-1)+2$. This follows because we have $h(h(1)+1)\ne h(h(1))+2$ by $s_{h(1)}(\xi_h)=\xi_h$ and $h^*(n+1-h(1))>h^*(n-h(1))$. We need to treat this case as well later.}
\end{remark}

\begin{lemma}\label{lem: D and +}
Let $h\colon[n]\rightarrow[n]$ be a nef Hessenberg function.
If $[i-1,\LL{i}]\subseteq J_k$ for some $1\le k\le n-1$ and $h(i)\ne h(i-1)+2$, then $\DD{i^{(+)}}=\DD{i}^{(+)}$.
\end{lemma}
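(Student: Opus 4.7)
The plan is to split on whether $i$ satisfies \eqref{eq:def of L 10} (so $i^{(+)} = i$) or \eqref{eq:def of L 20} (so $i^{(+)} = \w{h}^{-1}(h(\hat{i}) + 1) > i$). In both cases the key tool will be the principle of similar shapes applied to the pair of intervals $[i, \LL{i}]$ and $[\DD{i}, \DD{\LL{i}}]$. Its hypotheses are supplied by $[i-1, \LL{i}] \subseteq J_k$, which forces $s_j(\xi_h) = \xi_h$ for all $i - 1 \le j \le \LL{i} - 1$; by the given assumption $h(i) \ne h(i-1) + 2$; and by the automatic fact $h(\LL{i}) = h(\LL{i} - 1) \ne h(\LL{i} - 1) + 2$ coming from the definition of $\LL{i}$. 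In the edge case $\DD{i} = 0$ (equivalently $i = h(1)$), I will use the modified principle of Remark~\ref{rem: the modified principle}.

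In Case~1, the equality $i^{(+\infty)} = i$ combined with \eqref{eq:def of L 10} forces every step of $h$ on $[i, \LL{i} - 2]$ to have size $1$, and gives a single $0$-step at $\LL{i} - 1$ with no $2$-jumps in $[i, \LL{i} - 1]$. The principle transfers the same pattern to $[\DD{i}, \DD{\LL{i}} - 1]$, so \eqref{eq:def of L 10} also holds at $\DD{i}$, yielding $\DD{i}^{(+)} = \DD{i} = \DD{i^{(+)}}$.

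In Case~2, the first $2$-jump $\hat{i} \ge i$ precedes the first $0$-step $\ge i$. By Lemma~\ref{lem: principle of similar shapes 2}, this $2$-jump corresponds to a $2$-jump at $\DD{\hat{i}}$, and the principle shows that \eqref{eq:def of L 20} also holds at $\DD{i}$ with $\widehat{\DD{i}} = \DD{\hat{i}}$; hence $\DD{i}^{(+)} = \w{h}^{-1}(h(\DD{\hat{i}}) + 1)$. The remaining step is to establish the equality
\[
\DD{\w{h}^{-1}(h(\hat{i})+1)} = \w{h}^{-1}(h(\DD{\hat{i}})+1).
\]
My approach will be to characterize $i^{(+)}$ combinatorially via the greedy dot-placement defining $\w{h}$: $i^{(+)}$ is the first column after $\hat{i}$ at which the running balance of $2$-jumps versus $0$-steps returns to $0$, a Dyck-style condition. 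Combined with the bijective correspondence of $0$-steps (Lemma~\ref{lem: principle of similar shapes 1}) and $2$-jumps (Lemma~\ref{lem: principle of similar shapes 2}) between the two intervals, this characterization will yield the desired equality.

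The main obstacle will be the index bookkeeping in this last step, since Lemmas~\ref{lem: principle of similar shapes 1} and~\ref{lem: principle of similar shapes 2} shift $0$-step positions by $\DD{\cdot} + 1$ and $2$-jump positions by $\DD{\cdot}$. Tracking these shifts carefully, and confirming that the ``return to balance'' column $i^{(+)}$ on the left matches the image of $\DD{i}^{(+)}$ under $\DD{\cdot}$, reduces to a column-by-column comparison in the staircase diagram of $h$ guided by the principle of similar shapes.
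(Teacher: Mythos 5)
Your proposal follows essentially the same route as the paper: Case 1 is identical, and in Case 2 your ``running balance of $2$-jumps versus $0$-steps'' characterization of $i^{(+)}$ is exactly the paper's comparison of the counting functions $\stable[k,\ell]$ and $\twosteps[k,\ell]$, transferred from $[i,i^{(+)}]$ to $[\DD{i},\DD{i^{(+)}}]$ by the principle of similar shapes (after the reduction to $i=\hat{i}$, which the paper makes explicit). One small correction: the edge case $\DD{i}=0$ needs no appeal to Remark~\ref{rem: the modified principle} because it cannot occur under the hypotheses --- $s_{i-1}(\xi_h)=\xi_h$ together with $\DD{i-1}=\DD{i}=0$ forces $h(i)=h(i-1)+2$, contrary to assumption (and note $\DD{i}=0$ is equivalent to $i\le h(1)$, not to $i=h(1)$), which is exactly how the paper opens its proof by establishing $\DD{i}\ge 1$.
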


\begin{proof}
To begin with, we show that $\DD{i}\ge1$. If $\DD{i}=0$, then we also have $\DD{i-1}=0$, which means that $h^*(n+1-i)=h^*(n+1-(i-1))(=n)$. Since we have $[i-1,i]\subset [i-1,\LL{i}] \subseteq J_k$, this implies that $h(i)=h(i-1)+2$, which is a contradiction to our assumption. 

Let us prove that $D(i^{(+)}) = D(i)^{(+)}$. We first consider the case that $i$ satisfies condition \eqref{eq:def of L 10}. In this case, we have $i^{(+)}=i$ by definition, which means that
\begin{align*}
&h(j)=h(j-1)+1 \quad (i+1\le j< \LL{i}), \\
&h(\LL{i})=h(\LL{i}-1).
\end{align*}
The assumptions $[i-1,\LL{i}]\subseteq J_k$ and $h(i)\ne h(i-1)+2$ mean that we may apply the principle of similar shapes to $[i,\LL{i}]$ and $[\DD{i},\DD{\LL{i}}]$, and then the above equalities imply that
\begin{align*}
&h(l)=h(l-1)+1 \quad (\DD{i}+1\le l< m), \\
&h(m)=h(m-1),
\end{align*}
where $m=\DD{\LL{i}-1}+2$ by Lemma~\ref{lem: principle of similar shapes 1}. This means that $\DD{i}$ satisfies condition \eqref{eq:def of L 10} after replacing $i$ by $\DD{i}$. Thus it follows that $\DD{i}^{(+)}=\DD{i}$, and we obtain $\DD{i^{(+)}}=\DD{i}=\DD{i}^{(+)}$, as desired. 

Next, we consider the case that $i$ satisfies condition \eqref{eq:def of L 20}. By the assumption, we have $[i-1,i^{(+)}]\subset [i-1,\LL{i}] \subseteq J_k$, and hence we may apply the principle of similar shapes on $[i,i^{(+)}]$ and $[\DD{i},\DD{i^{(+)}}]$ in a way similar to above, and we see that $\DD{i}$ satisfies condition \eqref{eq:def of L 20} as well. Noticing this, it is straightforward to verify $\hat{\DD{i}} = \DD{\hat{i}}$. Hence the desired claim $\DD{i^{(+)}}=\DD{i}^{(+)}$ is equivalent to
\begin{align*}
\DD{\w{h}^{-1} (h(\hat{i})+1)} = \w{h}^{-1} (h(\DD{\hat{i}})+1).
\end{align*}
In addition, we have 
$
h(\hat{i}) \ne h(\hat{i}-1)+2
$
since the equality $h(\hat{i}) = h(\hat{i}-1)+2$ implies with the minimality of $\hat{i}\ (\ge i)$ that $h(i) = h(i-1)+2$, which contradicts our assumption.
Thus we may assume $i=\hat{i}$, that is, $h(i+1)=h(i)+2$, to prove $\DD{i^{(+)}}=\DD{i}^{(+)}$ in what follows. 
Notice that 
\begin{align}\label{eq: D and + 10}
h(\DD{i}+1)=h(\DD{i})+2,
\end{align}
which follows by $h(i+1)=h(i)+2$ and Lemma~\ref{lem: principle of similar shapes 2}. Also, since we have $h(i^{(+)})=h(i^{(+)}-1)$, it follows that
\begin{align}\label{eq: D and + 20}
h(\DD{i^{(+)}})=h(\DD{i^{(+)}}-1)
\end{align}
by Lemma~\ref{lem: principle of similar shapes 1} and $\DD{i^{(+)}-1}=\DD{i^{(+)}}-2$. See Figure \ref{pic:D(i) and D(ihat)} which visualizes the equalities \eqref{eq: D and + 10} and \eqref{eq: D and + 20}.
\begin{figure}[htbp]
{\unitlength 0.1in%
\begin{picture}(38.0000,22.2000)(22.0000,-35.0000)%
%
\special{sh 1.000}%
\special{ia 3700 1900 28 28 0.0000000 6.2831853}%
\special{pn 8}%
\special{ar 3700 1900 28 28 0.0000000 6.2831853}%
%
\special{pn 8}%
\special{pa 3500 1350}%
\special{pa 3500 3150}%
\special{dt 0.045}%
%
\special{pn 8}%
\special{pa 4700 1340}%
\special{pa 4700 3140}%
\special{dt 0.045}%
\put(33.2000,-34.3000){\makebox(0,0)[lb]{$\DD{i}$}}%
\put(44.2000,-34.3000){\makebox(0,0)[lb]{$\DD{i^{(+)}}$}}%
%
\special{pn 8}%
\special{pa 4800 2800}%
\special{pa 4400 2800}%
\special{fp}%
\special{pa 4400 2800}%
\special{pa 4400 2600}%
\special{fp}%
%
\special{sh 1.000}%
\special{ia 4500 2700 28 28 0.0000000 6.2831853}%
\special{pn 8}%
\special{ar 4500 2700 28 28 0.0000000 6.2831853}%
\put(40.0000,-23.5000){\makebox(0,0)[lb]{$\ddots$}}%
%
\special{pn 8}%
\special{pa 4800 2800}%
\special{pa 4800 3000}%
\special{fp}%
%
\special{pn 8}%
\special{pa 3400 1600}%
\special{pa 3600 1600}%
\special{fp}%
\special{pa 3600 1600}%
\special{pa 3600 2000}%
\special{fp}%
\special{pa 3600 2000}%
\special{pa 3800 2000}%
\special{fp}%
%
\special{pn 8}%
\special{pa 2200 1280}%
\special{pa 6000 1280}%
\special{pa 6000 3500}%
\special{pa 2200 3500}%
\special{pa 2200 1280}%
\special{ip}%
\end{picture}}%
\caption{The picture of \eqref{eq: D and + 10} and \eqref{eq: D and + 20}.}
\label{pic:D(i) and D(ihat)}
\end{figure}
From these, it suffices to show that
$\w{h}$ takes $h(\DD{i})+1$ as its value at $\DD{i^{(+)}}$ since this is equivalent to the desired claim $\DD{i^{(+)}}=\DD{i}^{(+)}$ by the definition of the $(+)$-operation. 
For $1\le k< \ell \le n$, we define
\begin{align*}
&\stable[k,\ell] \coloneqq |\{c\in[k,\ell-1] \mid h(c+1)=h(c) \}|, \\
&\twosteps[k,\ell] \coloneqq |\{c\in[k,\ell-1] \mid h(c+1)=h(c)+2 \}|.
\end{align*}
Then, by the definition of the $(+)$-operation and the maximality of the values of $\w{h}$, we have
\begin{equation}\label{eq: D and + 30}
\begin{split}
&\twosteps[i,j]> \stable[i,j] \quad \text{for each $i< j< i^{(+)}$,\ {\rm and}} \\
&\twosteps[i,i^{(+)}] = \stable[i,i^{(+)}].
\end{split}
\end{equation}
By the principle of similar shapes on $[i,i^{(+)}]\subseteq J_k$ and $[\DD{i},\DD{i^{(+)}}]$, we see that \eqref{eq: D and + 30} still hold when we replace $i$ and $i^{(+)}$ by $\DD{i}$ and $\DD{i^{(+)}}$, respectively. Namely, we have
\begin{align*}
&\twosteps[\DD{i},k] > \stable[\DD{i},k] \quad \text{for each $\DD{i}< k< \DD{i^{(+)}}$,\ {\rm and}}, \\
&\twosteps[\DD{i},\DD{i^{(+)}}] = \stable[\DD{i},\DD{i^{(+)}}].
\end{align*}
This in particular implies that $w_h (D(i^{(+)})) = h(D(i)) +1$ by the maximality of $\w{h}$, as desired. 
\end{proof}

\begin{lemma}\label{lem: D and L}
Let $h\colon[n]\rightarrow[n]$ be a nef Hessenberg function.
If $[i-1,\LL{i}]\subseteq J_k$ for some $1\le k\le n-1$ and $h(i)\ne h(i-1)+2$, then $\DD{\LL{i}}=\LL{{\DD{i}}}$.
\end{lemma}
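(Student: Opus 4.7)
The plan is to combine an iterated application of Lemma 5.8 with the principle of similar shapes applied to the intervals $[i^{(+\infty)},\LL{i}]$ and $[\DD{i^{(+\infty)}},\DD{\LL{i}}]$. First I would establish by induction on $m\ge 1$ that $\DD{i^{(+m)}}=\DD{i}^{(+m)}$, which in particular gives $\DD{i^{(+\infty)}}=\DD{i}^{(+\infty)}$. The base case $m=1$ is Lemma 5.8 itself. For the inductive step one needs to verify, with $i$ replaced by $i^{(+m)}$, the two hypotheses of Lemma 5.8: the containment $[i^{(+m)}-1,\LL{i^{(+m)}}]\subseteq J_k$ is automatic from $i^{(+m)}\ge i$ together with the equality $\LL{i^{(+m)}}=\LL{i}$, which is immediate from $(i^{(+m)})^{(+\infty)}=i^{(+\infty)}$; and the inequality $h(i^{(+m)})\ne h(i^{(+m)}-1)+2$ reduces to the original hypothesis when $i^{(+m)}=i$, and otherwise follows from the identity $h(j^{(+)})=h(j^{(+)}-1)$ that is used inside the proof of Lemma 5.8 for any $j$ with $j^{(+)}>j$.

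Next I would apply the principle of similar shapes with $a\coloneqq i^{(+\infty)}$ and $b\coloneqq \LL{i}$. All four hypotheses are in place: $s_j(\xi_h)=\xi_h$ holds for $a-1\le j\le b-1$ because $[a-1,b]\subseteq [i-1,\LL{i}]\subseteq J_k$ and $\LL{i}-1\le k+\rb{k}$, so the single ``extra'' index of $J_k$ is never reached; $\DD{a}\ge 1$ because $\DD{i}\ge 1$ was established in the opening of the proof of Lemma 5.8 from the hypothesis $h(i)\ne h(i-1)+2$, and $\DD{\cdot}$ is non-decreasing; $h(a)\ne h(a-1)+2$, again by the iterated use of $h(j^{(+)})=h(j^{(+)}-1)$ when $a>i$, and by hypothesis when $a=i$; and $h(b)\ne h(b-1)+2$ since in fact $h(\LL{i})=h(\LL{i}-1)$ by the very definition of $\LL{i}$. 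Note also that $1<a<b\le n$ holds because $i\ge 2$ (forced by $i-1\in J_k$) and $L_i-1\ge i^{(+\infty)}$.

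Finally, by the definitions of $i^{(+\infty)}$ and $\LL{i}$, the increments $h(j+1)-h(j)$ for $a\le j\le b-1$ are all equal to $1$ except for a single $0$ at $j=\LL{i}-1$, with no $2$-step present. The principle of similar shapes transfers this combinatorial data to $[\DD{a},\DD{b}-1]$: exactly one $0$-step and no $2$-step. Lemma 5.4 applied to the index $\LL{i}-1$ locates this $0$-step at position $\DD{\LL{i}-1}+1=\DD{\LL{i}}-1$, where the last equality follows from $\DD{\LL{i}}=\DD{\LL{i}-1}+2$, itself a consequence of $h(\LL{i})=h(\LL{i}-1)$ and $s_{\LL{i}-1}(\xi_h)=\xi_h$ via \eqref{eq: vanishing coefficient}. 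Hence $\DD{\LL{i}}-1$ is the first position $j\ge \DD{i}^{(+\infty)}=\DD{a}$ with $h(j)=h(j+1)$, and the defining formula for $\LL{\DD{i}}$ yields $\LL{\DD{i}}=(\DD{\LL{i}}-1)+1=\DD{\LL{i}}$, as required. The main obstacle is the bookkeeping to confirm that the hypotheses of Lemma 5.8 propagate through the iteration and that all four hypotheses of the principle of similar shapes hold for our chosen intervals; once these are in hand, the matching of $0$-steps delivers the conclusion at once.
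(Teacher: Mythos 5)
Your proposal is correct and follows essentially the same route as the paper: iterate Lemma~\ref{lem: D and +} (checking, exactly as the paper does, that its hypotheses propagate because $h$ is stable at $j^{(+)}$ whenever $j^{(+)}>j$) to reduce to $i=i^{(+\infty)}$, and then pin down the unique stable step on the $\DD{\cdot}$-side via Lemma~\ref{lem: principle of similar shapes 1} together with \eqref{eq: vanishing coefficient}. The only cosmetic difference is that you organize the final matching through the principle of similar shapes on $[i^{(+\infty)},\LL{i}]$ and $[\DD{i^{(+\infty)}},\DD{\LL{i}}]$, whereas the paper reads off the shape of $h$ on $[\DD{i},\LL{\DD{i}}]$ directly from the identity $\DD{i}=\DD{i}^{(+\infty)}$; the content is the same.
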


\begin{proof}
By the assumption, the previous lemma shows that $\DD{i^{(+)}}=\DD{i}^{(+)}$. By taking $(+)$ on both sides, we obtain $\DD{i^{(+)}}^{(+)}=\DD{i}^{(+)(+)}$. Note that we have $[i^{(+)}-1,\LL{i^{(+)}} ]\subseteq[i-1,\LL{i}]\subseteq J_k$, and $h(i^{(+)})\ne h(i^{(+)}-1)+2$. Here, the latter claim follows because if $i=i^{(+)}$, then the claim is precisely the assumption $h(i)\neq h(i-1)+2$, and if  $i<i^{(+)}$, then $h$ is stable at $i^{(+)}$, that is, $h(i^{(+)})=h(i^{(+)}-1)$, which implies the claim.
Thus we obtain $\DD{i^{(+)(+)}}=\DD{i^{(+)}}^{(+)}$ by the previous lemma. Combining this with the previous equality above, we obtain
\begin{align*}
\DD{i^{(+)(+)}}=\DD{i}^{(+)(+)}.
\end{align*}
By continuing this process sufficiently many times, we obtain
\begin{align}\label{eq: D and L 10}
\DD{i^{(+\infty)}}=\DD{i}^{(+\infty)}.
\end{align}
We also have
\begin{align*}
h(i^{(+\infty)})\ne h(i^{(+\infty)}-1)+2
\end{align*}
by an argument similar to that above. Thus, by \eqref{eq: D and L 10} and $[i^{(+\infty)}-1,\LL{i^{(+\infty)}}]\subseteq [i-1,\LL{i}]\subseteq J_k$, we may assume $i=i^{(+\infty)}$ to prove $\DD{\LL{i}}=\LL{\DD{i}}$ in what follows.

Since we have $i=i^{(+\infty)}$, we know that $i$ satisfies condition \eqref{eq:def of L 10}, which means that we have 
\begin{align*}
&h(j)=h(j-1)+1 \quad (i+1\le j< \LL{i}),\ {\rm and} \\
&h(\LL{i})=h(\LL{i}-1).
\end{align*}
Because of \eqref{eq: D and L 10}, we also have $\DD{i}=\DD{i}^{(+\infty)}$. Thus $\DD{i}$ also satisfies condition \eqref{eq:def of L 10}, which means that we have 
\begin{align*}
&h(k)=h(k-1)+1 \quad (\DD{i}+1\le k< \LL{\DD{i}}), \\
&h(\LL{\DD{i}})=h(\LL{\DD{i}}-1).
\end{align*}
Thus, by $[i-1,\LL{i}]\subseteq J_k$ and Lemma~\ref{lem: principle of similar shapes 1}, it follows that
\begin{align*}
\DD{\LL{i}-1}+1=\LL{\DD{i}}-1.
\end{align*}
Since we have $h(\LL{i})=h(\LL{i}-1)$ and $[\LL{i}-1,\LL{i}]\subseteq [i-1,\LL{i}]\subseteq J_k$, it follows that $\DD{\LL{i}-1}=\DD{\LL{i}}-2$ by \eqref{eq: vanishing coefficient}. Combining this with the above equality, we obtain $\DD{\LL{i}}=\LL{\DD{i}}$, as desired.
\end{proof}

\subsection{Proof of Theorem~B}\label{subsect: Proof of Thm B}
Let $h\colon[n]\rightarrow [n]$ be a nef Hessenberg function satisfying the assumption \eqref{eq: assumption on h}, that is, $h(i)\geq i+1$ for $1\le i<n$.
In this subsection, we give a proof of Theorem~B which is stated in Section \ref{sec: intro}.

We already established the equivalence of (i) and (iii) in Theorem~B by Proposition~\ref{prop: equiv condition for nef}. Recalling that the anti-canonical bundle of $\Hess{S}{h}$ is isomorphic to $\LB_{\xi_h}$ by Proposition~\ref{prop: anti-can of Hess}, it suffices to prove that if $\LB_{\xi_h}$ on $\Hess{S}{h}$ is nef, then it is in fact big.
By Proposition~\ref{cor: using Richardson 20} and Lemma~\ref{cor: using Richardson 30} together with the notations given in Section \ref{subsec: preliminary notations}, it is enough to show that there exists $u \in \mathfrak{S}_{J}$ such that $\ell(u)+\ell(\w{h})=\ell (u\w{h})$ and $u_{J}=(u\w{h})_{J}$. 

Our proof is induction on $n$. To control induction, we require two additional conditions as seen below. Namely, we prove the following, where we say that $h$ is \textbf{strictly increasing} on an interval $[a,b]\subseteq [n]$ $(a<b)$ if 
\[
h(a)<h(a+1)<\cdots< h(b).
\]

\begin{theorem}
Let $h\colon[n]\rightarrow [n]$ be a nef Hessenberg function satisfying \eqref{eq: assumption on h}.
Then there exists $u \in \mathfrak{S}_J$ such that the following conditions hold:
\begin{enumerate}
\item[{\rm (i)}] $\ell (u\w{h})=\ell(u)+\ell(\w{h})$;
\item[{\rm (ii)}] $(u\w{h})_J=u_J$; 
\item[{\rm (iii)}] 
if $s_i (\xi_h) = \xi_h$ and $h$ is strictly increasing on $J_i$, then 
$u(j) = j$ for all $j\in J_i$;
\item[{\rm (iv)}] for $1 \le i \le n-1$ and $i +1 \le j < \LL{i} $, we have $u \w{h} (i) < u \w{h} (j)$.
\end{enumerate}
\end{theorem}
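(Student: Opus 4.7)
The plan is to argue by induction on $n$, with the trivial case $n=1$ handled by taking $u = e$ (where $\w{h}=e$ and $J=\emptyset$). For $n \ge 2$, the argument splits according to whether $J = J(h)$ is empty.

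When $J = \emptyset$, the only choice is $u = e$, and the verification is immediate: (i) holds since $\ell(u) = 0$, (ii) and (iii) are vacuous, and (iv) is precisely Lemma~\ref{lem: wh and L}.

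When $J \neq \emptyset$, I build $u$ one component at a time. On each $J_i \in J$ on which $h$ is strictly increasing---including every $J_i$ containing $1$, by Lemma~\ref{l:connected_component_containing_1}---condition (iii) mandates $u|_{J_i} = \mathrm{id}$. For the remaining components, the strategy is to invoke the \emph{principle of similar shapes} developed in Section~\ref{subsec: similar shapes}. Given a component $J_i = [a,b]$ on which $h$ is not strictly increasing, the restriction of $h$ to $J_i$ has the same pattern of stable steps and jumps of size $2$ as the restriction of $h$ to $[\DD{a},\DD{b}]$; moreover, Lemmas~\ref{lem: D and +} and \ref{lem: D and L} show that the combinatorial operations $i \mapsto i^{(+)}$ and $i \mapsto \LL{i}$ are intertwined by $\DD{\cdot}$. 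This correspondence reduces the construction of $u|_{J_i}$ to a structurally analogous problem on a strictly smaller range of indices, where the inductive hypothesis furnishes a permutation satisfying the four conditions for the ``smaller'' nef Hessenberg function; this inductive output is then transported back via the shape correspondence to define $u|_{J_i}$.

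Among the four conditions, (iii) is enforced by construction, (ii) follows formally from the componentwise nature of the construction, and (iv) reduces to Lemma~\ref{lem: wh and L} combined with the inductive hypothesis---since $\w{h}$ is already increasing on each $[i, \LL{i}-1]$ and $u$ only permutes within each $J_k$, the only potential obstruction is an internal reordering inside some $J_k$, which the inductive statement is designed to rule out. The principal obstacle will be condition (i), the length additivity $\ell(u\w{h}) = \ell(u) + \ell(\w{h})$, equivalent to saying that no inversion of $u$ cancels against an inversion of $\w{h}$. Verifying this requires an explicit bookkeeping of how the set $\w{h}(J_i) \subseteq [n]$ distributes across the other components $J_k$; here again the principle of similar shapes is the decisive tool, as it supplies the bijective dictionary that translates the inversion count on $h$ into the corresponding count for the smaller Hessenberg function where the inductive hypothesis applies.
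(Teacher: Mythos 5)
There is a genuine gap: your induction step is not actually defined, and the part you dismiss as formal is precisely the crux of the paper's argument. The principle of similar shapes (Lemmas \ref{lem: principle of similar shapes 1}--\ref{lem: principle of similar shapes 3}, \ref{lem: D and +}, \ref{lem: D and L}) is a self-similarity statement relating the shape of the \emph{same} $h$ on an interval $[a,b]$ to its shape on $[\DD{a},\DD{b}]$; it does not hand you a smaller nef Hessenberg function, nor a mechanism for transporting a permutation obtained for such a smaller function back to a block $J_i$ of the original one, and you never say what that smaller function is. The paper's induction is of a different nature: it deletes the first column and the $h(1)$-st row, i.e.\ passes to $h'(i)=h(i+1)-1$, and it is this specific choice that makes $\xi_{h'}$ computable (Lemma \ref{l:calculation_of_xi_for_h_prime}), gives the embedding $\mathfrak{S}_{J'}\hookrightarrow\mathfrak{S}_J$, and lets conditions (iii) and (iv) of the inductive hypothesis be exploited (Lemmas \ref{l:fixed_part_by_u_prime}, \ref{l:computation_of_w(h)_for_h(1)-k}, \ref{lem: L and q}). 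The similar-shapes lemmas enter only as auxiliary tools inside that induction, not as the induction itself.

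Second, your claim that condition (ii) ``follows formally from the componentwise nature of the construction'' is false. For $j_1,j_2\in J_i$ the values $\w{h}(j_1),\w{h}(j_2)$ generally lie outside $J_i$, so the relative order of $u\w{h}$ on $J_i$ depends on how $u$ rearranges values across all blocks, not on $u|_{J_i}$ alone; likewise $u|_{J_i}=\mathrm{id}$ on strictly increasing blocks cannot simply be ``enforced'' independently of (i) and (ii). This is exactly where the paper works hardest: when $s_{h(1)}(\xi_h)=\xi_h$ the naive lift $\bar{u}'$ of the inductive permutation can violate (ii) on $J_{h(1)}$ (Case 2-b), and correcting it by the cycles $v_k$ simultaneously perturbs $u\w{h}$ on the other blocks, so one must prove that no value $h(1)-k$ occurs in $\bar{u}'\w{h}(J_{h(1)})$ or gets reordered inside another $\bar{u}'\w{h}(J_i)$ (Lemma \ref{l:evaluation_of_L_for_changing_points}, Propositions \ref{l:not_contain_changing_points} and \ref{p:correction_on_parabolic_completed_Case_2}); condition (i) for the corrected $u$ likewise needs the explicit position computation $\w{h}^{-1}(h(1)-k)=\DD{\tk{k}}$ rather than generic ``bookkeeping.'' Your proposal contains no substitute for this analysis, so the hardest two conditions are asserted rather than proved.
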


\begin{remark}
\normalfont{
In addition to the original conditions {\rm (i)} and {\rm (ii)}, we require two additional conditions {\rm (iii)} and {\rm (iv)} by the following reasons. If $h$ is strictly increasing on some $J_i$, then $w_h$ is also strictly increasing on $J_i$. Hence, by condition (ii), it is natural to seek for $u \in \mathfrak{S}_J$ under condition (iii) on $J_i$, that is, $u$ is the identity on $J_i$. This condition will be used in the proof of Lemma $\ref{l:fixed_part_by_u_prime}$. Condition (iv) is inspired by Lemma $\ref{lem: wh and L}$. This condition will be used to control the positions of $(h(1) -k)^{(+)}$ and $L_{h(1) -k}$ for $0 \le k \le k_{h(1), -}$ in the proofs of Lemmas \ref{l:computation_of_w(h)_for_h(1)-k} and \ref{lem: L and q}.}
\end{remark}

We proceed by induction on $n$. 
When $n = 2$, then the assumption \eqref{eq: assumption on h} implies that we must have $h=(2,2)$, which shows that $J=\emptyset$, so that the assertion is obvious by taking $u=e$. Let $n\ge 3$.
If $h(1)=n$, then we must have $h(i)=n$ for all $1\le i\le n$. In this case, we have $J=\emptyset$, and the assertion is obvious. Hence we may assume $h(1)<n$ in what follows so that $s_{h(1)}\in\mathfrak{S}_n$ makes sense. 

Define a function $h' \colon \{1, 2, \ldots, n-1\} \rightarrow \{1, 2, \ldots, n-1\}$ by 
\[
h' (i) \coloneqq h(i +1) -1 \qquad (1 \le i \le n-1).
\]
Then it is a Hessenberg function, and it is obtained from $h$ by removing all the boxes in the $h(1)$-st row and those in the $1$-st column. See the running example in Section \ref{subsec: Example}.
Note that we have
\[h^{\prime \ast} (i) = \begin{cases}
h^\ast (i) \quad&({\rm if}\ 1 \le i < n +1 -h(1)),\\
h^\ast (i) -1\ (= n -1)\quad&({\rm if}\ n +1 -h(1) \le i \le n-1).
\end{cases}\]
By the definition of $\w{h}$ and $\w{h'}$, we see that 
\begin{align}\label{eq: wh in terms of wh'}
\w{h} (i) = 
\begin{cases}
h(1) \quad&({\rm if}\ i = 1),\\
\w{h'} (i -1) \quad&({\rm if}\ \w{h'} (i -1) < h(1)),\\
\w{h'} (i -1) +1 \quad&({\rm if}\ \w{h'} (i -1) \ge h(1)),
\end{cases}
\end{align} 
which implies that
\begin{align}\label{eq: additional 330}
\ell(\w{h}) = \ell (\w{h'}) + (h(1)-1).
\end{align}
Recalling that we are assuming $h(1)<n$, we have the following.

\begin{lemma}\label{l:calculation_of_xi_for_h_prime}
Write $\xi_h = \sum_{i = 1} ^{n -1} d_i \varpi_i$. Then $\xi_{h'}$ can be written as follows: 
\[\xi_{h'} = \varpi_{h(1) -1} +\sum_{i = 1} ^{n -2} d_{i +1} \varpi_i.\]
In particular, $h'$ is also nef. 
\end{lemma}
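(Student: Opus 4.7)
The plan is to prove the formula by directly comparing the coefficients of $\xi_h$ and $\xi_{h'}$ with respect to the fundamental weight basis, using Lemma~\ref{lemm: new}. Write $\xi_{h'} = \sum_{i=1}^{n-2} d'_i \varpi_i$, so that by Lemma~\ref{lemm: new} applied to $h'$ (with $n$ replaced by $n-1$),
\[
d'_i = h'(i) - h'(i+1) + 2 - h'^{*}(n-i) + h'^{*}(n-1-i)
\]
for $1 \le i \le n-2$. The goal is to show $d'_i = d_{i+1}$ for $i \neq h(1)-1$ and $d'_{h(1)-1} = d_{h(1)} + 1$.

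The first key step is to establish the precise relation between $h'^*$ and $h^*$: since removing the $h(1)$-st row and $1$-st column amounts to deleting the first column of the configuration of boxes and shifting, one verifies from the definition of transpose that
\[
h'^{*}(j) =
\begin{cases}
h^{*}(j) & \text{if } 1 \le j \le n-h(1), \\
h^{*}(j) - 1 & \text{if } n+1-h(1) \le j \le n-1.
\end{cases}
\]
Since $h'(i) = h(i+1) - 1$, the term $h'(i) - h'(i+1) + 2 = h(i+1) - h(i+2) + 2$ matches exactly the corresponding part of $d_{i+1}$. Thus $d'_i - d_{i+1}$ depends only on the difference $\bigl(h'^{*}(n-1-i) - h^{*}(n-1-i)\bigr) - \bigl(h'^{*}(n-i) - h^{*}(n-i)\bigr)$.

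The second key step is to track when the indices $n-i$ and $n-1-i$ fall on the same or different sides of the threshold $n+1-h(1)$. A case analysis shows both indices lie above the threshold when $i < h(1)-1$, both lie at or below the threshold when $i > h(1)-1$, and exactly one of them crosses precisely when $i = h(1)-1$ (where $n-i = n+1-h(1)$ lies above and $n-1-i = n-h(1)$ lies below). In the two non-exceptional cases the two $\epsilon$-terms cancel, and in the exceptional case one obtains a net contribution of $+1$, yielding $d'_{h(1)-1} = d_{h(1)} + 1$. Since the assumption $h(1) < n$ together with $h(1) \ge 2$ guarantees $h(1) - 1 \in [1, n-2]$, the extra $\varpi_{h(1)-1}$ is legitimately present.

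The nef conclusion is then immediate: $h$ nef gives $d_j \ge 0$ for all $1 \le j \le n-1$, so $d'_i = d_{i+1} \ge 0$ for $i \ne h(1)-1$ and $d'_{h(1)-1} = d_{h(1)} + 1 \ge 1 \ge 0$. The main bookkeeping obstacle is purely the boundary analysis at $i = h(1)-1$; the rest is a mechanical translation through Lemma~\ref{lemm: new} once the relationship between $h'^*$ and $h^*$ is correctly pinned down.
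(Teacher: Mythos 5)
Your proposal is correct and follows essentially the same route as the paper: apply Lemma~\ref{lemm: new} to $h'$, use the relation $h'^{*}(j)=h^{*}(j)$ for $j\le n-h(1)$ and $h'^{*}(j)=h^{*}(j)-1$ for $j\ge n+1-h(1)$ (which the paper records just before the lemma), and compare coefficients via the case split $i<h(1)-1$, $i>h(1)-1$, $i=h(1)-1$. Your boundary analysis at $i=h(1)-1$ and the resulting $d'_{h(1)-1}=d_{h(1)}+1$ match the paper's computation, and the nefness conclusion is handled the same way.
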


\begin{proof}
Writing $\xi_{h'} = \sum_{i = 1} ^{n -2} d'_i \varpi_i$, we have 
\begin{align*}
d'_i &= h'(i) - h'(i+1) + 2 - h'^*(n-i) + h'^*(n-1-i).
\end{align*}
By the definition of $h'$ and the description of $h'^*$ above, we can rewrite this as follows.
If $i< h(1)-1$ or $i> h(1)-1$, then $d'_i$ is equal to 
\begin{align*}
h(i+1) - h(i+2) + 2 - h^*(n-i) + h^*(n-1-i),
\end{align*}
which is $d_{i+1}$.
If $i=h(1)-1\ (\le n-2)$, then $d'_i$ is equal to 
\begin{align*}
h(i+1) - h(i+2) + 2 - h^*(n-i) + h^*(n-1-i)+1,
\end{align*} 
which is $d_{h(1)}+1$. This proves the claim.
\end{proof}

By Lemma~\ref{l:calculation_of_xi_for_h_prime}, if $s_i (\xi_{h'}) = \xi_{h'}$ for some $1 \le i \le n-2$, then $s_{i +1} (\xi_h) = \xi_h$. This means that under the injective group homomorphism $\iota \colon \mathfrak{S}_{n-1} \hookrightarrow \mathfrak{S}_n$ given by $s_i \mapsto s_{i+1}$ for $1\le i\le n-2$, we have 
\begin{equation}\label{eq:inclusion_of_parabolic_subgroup}
\begin{aligned}
\mathfrak{S}_{J'} \hookrightarrow \mathfrak{S}_J, 
\end{aligned}
\end{equation}
where $J=J(h)$ and $J'=J(h')$.
By induction hypothesis, there exists $u' \in \mathfrak{S}_{J'}$ such that conditions {\rm (i)}--{\rm (iv)} hold. 
We denote by $\bar{u}' \in\mathfrak{S}_J$ the image of $u'$ under the embedding \eqref{eq:inclusion_of_parabolic_subgroup}. 
Namely, we have 
\begin{align}\label{eq: two permutations}
\bar{u}' (i) = 
\begin{cases}
1 \quad&({\rm if}\ i=1),\\
u'(i-1)+1\quad&({\rm if}\ i > 1),
\end{cases}
\end{align}
which implies that
\begin{align}\label{eq: two permutations 2}
u'(i) = \bar{u}' (i+1)-1 \quad (1\le i\le n-1).
\end{align}
Since $h$ is nef, it follows by Lemma~\ref{l:calculation_of_xi_for_h_prime} that $s_{h(1) -1} (\xi_{h'}) \neq \xi_{h'}$, and hence that $s_{h(1) -1}\notin \mathfrak{S}_{J'}$.
We observe that condition (iii) for $u'$ ensures the following property of its image $\bar{u}' $ in $\mathfrak{S}_J$.

\begin{lemma}\label{l:fixed_part_by_u_prime}
The equality $\bar{u}'  (k) = k$ holds for $1 \le k \le h(1)$.
\end{lemma}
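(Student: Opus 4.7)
The strategy is to translate the desired equality into a claim about $u'$, and then argue that $u'$ is forced to fix the relevant initial segment because it lies in a parabolic whose blocks do not reach into $\{1,\dots,h(1)-1\}$ in a nontrivial way. Concretely, $\bar{u}'(1)=1$ holds by the very definition \eqref{eq: two permutations}, and for $2\le k\le h(1)$ the equality $\bar{u}'(k)=k$ is equivalent, via \eqref{eq: two permutations 2}, to $u'(k-1)=k-1$. Thus the whole statement reduces to showing that
\[u'(j)=j\qquad (1\le j\le h(1)-1).\]

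The first step is to control the $J'$-blocks that meet $\{1,\dots,h(1)-1\}$. Since $u'\in\mathfrak{S}_{J'}$ only permutes indices within blocks $J'_i$, an index $j$ that belongs to no block is automatically fixed. For indices belonging to a block $J'_i=\{a,\dots,b\}$, I would use the fact (recorded right before the lemma) that $s_{h(1)-1}(\xi_{h'})\ne\xi_{h'}$: because the interior indices of a block are precisely those $k$ with $s_k(\xi_{h'})=\xi_{h'}$, the index $h(1)-1$ cannot lie in $\{a,\dots,b-1\}$, hence either $b\le h(1)-1$ or $a\ge h(1)$. In particular, every block meeting $\{1,\dots,h(1)-1\}$ is contained in it.

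The second and main step is to apply condition (iii) of the inductive hypothesis to $u'$ on such a block, which requires showing that $h'$ is strictly increasing on $J'_i=\{a,\dots,b\}\subseteq\{1,\dots,h(1)-1\}$. When $a=1$ this is exactly Lemma~\ref{l:connected_component_containing_1} applied to the nef Hessenberg function $h'$ (using Lemma~\ref{l:calculation_of_xi_for_h_prime}), which gives $h'(1)<h'(2)<\dots<h'(k'_1+2)=h'(b)$. When $a\ge 2$, I would translate back to $h$ using $d'_k=d_{k+1}$ for $k\le h(1)-2$ (read off from Lemma~\ref{l:calculation_of_xi_for_h_prime}): the condition $d'_k=0$ for $k\in\{a,\dots,b-1\}$ forces $d_{k'}=0$ for $k'\in\{a+1,\dots,b\}$. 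Strict increase of $h'$ on $\{a,\dots,b\}$ is the same as strict increase of $h$ on $\{a+1,\dots,b+1\}\subseteq\{2,\dots,h(1)\}$, so I need $h(k'+1)-h(k')\ge 1$ for $k'\in\{a+1,\dots,b\}$. Rewriting $d_{k'}=0$ as the boundary identity $h(k'+1)-h(k')+|\{j:h(j)=k'\}|=2$ (which follows from $h^*(n+1-k')-h^*(n-k')=|\{j:h(j)=k'\}|$), and noting that $k'\le b\le h(1)-1<h(1)$ together with monotonicity of $h$ implies $\{j:h(j)=k'\}=\emptyset$, I conclude $h(k'+1)-h(k')=2$. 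Hence $h'$ is (in fact) strictly increasing on $J'_i$, and condition (iii) gives $u'(j)=j$ for all $j\in J'_i$.

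Combining the two cases with the trivial case of indices outside all blocks yields $u'(j)=j$ on $\{1,\dots,h(1)-1\}$, completing the proof. The main obstacle is the case $a\ge 2$ in the second step: it is not at all obvious a priori that a $J'$-block sitting inside $\{1,\dots,h(1)-1\}$ forces strict increase of $h'$, and the key observation making it work is the very clean consequence of $k'<h(1)$, namely that $h$ simply never takes the value $k'$, which turns the nef identity into a nontrivial jump of exactly $2$.
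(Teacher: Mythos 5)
Your proposal is correct and follows essentially the same route as the paper: reduce to $u'(j)=j$ for $1\le j\le h(1)-1$, use $s_{h(1)-1}(\xi_{h'})\neq\xi_{h'}$ to confine the relevant $J'$-blocks to $\{1,\ldots,h(1)-1\}$, show that on such a block the vanishing of the nef coefficients forces jumps of exactly $2$ (hence strict increase) because the transpose contribution is trivial below $h(1)$, and then invoke condition (iii) of the inductive hypothesis. The paper carries out the jump-by-$2$ computation directly with $h'$ and $h'^*$ (via $h'^*(n-j)=n-1$) uniformly over all such blocks, so your translation back to $h$ and the separate treatment of the case $a=1$ via Lemma~\ref{l:connected_component_containing_1} are only cosmetic variations.
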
 

\begin{proof}
Since we have $\bar{u}'  (1) = 1$ by definition, it suffices to prove that
\begin{align}\label{eq: additional 1000}
u' (k) = k \quad (1 \le k \le h(1)-1).
\end{align}
Since $s_{h(1) -1} (\xi_{h'}) \neq \xi_{h'}$, we know that each $J'_i$ is contained in either $\{1, 2, \ldots, h(1) -1\}$ or $\{h(1),h(1)+1,\ldots,n-1\}$, where $J'_i=J_i(h')$.
If there are no $J' _i$ such that $J' _i \subseteq \{1, 2, \ldots, h(1) -1\}$, 
then the claim is obvious since $u'\in\mathfrak{S}_{J'}$. If there exists $1 \le i \le h(1) -1$ such that $J' _i \subseteq \{1, 2, \ldots, h(1) -1\}$, then we have $h^{\prime \ast} (n -j) = n -1$ for all $j \in J' _i$ since $h(1)-1\le h(2)-1= h'(1)$. 
Hence the equalities 
\[
h'(j) - h'(j+1) + 2 - h'^*(n-j) + h'^*(n-1-j) = 0 \quad (i-k' _{i, -} \le j < i+ k' _i +1)
\]
now imply that $h' (j+1) = h' (j) +2$ for $i-k' _{i, -} \le j < i+ k' _i +1$, where $k^\prime _{i, -}$ and $k^\prime _i$ are $k_{i, -}$ and $k_i$ for $h^\prime$, respectively. In particular, we have
\[h' (i -k' _{i, -}) < \cdots < h' (i) < \cdots < h' (i +k' _i +1).\]
Hence we see by condition {\rm (iii)}  for $u'$ that $u' (j) = j \ (j \in J' _i)$. 
Since this holds for all $J'_i\subseteq \{1, 2, \ldots, h(1) -1\}$, \eqref{eq: additional 1000} follows from $u' \in\mathfrak{S}_{J'}$.
\end{proof}

The relation \eqref{eq: wh in terms of wh'} between $\w{h}$ and $\w{h'}$ implies the following relation between $\bar{u}'\w{h}$ and $u' \w{h'}$ in a similar form by Lemma~\ref{l:fixed_part_by_u_prime}.

\begin{corollary}\label{c:computation_of_v_first_step}
The following equalities hold: 
\begin{align*}
\bar{u}'  \w{h} (i) = 
\begin{cases}
h(1) \quad&({\rm if}\ i = 1),\\
u' \w{h'} (i -1) \quad&({\rm if}\ u' \w{h'} (i -1) < h(1)),\\
u' \w{h'} (i -1)+1 \quad&({\rm if}\ u' \w{h'} (i -1) \ge h(1)).
\end{cases}
\end{align*} 
\end{corollary}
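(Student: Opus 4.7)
The plan is to verify the three cases of the corollary by direct computation, unwinding the definitions of $\bar{u}'$ (equations \eqref{eq: two permutations} and \eqref{eq: two permutations 2}) and of $\w{h}$ in terms of $\w{h'}$ (equation \eqref{eq: wh in terms of wh'}), together with Lemma~\ref{l:fixed_part_by_u_prime} which pins down the values of $\bar{u}'$ on $\{1, 2, \ldots, h(1)\}$. There is essentially no obstacle here: the statement is the combinatorial shadow of the embedding $\mathfrak{S}_{n-1}\hookrightarrow \mathfrak{S}_n$ together with the fact that $u'$ fixes the first $h(1)-1$ letters. All that needs to be checked is that the fixed range for $\bar{u}'$ is wide enough to cover the values produced by $\w{h}$ in the low case.

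First, for $i=1$, we have $\w{h}(1)=h(1)$ by \eqref{eq: wh in terms of wh'}, and $\bar{u}'(h(1))=h(1)$ by Lemma~\ref{l:fixed_part_by_u_prime}, giving the first line. Next assume $i>1$ and $\w{h'}(i-1)<h(1)$. Then \eqref{eq: wh in terms of wh'} gives $\w{h}(i)=\w{h'}(i-1)$, and since $1\le \w{h'}(i-1)<h(1)$ Lemma~\ref{l:fixed_part_by_u_prime} yields $\bar{u}'\w{h}(i)=\w{h'}(i-1)$. On the other hand, applying \eqref{eq: two permutations 2} with $k=\w{h'}(i-1)$ (noting $k+1\le h(1)$) together with Lemma~\ref{l:fixed_part_by_u_prime} gives
\[
u'\w{h'}(i-1) = \bar{u}'(\w{h'}(i-1)+1)-1 = (\w{h'}(i-1)+1)-1 = \w{h'}(i-1),
\]
which matches $\bar{u}'\w{h}(i)$, confirming the middle line.

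Finally, if $i>1$ and $\w{h'}(i-1)\ge h(1)$, then \eqref{eq: wh in terms of wh'} gives $\w{h}(i)=\w{h'}(i-1)+1>1$, so \eqref{eq: two permutations} applies with index $\w{h'}(i-1)+1$ to give
\[
\bar{u}'\w{h}(i) \;=\; \bar{u}'(\w{h'}(i-1)+1) \;=\; u'(\w{h'}(i-1))+1 \;=\; u'\w{h'}(i-1)+1,
\]
which is the third line. This exhausts the three cases and completes the proof.
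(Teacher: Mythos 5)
Your computation of the values is correct and uses exactly the same ingredients as the paper: the relation \eqref{eq: wh in terms of wh'} between $\w{h}$ and $\w{h'}$, the definition \eqref{eq: two permutations}--\eqref{eq: two permutations 2} of $\bar{u}'$, and Lemma~\ref{l:fixed_part_by_u_prime}. There is, however, one small logical point you pass over: you split into cases according to whether $\w{h'}(i-1)<h(1)$ or $\w{h'}(i-1)\ge h(1)$, whereas the statement's cases are governed by $u'\w{h'}(i-1)$ versus $h(1)$. In your middle case you do verify that the two conditions agree (you show $u'\w{h'}(i-1)=\w{h'}(i-1)<h(1)$), but in your final case you conclude ``which is the third line'' without checking that $\w{h'}(i-1)\ge h(1)$ actually forces $u'\w{h'}(i-1)\ge h(1)$. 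If that implication failed for some $i$, your computed value $u'\w{h'}(i-1)+1$ would contradict the second line of the statement rather than confirm the third, so ``this exhausts the three cases'' is not yet justified as written.

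The fix is one line and uses only what you already have: since $u'$ fixes $\{1,\ldots,h(1)-1\}$ pointwise by Lemma~\ref{l:fixed_part_by_u_prime} (via \eqref{eq: additional 1000}) and is a bijection of $[n-1]$, it maps $\{h(1),\ldots,n-1\}$ onto itself, so $\w{h'}(i-1)\ge h(1)$ if and only if $u'\w{h'}(i-1)\ge h(1)$. The paper sidesteps this issue by running the case analysis in the opposite direction: it starts from the hypotheses $u'\w{h'}(i-1)<h(1)$ (resp.\ $\ge h(1)$) as stated in the corollary and deduces $\w{h'}(i-1)<h(1)$ (resp.\ $\ge h(1)$) from \eqref{eq: two permutations 2} and Lemma~\ref{l:fixed_part_by_u_prime}, after which the value computation is the same as yours. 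With that one observation added, your argument is complete and is essentially the paper's proof.
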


\begin{proof}
We compute the values $\bar{u}'  \w{h} (i)$ for $1\le i\le n$.
If $i=1$, then $\bar{u}'  \w{h} (i)=\bar{u}'  (h(1))=h(1)$ by \eqref{eq: wh in terms of wh'} and Lemma~\ref{l:fixed_part_by_u_prime}. Hence we may assume $i>1$ in the following.

If $u' \w{h'} (i -1) < h(1)$, then by \eqref{eq: two permutations 2} we have $\bar{u}'  (\w{h'} (i -1)+1) \le h(1)$ so that 
\begin{align*}
\w{h'} (i -1) < h(1)
\end{align*} 
by Lemma~\ref{l:fixed_part_by_u_prime}. Thus \eqref{eq: wh in terms of wh'} and Lemma~\ref{l:fixed_part_by_u_prime} again show that 
\[
\bar{u}'  \w{h} (i) = \bar{u}'  \w{h'} (i-1) = \w{h'} (i-1). 
\]
But this is further equal to $u' \w{h'} (i -1)$ 
by \eqref{eq: additional 1000}.

If $u' \w{h'} (i -1) \ge h(1)$, then $\bar{u}'  (\w{h'} (i -1)+1)-1 \ge h(1)$ by \eqref{eq: two permutations 2}. Hence it follows that $\w{h'} (i -1)+1\ge h(1)+1$ by Lemma~\ref{l:fixed_part_by_u_prime}, and hence that
\[
\w{h'} (i -1)\ge h(1). 
\]
Thus, by \eqref{eq: wh in terms of wh'} and \eqref{eq: two permutations 2}, we have $\bar{u}'  \w{h} (i) = \bar{u}'  (\w{h'} (i-1)+1) = u' \w{h'} (i-1)+1$, as desired.
\end{proof}

\begin{lemma}\label{l:length_easiest_case}
The equality $\ell(\bar{u}'  \w{h}) = \ell(\bar{u}' ) + \ell(\w{h})$ holds.
\end{lemma}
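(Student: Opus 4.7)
The plan is to reduce the identity $\ell(\bar{u}' \w{h}) = \ell(\bar{u}') + \ell(\w{h})$ to the analogous identity $\ell(u' \w{h'}) = \ell(u') + \ell(\w{h'})$ for the smaller Hessenberg function $h'$, which holds by the induction hypothesis (condition~(i) for $u'$). The bridge between the two cases is Corollary~\ref{c:computation_of_v_first_step}, which expresses $\bar{u}'\w{h}$ in exactly the same form as \eqref{eq: wh in terms of wh'} expresses $\w{h}$: insert the value $h(1)$ at position $1$, and for $i>1$ read off $u'\w{h'}(i-1)$, pushing it up by $1$ whenever it is $\ge h(1)$.

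First I would count inversions of $\bar{u}'\w{h}$ by splitting pairs $(i,j)$ with $i<j$ into the cases $i=1$ and $1<i<j$. The pair $(1,j)$ is an inversion precisely when $u'\w{h'}(j-1)<h(1)$, giving exactly $h(1)-1$ such inversions. For $1<i<j$, a direct case analysis on whether each of $u'\w{h'}(i-1)$ and $u'\w{h'}(j-1)$ is $<h(1)$ or $\ge h(1)$ shows that $(i,j)$ is an inversion of $\bar{u}'\w{h}$ if and only if $(i-1,j-1)$ is an inversion of $u'\w{h'}$; the $+1$-shift either applies to both values or to neither, so it never changes the comparison. Hence
\[\ell(\bar{u}'\w{h}) = (h(1)-1) + \ell(u'\w{h'}).\]
Specializing the same bookkeeping to $u'=e$ recovers the identity $\ell(\w{h}) = \ell(\w{h'}) + (h(1)-1)$ already recorded in \eqref{eq: additional 330}.

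Finally, since $\bar{u}'(1)=1$ and \eqref{eq: two permutations} gives $\bar{u}'(i)=u'(i-1)+1$ for $i\ge 2$, the map $(i,j)\mapsto(i+1,j+1)$ is a bijection between inversions of $u'$ and inversions of $\bar{u}'$, so $\ell(\bar{u}')=\ell(u')$. Combining these ingredients with the induction hypothesis $\ell(u'\w{h'})=\ell(u')+\ell(\w{h'})$ yields
\[\ell(\bar{u}'\w{h})=(h(1)-1)+\ell(u')+\ell(\w{h'})=\ell(\bar{u}')+\ell(\w{h}),\]
which is the desired equality. The only step requiring genuine care is the case analysis for the pairs $(i,j)$ with $1<i<j$; everything else is an easy inversion count or direct appeal to the induction hypothesis, so I do not anticipate any real obstacle.
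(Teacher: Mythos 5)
Your proposal is correct and follows essentially the same route as the paper: deduce $\ell(\bar{u}'\w{h})=\ell(u'\w{h'})+(h(1)-1)$ from Corollary~\ref{c:computation_of_v_first_step}, apply condition (i) for $u'$, and conclude via \eqref{eq: additional 330} together with $\ell(\bar{u}')=\ell(u')$. Your explicit inversion-counting case analysis simply fills in the details the paper leaves implicit when citing the corollary.
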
 

\begin{proof}
The claim follows from the following direct computations: 
\begin{align*}
\ell(\bar{u}'  \w{h}) 
&= \ell(u' \w{h'}) + (h(1)-1)  \quad \text{(by Corollary \ref{c:computation_of_v_first_step})} \\
&= \ell(u')+\ell(\w{h'}) + (h(1)-1) \quad \text{(by condition (i) for $u' $)} \\
&= \ell(\bar{u}' )+\ell(\w{h}) \quad \text{(by \eqref{eq: additional 330})}.
\end{align*}
\end{proof}

To construct $u\in\mathfrak{S}_J$ which satisfies conditions (i)-(iv), we now take cases.\\

\noindent \underline{\textbf{Case 1:} $s_{h(1)} (\xi_h) \neq \xi_h$.}\\

In this case, we set $u \coloneqq \bar{u}' \in\mathfrak{S}_J$. Then condition (i) holds for $u$ by Lemma~\ref{l:length_easiest_case}. 
The assumption $s_{h(1)} (\xi_h) \neq \xi_h$ implies the following assertions on $J_i ^\prime$ by Lemma~\ref{l:calculation_of_xi_for_h_prime}. If $s_i (\xi_{h^\prime}) = \xi_{h^\prime}$, then $J_i ^\prime = \{j -1 \mid j \in J_{i +1}\} \setminus \{0\}$. If $s_1 (\xi_h) = \xi_h$, then $J_1 ^\prime$ is defined if and only if $k_1 >0$. In this case, $J_1 = \{j +1 \mid j \in J_1 ^\prime\} \cup \{1\}$.
We will use this observation to prove conditions (ii)-(iv) in the following.

\begin{proposition}\label{p:correction_on_parabolic_completed}
Condition {\rm (ii)}  holds for $u$. That is, the equality $(u \w{h})_J=u_J$ holds.
\end{proposition}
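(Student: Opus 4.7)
The plan is to verify condition (ii), i.e.\ $(u\w{h})_J = u_J$, by checking separately for each block $J_i \in J$ that the relative order of $u$ and of $u\w{h}$ on $J_i$ agree. Since we are in Case~1 ($s_{h(1)}(\xi_h) \neq \xi_h$), I split into two cases depending on whether $1 \notin J_i$ or $J_i = J_1$ (which forces $s_1(\xi_h) = \xi_h$).

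For a block $J_i = \{a, a+1, \ldots, b\}$ with $a \ge 2$, Lemma~\ref{l:calculation_of_xi_for_h_prime} combined with $s_{h(1)-1}(\xi_{h'}) \neq \xi_{h'}$ in Case~1 gives a bijective correspondence between $J_i$ and a unique block $J'_{a-1} = \{a-1, \ldots, b-1\}$ in $J'$. Using \eqref{eq: two permutations 2} I have $u(j) = u'(j-1)+1$ for $j \in J_i$, and Corollary~\ref{c:computation_of_v_first_step} gives $u\w{h}(j) = f(u'\w{h'}(j-1))$, where $f \colon \Z \to \Z$ is defined by $f(x)=x$ if $x<h(1)$ and $f(x)=x+1$ if $x\ge h(1)$. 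Since $f$ is strictly monotone, condition (ii) for $u'$ on $J'_{a-1}$ transfers directly to the desired equivalence on $J_i$.

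For the block $J_1 = \{1, 2, \ldots, k_1+2\}$, I would first show that $u$ fixes $J_1$ pointwise. When $k_1 = 0$, Lemma~\ref{l:fixed_part_by_u_prime} gives $u(1)=1$, and since $s_1(\xi_{h'}) \neq \xi_{h'}$ in this subcase we get $u'(1) = 1$, hence $u(2) = 2$. When $k_1 \ge 1$, the subcase condition forces $d_2 = 0$ and then Lemma~\ref{l:calculation_of_xi_for_h_prime} yields $s_1(\xi_{h'}) = \xi_{h'}$; applying Lemma~\ref{l:connected_component_containing_1} to $h'$ shows $h'$ is strictly increasing on $J'_1 = \{1, \ldots, k_1+1\}$, so condition (iii) for $u'$ forces $u'$ to fix $J'_1$ pointwise, which lifts through \eqref{eq: two permutations} to $u$ fixing $J_1$. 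Next I need to show that $u\w{h}$ is strictly increasing on $J_1$. By Lemma~\ref{l:connected_component_containing_1} for $h$, $\w{h}(j) = h(j)$ on $J_1$ with $h(1) < \cdots < h(k_1+2)$. For $j \ge 2$, Corollary~\ref{c:computation_of_v_first_step} rewrites $u\w{h}(j) = f(u'\w{h'}(j-1))$, and condition (ii) for $u'$ on $J'_1$ together with $u'|_{J'_1} = \mathrm{id}$ makes $u'\w{h'}$ strictly increasing on $J'_1$; monotonicity of $f$ gives the strict increase of $u\w{h}$ on $\{2, \ldots, k_1+2\}$. The remaining inequality $u\w{h}(1) = h(1) < u\w{h}(2)$ reduces via Corollary~\ref{c:computation_of_v_first_step} to showing $u'\w{h'}(1) \ge h(1)$: I have $\w{h'}(1) = h'(1) = h(2) - 1 \ge h(1)$ by Lemma~\ref{l:connected_component_containing_1}, and as observed in the proof of Lemma~\ref{l:fixed_part_by_u_prime}, each block $J'_{i'}$ lies entirely in $\{1, \ldots, h(1)-1\}$ or in $\{h(1), \ldots, n-1\}$, so $u'$ cannot carry a value $\ge h(1)$ to a value $<h(1)$.

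The main obstacle is this last threshold inequality in the $J_1$ case: it is essential that Case~1 forces every connected component of the $P_{\xi_{h'}}$-parabolic to lie on one side of the cut at $h(1)$, so that $u'$ preserves the sign of $\w{h'}(1) - h(1) \ge 0$. The other verifications are fairly routine consequences of Corollary~\ref{c:computation_of_v_first_step}, Lemma~\ref{l:fixed_part_by_u_prime}, and condition (ii) for $u'$ applied blockwise.
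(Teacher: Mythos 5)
Your argument is correct, and for every block $J_i$ with $1\notin J_i$ it is exactly the paper's proof: transfer to $u'$ via Corollary~\ref{c:computation_of_v_first_step} and \eqref{eq: two permutations 2}, then quote condition (ii) for $u'$. The only real divergence is the block $J_1$. The paper handles it more economically: comparisons inside $J_1\setminus\{1\}$ are again inherited from $u'$, and for pairs involving the position $1$ it only needs $u(1)=1<u(j)$ together with $h(1)<h(j)=\w{h}(j)$ (Lemma~\ref{l:connected_component_containing_1} for $h$), whence $u\w{h}(1)=h(1)<\bar{u}'\w{h}(j)=u\w{h}(j)$ because $\bar{u}'$ fixes $\{1,\dots,h(1)\}$ pointwise by Lemma~\ref{l:fixed_part_by_u_prime}. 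You instead prove the stronger statements that $u$ is the identity on $J_1$ (which costs you condition (iii) for $u'$, Lemma~\ref{l:connected_component_containing_1} applied to $h'$, and the Case-1 fact $h(1)\neq 2$) and that $u\w{h}$ is strictly increasing on $J_1$, pushing the threshold comparison down one level to $u'\w{h'}(1)\ge h(1)$ via $\w{h'}(1)=h'(1)=h(2)-1\ge h(1)$ and the observation that no block of $J'$ straddles the cut at $h(1)$. Both routes are sound; note that your key threshold mechanism (that $u'$ cannot move a value from $\ge h(1)$ to $<h(1)$) is precisely the non-straddling observation from the proof of Lemma~\ref{l:fixed_part_by_u_prime}, which the paper exploits one level up, directly at $\bar{u}'$, so its version of the $J_1$ case is lighter and keeps condition (iii) entirely out of this proposition (it appears only in Proposition~\ref{p:condition_iv_for_u}); your version, in exchange, effectively establishes condition (iii) on $J_1$ as a by-product.
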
 

\begin{proof}
Take $1 \le i \le n -1$ such that $s_i (\xi_h) = \xi_h$. 
It suffices to prove for $j_1, j_2 \in J_i$ that $u \w{h} (j_1) < u \w{h} (j_2)$ if and only if $u (j_1) < u (j_2)$, as we observed in the beginning of Section \ref{subsec: preliminary notations}.

First, we consider the case $1 \notin J_i$. In this case, we have $j_1, j_2\ge 2$, and hence Corollary \ref{c:computation_of_v_first_step} implies that $u \w{h} (j_1) < u \w{h} (j_2)$ if and only if $u' \w{h'} (j_1 -1) < u' \w{h'} (j_2 -1)$. In addition, since $u = \bar{u}' $, we have $u (j_1) < u (j_2)$ if and only if $u' (j_1 -1) < u' (j_2 -1)$ by \eqref{eq: two permutations 2}. From these and condition {\rm (ii)}  for $u'$, we conclude the assertion.

We next consider the case $1 \in J_i$. In this case, we have $J_i = J_1 = \{1, 2, \ldots, \rb{1} +2\}$, and the same argument as above implies that for $j_1, j_2 \in J_1 \setminus \{1\}$, we have $u \w{h} (j_1) < u \w{h} (j_2)$ if and only if $u (j_1) < u (j_2)$. Since $u = \bar{u}'$, it follows from \eqref{eq: two permutations} that 
\[u (1) = 1 < u (j)\]
for all $j \in J_1 \setminus \{1\}$. Thus it suffices to prove that $u \w{h} (1)<u \w{h} (j)$ for all $j \in J_1 \setminus \{1\}$. By Lemma~\ref{l:connected_component_containing_1}, we deduce for $j \in J_1 \setminus \{1\}$ that $h(1) < h(j) = \w{h} (j)$.
Since this means $h(1)<\bar{u}'  \w{h} (j)$
by Lemma~\ref{l:fixed_part_by_u_prime}, it follows by Lemma~\ref{l:fixed_part_by_u_prime} again that
\begin{align*}
u \w{h} (1)
=u (h(1)) 
=h(1)
<\bar{u}'  \w{h} (j) 
=u \w{h} (j),
\end{align*}
as desired.
\end{proof}

\begin{proposition}\label{p:condition_iv_for_u}
Condition {\rm (iii)} holds for $u$.
That is,
if $s_i (\xi_h) = \xi_h$ and $h$ is strictly increasing on $J_i$, then 
$u(j) = j$
for all $j\in J_i$.
\end{proposition}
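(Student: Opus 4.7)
The plan is to deduce $u(j) = j$ on $J_i$ directly from the inductive hypothesis {\rm (iii)} for $u'$, using the formulas $u(1) = 1$ and $u(j) = u'(j-1) + 1$ for $j \ge 2$ from \eqref{eq: two permutations}. I will fix $i$ with $s_i(\xi_h) = \xi_h$ such that $h$ is strictly increasing on $J_i$, and since $s_{h(1)}(\xi_h) \ne \xi_h$ in Case~1 the set $J_i$ sits either in $\{1, \ldots, h(1)-1\}$ or in $\{h(1)+1, \ldots, n\}$. I will then split the argument according to whether $1 \in J_i$.

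If $1 \notin J_i$, then $i \ge 2$, and the observation recorded right after the definition of $u$ in Case~1 (together with Lemma~\ref{l:calculation_of_xi_for_h_prime}) shows that $J'_{i-1}$ is a well-defined component of $J(h')$ equal to $\{j-1 : j \in J_i\}$. The identity $h'(j) = h(j+1) - 1$ transfers the strict increase of $h$ on $J_i$ verbatim to $h'$ on $J'_{i-1}$, so the inductive hypothesis gives $u'(j-1) = j-1$ for all $j \in J_i$, yielding $u(j) = j$.

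If $1 \in J_i$, then $J_i = J_1 = \{1, \ldots, k_1 + 2\}$ and the strict-increase hypothesis is automatic by Lemma~\ref{l:connected_component_containing_1}. The equality $u(1) = 1$ holds by definition of $\bar{u}'$, so I will be reduced to showing that $u'$ fixes $\{1, 2, \ldots, k_1+1\}$ pointwise. When $h(1) \ge 3$ and $k_1 \ge 1$, Lemma~\ref{l:calculation_of_xi_for_h_prime} yields $d'_1 = d_2 = 0$, hence $J'_1 = \{1, \ldots, k_1+1\}$, $h'$ inherits the strict increase on $J'_1$, and induction {\rm (iii)} finishes the job; the case $k_1 = 0$ is even easier since only index $1$ is at stake and $d'_1 \ge d_2 > 0$ places $1$ outside every component of $J(h')$.

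The main obstacle will be the edge case $h(1) = 2$, in which Lemma~\ref{l:calculation_of_xi_for_h_prime} forces $d'_1 = d_2 + 1 \ge 1$, so $s_1(\xi_{h'}) \ne \xi_{h'}$ and direct access to $J'_1$ is lost. Fortunately this very failure means that $1$ lies in no component of $J(h')$, so $u'(1) = 1$ automatically. If $k_1 = 1$, the same reasoning applied at index~$2$ gives $d'_2 = d_3 > 0$ and therefore $u'(2) = 2$. If $k_1 \ge 2$, then $d'_2 = d_3 = 0$, so $J'_2 = \{2, \ldots, k_1+1\}$ is defined, $h'$ still inherits the strict increase on $J'_2$, and induction {\rm (iii)} fixes these remaining indices. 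In every sub-situation $u$ is then the identity on $J_i$, completing the proof.
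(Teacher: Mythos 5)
Your proposal is correct and follows essentially the same route as the paper's proof: in both the case $1\notin J_i$ and the case $1\in J_i$ you reduce to condition (iii) for $u'$ via the identification of $J'_{i-1}$ (resp.\ $J'_1$) with the shift of $J_i$ coming from Lemma~\ref{l:calculation_of_xi_for_h_prime}, transfer the strict increase through $h'(j)=h(j+1)-1$, and conclude with \eqref{eq: two permutations}. The only differences are cosmetic: the paper dispenses with your ``edge case'' $h(1)=2$ by noting that when $k_1\ge 1$ we have $s_2(\xi_h)=\xi_h$, so the Case-1 hypothesis $s_{h(1)}(\xi_h)\neq\xi_h$ forces $h(1)\neq 2$ (your direct treatment is sound but those subcases are vacuous), it handles $k_1=0$ via Lemma~\ref{l:fixed_part_by_u_prime} rather than your ``outside every component'' argument, and your preliminary remark that $J_i$ lies entirely in $\{1,\ldots,h(1)-1\}$ or $\{h(1)+1,\ldots,n\}$ is not quite accurate (the largest element of $J_i$ can equal $h(1)$), though it is never actually used.
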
 

\begin{proof}
Note that $i \neq h(1)$ since we are assuming $s_{h(1)} (\xi_h) \neq \xi_h$ in Case 1. We first consider the case $1 \notin J_i$. In this case, we have $i\ge2$, and the assumption $s_i (\xi_h) = \xi_h$ implies $s_{i -1} (\xi_{h'}) = \xi_{h'}$ by Lemma~\ref{l:calculation_of_xi_for_h_prime} since $i\ne h(1)$. In addition, we have
\begin{align*}
\lb{i -1}' = \lb{i},\ \rb{i -1}' = \rb{i},\ J' _{i -1} = \{j -1 \mid j \in J_i\}.
\end{align*}
Hence the assumption
\begin{align*}
h (i -\lb{i}) < \cdots < h (i) < \cdots < h (i +\rb{i} +1)
\end{align*}
means by the definition of $h'$ that
\begin{align*}
h' (i -1 -k' _{i -1, -}) < \cdots < h' (i -1) < \cdots < h' (i -1 +k' _{i -1} +1).
\end{align*}
Thus, by condition {\rm (iii)}  for $u'$, we obtain
\[u (j) = \bar{u}'  (j) =u'(j-1)+1 = (j-1)+1=j\]
for all $j\in J_i$, where the second equality follows from \eqref{eq: two permutations} and $j\ge2$.

We next consider the case $1 \in J_i$. In this case, we have $J_i=J_1=\{1, 2, \ldots, \rb{1}+2\}$.
If $\rb{1}=0$, then the claim is obvious since we have $u (j) = \bar{u}'  (j) = j$ for $1\le j\le 2$ $(\le h(1))$ by Lemma~\ref{l:fixed_part_by_u_prime}.
Hence we may assume that $\rb{1}\ge1$. This means that $s_{2}(\xi_h)=\xi_h$, and hence that $s_{1}(\xi_{h'})=\xi_{h'}$ by Lemma~\ref{l:calculation_of_xi_for_h_prime} 
since the assumption $s_{h(1)} (\xi_h) \neq \xi_h$ implies $h(1) \neq 2$.
In particular, $J'_1$ is defined, and we have $J'_1=\{1, 2, \ldots, \rb{1}'+2\}=\{1, 2, \ldots, \rb{1}+1\}$.
Thus, by an argument similar to that above, we obtain $u (j) = j$ for $j\in J_i\setminus\{1\}$. 
Since we also have $u (1) = \bar{u}'  (1) = 1$ by \eqref{eq: two permutations}, it follows that $u (j) = j$ for all $j\in J_i$.
\end{proof}

\begin{proposition}\label{p:condition_v_easy_case}
Condition {\rm (iv)} holds for $u$.
That is, for $1 \le i \le n-1$ and $i +1 \le j < \LL{i} $, we have
$u \w{h} (i) < u \w{h} (j)$.
\end{proposition}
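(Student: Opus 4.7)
The plan is to prove condition~(iv) by splitting on whether $i=1$ or $i\ge 2$, handling the first case directly and reducing the second to the induction hypothesis for $u'$ and $h'$.

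For $i=1$, Corollary~\ref{c:computation_of_v_first_step} gives $u\w{h}(1)=h(1)$, so the task is to show $u\w{h}(j)>h(1)$ for $2\le j<\LL{1}$. Applying Lemma~\ref{lem: wh and L} at $i=1$ yields $\w{h}(j)>\w{h}(1)=h(1)$ in this range, which by~\eqref{eq: wh in terms of wh'} forces both $\w{h'}(j-1)\ge h(1)$ and $\w{h}(j)=\w{h'}(j-1)+1$. The Case~1 assumption $s_{h(1)}(\xi_h)\neq\xi_h$ together with Lemma~\ref{l:calculation_of_xi_for_h_prime} yields $s_{h(1)-1}(\xi_{h'})\neq\xi_{h'}$, so every block $J'_{i}$ of the parabolic $\mathfrak{S}_{J'}$ lies entirely in $\{1,\dots,h(1)-1\}$ or in $\{h(1),\dots,n-1\}$. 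In particular $u'$ stabilizes $\{h(1),\dots,n-1\}$ setwise, so $u'\w{h'}(j-1)\ge h(1)$, and Corollary~\ref{c:computation_of_v_first_step} gives $u\w{h}(j)=u'\w{h'}(j-1)+1>h(1)$, as required.

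For $i\ge 2$, the key identification to establish is $\LL{i}(h)=\LL{i-1}(h')+1$. Using $h'(k)=h(k+1)-1$, the stability $h'(k)=h'(k+1)$ is equivalent to $h(k+1)=h(k+2)$, and the double-jump $h'(k+1)=h'(k)+2$ to $h(k+2)=h(k+1)+2$. Hence $i'=i-1$ satisfies~\eqref{eq:def of L 10} (resp.\ \eqref{eq:def of L 20}) for $h'$ exactly when $i$ does for $h$, and in the latter case the quantities $\hat i$ defined for $h$ and for $h'$ differ by a shift of $1$. Since $i\ge 2$ implies $h(\hat i)\ge h(2)\ge h(1)$, formula~\eqref{eq: wh in terms of wh'} yields $\w{h}^{-1}(h(\hat i)+1)=\w{h'}^{-1}(h(\hat i))+1$, which gives $i^{(+)_h}=(i-1)^{(+)_{h'}}+1$ in both branches. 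Iterating produces $i^{(+\infty)_h}=(i-1)^{(+\infty)_{h'}}+1$, and substituting $j=k+1$ in the defining minimum of $\LL{i-1}(h')$ translates it to $\LL{i}(h)=\LL{i-1}(h')+1$.

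With this identification, the range $i+1\le j<\LL{i}$ corresponds to $(i-1)+1\le j-1<\LL{i-1}(h')$, so the induction hypothesis~(iv) for $u'$ and $h'$ supplies $u'\w{h'}(i-1)<u'\w{h'}(j-1)$. Setting $a=u'\w{h'}(i-1)$ and $b=u'\w{h'}(j-1)$, the strictly increasing map $m\mapsto m$ for $m<h(1)$ and $m\mapsto m+1$ for $m\ge h(1)$ supplied by Corollary~\ref{c:computation_of_v_first_step} sends $a<b$ to $u\w{h}(i)<u\w{h}(j)$ (the possibility $a\ge h(1)>b$ is excluded by $a<b$). The main obstacle I expect is the bookkeeping for $\LL{i}(h)=\LL{i-1}(h')+1$: keeping the correspondence between stable points, double-jumps, the $\hat i$-construction, and the $(+)$-operation coherent across $h$ and $h'$ requires care, though it involves no deep new idea beyond tracking the shift by $1$.
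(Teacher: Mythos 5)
Your proof is correct and follows essentially the same route as the paper: split into $i=1$ (using Lemma \ref{lem: wh and L} and the fact that $u$ preserves values above $h(1)$) and $i\ge 2$ (using the shift identity $\LL{i}(h)=\LL{i-1}(h')+1$, condition (iv) for $u'$, and Corollary \ref{c:computation_of_v_first_step}). The only difference is that you spell out the verification of $\LL{i}(h)=\LL{i-1}(h')+1$, which the paper asserts without proof, and route the $i=1$ case through $u'$ and the block structure of $\mathfrak{S}_{J'}$ rather than directly through Lemma \ref{l:fixed_part_by_u_prime}; both are sound.
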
 

\begin{proof}
If $i \neq 1$, then we have $\LL{i} = \LL{i-1}^\prime +1$, where $\LL{i}=\LL{i}(h)$ and $\LL{i-1}^\prime=\LL{i-1}(h')$.
Hence the assumption $i +1 \le j < \LL{i} $ means that $(i-1)+1\le j-1 < \LL{i-1}^\prime$, and we see by condition {\rm (iv)}  for $u'$ that 
\[u' \w{h'} (i -1) < u' \w{h'} (j -1),\]
which implies by Corollary \ref{c:computation_of_v_first_step} that 
$\bar{u}'  \w{h} (i) < \bar{u}'  \w{h} (j).$

If $i = 1$, then the assumption $i +1 \le j < \LL{i} $ implies that 
$h(1)=\w{h} (1)< \w{h} (j)$ by Lemma~\ref{lem: wh and L}. Hence it follows from Lemma~\ref{l:fixed_part_by_u_prime} that
\begin{align*}
u \w{h} (1) = h(1) < u \w{h} (j),
\end{align*}
as desired.
\end{proof}

\noindent \underline{\textbf{Case 2:} $s_{h(1)} (\xi_h) = \xi_h$.}\\

Let us write $\lp{h} \coloneqq \lb{h(1)}$ and $\rp{h} \coloneqq \rb{h(1)}$ for simplicity, that is, 
\begin{align*}
J_{h(1)} = \{h(1) -\lp{h}, \ldots, h(1) -1, h(1), h(1) +1, \ldots, h(1) +\rp{h} +1\}.
\end{align*}
We first consider the orders of the numbers of $\bar{u}' $ and $\bar{u}' \w{h}$ on $J_{h(1)}$ in one-line notation. We  set $J_{h(1)}^-\coloneqq \{h(1) -\lp{h}, \ldots, h(1)-1, h(1)\}$ and $J_{h(1)}^+\coloneqq \{h(1)+1, h(1)+2 , \ldots, h(1) +\rp{h} +1\}$ so that 
\[J_{h(1)} = J_{h(1)}^-\sqcup J_{h(1)}^+.\]
Since $\bar{u}'\in\mathfrak{S}_{J}$, it preserves the subset $J_{h(1)}$ of $[n]$. By Lemma~\ref{l:fixed_part_by_u_prime}, it also preserves the smaller subset $J_{h(1)}^-$. Since $J_{h(1)}^+$ is the complement of $J_{h(1)}^-$ in $J_{h(1)}$, we see that
\begin{align}\label{eq: preserving - and +}
\bar{u}'  (J_{h(1)}^-) = J_{h(1)}^-, \quad
\bar{u}'  (J_{h(1)}^+) = J_{h(1)}^+.
\end{align}
For example, we have $J_{h(1)}^-=\{7,8,9\}$ and $J_{h(1)}^+=\{10,11,\ldots,14\}$ in the running example of $n=20$.

We now use condition (ii) for $u'$ to study the orders of the numbers of $\bar{u}'$ and $\bar{u}' \w{h}$ on $J_{h(1)}$. 
Since $J'_{h(1)} = \{h(1), h(1)+1,\ldots, h(1) +\rp{h}\}$ if $\rp{h}>0$, we have
\begin{align}\label{eq: orders on J+}
\text{$\bar{u}' (j_1) < \bar{u}' (j_2)$ if and only if $\bar{u}' \w{h} (j_1) < \bar{u}' \w{h} (j_2)$} \qquad (j_1, j_2\in J_{h(1)}^+)
\end{align}
from condition (ii) for $u'$ on $J'_{h(1)}$ (cf.\ the proof of Proposition~\ref{p:correction_on_parabolic_completed}). 
Similarly (but from a slightly complicated argument as we explain below), it follows that
\begin{align}\label{eq: orders on J-}
\text{$\bar{u}' (j_1) < \bar{u}' (j_2)$ if and only if $\bar{u}' \w{h} (j_1) < \bar{u}' \w{h} (j_2)$} \qquad (j_1, j_2\in J_{h(1)}^-).
\end{align}
We prove this as follows.
By Lemma~\ref{l:fixed_part_by_u_prime}, 
it suffices to show 
\begin{align}\label{eq: order of u'bar wh}
\bar{u}' \w{h} (h(1)-\lp{h}) < \ \cdots < \bar{u}' \w{h} (h(1)-1) < \bar{u}' \w{h} (h(1)).
\end{align}
We may assume that $\lp{h}\ge 1$ since if not, then there is nothing to prove. Recall from the definition of $J_{h(1)}$ that we have $s_{h(1)-\lp{h}}\in\mathfrak{S}_J$. We now take cases.
If $h(1) -\lp{h}\ge2$, then the property $s_{h(1)-\lp{h}}\in\mathfrak{S}_J$ implies that $s_{h(1)-\lp{h}-1}\in\mathfrak{S}_{J'}$. This means that $J'_{h(1)-\lp{h}-1} = \{h(1)-\lp{h}-1, \ldots, h(1)-2, h(1)-1\}$ is defined, where we used $s_{h(1)-1}\notin\mathfrak{S}_{J'}$. Thus Lemma~\ref{l:fixed_part_by_u_prime} and condition (ii) for $u'$ on $J'_{h(1)-\lp{h}-1}$ imply \eqref{eq: order of u'bar wh} in this case. 
If $h(1) -\lp{h}=1$, then we have $h(1)<h(2)$ 
by Lemma~\ref{l:connected_component_containing_1}, and this implies that $\w{h}(1)=h(1)<h(2)=\w{h}(2)$. Hence Lemma~\ref{l:fixed_part_by_u_prime} shows that $\bar{u}' \w{h}(1) <\bar{u}' \w{h}(2)$.
This means that if $h(1)=2$, then we already have \eqref{eq: order of u'bar wh}, and hence we may assume $h(1)\ge3$ in what follows.
We then have $h(1)-\lp{h}=1< 2< h(1)$, and hence the definition of $J_{h(1)}$ implies that $s_2(\xi_{h})=\xi_{h}$, which means that $s_1(\xi_{h'})=\xi_{h'}$ so that $J'_{1} = \{1,2, \ldots, h(1)-2, h(1)-1\}$ is defined. Thus the same argument as above implies that
\begin{align*}
\bar{u}' \w{h} (2) < \bar{u}' \w{h} (3) < \ \cdots < \bar{u}' \w{h} (h(1)-1) < \bar{u}' \w{h} (h(1)).
\end{align*}
Combining this with $\bar{u}' \w{h}(1) <\bar{u}' \w{h}(2)$ proved above, we obtain \eqref{eq: order of u'bar wh} in this case. Hence  \eqref{eq: orders on J-} follows.

Recall that we seek for a permutation $u\in \mathfrak{S}_J$ which satisfies $(u\w{h})_J=u_J$. 
We observed in \eqref{eq: orders on J+} and \eqref{eq: orders on J-} above that the numbers of $\bar{u}' $ and $\bar{u}' \w{h}$ on $J_{h(1)}^{\pm}$ are ordered in the same way.
For $\bar{u}' $, we also have the following property on the whole $J_{h(1)}$:
\begin{align}\label{eq:additional 30}
\bar{u}' (j_1) < \bar{u}' (j_2) \qquad (j_1\in J_{h(1)}^-, \ j_2\in J_{h(1)}^+)
\end{align}
by \eqref{eq: preserving - and +}. If \eqref{eq:additional 30} is also satisfied for $\bar{u}' \w{h}$ (after replacing $\bar{u}'$ by $\bar{u}' \w{h}$), then we may take $u$ to be $\bar{u}' $ as we will see in Case 2-a below, but this is not the case in general. To find the desired permutation $u\in \mathfrak{S}_J$, we encode the information how the numbers of $\bar{u}' \w{h}$ on the whole $J_{h(1)}$ are ordered; in other words, how \eqref{eq:additional 30} is violated for $\bar{u}' \w{h}$ on $J_{h(1)}$. Recalling the inequalities \eqref{eq: order of u'bar wh} for $\bar{u}' \w{h}$ on $J_{h(1)}^-$, we set $r_k \coloneqq  \bar{u}' \w{h} (h(1) -k)$ for $0 \le k \le \lp{h}$, that is, we have
\begin{align}\label{eq: one-line notation on J-}
 r_{\lp{h}} < \cdots < r_1 < r_0
\end{align}
in one-line notation of $\bar{u}' \w{h}$ on $J_{h(1)}^-$.
We define $0\le\m{\lp{h}}\le \cdots\le \m{1}\le \m{0}\le \rp{h} + 1$
and 
$1 \le \q{1}, \q{2} , \ldots, \q{\m{0}} \le \rp{h} +1$ by 
\begin{equation}\label{eq: observation on J 10}
\{1 \le q \le \rp{h} +1 \mid \bar{u}'  \w{h} (h(1) +q) < r_k\} 
= \{\q{1}, \q{2}, \ldots, \q{\m{k}}\}
\end{equation}
for $0\le k\le \lp{h}$, and by
\begin{align*}
\bar{u}'  \w{h} (h(1) +\q{1}) < \bar{u}'  \w{h} (h(1) +\q{2}) < \cdots  < \bar{u}'  \w{h} (h(1) +\q{\m{0}}).
\end{align*}
Here, we mean $\{\q{1}, \q{2}, \ldots, \q{\m{k}}\}=\emptyset$ when $\m{k}=0$.
The definition \eqref{eq: observation on J 10} is well-defined because of \eqref{eq: one-line notation on J-}. Let 
\begin{align*}
\Delta_k\coloneqq\m{k}-\m{k+1} 
= |\{1 \le q \le \rp{h} +1 \mid r_{k+1} < \bar{u}'  \w{h} (h(1) +q) < r_k\} |
\end{align*}
for $0 \le k \le \lp{h}$, where we take $r_{\lp{h}+1}=\m{\lp{h}+1}=0$ as conventions so that $\Delta_{\lp{h}}=\m{\lp{h}}$.
In the running example of $n=20$, 
we have $m_2=0$, $m_1=1$, and $m_0=2$ (see also Figure~\ref{pic:example_mi}).\\

\noindent \underline{\textbf{Case 2-a:} $\Delta_k = 0$ for all $0 \le k \le \lp{h}$.}\\

In this case, the inequalities \eqref{eq:additional 30} are also satisfied for $\bar{u}' \w{h}$ on the whole $J_{h(1)}$ by the definition of $\Delta_k$, and this leads us to set $u \coloneqq \bar{u}' \in\mathfrak{S}_{J}$. Then, as in Case 1, condition {\rm (i)} follows by Lemma~\ref{l:length_easiest_case}, and condition {\rm (iv)} follows by the same arguments as that in the proof of Proposition~\ref{p:condition_v_easy_case}.

\begin{proposition}\label{prop: case 2-a condition (ii)}
The equality $(u \w{h})_J = u_J$ holds, that is, condition {\rm (ii)}  holds for $u$.
\end{proposition}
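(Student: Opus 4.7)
By the characterization of $v_J = w_J$ recalled in Section~\ref{subsec: preliminary notations}, it suffices to verify, for every $J_i \in J$ and every pair $j_1, j_2 \in J_i$, the equivalence $\bar{u}'(j_1) < \bar{u}'(j_2) \Leftrightarrow \bar{u}'\w{h}(j_1) < \bar{u}'\w{h}(j_2)$. The plan is to split into the two cases $J_i \ne J_{h(1)}$ and $J_i = J_{h(1)}$, handling the first by essentially copying Proposition~\ref{p:correction_on_parabolic_completed} and the second by exploiting the hypothesis $\Delta_k = 0$.

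For each $J_i \ne J_{h(1)}$, I would argue exactly as in the proof of Proposition~\ref{p:correction_on_parabolic_completed}: if $1 \notin J_i$, then Corollary~\ref{c:computation_of_v_first_step} converts the desired equivalence into the corresponding statement for $u'$ on $J'_{i-1}$, which is provided by condition~(ii) for $u'$; if $1 \in J_i$, then $J_i = J_1 \subseteq \{1, \ldots, h(1)-1\}$, Lemma~\ref{l:fixed_part_by_u_prime} makes $\bar{u}'$ the identity on $J_i$, and the same reduction via $h'$ (after separately handling $j = 1$) takes care of $\bar{u}'\w{h}$. Note here that Lemma~\ref{l:calculation_of_xi_for_h_prime} together with $s_{h(1)}(\xi_h) = \xi_h$ forces $s_{h(1)-1}(\xi_{h'}) \ne \xi_{h'}$, so any such $J_i$ lies strictly on one side of $h(1)$ and the reduction to $h'$ is clean.

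For $J_i = J_{h(1)}$, I would split the pair $(j_1, j_2)$ into three sub-cases according to $J_{h(1)} = J_{h(1)}^- \sqcup J_{h(1)}^+$. When both $j_1, j_2 \in J_{h(1)}^-$, Lemma~\ref{l:fixed_part_by_u_prime} gives $\bar{u}'(j_s) = j_s$, while \eqref{eq: order of u'bar wh} shows that $\bar{u}'\w{h}$ is strictly increasing on $J_{h(1)}^-$, so both sides of the equivalence collapse to $j_1 < j_2$. When both $j_1, j_2 \in J_{h(1)}^+$, the equivalence is exactly \eqref{eq: orders on J+}. The remaining mixed case $j_1 \in J_{h(1)}^-$, $j_2 \in J_{h(1)}^+$ is where the Case~2-a hypothesis is consumed: \eqref{eq: preserving - and +} yields $\bar{u}'(j_1) \in J_{h(1)}^-$ and $\bar{u}'(j_2) \in J_{h(1)}^+$, hence $\bar{u}'(j_1) < \bar{u}'(j_2)$, while $\Delta_k = 0$ for all $0 \le k \le \lp{h}$ together with the convention $\m{\lp{h}+1} = 0$ propagates through the chain $\m{0} \ge \m{1} \ge \cdots \ge \m{\lp{h}+1}$ to force $\m{0} = 0$; by the definition \eqref{eq: observation on J 10} this means $\bar{u}'\w{h}(h(1)+q) > r_0 > r_k$ for every $1 \le q \le \rp{h}+1$ and every $0 \le k \le \lp{h}$, yielding $\bar{u}'\w{h}(j_1) < \bar{u}'\w{h}(j_2)$.

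The only genuine obstacle is this mixed sub-case on $J_{h(1)}$, and the Case~2-a hypothesis is tailored precisely to control it: informally, the values of $\bar{u}'\w{h}$ on $J_{h(1)}^+$ all exceed those on $J_{h(1)}^-$, mirroring the property of $\bar{u}'$ already recorded in \eqref{eq:additional 30}. The other sub-cases and the blocks $J_i \ne J_{h(1)}$ reduce to rewriting Proposition~\ref{p:correction_on_parabolic_completed} and invoking condition~(ii) for $u'$.
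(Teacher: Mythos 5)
Your proof is correct and follows essentially the same route as the paper: reduce to blocks $J_i$, reuse the Case~1 argument of Proposition~\ref{p:correction_on_parabolic_completed} when $h(1)\notin J_i$, and on $J_{h(1)}$ handle same-side pairs via \eqref{eq: orders on J+} and \eqref{eq: orders on J-} and mixed pairs via the Case~2-a hypothesis $\Delta_k=0$ (you merely spell out the step $m_0=0$ that the paper asserts in one line). The only quibble is the phrasing ``$>r_0>r_k$ for every $0\le k\le \lp{h}$'', which for $k=0$ should read $\ge r_0\ge r_k$; the conclusion is unaffected.
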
 

\begin{proof}
Take $1 \le i \le n -1$ such that $s_i (\xi_h) = \xi_h$. It suffices to prove that for $j_1, j_2 \in J_i$, $u \w{h} (j_1) < u \w{h} (j_2)$ if and only if $u(j_1) < u (j_2)$. If $h(1) \notin J_i$, then the proof of Proposition~\ref{p:correction_on_parabolic_completed} in Case 1 implies the assertion (see also the paragraph before Proposition~\ref{p:correction_on_parabolic_completed}). 

Hence we may assume that $i = h(1)$. If $j_1, j_2 \le h(1)$ or $j_1, j_2 \ge h(1) +1$, then we have $u \w{h} (j_1) < u \w{h} (j_2)$ if and only if $u (j_1) < u (j_2)$ because of \eqref{eq: orders on J+} and \eqref{eq: orders on J-}. Hence it is enough to consider the case $j_1 \le h(1)$ and $h(1) +1\le j_2$. In this case, we have 
\[
u (j_1) < u (j_2)
\] 
by \eqref{eq:additional 30}. Since $\Delta_k = 0$ for all $k$, we know that \eqref{eq:additional 30} holds for $\bar{u}' \w{h}$ as well after replacing $\bar{u}'$ by $\bar{u}' \w{h}$, that is, we have 
\[ \bar{u}'  \w{h} (j_1) < \bar{u}'  \w{h} (j_2),\]
which implies the assertion.
\end{proof}

\begin{proposition}
Condition {\rm (iii)} holds for $u$. That is, if $s_i (\xi_h) = \xi_h$ and $h$ is strictly increasing on $J_i$, then $u(j) = j$ for all $j\in J_i$.
\end{proposition}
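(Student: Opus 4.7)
The plan is to split the proof into two cases according to whether $i = h(1)$ or $i \neq h(1)$, parallel to the analysis in Case~1 and using the fact that we are in Case~2 (so $h(1)\in J_{h(1)}$ is the only way for $i$ to equal $h(1)$).

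First, suppose $i \neq h(1)$. Then $h(1) \notin J_i$ since $J_i$ and $J_{h(1)}$ are disjoint connected components of the parabolic set. In particular $J_i \subseteq [n]\setminus\{h(1)\}$, and by Lemma~\ref{l:calculation_of_xi_for_h_prime} the equality $s_i(\xi_h)=\xi_h$ gives $s_{i-1}(\xi_{h'})=\xi_{h'}$ together with $J'_{i-1}=\{j-1 \mid j\in J_i\}$ and $k'_{i-1,-}=\lb{i},\ k'_{i-1}=\rb{i}$. Since $h$ is strictly increasing on $J_i$ and $h'(j-1)=h(j)-1$, we see $h'$ is strictly increasing on $J'_{i-1}$. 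Condition~(iii) for $u'$ therefore yields $u'(j-1)=j-1$ for every $j\in J_i$, so $u(j)=\bar{u}'(j)=u'(j-1)+1=j$ for all such $j$, as desired. (Note that $1\notin J_i$ here, because if $1\in J_i$ then $J_i=J_1$ would have to intersect $J_{h(1)}$ nontrivially, forcing $i=h(1)$, contrary to assumption.)

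Next, suppose $i=h(1)$, so $J_i=J_{h(1)}=J_{h(1)}^-\sqcup J_{h(1)}^+$. For $j\in J_{h(1)}^-$ we have $j\le h(1)$, so Lemma~\ref{l:fixed_part_by_u_prime} gives $u(j)=\bar{u}'(j)=j$ immediately. For $j\in J_{h(1)}^+$, if $\rp{h}=0$ then $J_{h(1)}^+=\{h(1)+1\}$ and \eqref{eq: preserving - and +} combined with Lemma~\ref{l:fixed_part_by_u_prime} forces $u(h(1)+1)=h(1)+1$. If $\rp{h}\geq 1$ then $s_{h(1)+1}(\xi_h)=\xi_h$, so by Lemma~\ref{l:calculation_of_xi_for_h_prime} we have $s_{h(1)}(\xi_{h'})=\xi_{h'}$, and one checks directly from the definitions that $J'_{h(1)}=\{h(1),h(1)+1,\ldots,h(1)+\rp{h}\}$. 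Because $h$ is strictly increasing on $J_{h(1)}\supseteq\{h(1)+1,\ldots,h(1)+\rp{h}+1\}$ and $h'(k)=h(k+1)-1$, we conclude that $h'$ is strictly increasing on $J'_{h(1)}$. Applying condition~(iii) of the induction hypothesis to $u'$ on $J'_{h(1)}$ gives $u'(k)=k$ for all $k\in J'_{h(1)}$. Translating back via \eqref{eq: two permutations}, this yields $\bar{u}'(k+1)=u'(k)+1=k+1$ for every $k\in J'_{h(1)}$, which covers exactly the set $J_{h(1)}^+$. Thus $u(j)=\bar{u}'(j)=j$ for all $j\in J_{h(1)}$.

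The argument is essentially routine bookkeeping; the main thing to check carefully is the translation of strict monotonicity between $h$ and $h'$ under the index shift $i\mapsto i-1$, and that $J_i$ in $h$ corresponds to $J'_{i-1}$ in $h'$ in the required way. Note that this proof never used the hypothesis $\Delta_k=0$ of Case~2-a: the hypothesis is needed for conditions~(i), (ii), and (iv), while condition~(iii) only requires the structural correspondence between $(h,u)$ and $(h',u')$ that is in place throughout Case~2.
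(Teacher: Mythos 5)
Your second case ($i=h(1)$) is essentially the paper's own argument and is correct, and your closing remark that the hypothesis $\Delta_k=0$ is never used for condition (iii) is accurate. The problem is your first case, which rests on two claims that are not true.

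First, ``$i\neq h(1)$ implies $h(1)\notin J_i$'' is false. In Case 2 we have $s_{h(1)}(\xi_h)=\xi_h$, so whenever the block $J_{h(1)}$ has length at least $3$ there exist indices $i\in J_{h(1)}$ with $s_i(\xi_h)=\xi_h$ and $i\neq h(1)$, and for these $J_i=J_{h(1)}\ni h(1)$: the blocks are not distinguished by the index $i$. This situation does occur under the hypotheses of (iii) inside Case 2-a. For $n=5$ and $h=(2,3,4,5,5)$ one computes $\xi_h=\varpi_1+\varpi_4$, so $J_{h(1)}=J_2=\{2,3,4\}$ and $h$ is strictly increasing there; moreover $h'=(2,3,4,4)$ forces $u'=e$, whence $\m{0}=0$ and we are in Case 2-a; taking $i=3\neq h(1)=2$ your first case is entered with a false premise. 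For such $i$ the identification $J'_{i-1}=\{j-1\mid j\in J_i\}$ (and $k'_{i-1,-}=\lb{i}$) also fails: by Lemma \ref{l:calculation_of_xi_for_h_prime} the coefficient of $\varpi_{h(1)-1}$ in $\xi_{h'}$ is $d_{h(1)}+1>0$, so the block in $h'$ is truncated on the left (in the example $J'_2=\{2,3\}$, not $\{1,2,3\}$), and condition (iii) for $u'$ simply does not apply to all of $\{j-1\mid j\in J_i\}$. The correct dichotomy, as in the paper, is $h(1)\in J_i$ versus $h(1)\notin J_i$; in the former case $J_i=J_{h(1)}$, and since both the hypothesis and the conclusion of (iii) depend only on the set $J_i$, one may assume $i=h(1)$ and run your second case.

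Second, in the case $h(1)\notin J_i$ you dismiss the possibility $1\in J_i$ on the grounds that $J_1$ ``would have to intersect $J_{h(1)}$''; this is a non sequitur. Having $s_1(\xi_h)=\xi_h$ only means $h(2)=h(1)+2$, and the run of vanishing coefficients may break before reaching $h(1)$ while $d_{h(1)}=0$ still holds; for instance $h=(4,6,7,8,9,10,11,12,12,\ldots,12)$ is nef with $J_1=\{1,2\}$ disjoint from $J_{h(1)}=\{4,5\}$, so at the very least you would have to prove that Case 2-a excludes such configurations, which you do not. When $1\in J_i$ the shift $j\mapsto j-1$ produces $0$, the block in $h'$ is $\{j-1\mid j\in J_i\}\setminus\{0\}$ (and ``$s_{i-1}(\xi_{h'})$'' is meaningless if $i=1$), so the element $j=1$ must be handled via $\bar{u}'(1)=1$ from \eqref{eq: two permutations}, and for $j\ge 2$ one passes to $J'_1$ using $\rb{1}\ge1$ and $h(1)\neq2$. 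This is precisely the extra argument carried out in the proof of Proposition \ref{p:condition_iv_for_u}, which the paper's Case 2-a proof invokes for all blocks with $h(1)\notin J_i$; your write-up needs the same supplement.
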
 

\begin{proof}
If $h(1) \notin J_i$, then the proof of Proposition~\ref{p:condition_iv_for_u} implies the assertion. Hence we may assume that $i = h(1)$ in the assumption of condition (iii). If $\rp{h} = 0$, then we have $J_{h(1)}=J_{h(1)}^-\sqcup\{h(1)+1\}$. Since $u=\bar{u}'$, we see by \eqref{eq: preserving - and +} that
$u(h(1) +1) = h(1) +1$. This and Lemma~\ref{l:fixed_part_by_u_prime} imply the desired claim in this case.

If $\rp{h} \ge 1$, then we have $h(1)+\rp{h}+1 \ge h(1)+2$, which implies that $s_{h(1)+1} (\xi_{h}) = \xi_{h}$. This means by Lemma~\ref{l:calculation_of_xi_for_h_prime} that $s_{h(1)} (\xi_{h'}) = \xi_{h'}$, and 
\[J' _{h(1)} = \{h(1), h(1) +1, \ldots, h(1) +\rp{h}\},
\]
where we used $s_{h(1)-1}(\xi_{h'})\ne \xi_{h'}$.
Since we have
\[
h (h(1) +1) < h (h(1) +2) < \cdots < h (h(1) +\rp{h} +1)\]
by the assumption of condition (iii), it follows from 
the definition of $h'$ that 
\[
h' (h(1)) < h' (h(1) +1) < \cdots < h' (h(1) +\rp{h}).\]
Hence condition {\rm (iii)}  for $u'$ implies that $u'(j) = j$ for $h(1)\le j\le h(1)+\rp{h}$. Thus it follows from \eqref{eq: two permutations} that
\begin{align*} 
u(j) = \bar{u}' (j) = u'(j-1)+1=(j-1)+1=j
\end{align*}
for $h(1)+1\le j\le h(1)+\rp{h}+1$. From this and Lemma~\ref{l:fixed_part_by_u_prime}, we conclude the proposition.
\end{proof}

\vspace{10pt}

\noindent \underline{\textbf{Case 2-b:} $\Delta_k \ge 1$ for some $0 \le k \le \lp{h}$.}\\

In this case, the property \eqref{eq:additional 30} does not hold for $\bar{u}' \w{h}$ on $J_{h(1)}$ (after replacing $\bar{u}'$ by $\bar{u}' \w{h}$) by the definition of $\Delta_k$, which means that the numbers of $\bar{u}' $ on $J_{h(1)}$ and the numbers of $\bar{u}' \w{h}$ on $J_{h(1)}$ are in different orders.
Hence we cannot take $u$ to be $\bar{u}' $ to have the desired property $u_J=(u\w{h})_J$.
To resolve this, we define a certain permutation $v\in\mathfrak{S}_J$ which makes the order of the numbers of $u\coloneqq v\bar{u}' $ on $J_{h(1)}$ is the same as the order of the numbers of $\bar{u}' \w{h}$ on $J_{h(1)}$. It is not immediately clear that this implies $u_J=(u\w{h})_J$ since $\bar{u}' \w{h}$ is changed as $u \w{h}$ simultaneously, but we prove that the equality in fact follows. 

To find such $v\in\mathfrak{S}_J$, we focus on the numbers of $\bar{u}' $ on $J_{h(1)}^-$ which are given by 
\begin{align*}
\bar{u}'  (h(1) -k) = h(1) -k \qquad (0 \le k \le \lp{h})
\end{align*}
by Lemma~\ref{l:fixed_part_by_u_prime}.
We set
\[\Mh \coloneqq \max\{0 \le k \le \lp{h} \mid \Delta_{k}=\m{k}-\m{k+1} \ge 1\}\]
so that we have 
\begin{align}\label{eq:additional 1010}
0=\m{\lp{h}} = \cdots =\m{\Mh+1} < \m{\Mh}\le \cdots \le \m{1}\le \m{0} \ (\le  \rp{h}+1)
\end{align}
by definition. Let
\begin{align*}
v_k \coloneqq s_{h(1)-k +\m{k}-1} \cdots s_{h(1)-k +1} s_{h(1)-k} \qquad (0 \le k \le \Mh).
\end{align*}
Note that $v_k$ is a cyclic permutation of length $\m{k}$ which is visualized in Figure \ref{pic:def of vk}.
\begin{figure}[htbp]
{\unitlength 0.1in%
\begin{picture}(42.0000,6.2000)(8.5000,-15.7000)%
%
\special{pn 8}%
\special{pa 850 950}%
\special{pa 5050 950}%
\special{pa 5050 1570}%
\special{pa 850 1570}%
\special{pa 850 950}%
\special{ip}%
%
\special{pn 8}%
\special{pa 1490 1250}%
\special{pa 1550 1250}%
\special{fp}%
%
\special{pn 8}%
\special{pa 1520 1450}%
\special{pa 1520 1250}%
\special{fp}%
%
\special{pn 8}%
\special{pa 1520 1450}%
\special{pa 4210 1450}%
\special{fp}%
\put(12.0000,-12.0000){\makebox(0,0)[lb]{\footnotesize{$h(1)-k$}}}%
\put(20.5000,-12.0000){\makebox(0,0)[lb]{\footnotesize{$h(1)-k+1$}}}%
\put(37.5000,-12.0000){\makebox(0,0)[lb]{\footnotesize{$h(1)-k+\m{k}$}}}%
\put(31.8000,-11.6000){\makebox(0,0)[lb]{$\cdots$}}%
\put(17.7000,-11.7000){\makebox(0,0)[lb]{$\leftarrow$}}%
%
\special{pn 8}%
\special{pa 1930 1100}%
\special{pa 1930 1160}%
\special{fp}%
\put(28.7000,-11.7000){\makebox(0,0)[lb]{$\leftarrow$}}%
%
\special{pn 8}%
\special{pa 3030 1100}%
\special{pa 3030 1160}%
\special{fp}%
\put(35.2000,-11.7000){\makebox(0,0)[lb]{$\leftarrow$}}%
%
\special{pn 8}%
\special{pa 3680 1100}%
\special{pa 3680 1160}%
\special{fp}%
\put(41.7000,-13.9000){\makebox(0,0)[lb]{$\uparrow$}}%
%
\special{pn 8}%
\special{pa 4210 1450}%
\special{pa 4210 1270}%
\special{fp}%
\end{picture}}%
\caption{The cyclic permutation $v_k$.}
\label{pic:def of vk}
\end{figure}
Hence we have $\ell(v_{\Mh})\le \cdots \le \ell(v_1)\le \ell(v_0)$. Now we set
\begin{align*}
u\coloneqq v_{\Mh} \cdots v_1 v_0 \bar{u}' .
\end{align*}
Note that each $v_k$ $(0 \le k \le \Mh)$ is a permutation on $J_{h(1)}\subseteq [n]$ since we have $h(1)-\lp{h}\le h(1)-k$ and $h(1)-k+\m{k}\le h(1)+\rp{h}+1$ in Figure \ref{pic:def of vk}. Thus it follows that
$u=v_{\Mh} \cdots v_1 v_0 \bar{u}'\in\mathfrak{S}_J$. 
In the running example of $n = 20$, we have $J_{h(1)}=\{7, 8, \ldots, 14\}$ and $\Mh=1 < 2 = \lp{h}$, and it follows that $u = v_1 v_0 \bar{u}' = (s_8) (s_{10}s_9) \bar{u}'$. Figures \ref{pic:example_mi} and \ref{pic:example_modification} for this example visualize the idea of the definition of $u$ which we stated at the beginning of Case 2-b: we defined $u=v_{\Mh} \cdots v_1 v_0 \bar{u}' $ so that the order of the numbers of $u$ on $J_{h(1)}$ is the same as the order of the numbers of $\bar{u}' \w{h}$ on $J_{h(1)}$. To see this in general, we prepare the following lemma.

\begin{lemma}\label{lem: the modifying permutation preserves the order}
For $1 \le j_1 < j_2 \le n$, we have $v_{\Mh} \cdots v_1 v_0 (j_1) > v_{\Mh} \cdots v_1 v_0 (j_2)$ if and only if $j_1 = h(1) - k$ and $j_2 = h(1) + l$ for some $0 \le k \le \Mh$ and $1 \le l \le m_k$. In particular, the permutation $v_{\Mh} \cdots v_1 v_0$ preserves the order of the numbers in $[n]\setminus \{h(1)-k \mid 0\le k\le \Mh\}$.
\end{lemma}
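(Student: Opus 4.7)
The plan is to compute the permutation $\sigma \coloneqq v_{\Mh} \cdots v_1 v_0$ explicitly in one-line notation, and then read off its inversions by a direct case check. Each $v_k$ is the cyclic permutation supported on the interval $[h(1)-k, h(1)-k+\m{k}]$ that sends $h(1)-k \mapsto h(1)-k+\m{k}$ and $h(1)-k+j \mapsto h(1)-k+j-1$ for $1 \le j \le \m{k}$, fixing every other element of $[n]$. From \eqref{eq:additional 1010} we have $\m{0} \ge \m{1} \ge \cdots \ge \m{\Mh} \ge 1$, which controls how these intervals overlap.

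First I compute $\sigma(h(1)-k)$ for $0 \le k \le \Mh$. For $j<k$, $v_j$ fixes $h(1)-k$ since it lies strictly below the support $[h(1)-j, h(1)-j+\m{j}]$; then $v_k(h(1)-k)=h(1)-k+\m{k}$; and for $j>k$, the identity $(h(1)-j+\m{j})-(h(1)-k+\m{k})=(k-j)+(\m{j}-\m{k})\le -1$ places $h(1)-k+\m{k}$ strictly above the support of $v_j$, so $v_j$ fixes it. Hence $\sigma(h(1)-k) = h(1)-k+\m{k}$. Next, for $1 \le l \le \m{0}$, let $k^*(l) \coloneqq \max\{0 \le k \le \Mh \mid l \le \m{k}\}$ (well defined since $l \le \m{0}$ and $l > 0 = \m{\Mh+1}$). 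An inductive trace on $j$ shows that after applying $v_0,\ldots,v_j$ the current position is $h(1)+l-j-1$ if $j \le k^*(l)$, and $h(1)+l-k^*(l)-1$ if $j > k^*(l)$: the former uses that the position $h(1)+l-j$ before $v_{j+1}$ lies in the shifting part of its support whenever $1 \le l \le \m{j+1}$, and the latter uses $l > \m{j+1}$ to place the position strictly above the support of $v_{j+1}$. Hence $\sigma(h(1)+l) = h(1)+l-k^*(l)-1$. All other elements of $[n]$ lie outside every support and are fixed by $\sigma$.

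With $\sigma$ completely described, the inversion criterion is a short case analysis on the types of $(j_1, j_2)$. For two left elements $h(1)-k_1 < h(1)-k_2$ (so $k_1 > k_2$), $\sigma(h(1)-k_1)-\sigma(h(1)-k_2) = (\m{k_1}-\m{k_2})-(k_1-k_2) \le -1$ by the monotonicity of $\m{\cdot}$, giving no inversion. A symmetric argument using non-increase of $k^*(\cdot)$ in $l$ rules out inversions between two right elements $h(1)+l_1 < h(1)+l_2$. In the mixed case $j_1 = h(1)-k$, $j_2 = h(1)+l$, we obtain $\sigma(j_1)-\sigma(j_2) = (\m{k}-l)+(k^*(l)-k)+1$; using the equivalence $k^*(l) \ge k \Leftrightarrow l \le \m{k}$, this expression is $\ge 1$ if $l \le \m{k}$ and $\le -1$ otherwise. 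These are precisely the inversions described in the statement. Finally, any pair with at least one element outside the active range $[h(1)-\Mh, h(1)+\m{0}]$ is non-inverting by the bounds $h(1)-\Mh \le \sigma(j) \le h(1)+\m{0}$ on that range (minimum attained at $\sigma(h(1)+1)=h(1)-\Mh$, maximum at $\sigma(h(1))=h(1)+\m{0}$).

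The main obstacle will be the careful bookkeeping of which intervals contain the current position at each step of the composition, particularly in handling the non-strict monotonicity $\m{k} \le \m{k-1}$ so that repeated values in the $\m{\cdot}$-sequence do not produce spurious inversions; once the explicit formulas $\sigma(h(1)-k) = h(1)-k+\m{k}$ and $\sigma(h(1)+l) = h(1)+l-k^*(l)-1$ are established, the inversion check itself is routine.
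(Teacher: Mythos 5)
Your proposal is correct, but it takes a different route from the paper. The paper never computes the product $v_{\Mh}\cdots v_1 v_0$ explicitly: for the forward direction it locates the first index $k$ at which the relative order of the pair flips, deduces from the shape of $v_k$ that $v_{k-1}\cdots v_0(j_1)=h(1)-k$ (hence $j_1=h(1)-k$, since the earlier $v_j$ fix $h(1)-k$) and that $h(1)-(k-1)\le v_{k-1}\cdots v_0(j_2)\le h(1)-k+\m{k}$, and then traces $j_2$ backwards through $v_{k-1},\ldots,v_0$ to conclude $h(1)+1\le j_2\le h(1)+\m{k}$; the converse reverses this tracking and uses the monotonicity \eqref{eq:additional 1010} to push the inequality through the remaining $v_j$. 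You instead derive closed-form values $\sigma(h(1)-k)=h(1)-k+\m{k}$ and $\sigma(h(1)+l)=h(1)+l-k^\ast(l)-1$ (with $k^\ast(l)=\max\{k\le\Mh \mid l\le\m{k}\}$, a quantity not appearing in the paper), which requires the two-phase induction you sketch, and then read off all inversions by a four-way case check; the justifications you give for each phase (the strict gap $(k-j)+(\m{j}-\m{k})\le -1$ above later supports, and $l>\m{j+1}$ placing the running position strictly above the support) are exactly the right ones, and the boundary cases outside $[h(1)-\Mh,\,h(1)+\m{0}]$ are handled correctly. The trade-off: your computation is a bit heavier in bookkeeping but yields the complete one-line description of $v_{\Mh}\cdots v_1 v_0$, from which, for instance, the inversion count $\m{0}+\m{1}+\cdots+\m{\Mh}$ used in Lemma~\ref{lem: length of modified u} is immediate, whereas the paper's flip-tracking argument is leaner and never needs the images of the elements $h(1)+l$ at all.
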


\begin{proof}
We first assume that $v_{\Mh} \cdots v_1 v_0 (j_1) > v_{\Mh} \cdots v_1 v_0 (j_2)$. Then there exists
$0\leq k\leq \Mh$ such that the following inequalities hold: 
\begin{equation}\label{equ:5.20-1}
\begin{split}
&v_{k-1}\cdots v_0 (j_1) < v_{k-1}\cdots v_0 (j_2), \\
&v_{k}v_{k-1}\cdots v_0 (j_1) > v_{k}v_{k-1}\cdots v_0 (j_2).
\end{split}
\end{equation}
Then, by the definition of $v_k$, it follows that
\begin{equation*}
\begin{split}
&v_{k-1}\cdots v_0 (j_1) = h(1)-k, \\
&v_{k}v_{k-1}\cdots v_0 (j_1) = h(1)-k+\m{k}.
\end{split}
\end{equation*}
The first equality implies that $j_1 = h(1)-k$ since the permutations $v_0, v_1, \ldots, v_{k-1}$ preserve $h(1)-k$. The definition of $v_k$ and inequalities (\ref{equ:5.20-1}) also imply that  
\[
h(1)-(k-1)\le v_{k-1}\cdots v_0 (j_2) \le h(1)-k+\m{k}.\]
By the definition of $v_{k-1}$ and $h(1)-k+\m{k}< h(1)-(k-1)+\m{k-1}$, this means that 
\[
h(1)-(k-2)\le v_{k-2}\cdots v_0 (j_2) \le h(1)-(k-1)+\m{k}.\]
By continuing this argument, we obtain that
\[ 
h(1)+1 \le j_2 \le h(1)+\m{k}.
\]
Thus we have $j_2 = h(1)+ l$ for some $1\le l \le \m{k}$.

Conversely, assume that $j_1 = h(1) - k$ and $j_2 = h(1) + l$ for some $0 \le k \le \Mh$ and $1 \le l \le m_k$. By reversing the argument above, we see that 
\begin{align*}
&v_{k}v_{k-1}\cdots v_0 (j_1) = h(1)-k+\m{k},\\
&h(1)-(k-1) \le v_{k-1}\cdots v_0 (j_2) \le h(1)-k+\m{k}.
\end{align*}
In particular, it follows that 
\[
v_{k}v_{k-1}\cdots v_0 (j_2) = v_{k-1}\cdots v_0 (j_2) -1 < h(1)-k+\m{k} = v_{k}v_{k-1}\cdots v_0 (j_1).
\]
Hence we see by \eqref{eq:additional 1010} that 
\[
v_{\Mh} \cdots v_1 v_0 (j_2) < h(1)-k+\m{k} = v_{\Mh} \cdots v_1 v_0 (j_1). 
\]
\end{proof}

Since the definition of $q_1, q_2,\ldots,q_{\m{0}}$ implies that $\bar{u}' \w{h} (h(1) +q_{l})$ is the $l$-th smallest number in $\bar{u}' \w{h}(J_{h(1)}^+)$ for $1 \le l \le \m{0}$, we see by \eqref{eq: preserving - and +} and \eqref{eq: orders on J+} that $\bar{u}'  (h(1) +q_{l})$ is the $l$-th smallest number in $\bar{u}' (J_{h(1)}^+) = J_{h(1)}^+$, which implies that
\begin{equation}\label{eq:additional 60}
\begin{split}
\bar{u}'  (h(1) +q_{l}) = h(1) +l \quad{\rm for}\ 1 \le l \le \m{0}.
\end{split}
\end{equation}
Combining this with Lemmas \ref{l:fixed_part_by_u_prime} and \ref{lem: the modifying permutation preserves the order}, it follows that for $j_1, j_2 \in [n]$, we have $\bar{u}' (j_1) < \bar{u}' (j_2)$ and $u (j_1) > u (j_2)$ if and only if $j_1 = h(1) - k$ and $j_2 = h(1) +q_{l}$ for some $0 \le k \le \Mh$ and $1 \le l \le m_k$. Hence, by \eqref{eq: orders on J+} and \eqref{eq: orders on J-}, the definition of $q_1, q_2,\ldots,q_{\m{0}}$ implies that 
\begin{align}\label{eq: additional 380}
u (j_1) < u (j_2) \ \text{if and only if} \ 
\bar{u}'  \w{h} (j_1) < \bar{u}'  \w{h} (j_2)
\quad \text{for $j_1, j_2\in J_{h(1)}$;}
\end{align}
see Figures \ref{pic:example_mi} and \ref{pic:example_modification} for the pictorial meaning of this argument. Indeed, we defined the permutation $u\in\mathfrak{S}_J$ so that this holds as we claimed above. We will prove that the latter inequality is in fact equivalent to $u \w{h} (j_1) < u \w{h} (j_2)$ to see condition (ii) for $u$. 

Our first aim is to prove condition (i) for $u$. For this purpose, we make a few observations in what follows.

\begin{lemma}\label{lem: length of modified u}
$\ell(u) = \ell(\bar{u}' ) + (\m{0}+\m{1}+\cdots +\m{\Mh})$.
\end{lemma}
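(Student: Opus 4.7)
My plan is to count inversions directly. Set $w \coloneqq v_{\Mh}\cdots v_1 v_0$ so that $u = w\bar{u}'$. The preceding Lemma~\ref{lem: the modifying permutation preserves the order} already describes the inversions of $w$: for $1\le a<b\le n$, the inequality $w(a)>w(b)$ holds if and only if $(a,b)=(h(1)-k,\,h(1)+l)$ for some $0\le k\le \Mh$ and $1\le l\le \m{k}$. Counting these pairs gives $\ell(w)=\sum_{k=0}^{\Mh}\m{k}$ immediately. I will refer to such $(a,b)$ as a \emph{special pair}.

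Next I compare $\ell(u)$ with $\ell(\bar{u}')$ by translating inversions of $u = w\bar{u}'$ into statements about $\bar{u}'$. A pair $i<j$ is an inversion of $u$ precisely when $w(\bar{u}'(i))>w(\bar{u}'(j))$, and splitting on the sign of $\bar{u}'(j)-\bar{u}'(i)$ and invoking the characterization above yields
\[
\ell(u) = \ell(\bar{u}') - A + B,
\]
where
\[
A = |\{i<j : \bar{u}'(i)>\bar{u}'(j),\ (\bar{u}'(j),\bar{u}'(i))\text{ special}\}|,
\]
\[
B = |\{i<j : \bar{u}'(i)<\bar{u}'(j),\ (\bar{u}'(i),\bar{u}'(j))\text{ special}\}|.
\]

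The key ingredient is that the $\bar{u}'$-preimages of every entry appearing in a special pair are explicit. Lemma~\ref{l:fixed_part_by_u_prime} gives $\bar{u}'(h(1)-k)=h(1)-k$, which applies throughout the range $0\le k\le \Mh$ because $\Mh\le \lp{h}$; and \eqref{eq:additional 60} gives $\bar{u}'(h(1)+\q{l})=h(1)+l$ for $1\le l\le \m{0}$, which covers all $1\le l\le \m{k}$ since $\m{k}\le \m{0}$ by \eqref{eq:additional 1010}. Substituting these identifications shows that an $A$-pair would force $i=h(1)+\q{l}$ and $j=h(1)-k$ with $i<j$, an impossibility, so $A=0$. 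On the other hand, each special pair $(h(1)-k,\,h(1)+l)$ contributes exactly one $B$-pair, namely $(h(1)-k,\,h(1)+\q{l})$, whose order $i<j$ is automatic. Hence $B=\sum_{k=0}^{\Mh}\m{k}$, and the claimed identity follows.

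The main piece of bookkeeping is the sign-split in the second paragraph: one must verify that each pair $(i,j)$ is counted once, with the correct sign, when passing from $\ell(\bar{u}')$ to $\ell(u)$. Once this translation is set up carefully, the rest reduces to plugging in the two explicit formulas for the action of $\bar{u}'$ on the relevant indices.
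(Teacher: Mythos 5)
Your proof is correct and is essentially the paper's own argument written out in full: the paper likewise deduces the length formula from Lemma~\ref{lem: the modifying permutation preserves the order} together with the fact (via Lemma~\ref{l:fixed_part_by_u_prime}) that the values $h(1)-\Mh,\ldots,h(1)$ occur in $\bar{u}'$ at positions preceding those of $h(1)+1,\ldots,h(1)+\m{0}$, so that no inversions are destroyed and exactly $\m{0}+\cdots+\m{\Mh}$ new ones are created. Your explicit $A=0$, $B=\sum_k \m{k}$ bookkeeping is just a more detailed rendering of that same inversion count.
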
 

\begin{proof}
In one-line notation of $\bar{u}' $, the numbers $h(1)-\Mh, h(1)-\Mh+1,\ldots, h(1)$ appear before the numbers $h(1)+1, h(1)+2, \ldots, h(1)+\m{0}$ by Lemma~\ref{l:fixed_part_by_u_prime}. Hence the assertion follows by Lemma \ref{lem: the modifying permutation preserves the order}.
\end{proof}

Notice that
\[
\DD{h(1)-k}=0
\qquad (0\le k\le \lp{h}).
\] 
Since $s_{h(1) -k} (\xi_h) = \xi_h$ for all $1 \le k \le \lp{h}$, this means from \eqref{eq: vanishing coefficient} that
\begin{equation}\label{eq:shape_of_h_smaller_than_h(1)}
\begin{aligned}
h(h(1) -k) = \jh -2k \qquad \text{($0 \le k \le \lp{h}$)}.
\end{aligned}
\end{equation}
This leads us to define $t_1,t_2,\ldots,t_{\lp{h}}>0$ by
\begin{align}\label{eq:additional 70}
(h(1)-k)^{(+)} = \tk{k}   \qquad (1 \le k \le \lp{h}), 
\end{align}
which is equivalent to
\begin{align}\label{eq:additional 75}
\w{h}(\tk{k}) 
= h(h(1)-k)+1 = \jh -(2k -1) \qquad (1 \le k \le \lp{h})
\end{align}
by \eqref{eq:shape_of_h_smaller_than_h(1)}. See Figure \ref{pic:def tk}. Since we have 
$\jh -1>\jh -3>\cdots> \jh -(2\lp{h}-1)$, 
the maximality of the values of $\w{h}$ implies that
\[
t_1 < t_2 < \cdots < t_{\lp{h}}.
\]
Also, \eqref{eq:additional 75} implies that $h$ is stable at $\tk{k}$:
\[
h(\tj{k}) = h(\tk{k}) \qquad (1 \le k \le \lp{h}).
\]
For example, we have $\tk{1}=11$ and $\tk{2}=13$ in the running example of $n=20$ in Section \ref{subsec: Example}.

\begin{figure}[htbp]
{\unitlength 0.1in%
\begin{picture}(31.4000,26.9000)(12.5000,-38.9000)%
%
\special{pn 8}%
\special{pa 1800 1400}%
\special{pa 2000 1400}%
\special{fp}%
\special{pa 2000 1400}%
\special{pa 2000 1800}%
\special{fp}%
\special{pa 2000 1800}%
\special{pa 2200 1800}%
\special{fp}%
%
\special{pn 8}%
\special{pa 3200 3200}%
\special{pa 3200 3400}%
\special{fp}%
\special{pa 3200 3400}%
\special{pa 3600 3400}%
\special{fp}%
%
\special{sh 1.000}%
\special{ia 2100 1700 28 28 0.0000000 6.2831853}%
\special{pn 8}%
\special{ar 2100 1700 28 28 0.0000000 6.2831853}%
%
\special{sh 1.000}%
\special{ia 3300 3300 28 28 0.0000000 6.2831853}%
\special{pn 8}%
\special{ar 3300 3300 28 28 0.0000000 6.2831853}%
%
\special{sh 1.000}%
\special{ia 3500 2300 28 28 0.0000000 6.2831853}%
\special{pn 8}%
\special{ar 3500 2300 28 28 0.0000000 6.2831853}%
\put(28.5000,-29.0000){\makebox(0,0)[lb]{$\ddots$}}%
%
\special{pn 8}%
\special{pa 2200 1800}%
\special{pa 2200 2200}%
\special{fp}%
\special{pa 2200 2200}%
\special{pa 2400 2200}%
\special{fp}%
\special{pa 2400 2200}%
\special{pa 2400 2600}%
\special{fp}%
%
\special{pn 8}%
\special{pa 2400 2600}%
\special{pa 2600 2600}%
\special{fp}%
%
\special{sh 1.000}%
\special{ia 2500 2500 28 28 0.0000000 6.2831853}%
\special{pn 8}%
\special{ar 2500 2500 28 28 0.0000000 6.2831853}%
%
\special{sh 1.000}%
\special{ia 2300 2100 28 28 0.0000000 6.2831853}%
\special{pn 8}%
\special{ar 2300 2100 28 28 0.0000000 6.2831853}%
%
\special{pn 8}%
\special{pa 4300 2300}%
\special{pa 1800 2300}%
\special{dt 0.045}%
%
\special{pn 8}%
\special{pa 3500 1300}%
\special{pa 3500 3600}%
\special{dt 0.045}%
\put(32.0000,-38.3000){\makebox(0,0)[lb]{$\tk{k}$}}%
%
\special{pn 8}%
\special{pa 2300 1300}%
\special{pa 2300 3600}%
\special{dt 0.045}%
\put(19.7000,-38.3000){\makebox(0,0)[lb]{$h(1)-k$}}%
%
\special{pn 8}%
\special{pa 1250 1200}%
\special{pa 4390 1200}%
\special{pa 4390 3890}%
\special{pa 1250 3890}%
\special{pa 1250 1200}%
\special{ip}%
\end{picture}}%
\caption{The definition of $t_k$.}
\label{pic:def tk}
\end{figure}
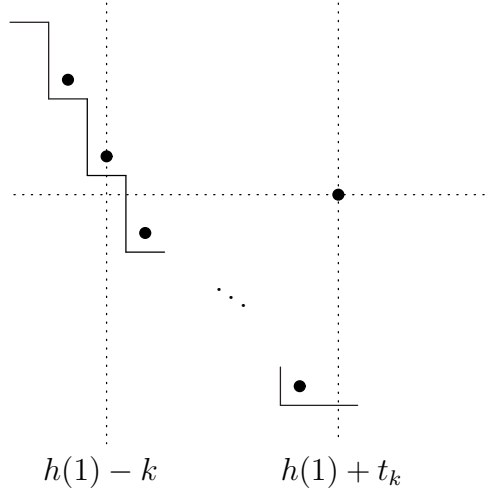

We use condition (iv) for $u'$ to prove the following lemma.

\begin{lemma}\label{p:not_containing_1}
The set $J_{h(1)}$ does not contain $1$. In particular, we have $
h(1)-\lp{h}>1$.
\end{lemma}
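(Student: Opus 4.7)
The plan is to argue by contradiction: suppose $1 \in J_{h(1)}$, so that $\lp{h} = h(1) - 1$ and $J_{h(1)} = J_1 = \{1, 2, \ldots, h(1) + \rp{h} + 1\}$. I aim to show that then $\Delta_k = 0$ for every $0 \le k \le \lp{h}$, contradicting the defining hypothesis of Case 2-b.

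I would first exploit Lemma~\ref{l:connected_component_containing_1}, which gives that $h$ is strictly increasing on $J_1$ with $\w{h}(j) = h(j)$ there. Combining this with the identity $h(h(1) - k) = \jh - 2k$ from \eqref{eq:shape_of_h_smaller_than_h(1)} pins down the shape of $h$ on $J_1$: the gap $h(j+1) - h(j)$ equals $2$ for $j \in \{1, \ldots, h(1) - 1\}$, so in particular $\jh = 3h(1) - 2$; an analogous accounting using $d_{h(1) + k} = 0$ for $0 \le k \le \rp{h}$ shows that the gaps on $\{h(1), \ldots, h(1) + \rp{h}\}$ follow the alternating pattern $1, 2, 1, 2, \ldots$. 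By Lemma~\ref{l:calculation_of_xi_for_h_prime}, the coefficient of $\varpi_{h(1) - 1}$ in $\xi_{h'}$ equals $1$ (because $d_{h(1)} = 0$), which splits the $J'$-connected components around $h(1) - 1$ as $J'_1 = \{1, \ldots, h(1) - 1\}$ (when $h(1) \ge 3$) and $J'_{h(1)} = \{h(1), \ldots, h(1) + \rp{h}\}$ (when $\rp{h} \ge 1$). The strict monotonicity of $h$ on $J_1$ propagates via $h'(i) = h(i+1) - 1$ to strict monotonicity of $h'$ on $J'_1$ and on $J'_{h(1)}$, so condition~{\rm (iii)} for $u'$ forces $u'$ to fix $J'_1 \cup J'_{h(1)}$ pointwise. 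Combined with \eqref{eq: two permutations} and the fact that $\bar{u}' \in \mathfrak{S}_J$ preserves $J_1$, it follows that $\bar{u}'$ fixes $J_1$ pointwise.

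The heart of the proof is then verifying $\bar{u}' \w{h}(h(1) - k) < \bar{u}' \w{h}(h(1) + l)$ for every $0 \le k \le \lp{h}$ and $1 \le l \le \rp{h} + 1$, which will imply $\Delta_k = 0$ for all $k$ and yield the desired contradiction. For $k < \lp{h}$, Corollary~\ref{c:computation_of_v_first_step} reduces this to $u' \w{h'}(h(1) - k - 1) < u' \w{h'}(h(1) + l - 1)$, which follows from condition~{\rm (iv)} for $u'$ once we verify the uniform bound $\LL{i}(h') \ge h(1) + \rp{h} + 1$ for $1 \le i \le h(1) - 1$. For the boundary case $k = \lp{h}$ we have $\bar{u}' \w{h}(1) = h(1)$, and the desired inequality reduces to $u' \w{h'}(h(1) + l - 1) \ge h(1)$; since $u'$ preserves each $J'$-block and all $J'$-blocks other than $J'_1$ consist of indices $\ge h(1)$, this further reduces to $\w{h'}(j) \ge h(1)$ for $j \in J'_{h(1)}$, which I would establish by directly computing $\w{h'}(h(1)) = 3h(1) - 2 \ge h(1)$ from the shape pinned down above and then invoking Lemma~\ref{lem: wh and L} for $h'$.

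The main obstacle is the uniform bound $\LL{i}(h') \ge h(1) + \rp{h} + 1$. The point $i^{(+)_{h'}}$ may leap far beyond $i$ due to the gaps of size $2$ that $h'$ inherits from the shape of $h$, so this bound cannot be obtained by merely tracking where $i^{(+\infty)_{h'}}$ lands. The key observation that makes it go through is that the strict monotonicity of $h$ on $J_1$ forces $h'$ to have no stable points on $\{1, \ldots, h(1) + \rp{h} - 1\}$; consequently, any stable point of $h'$ appearing in the range that defines $\LL{i}(h')$ must lie at position $\ge h(1) + \rp{h}$, regardless of how high $i^{(+\infty)_{h'}}$ ends up.
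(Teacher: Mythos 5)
Your proposal is correct and follows essentially the same route as the paper's proof: assume $1\in J_{h(1)}$, use Lemma~\ref{l:connected_component_containing_1} to get strict monotonicity of $h$ on $J_1$, and deduce via condition (iv) for $u'$ that $\bar{u}' \w{h}(h(1)-k)<\bar{u}' \w{h}(h(1)+l)$ across all of $J_{h(1)}$, forcing $\Delta_k=0$ for every $k$ and contradicting the Case 2-b hypothesis. The paper reaches this a bit more directly by noting $h(1)+\rp{h}+1<\LL{i}(h)$ for $1\le i\le h(1)$ and citing the argument of Proposition~\ref{p:condition_v_easy_case} (which already handles the boundary case $i=1$ via Lemma~\ref{lem: wh and L} applied to $h$), whereas you rebuild the same reduction at the level of $h'$ --- your bound $\LL{i}(h')\ge h(1)+\rp{h}+1$ is the identical observation through $\LL{i}(h)=\LL{i-1}(h')+1$ --- and your additional structural computations (the exact gap pattern on $J_1$, the value $\w{h'}(h(1))$, and $\bar{u}'$ fixing $J_1$ pointwise) are not needed for the contradiction, only the strict monotonicity is.
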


\begin{proof}
If $1 \in J_{h(1)}$, then we have $J_{h(1)}=J_1$ and $s_1(\xi_h)=\xi_h$.
Lemma~\ref{l:connected_component_containing_1} then implies that
\[h(1) < h(2) < \cdots < h(h(1)) < \cdots < h(h(1) +\rp{h} +1).\]
Since $h$ is stable at $\LL{i} $ by definition, this means that
\[h(1) +\rp{h} +1 < \LL{i} \]
for all $1 \le i \le h(1)$.
Hence condition (iv) for $u'$ implies (as in the proof of Proposition~\ref{p:condition_v_easy_case}) that
\[
\bar{u}'  \w{h} (i) < \bar{u}'  \w{h} (j)
\]
for all $1 \le i \le h(1)$ and $i +1 \le j \le h(1) +\rp{h} +1$. This means by definition that $\Delta_{k}=\m{k}-\m{k+1} = 0$ for all $k$, which is a contradiction.
\end{proof}

We note by \eqref{eq:additional 1010} that
the set in \eqref{eq: observation on J 10} is non-empty for $0\le k\le \Mh$:
\begin{align}\label{eq: additional 530}
\{\q{1}, \q{2}, \ldots, \q{\m{k}}\}\ne\emptyset \qquad (0\le k\le \Mh).
\end{align}

\begin{lemma}\label{l:computation_of_w(h)_for_h(1)-k}
For $1 \le k \le \Mh$, the following equality holds:
\[\w{h}^{-1} (h(1) -k) = \DD{\tk{k}}.\]
\end{lemma}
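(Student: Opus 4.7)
The statement is equivalent to $\w{h}(\DD{\tk{k}}) = h(1) - k$, so the goal is to compute the value of $\w{h}$ at the specific position $\DD{\tk{k}}$. My plan would be to first show that $\tk{k} \in J_{h(1)}$ whenever $1 \le k \le \Mh$, and then apply the principle of similar shapes to transfer the local structure of $h$ near $\tk{k}$ to the initial segment $[1, \DD{\tk{k}}]$, where the greedy construction of $\w{h}$ can be analyzed explicitly.

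First, I would verify that $\tk{k} \in J_{h(1)}$ for $1 \le k \le \Mh$. This is exactly where the restriction $k \le \Mh$ is used. By \eqref{eq:additional 60} and the definition of $\m{k}$, the $\m{k}$ positions $h(1)+\q{l}$ in $J_{h(1)}^{+}$ realize, under $\bar{u}'\w{h}$, the values $h(1)+1, h(1)+2, \dots, h(1)+\m{k}$ that are smaller than $r_k = \bar{u}'\w{h}(h(1)-k)$. Tracing this through \eqref{eq: wh in terms of wh'} and Lemma~\ref{l:fixed_part_by_u_prime} gives that the value $\w{h}(\tk{k}) = h(h(1))-2k+1$, which by \eqref{eq:additional 75} is the defining value of $\tk{k}$, is realized at a position inside $J_{h(1)}^{+}$, whence $\tk{k} \in J_{h(1)}$.

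Once $\tk{k} \in J_{h(1)}$ is secured, the interval $[h(1), \tk{k}]$ lies inside $J_{h(1)}$, and since $s_{h(1)}(\xi_h) = \xi_h$ and $h(\tk{k}) = h(\tk{k}-1) \ne h(\tk{k}-1)+2$, the modified principle of similar shapes from Remark~\ref{rem: the modified principle} applies to the intervals $[h(1), \tk{k}]$ and $[1, \DD{\tk{k}}]$. This transports the shape of $h$ on $[h(1), \tk{k}]$, which by \eqref{eq:shape_of_h_smaller_than_h(1)} and the structure of the $(+)$-operation alternates $k$ jumps of size $2$ with $k$ stable steps interspersed by steps of size $1$, to an identical similar shape on $[1, \DD{\tk{k}}]$. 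In particular, $h$ is stable at $\DD{\tk{k}}$, the positions in $[1, \DD{\tk{k}}]$ where $h$ jumps by $2$ (respectively is stable) correspond one-to-one with those in $[h(1), \tk{k}]$, and $\DD{\tk{k}}$ itself corresponds to $\tk{k}$.

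Finally, to pin down $\w{h}(\DD{\tk{k}})$, I would proceed by induction on $k$. The base case $k=1$: Lemma~\ref{lem: wh and L} applied with $i=1$ forces $\w{h}(j) > \w{h}(1) = h(1)$ for $2 \le j < \LL{1}$, and the shape transfer shows $\DD{\tk{1}} = \LL{1}$ is the first stable position; the greedy rule then places the next available value, which is $h(1)-1$, at $\DD{\tk{1}}$. For the inductive step $k-1 \Rightarrow k$, I would use Lemma~\ref{lem: D and L} (i.e.\ $\DD{\LL{i}} = \LL{\DD{i}}$, applied to the chain of $(+)$-operations linking $\tk{k-1}$ to $\tk{k}$) together with Lemma~\ref{lem: wh and L} to show that between positions $\DD{\tk{k-1}}$ and $\DD{\tk{k}}$, the greedy rule fills in values strictly between $h(1)-k$ and $h(1)-k+1$ on integers $> h(1)$, so that by the time we reach $\DD{\tk{k}}$ the next available value not exceeding $h(h(1)-k)$ is precisely $h(1)-k$.

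The main obstacle is the bookkeeping in the inductive step: one must verify that no value in $\{1, 2, \ldots, h(1)-k-1\}$ is placed by $\w{h}$ at any position in $(\DD{\tk{k-1}}, \DD{\tk{k}})$, which requires carefully combining Lemma~\ref{lem: wh and L} with the shape transfer to argue that all such intermediate positions $j$ satisfy $h(j)$ large enough to force the greedy choice to be $>h(1)-k$. The assumption $k \le \Mh$, via the nonemptiness \eqref{eq: additional 530} of the $q_l$'s, is exactly what guarantees that the shape transfer reaches $\DD{\tk{k}}$ without the interval escaping $J_{h(1)}$.
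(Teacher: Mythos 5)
Your overall skeleton is the paper's: first force $[h(1),\tk{k}]\subseteq J_{h(1)}$, then transfer the shape of $h$ from $[h(1),\tk{k}]$ to $[1,\DD{\tk{k}}]$ via the (modified) principle of similar shapes and read off $\w{h}(\DD{\tk{k}})$ from the greedy/maximality definition of $\w{h}$; your second half (shape transfer plus a counting/greedy analysis, done first for $k=1$ and then on the intervals between consecutive $\tk{k}$'s) is essentially what the paper does, modulo loose bookkeeping. The genuine gap is in your first step, which is exactly where the hypothesis $k\le \Mh$ must do its work. Knowing that the $\m{k}\ (\ge 1)$ positions $h(1)+\q{l}$ of $J_{h(1)}^{+}$ carry $\bar{u}'\w{h}$-values below $r_k=\bar{u}'\w{h}(h(1)-k)$ says nothing, by itself, about where $\tk{k}$ sits: to conclude $\tk{k}\le h(1)+\q{l}\le h(1)+\rp{h}+1$ one must know that \emph{no} position $j$ with $h(1)-k+1\le j<\tk{k}$ has $\bar{u}'\w{h}(j)<r_k$. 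The paper gets this from condition (iv) of the inductive hypothesis applied to $u'$ at $i=h(1)-(k+1)$, transported through Corollary \ref{c:computation_of_v_first_step}; your sketch never invokes condition (iv). The tools you cite cannot replace it: \eqref{eq: wh in terms of wh'}, \eqref{eq:additional 60} and Lemma \ref{l:fixed_part_by_u_prime} only control $\bar u'$ itself, and Lemma \ref{lem: wh and L} gives the inequality $\w{h}(h(1)-k)<\w{h}(j)$ for $\w{h}$, not for $\bar{u}'\w{h}$; since $\bar{u}'$ permutes values inside the blocks $J_i$, it need not preserve these comparisons, so the needed inequality for $\bar{u}'\w{h}$ does not follow. (Incidentally, the values realized by $\bar{u}'\w{h}$ at the positions $h(1)+\q{l}$ are not $h(1)+1,\ldots,h(1)+\m{k}$; that is what $\bar{u}'$ does at those positions, by \eqref{eq:additional 60}.)

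A secondary consequence of the same omission: in your inductive step you want to apply Lemma \ref{lem: D and L} along the $(+)$-chain, but that lemma requires containments of the form $[i-1,\LL{i}]\subseteq J_{h(1)}$, and in this setting the inclusion $\LL{h(1)-k}\le h(1)+\rp{h}+1$ is again a consequence of condition (iv) (this is the paper's Lemma \ref{lem: L and q}, proved later); the paper avoids this in the present lemma by working purely with the $\stable[\cdot,\cdot]$ versus $\twosteps[\cdot,\cdot]$ counts after the transfer. So to repair your proof, import condition (iv) for $u'$ (via Corollary \ref{c:computation_of_v_first_step}) at the start to pin $\tk{k}$ inside $J_{h(1)}$, and then either run the counting argument of the paper or justify the containments needed for Lemma \ref{lem: D and L} separately.
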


\begin{proof}
We first show that $[h(1),\tk{k}]\subseteq J_{h(1)}$.
By the previous lemma, 
we have $h(1)-(k+1)\ge1$.
Hence it follows from condition {\rm (iv)}  for $u'$ with $i=h(1)-(k+1)\ge1$ that
\begin{align*}
u' \w{h'} (h(1) -(k +1)) < u' \w{h'} (j -1)
\end{align*}
for $h(1) -k +1 \le j < i^{(+)'}+1$ $(\le \LL{i}^{\prime})$, 
where we mean $i^{(+)'}=i^{(+)_{h'}}$ and $\LL{i}^{\prime}=\LL{i}(h')$.
Since we have $i^{(+)'}+1 = (i+1)^{(+)} = \tk{k}$ by the definition of $i=h(1)-(k+1)$, this inequality and Corollary \ref{c:computation_of_v_first_step} imply that
\begin{align*}
\bar{u}'  \w{h} (h(1) -k) < \bar{u}'  \w{h} (j)
\end{align*}
for $h(1) -k +1 \le j < \tk{k}$. This means that $h(1) + q_{m}$ cannot belong to the interval $[h(1) -k +1, \ \tj{k}]$ for any $1\le m\le \m{k}$ because of \eqref {eq: observation on J 10}, where we know that $\{\q{1}, \q{2}, \ldots, \q{\m{k}}\}\ne\emptyset$ as pointed out above.
Since we have $h(1)+q_{m}\in J_{h(1)}^+$ by definition, this means that
\begin{align}\label{eq: additional 520}
\tk{k} 
\le h(1) +q_{m} \le h(1) +\rp{h} +1
\end{align}
for $1\le m\le \m{k}$.
From this, we obtain 
\begin{align}\label{eq: additional 370}
[h(1),\tk{k}]\subseteq J_{h(1)}.
\end{align}
This in fact implies the claim of this lemma as we prove in what follows. 

We start with the case $k=1$.
Since the definition of $t_1$ means that $\w{h}(\tk{1})= \w{h}(h(1))-1$ which is smaller than $\w{h}(h(1))$ by $1$, we must have
\begin{equation*}
\begin{split}
&\stable[h(1),j]\le \twosteps[h(1),j] \quad \text{for $h(1)< j< \tk{1}$}, \\
&\stable[h(1),\tk{1}] = \twosteps[h(1),\tk{1}]+1,
\end{split}
\end{equation*}
where $\stable[k,\ell]$ and $\twosteps[k,\ell]$ are the numbers defined in the proof of Lemma~\ref{lem: D and +}. 
This is because, if this does not hold, then 
$\w{h} (h(1) +t_1) \neq \w{h} (h(1)) -1$ by the maximality of the values of $\w{h}$. The principle of similar shapes in the sense of Remark \ref{rem: the modified principle} now implies that the same holds on $[1,\DD{\tk{1}}]$.
Namely, 
\begin{equation*}
\begin{split}
&\stable[1,j]\le \twosteps[1,j] \quad \text{for $1< j< \DD{\tk{1}}$}, \\
&\stable[1,\DD{\tk{1}}] = \twosteps[1,\DD{\tk{1}}]+1.
\end{split}
\end{equation*}
This means that 
\begin{align*}
\w{h} ( \DD{\tk{1}} ) 
= \w{h} (1) -1
= h(1) -1.
\end{align*}
That is, $\DD{\tk{1}}
= \w{h}^{-1}(h(1)-1)$,
as desired. 

For $k\ge 2$, we can argue in a similar way on $[\tk{k-1}, \tk{k}]$ with the principle of similar shapes.
\end{proof}

\begin{proposition}\label{prop: condition (i) for u}
Condition {\rm (i)} holds for $u$. That is, the equality $\ell(u \w{h}) = \ell(u) + \ell(\w{h})$ holds.
\end{proposition}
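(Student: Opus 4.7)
The plan is to express $u\w{h} = v_{\Mh}\cdots v_0 \cdot \bar{u}'\w{h}$ and reduce the sought length identity to disjointness of two inversion sets. Writing $\mathrm{Inv}(\sigma) = \{(i,j) : i < j,\ \sigma(i) > \sigma(j)\}$ for $\sigma \in \mathfrak{S}_n$, a standard criterion says that $\ell(vw) = \ell(v) + \ell(w)$ if and only if $\mathrm{Inv}(v) \cap \mathrm{Inv}(w^{-1}) = \emptyset$. Combining this with Lemmas~\ref{l:length_easiest_case} and~\ref{lem: length of modified u} and with the identity $\ell(v_{\Mh}\cdots v_0) = \sum_{k=0}^{\Mh} m_k$ (which follows from Lemma~\ref{lem: the modifying permutation preserves the order}), the desired equality becomes equivalent to
\[
\mathrm{Inv}(v_{\Mh}\cdots v_0) \cap \mathrm{Inv}((\bar{u}'\w{h})^{-1}) = \emptyset.
\]

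By Lemma~\ref{lem: the modifying permutation preserves the order},
\[
\mathrm{Inv}(v_{\Mh}\cdots v_0) = \{(h(1)-k,\, h(1)+l) : 0 \le k \le \Mh,\ 1 \le l \le m_k\},
\]
so the task is to show that for every such pair the value $h(1)-k$ precedes the value $h(1)+l$ in the one-line notation of $\bar{u}'\w{h}$. Since $\bar{u}'$ fixes every element of $\{1,\dots,h(1)\}$ by Lemma~\ref{l:fixed_part_by_u_prime}, the position of $h(1)-k$ in $\bar{u}'\w{h}$ coincides with $\w{h}^{-1}(h(1)-k)$; this equals $1$ when $k=0$, and equals $\DD{\tk{k}}$ for $1 \le k \le \Mh$ by Lemma~\ref{l:computation_of_w(h)_for_h(1)-k}. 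On the other hand, \eqref{eq:additional 60} gives $\bar{u}'(h(1)+q_l) = h(1)+l$, so the position of $h(1)+l$ in $\bar{u}'\w{h}$ equals $\w{h}^{-1}(h(1)+q_l)$.

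For $k=0$, the required strict inequality $1 < \w{h}^{-1}(h(1)+q_l)$ is immediate since $\w{h}(1) = h(1) \neq h(1)+q_l$. For $1 \le k \le \Mh$, inequality \eqref{eq: additional 520} established within the proof of Lemma~\ref{l:computation_of_w(h)_for_h(1)-k} furnishes $\tk{k} \le h(1)+q_l$, and then Lemma~\ref{lem: addition D and wh} applied with $i = \tk{k}$ and $j = h(1)+q_l$ yields $\DD{\tk{k}} < \w{h}^{-1}(h(1)+q_l)$, completing the argument. The principal technical hurdle is the simultaneous identification of where $h(1)-k$ and $h(1)+l$ sit in $\bar{u}'\w{h}$; once Lemma~\ref{l:computation_of_w(h)_for_h(1)-k} (which itself rests on the principle of similar shapes developed in Section~\ref{subsec: similar shapes}) and \eqref{eq:additional 60} pin down these positions, everything collapses to the pointwise inequality between $\DD{}$ and $\w{h}^{-1}$ already recorded in Lemma~\ref{lem: addition D and wh}.
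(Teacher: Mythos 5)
Your proposal is correct and follows essentially the same route as the paper: after the reduction via Lemmas~\ref{l:length_easiest_case}, \ref{lem: length of modified u} and \ref{lem: the modifying permutation preserves the order}, everything comes down to the pointwise inequalities $(\bar{u}'\w{h})^{-1}(h(1)-k) < (\bar{u}'\w{h})^{-1}(h(1)+l)$, which you verify exactly as the paper does, using Lemma~\ref{l:fixed_part_by_u_prime}, Lemma~\ref{l:computation_of_w(h)_for_h(1)-k} together with \eqref{eq: additional 520}, \eqref{eq:additional 60}, and Lemma~\ref{lem: addition D and wh}. Making the inversion-set criterion explicit is only a cosmetic difference from the paper's phrasing.
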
 

\begin{proof}
By Lemmas~\ref{l:length_easiest_case} and \ref{lem: length of modified u}, it suffices to show
\[\ell(u \w{h}) = \ell(\bar{u}'  \w{h}) +(\m{0} +\m{1} + \cdots +\m{\Mh}).\]
Since $u=v_{\Mh} \cdots v_1 v_0 \bar{u}'$, Lemma \ref{lem: the modifying permutation preserves the order} implies that it is enough to prove 
\[
(\bar{u}'  \w{h})^{-1} (h(1) -k) < (\bar{u}'  \w{h})^{-1} (h(1) +m)
\]
for $0\le k\le \Mh$ and $1\le m\le \m{k}$.

When $k=0$, we have $(\bar{u}'  \w{h})^{-1} (h(1)) = 1$ by Lemma~\ref{l:fixed_part_by_u_prime}. In particular, we see that
\begin{align*}
(\bar{u}'  \w{h})^{-1} (h(1) ) < (\bar{u}'  \w{h})^{-1} (h(1) +m) 
\end{align*}
for $1 \le m \le \m{0}$, as desired.
Hence we may assume that $1\le k\le \Mh$ in the following. By Lemma~\ref{l:computation_of_w(h)_for_h(1)-k}, we have
\[
\w{h}^{-1} (h(1) -k) = \DD{\tk{k}}.
\]
This means from 
Lemma~\ref{lem: addition D and wh} that
\begin{align}\label{eq:additional 110}
\w{h}^{-1} (h(1) -k) < 
\w{h}^{-1} (j) 
\quad \text{for all $j \ge \tk{k}$}.
\end{align}
Recalling that we are assuming $1\le k\le \Mh$, we know from \eqref{eq: additional 520} that
\begin{align}\label{eq:additional 130}
\tk{k}\le h(1) +q_m \qquad (1\le m\le \m{k}).
\end{align}
Thus, for $1\le m\le \m{k}$, we have
\begin{align*}
(\bar{u}'  \w{h})^{-1} (h(1) -k) 
&= \w{h}^{-1} (h(1) -k) \qquad \text{(by Lemma~\ref{l:fixed_part_by_u_prime})}\\
&< \w{h}^{-1} (h(1) +q_m) \qquad \text{(by \eqref{eq:additional 110} and \eqref{eq:additional 130})}\\
&= (\bar{u}'  \w{h})^{-1} (h(1) +m) \qquad \text{(by \eqref{eq:additional 60} and $m \le \m{k}\le\m{0})$},
\end{align*}
as desired.
\end{proof}

Our next aim is to prove condition {\rm (ii)} for $u$. To give a proof, we need to know that  $\bar{u}'  \w{h}(J_{h(1)})$ does not contain $h(1)-k$ for any $0 \le k \le \Mh$, which will be proved in Proposition~\ref{l:not_contain_changing_points}. We prepare two lemmas for this.

\begin{lemma}\label{l:position_of_changing_values}
The following hold:
\begin{align*}
(\bar{u}'  \w{h})^{-1} (h(1)) < (\bar{u}'  \w{h})^{-1} (h(1) -1) < \cdots < (\bar{u}'  \w{h})^{-1} (h(1) -\Mh) \le h(1) -\lp{h}.
\end{align*}
\end{lemma}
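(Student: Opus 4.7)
My plan is to prove the two pieces of the statement—the strict chain and the final upper bound—separately, after a common reduction.

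\textbf{Reduction.} First I would reduce everything to statements about $\w{h}$ alone. Since $h(1)-\lp{h}\ge 2$ by Lemma~\ref{p:not_containing_1}, every value $h(1)-k$ with $0\le k\le \lp{h}$ lies in $\{1,\ldots,h(1)\}$, so by Lemma~\ref{l:fixed_part_by_u_prime} it is fixed by $\bar{u}'$. Consequently $(\bar{u}'\w{h})^{-1}(h(1)-k)=\w{h}^{-1}(h(1)-k)$ throughout $0\le k\le \Mh$, and the problem reduces to
\[
1=\w{h}^{-1}(h(1))<\w{h}^{-1}(h(1)-1)<\cdots<\w{h}^{-1}(h(1)-\Mh)\le h(1)-\lp{h}.
\]

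\textbf{Strict chain.} Here I would substitute $\w{h}^{-1}(h(1)-k)=\DD{h(1)+t_k}$ from Lemma~\ref{l:computation_of_w(h)_for_h(1)-k} for $1\le k\le \Mh$. Since $h^{\ast}$ is weakly increasing, so is $D$, and by construction $h(1)+t_1<h(1)+t_2<\cdots<h(1)+t_{\Mh}$, making the sequence $\DD{h(1)+t_k}$ weakly increasing; distinctness of the values $h(1)-k$ together with the permutation property of $\w{h}$ upgrades each inequality to a strict one. For the initial step $1<\DD{h(1)+t_1}$ I would separately note that $\DD{h(1)+1}\ge 1$ (at least column $1$ satisfies $h(1)<h(1)+1$), so $\DD{h(1)+t_1}\ge 1$, while $\DD{h(1)+t_1}\ne 1$ because $\w{h}(1)=h(1)\ne h(1)-1$; integrality then forces $\DD{h(1)+t_1}\ge 2$.

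\textbf{Upper bound.} This is the main obstacle. Unwinding the definition of $D$, the desired $\DD{h(1)+t_{\Mh}}\le h(1)-\lp{h}$ is equivalent to $h(h(1)-\lp{h}+1)\ge h(1)+t_{\Mh}$, and by \eqref{eq:shape_of_h_smaller_than_h(1)} the left-hand side equals $h(h(1))-2\lp{h}+2$; so the target reformulates as $h(h(1))\ge h(1)+t_{\Mh}+2\lp{h}-2$. My plan is to derive this by applying condition (iv) for $u'$ at $i=h(1)-\lp{h}-1$ (valid since $h(1)-\lp{h}-1\ge 1$ by Lemma~\ref{p:not_containing_1}), translated through Corollary~\ref{c:computation_of_v_first_step} to $\bar{u}'\w{h}$, so as to pin down where the greedy construction of $\w{h}$ can first drop its value into $\{1,\ldots,h(1)-\lp{h}\}$; I would then combine this positional control with the principle of similar shapes on $[h(1),h(1)+t_{\lp{h}}]\subseteq J_{h(1)}$—the same device that powers the proof of Lemma~\ref{l:computation_of_w(h)_for_h(1)-k}—to convert it into the arithmetic inequality on $h(h(1))$. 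The delicate point I anticipate is extending the principle of similar shapes cleanly from $k\le \Mh$ up to $k=\lp{h}$, and using the vanishing $\m{\Mh+1}=0$ to prevent extra terms from appearing beyond position $h(1)-\lp{h}$.
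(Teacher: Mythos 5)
Your reduction via Lemma~\ref{l:fixed_part_by_u_prime} and the strict chain via Lemma~\ref{l:computation_of_w(h)_for_h(1)-k} (monotonicity of $\DD{\cdot}$ plus distinctness, and $(\bar{u}'\w{h})^{-1}(h(1))=1$ for the first step) are correct and coincide with the paper's treatment of that half of the statement.

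The upper bound, however, has a genuine gap, and it is exactly the point you flag as "delicate". Your reformulation $\DD{\tk{\Mh}}\le h(1)-\lp{h}\iff h(h(1)-\lp{h}+1)\ge \tk{\Mh}$ is fine (modulo the case $\lp{h}=0$, where \eqref{eq:shape_of_h_smaller_than_h(1)} does not cover $h(h(1)+1)$), but the proposed proof presupposes $[h(1),\tk{\lp{h}}]\subseteq J_{h(1)}$, i.e.\ $t_{\lp{h}}\le \rp{h}+1$, and this is not available: Lemma~\ref{lem: L and q} and Lemma~\ref{l:computation_of_w(h)_for_h(1)-k} are proved only for $0\le k\le \Mh$, precisely because for $k>\Mh$ one has $\m{k}=0$, the sets in \eqref{eq: observation on J 10} are empty, and condition (iv) for $u'$ then yields no bound of the type \eqref{eq: additional 520}--\eqref{eq: additional 370}. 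Extending such control to $k=\lp{h}$ is the hard content of the later Proposition~\ref{l:not_contain_changing_points}, where it is obtained only under the extra hypothesis \eqref{eq:additional 200} that is being refuted there; the remark that "$\m{\Mh+1}=0$" should prevent trouble is not an argument and does not supply the missing containment. The paper closes the bound by a much shorter route that avoids the principle of similar shapes entirely: setting $r=\w{h}^{-1}(h(1)-\Mh)=\DD{\tk{\Mh}}<\tk{\Mh}$, the inequality $\w{h}(r)<h(1)$ forces $h$ to be stable at $r$; by \eqref{eq:shape_of_h_smaller_than_h(1)}, $h$ is not stable at any position in $(h(1)-\lp{h},\,h(1)]$; and since the stable position $\tk{\Mh}$ carries the larger value $h(h(1)-\Mh)+1$ by \eqref{eq:additional 75}, the maximality built into the definition of $\w{h}$ excludes $r$ from $\{h(1)-\Mh+1,\ldots,\tk{\Mh}\}$, whence $r\le h(1)-\lp{h}$. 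To complete your write-up you would either need to adopt an argument of this kind, or give an independent proof of $h(h(1))\ge h(1)+t_{\Mh}+2\lp{h}-2$ that does not assume $t_{\lp{h}}\le \rp{h}+1$.
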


\begin{proof}
Note that the left-most number is equal to $1$.
This means that if $\Mh=0$, then there is nothing to prove. 
Hence we may assume $\Mh\ge1$ in the following.
For $1 \le k \le \Mh$, we have
\begin{equation}\label{eq: first equations}
\begin{split}
(\bar{u}'  \w{h})^{-1} (h(1) -k) 
&= \w{h}^{-1} (h(1) -k) \qquad \text{(by Lemma~\ref{l:fixed_part_by_u_prime})} \\
&= \DD{\tk{k}} \qquad \text{(by Lemma~\ref{l:computation_of_w(h)_for_h(1)-k})}, 
\end{split}
\end{equation}
which implies that 
\[(\bar{u}'  \w{h})^{-1} (h(1) -1) < (\bar{u}'  \w{h})^{-1} (h(1) -2) < \cdots < (\bar{u}'  \w{h})^{-1} (h(1) -\Mh).\]

Next, we write
\[
r\coloneqq (\bar{u}'  \w{h})^{-1} (h(1) -\Mh), 
\]
and prove the right-most inequality $r \le h(1) -\lp{h}$ in the claim.
The same computation as above shows that
\begin{align*}
r = \w{h}^{-1} (h(1) -\Mh) = \DD{\tk{\Mh}} < \tk{\Mh}, 
\end{align*}
where the left-most equality shows that $\w{h}(r)<h(1)$, which implies that $h$ must be stable at $r$ by the definition of $\w{h}(r)$.
Since $h$ is not stable at $k$ for all $h(1)-\lp{h}<k\le h(1)$ by \eqref{eq:shape_of_h_smaller_than_h(1)}, it suffices to show that
\begin{align}\label{eq: additional 360}
r \notin \{h(1)+1, h(1)+2, \ldots, \tk{\Mh}\}.
\end{align}
Recall from \eqref{eq:additional 75} that 
\[
\w{h}(\tk{\Mh}) = \jh - (2\Mh -1) = h(h(1)-\Mh) +1,
\]
which is depicted in Figure \ref{pic:value at tMh}.
\begin{figure}[htbp]
{\unitlength 0.1in%
\begin{picture}(50.0000,26.8000)(7.2000,-34.9000)%
\put(25.0000,-34.4000){\makebox(0,0)[lb]{$h(1)-\Mh$}}%
\put(37.1000,-34.4000){\makebox(0,0)[lb]{$h(1)+t_{\Mh}$}}%
%
\special{pn 8}%
\special{pa 720 810}%
\special{pa 5720 810}%
\special{pa 5720 3490}%
\special{pa 720 3490}%
\special{pa 720 810}%
\special{ip}%
\put(10.4000,-19.7000){\makebox(0,0)[lb]{$h(h(1)-\Mh)+1$}}%
%
\special{pn 8}%
\special{pa 2400 1000}%
\special{pa 2600 1000}%
\special{fp}%
\special{pa 2600 1000}%
\special{pa 2600 1400}%
\special{fp}%
\special{pa 2600 1400}%
\special{pa 2800 1400}%
\special{fp}%
%
\special{pn 8}%
\special{pa 3800 2800}%
\special{pa 3800 3000}%
\special{fp}%
\special{pa 3800 3000}%
\special{pa 4200 3000}%
\special{fp}%
%
\special{sh 1.000}%
\special{ia 2700 1300 28 28 0.0000000 6.2831853}%
\special{pn 8}%
\special{ar 2700 1300 28 28 0.0000000 6.2831853}%
%
\special{sh 1.000}%
\special{ia 3900 2900 28 28 0.0000000 6.2831853}%
\special{pn 8}%
\special{ar 3900 2900 28 28 0.0000000 6.2831853}%
%
\special{sh 1.000}%
\special{ia 4100 1900 28 28 0.0000000 6.2831853}%
\special{pn 8}%
\special{ar 4100 1900 28 28 0.0000000 6.2831853}%
\put(34.5000,-25.0000){\makebox(0,0)[lb]{$\ddots$}}%
%
\special{pn 8}%
\special{pa 2800 1400}%
\special{pa 2800 1800}%
\special{fp}%
\special{pa 2800 1800}%
\special{pa 3000 1800}%
\special{fp}%
\special{pa 3000 1800}%
\special{pa 3000 2200}%
\special{fp}%
%
\special{pn 8}%
\special{pa 3000 2200}%
\special{pa 3200 2200}%
\special{fp}%
%
\special{sh 1.000}%
\special{ia 3100 2100 28 28 0.0000000 6.2831853}%
\special{pn 8}%
\special{ar 3100 2100 28 28 0.0000000 6.2831853}%
%
\special{sh 1.000}%
\special{ia 2900 1700 28 28 0.0000000 6.2831853}%
\special{pn 8}%
\special{ar 2900 1700 28 28 0.0000000 6.2831853}%
%
\special{pn 8}%
\special{pa 4900 1900}%
\special{pa 2400 1900}%
\special{dt 0.045}%
%
\special{pn 8}%
\special{pa 4100 900}%
\special{pa 4100 3200}%
\special{dt 0.045}%
%
\special{pn 8}%
\special{pa 2900 900}%
\special{pa 2900 3200}%
\special{dt 0.045}%
\end{picture}}%
\caption{The value $\w{h}(\tk{\Mh})$.}
\label{pic:value at tMh}
\end{figure}
Now, $h$ is stable at both $r$ and $\tk{\Mh}$ where $\w{h}$ takes the values $h(1) -\Mh$ and $h(h(1) -\Mh) +1$, respectively.
Since $h$ is a Hessenberg function, the former value $h(1) -\Mh$ is less than the latter value $h(h(1) -\Mh) +1$. Thus, by the maximality of the values of $\w{h}$, it follows that
\[
r \notin \{h(1) -\Mh +1, h(1) -\Mh +2, \ldots, \tk{\Mh}\}
\]
(see Figure \ref{pic:value at tMh}), which implies \eqref{eq: additional 360}, as desired.
\end{proof}

As the following lemma indicates, condition (iv) ensures that $L_{h(1)-k}\in J_{h(1)}$ for $0 \le k \le \Mh$.

\begin{lemma}\label{lem: L and q}
For $0 \le k \le \Mh$, we have
\[
\LL{h(1)-k}\leq h(1)+\rp{h}+1.
\]
\end{lemma}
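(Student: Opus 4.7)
The plan is to argue by contradiction, using condition~{\rm (iv)} of the induction hypothesis on $u'$. Suppose $\LL{h(1)-k}(h) > h(1) + \rp{h} + 1$ for some $0 \le k \le \Mh$. By Lemma~\ref{p:not_containing_1}, $h(1) - \lp{h} \ge 2$, so $i' \coloneqq h(1) - k - 1$ satisfies $1 \le i' \le n - 2$. Condition~{\rm (iv)} for $u'$ applied at $i'$ then gives
\[u' \w{h'}(i') < u' \w{h'}(j) \qquad (i' + 1 \le j < \LL{i'}(h')).\]

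Next I would transfer this to an inequality for $\bar{u}' \w{h}$ on indices of $h$, via two short identities. First, a direct unwinding of the definitions yields $\LL{i}(h) = \LL{i-1}(h') + 1$ for $i \ge 2$: since $h'(j) = h(j+1) - 1$, both the $(+)$-operation and the stability condition ``$h(\cdot) = h(\cdot + 1)$'' for $h'$ at $i-1$ correspond to those for $h$ at $i$ after a shift by one. Second, Corollary~\ref{c:computation_of_v_first_step} expresses $\bar{u}' \w{h}(m) = u' \w{h'}(m-1) + \varepsilon$ with $\varepsilon \in \{0, 1\}$ for $m \ge 2$, and a quick four-case check (according to whether the values $u' \w{h'}$ take at the two arguments are below or above $h(1)$) shows this shift preserves strict inequalities. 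Combining these, condition~{\rm (iv)} becomes
\[\bar{u}' \w{h}(h(1) - k) < \bar{u}' \w{h}(m) \qquad (h(1) - k + 1 \le m \le \LL{h(1)-k}(h) - 1).\]

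Under the standing hypothesis $\LL{h(1)-k}(h) > h(1) + \rp{h} + 1$, every $m = h(1) + q$ with $1 \le q \le \rp{h} + 1$ lies in this range, so $r_k = \bar{u}' \w{h}(h(1) - k) < \bar{u}' \w{h}(h(1) + q)$ for all such $q$. By the definition of $\m{k}$ in \eqref{eq: observation on J 10}, this forces $\m{k} = 0$. On the other hand, $\m{k}$ is nonincreasing in $k$ (immediate from $r_{k+1} < r_k$), so for $0 \le k \le \Mh$ we have $\m{k} \ge \m{\Mh} \ge \Delta_{\Mh} \ge 1$, a contradiction. The main technical obstacle is the clean transport of condition~{\rm (iv)} from the $h'$-indexing to the $h$-indexing over the correct range; once the two bookkeeping identities above are in place, the remainder of the argument is immediate.
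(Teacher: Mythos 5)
Your proof is correct and follows essentially the same route as the paper: the paper argues directly that for any $q\in\{q_1,\ldots,q_{m_k}\}$ (non\nobreakdash-empty since $k\le M$) the transferred condition (iv) forces $\LL{h(1)-k}\le h(1)+q$, while you run the contrapositive as a contradiction argument, using the very same ingredients (condition (iv) for $u'$, the shift identity $\LL{i}(h)=\LL{i-1}(h')+1$ together with Corollary~\ref{c:computation_of_v_first_step} as in the proof of Proposition~\ref{p:condition_v_easy_case}, and $\m{k}\ge\m{\Mh}\ge1$ for $k\le\Mh$). The only cosmetic difference is your appeal to Lemma~\ref{p:not_containing_1} to keep the index $h(1)-k-1\ge1$, which is legitimate since that lemma precedes this one.
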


\begin{proof}
We know from \eqref{eq: additional 530} that $\{q_1,q_2,\ldots,q_{\m{k}} \}\ne\emptyset$ in \eqref{eq: observation on J 10}.
Hence it suffices to show for $q \in \{q_1,q_2,\ldots,q_{\m{k}} \}$ that
\begin{align}\label{eq: additional 300}
\LL{h(1)-k}\leq h(1)+q.
\end{align}
We first note that $h(1)-k\ge1$. Since $q \in \{q_1,q_2,\ldots,q_{\m{k}} \}$, we have
\begin{align*}
\bar{u}'  \w{h} (h(1) +q) < \bar{u}'  \w{h} (h(1) -k).
\end{align*}
Condition {\rm (iv)} for $u'$ with $i=h(1)-k$ now implies that 
\[
\LL{h(1)-k} \leq h(1)+q
\]
(cf.\ the proof of Proposition~\ref{p:condition_v_easy_case}). 
\end{proof}

In the proof of the next proposition, we will implicitly use the fact that if $h(i+1)=h(i)+2$, then
\[
\w{h}(i^{(+)}) > 1. 
\]
This is because $\w{h}(i^{(+)})=h(i)+1>1$ by the definition of $(+)$-operation. 

\begin{proposition}\label{l:not_contain_changing_points}
The set $J_{h(1)}$ does not contain $(\bar{u}'  \w{h})^{-1} (h(1) -k)$ for any $0 \le k \le \Mh$.
\end{proposition}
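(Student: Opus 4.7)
The plan is to split into the case $k=0$ and the cases $1\le k\le \Mh$, and to reduce everything to the observation (from the proof of Lemma~\ref{l:position_of_changing_values}) that $(\bar{u}'\w{h})^{-1}(h(1)-k)=\w{h}^{-1}(h(1)-k)$ for all $0\le k\le \Mh$. For $k=0$, since $\w{h}(1)=h(1)$ and $\bar{u}'$ fixes $\{1,\ldots,h(1)\}$ by Lemma~\ref{l:fixed_part_by_u_prime}, we get $(\bar{u}'\w{h})^{-1}(h(1))=1$, and Lemma~\ref{p:not_containing_1} yields $1\notin J_{h(1)}$.

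For $1\le k\le \Mh$, the identities \eqref{eq: first equations} in the proof of Lemma~\ref{l:position_of_changing_values} rewrite the element as $\DD{\tk{k}}$. Lemma~\ref{l:position_of_changing_values} then gives the strictly increasing chain
\[
1<\DD{\tk{1}}<\DD{\tk{2}}<\cdots<\DD{\tk{\Mh}}\le h(1)-\lp{h},
\]
so that for $k<\Mh$ one already has $\DD{\tk{k}}<h(1)-\lp{h}$, which lies strictly below $J_{h(1)}=[h(1)-\lp{h},\,h(1)+\rp{h}+1]$.

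The real content of the proposition is the boundary case $k=\Mh$: one must upgrade the weak inequality $\DD{\tk{\Mh}}\le h(1)-\lp{h}$ to a strict one. I would argue by contradiction. Suppose $\DD{\tk{\Mh}}=h(1)-\lp{h}$. Then as shown inside the proof of Lemma~\ref{l:position_of_changing_values}, $h$ is stable at $h(1)-\lp{h}$, i.e.\ $h(h(1)-\lp{h})=h(h(1)-\lp{h}-1)=\jh-2\lp{h}$. By Lemma~\ref{lem: L and q} (applied to $k=\Mh$) we have $\tk{\Mh}+1\le \LL{h(1)-\Mh}\le h(1)+\rp{h}+1$, so $\tk{\Mh}\in J_{h(1)}^{+}$, and in particular $s_i(\xi_h)=\xi_h$ holds for every $h(1)-1\le i\le \tk{\Mh}-1$ (since $\lp{h}\ge 1$). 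Together with the fact that $h$ is stable at $\tk{\Mh}$ (so $h(\tk{\Mh})\ne h(\tk{\Mh}-1)+2$), this places us exactly in the setting of the modified principle of similar shapes of Remark~\ref{rem: the modified principle}, applied to the intervals $[h(1),\tk{\Mh}]$ and $[1,\DD{\tk{\Mh}}]=[1,\,h(1)-\lp{h}]$.

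The principle transports the shape of $h$ on $[h(1),\tk{\Mh}]$ onto $[1,h(1)-\lp{h}]$. The jump $h(h(1))-h(h(1)-1)=2$ and the stable step forced at position $h(1)-\lp{h}$ therefore produce, on the smaller interval, a specific pattern of stable and jump-by-$2$ transitions that in particular forces a stable transition of $h$ at $h(1)-\lp{h}$ matched by a corresponding structural constraint on $h^{*}$ at position $n+2-h(1)+\lp{h}$. Comparing this with the formula $d_{h(1)-\lp{h}-1}=2-h^{*}(n+2-h(1)+\lp{h})+h^{*}(n+1-h(1)+\lp{h})$ contradicts the defining maximality of $\lp{h}$, namely $s_{h(1)-\lp{h}-1}(\xi_h)\neq \xi_h$, i.e.\ $d_{h(1)-\lp{h}-1}\ge 1$. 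The hardest step is precisely this last contradiction: bookkeeping the correspondence produced by the modified similar-shapes principle and reading off the required inequality on $d_{h(1)-\lp{h}-1}$; the other steps are essentially formal consequences of the preceding lemmas.
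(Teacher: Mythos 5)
Your reduction is fine up to the boundary case: the $k=0$ case via Lemma~\ref{p:not_containing_1}, the identification $(\bar{u}'\w{h})^{-1}(h(1)-k)=\DD{\tk{k}}$, and the use of Lemma~\ref{l:position_of_changing_values} to see that only $k=\Mh$ with $(\bar{u}'\w{h})^{-1}(h(1)-\Mh)=h(1)-\lp{h}$ could land in $J_{h(1)}$ all match the paper. The problem is the final contradiction, which is exactly the step you defer: it cannot be obtained the way you propose. Under your own hypothesis ($h$ stable at $h(1)-\lp{h}$), the index $i=h(1)-\lp{h}-1$ satisfies $i+1\le h(1)$, so $\DD{i}=\DD{i+1}=0$, i.e.\ $h^*(n+2-h(1)+\lp{h})=h^*(n+1-h(1)+\lp{h})=n$; hence $d_{h(1)-\lp{h}-1}=h(i)-h(i+1)+2=2$ automatically. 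There is no "structural constraint on $h^*$" available at that position (those values of $h^*$ are identically $n$ because every column has height at least $h(1)$), and the maximality of $\lp{h}$ only requires $d_{h(1)-\lp{h}-1}\neq 0$, which holds. So the similar-shapes principle can never produce the inequality you want to contradict, and your argument collapses precisely at its load-bearing step.

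The paper's route is genuinely different and both of its stages are missing from your sketch. First, it distinguishes $\Mh$ from $\lp{h}$ (a priori $\Mh\le\lp{h}$) and shows that the hypothesis $\DD{\tk{\Mh}}=h(1)-\lp{h}$ forces $\Mh=\lp{h}$: one uses $\LL{\DD{\tk{\Mh}}}=\DD{\LL{\tk{\Mh}}}$ (Lemma~\ref{lem: D and L}) to get $\tk{\lp{h}}<h(1)+\rp{h}+1$, which extends Lemma~\ref{l:computation_of_w(h)_for_h(1)-k} and the second half of Lemma~\ref{l:position_of_changing_values} to $k=\lp{h}$ and pins down $\Mh=\lp{h}$. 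Second, the contradiction is not with the nef inequality $d_{h(1)-\lp{h}-1}\ge 1$ but with Lemma~\ref{lem: L and q}: setting $i=h(1)-\Mh$, the equality $\DD{i^{(+)}}=i$ propagates under the $(+)$-operation via Lemma~\ref{lem: D and +} and case (1) of Lemma~\ref{lem: principle of similar shapes 3}, so that $h((i^{(+)})^{(+)}+1)=h((i^{(+)})^{(+)})+2$ at every stage and the chain never stabilizes inside $J_{h(1)}$, forcing $\LL{h(1)-\Mh}>h(1)+\rp{h}+1$, contrary to $\LL{h(1)-\Mh}\le h(1)+\rp{h}+1$. You would need to supply an argument of this kind (or some other correct one); as written, the proposal has a genuine gap at the only nontrivial point.
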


\begin{proof}
The claim for $k=0$ follows from Lemma~\ref{p:not_containing_1}.
Suppose $(\bar{u}'  \w{h})^{-1} (h(1) -k) \in J_{h(1)}$ for some $1 \le k \le \Mh$, 
and we will deduce a contradiction. 
Lemma~\ref{l:position_of_changing_values} implies that we must have $k = \Mh$ and  
\begin{align}\label{eq:additional 200}
(\bar{u}'  \w{h})^{-1} (h(1) -\Mh) = h(1) -\lp{h}.
\end{align}
By \eqref{eq: first equations}, this means that
\begin{align}\label{eq:additional 210}
\DD{\tk{\Mh}} = h(1) -\lp{h}, 
\end{align}
which implies that 
\begin{align}\label{eq: additional 540}
\LL{\DD{\tk{\Mh}}} = \LL{h(1) -\lp{h}}.
\end{align} 
The equality $\tk{\Mh}=(h(1)-\Mh)^{(+)}$ from \eqref{eq:additional 70} implies that $\LL{\tk{\Mh}}=\LL{h(1)-\Mh}$ by the definition \eqref{eq: def of Li}. This means that $[\tk{\Mh}-1, \LL{\tk{\Mh}}]\subseteq J_{h(1)}$ by Lemma~\ref{lem: L and q}. In addition, we have $h(\tk{\Mh})\ne h(\tk{\Mh}-1)+2$ since $h$ is stable at $\tk{\Mh}$ by definition. Thus we deduce from Lemma~\ref{lem: D and L} that
\begin{align*}
\DD{\LL{\tk{\Mh}}} = \LL{\DD{\tk{\Mh}}} = \LL{h(1) -\lp{h}},
\end{align*}
where the second equality follows from \eqref{eq: additional 540}.
Since we have $(h(1)-\lp{h})^{(+)} < \LL{h(1) -\lp{h}}$ by the definition \eqref{eq: def of Li}, this implies that
\begin{align*}
(h(1)-\lp{h})^{(+)} < \LL{h(1) -\lp{h}} = \DD{\LL{\tk{\Mh}}} < \LL{\tk{\Mh}}. 
\end{align*}
By \eqref{eq:additional 70} and the definition \eqref{eq: def of Li} again, we deduce by this that $\tk{\lp{h}}< \LL{h(1)-\Mh}$.
Thus, by Lemma~\ref{lem: L and q}, we obtain
\begin{align*}
\tk{\lp{h}} < h(1)+\rp{h}+1.
\end{align*}
This means that \eqref{eq: additional 370} holds for $1\le k\le\lp{h}$. Hence by the argument after \eqref{eq: additional 370}, the claim of Lemma~\ref{l:computation_of_w(h)_for_h(1)-k} holds for $1\le k\le\lp{h}$. This implies that \eqref{eq: first equations} holds for $1\le k\le \lp{h}$, and thus we obtain
\[
(\bar{u}'  \w{h})^{-1} (h(1) -\Mh) \le (\bar{u}'  \w{h})^{-1} (h(1) -\lp{h})
\]
(cf.\ the inequalities below \eqref{eq: first equations}).
Since \eqref{eq: first equations} for $k=\lp{h}$ is now valid, the latter half of the proof of Lemma~\ref{l:position_of_changing_values} proves 
\[ (\bar{u}'  \w{h})^{-1} (h(1) -\lp{h})\le  h(1) -\lp{h}\] 
by replacing $\Mh$ by $\lp{h}$ in the argument. 
From the last two inequalities and \eqref{eq:additional 200}, it now follows that $(\bar{u}'  \w{h})^{-1} (h(1) -\Mh)=(\bar{u}'  \w{h})^{-1} (h(1) -\lp{h})$, and hence $\Mh=\lp{h}$.

We now deduce a contradiction by using $\Mh=\lp{h}$.
Let $i=h(1)-\Mh$.
Then \eqref{eq:additional 210} now says that 
\begin{align}\label{eq:additional 310}
\DD{i^{(+)}} = i
\end{align}
(see Figure \ref{pic: D(i+)=i}).
\begin{figure}[htbp]
\hspace{-10pt}
{\unitlength 0.1in%
\begin{picture}(68.0000,22.7000)(12.0000,-34.0000)%
%
\special{sh 1.000}%
\special{ia 3700 1900 28 28 0.0000000 6.2831853}%
\special{pn 8}%
\special{ar 3700 1900 28 28 0.0000000 6.2831853}%
%
\special{pn 8}%
\special{pa 4200 1200}%
\special{pa 5600 2600}%
\special{fp}%
%
\special{pn 8}%
\special{pa 3500 1200}%
\special{pa 3500 3200}%
\special{dt 0.045}%
%
\special{pn 8}%
\special{pa 2800 1700}%
\special{pa 4700 1700}%
\special{dt 0.045}%
%
\special{pn 8}%
\special{pa 4700 1700}%
\special{pa 4700 3200}%
\special{dt 0.045}%
\put(29.9000,-34.0000){\makebox(0,0)[lb]{$i=h(1)-\Mh$}}%
\put(44.0000,-34.0000){\makebox(0,0)[lb]{$i^{(+)}=(h(1)-\Mh)^{(+)}$}}%
%
\special{pn 8}%
\special{pa 1200 1130}%
\special{pa 8000 1130}%
\special{pa 8000 3400}%
\special{pa 1200 3400}%
\special{pa 1200 1130}%
\special{ip}%
%
\special{sh 1.000}%
\special{ia 4700 1700 28 28 0.0000000 6.2831853}%
\special{pn 8}%
\special{ar 4700 1700 28 28 0.0000000 6.2831853}%
\put(53.0000,-27.5000){\makebox(0,0)[lb]{the diagonal line}}%
%
\special{pn 8}%
\special{pa 4800 2800}%
\special{pa 4400 2800}%
\special{fp}%
\special{pa 4400 2800}%
\special{pa 4400 2600}%
\special{fp}%
%
\special{sh 1.000}%
\special{ia 4500 2700 28 28 0.0000000 6.2831853}%
\special{pn 8}%
\special{ar 4500 2700 28 28 0.0000000 6.2831853}%
\put(40.0000,-24.5000){\makebox(0,0)[lb]{$\Huge{\text{*}}$}}%
%
\special{pn 8}%
\special{pa 4800 2800}%
\special{pa 4800 3000}%
\special{fp}%
%
\special{pn 8}%
\special{pa 3400 1600}%
\special{pa 3600 1600}%
\special{fp}%
\special{pa 3600 1600}%
\special{pa 3600 2000}%
\special{fp}%
\special{pa 3600 2000}%
\special{pa 3800 2000}%
\special{fp}%
\end{picture}}%
\caption{The picture of $\DD{i^{(+)}} = i$, where $i=h(1)-\Mh$.}
\label{pic: D(i+)=i}
\end{figure}
By Lemma~\ref{lem: L and q}, we know that 
\begin{align*}
[i, i^{(+)}+1]\subseteq J_{h(1)}
\end{align*}
since $i^{(+)}+1\le  \LL{i}=\LL{h(1)-\Mh}$.
This means that $s_{i^{(+)}} \in \mathfrak{S}_J$, and hence we have
\[h(i^{(+)}+1) = h(i^{(+)}) +2\]
by the case (1) of Lemma~\ref{lem: principle of similar shapes 3} (see Figure \ref{pic: D(i+)=i}).
This means from the definition of the $(+)$-operation that \[i^{(+)}<(i^{(+)})^{(+)}.\]
We know that 
\[
\DD{(i^{(+)})^{(+)}}  = \DD{i^{(+)}}^{(+)} = i^{(+)}
\]
by \eqref{eq:additional 310} and Lemma~\ref{lem: D and +}.
Namely, \eqref{eq:additional 310} holds after replacing $i$ by $i^{(+)}$.
By Lemma~\ref{lem: L and q} again, we know that 
\begin{align*}
[i^{(+)}, (i^{(+)})^{(+)}+1]\subseteq J_{h(1)}
\end{align*}
since $(i^{(+)})^{(+)}+1\le  \LL{i}=\LL{h(1)-\Mh}$.
This means that $s_{(i^{(+)})^{(+)}}\in\mathfrak{S}_J$, and we obtain that
\[h((i^{(+)})^{(+)}+1) = h((i^{(+)})^{(+)}) +2\]
as above, so that we have $(i^{(+)})^{(+)}<((i^{(+)})^{(+)})^{(+)}$ by the definition of the $(+)$-operation.
Continuing this argument, we conclude that $\LL{h(1) -\Mh}$ $(=\LL{i^{(+\infty)}})$ exceeds $h(1) +\rp{h} +1$, that is, $h(1) +\rp{h} +1 < \LL{h(1) -\Mh}$, which contradicts Lemma~\ref{lem: L and q}.
\end{proof}

The previous proposition gives us information on the positions where $\bar{u}' \w{h}$ takes values less than $h(1)$. 
In contrast, the next lemma gives us lower bounds for the positions where $\bar{u}' \w{h}$ takes values greater than $h(1)$. 
We will use both properties to prove condition (ii) for $u$. 

\begin{lemma}\label{l:evaluation_of_L_for_changing_points}
For $0 \le k \le \Mh$ and 
$1\le l\le \m{k}$, 
the following holds:
\begin{align*}
\LL{(\bar{u}'  \w{h})^{-1} (h(1) -k)} \le (\bar{u}' \w{h})^{-1} (h(1) +l).
\end{align*}
\end{lemma}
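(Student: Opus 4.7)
The plan is to case-split on whether $k\ge 1$ or $k=0$, and in each case first to derive a bound of the form $\LL{\cdot}\le h(1)+q_l$ from condition~(iv) for $u'$, then to upgrade it to the desired bound via Lemmas~\ref{lem: D and L} and~\ref{lem: addition D and wh}. A useful preliminary observation is that $(\bar{u}'\w{h})^{-1}(h(1)+l)=\w{h}^{-1}(h(1)+q_l)$, which follows from~\eqref{eq:additional 60} together with the fact that $\bar{u}'$ permutes $\{h(1)+1,\dots,n\}$. So the target reduces to $\LL{p_k}\le \w{h}^{-1}(h(1)+q_l)$.

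For $1\le k\le \Mh$ I would apply condition~(iv) for $u'$ at the index $i=h(1)-k-1$, which lies in $[1,n-2]$ by Lemma~\ref{p:not_containing_1}. Translating via Corollary~\ref{c:computation_of_v_first_step}, together with the identity $\LL{i}(h')+1=\LL{i+1}(h)$, yields $\bar{u}'\w{h}(h(1)-k)<\bar{u}'\w{h}(j)$ for all $h(1)-k+1\le j<\LL{h(1)-k}$. Since $\bar{u}'\w{h}(h(1)+q_l)<r_k$ by definition of $q_l$, the position $h(1)+q_l$ cannot lie in $[h(1)-k+1,\LL{h(1)-k}-1]$, and hence $\LL{h(1)-k}\le h(1)+q_l$. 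Next, Lemma~\ref{lem: L and q} gives $[\tk{k}-1,\LL{\tk{k}}]\subseteq J_{h(1)}$, and~\eqref{eq:additional 75} gives $h(\tk{k})=h(\tk{k}-1)$, so Lemma~\ref{lem: D and L} applies and produces $\LL{\DD{\tk{k}}}=\DD{\LL{\tk{k}}}=\DD{\LL{h(1)-k}}$. Finally Lemma~\ref{lem: addition D and wh}, applied at $i=\LL{h(1)-k}$ and $j=h(1)+q_l$, yields the strict inequality $\DD{\LL{h(1)-k}}<\w{h}^{-1}(h(1)+q_l)$.

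For $k=0$, Lemma~\ref{l:fixed_part_by_u_prime} gives $p_0=(\bar{u}'\w{h})^{-1}(h(1))=1$, so the target becomes $\LL{1}\le \w{h}^{-1}(h(1)+q_l)$. An analogous application of condition~(iv) for $u'$, now at $i=h(1)-1$ (which is $\ge 1$ since $h(1)\ge 2$), gives $\LL{h(1)}\le h(1)+q_l$ by the same translation argument. I would then invoke the modified principle of similar shapes (Remark~\ref{rem: the modified principle}) on the intervals $[h(1),\LL{h(1)}]$ and $[1,\DD{\LL{h(1)}}]$, matching the stability/two-step jump pattern of $h$ on these two intervals and identifying the first stable position starting from $1$ with the one starting from $h(1)$, so that $\LL{1}=\DD{\LL{h(1)}}$. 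Lemma~\ref{lem: addition D and wh} applied at $i=\LL{h(1)}$, $j=h(1)+q_l$ then closes the case.

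The hard part will be the $k=0$ step: the inequality produced by condition~(iv) at $i=h(1)-1$ naturally sits at the top of the $J_{h(1)}$-block, whereas $\LL{1}$ lives at position~$1$, so it has to be transported through the staircase symmetry recorded by the principle of similar shapes. Its hypotheses---in particular $s_{h(1)-1}(\xi_h)=\xi_h$ and $h(\LL{h(1)})\ne h(\LL{h(1)}-1)+2$---must be checked in the present setting, and the degenerate situation $\lp{h}=0$ (so that $s_{h(1)-1}$ does not fix $\xi_h$) may require a separate short direct argument.
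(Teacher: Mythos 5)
For $1\le k\le \Mh$ your argument is essentially the paper's. Your first step reproves \eqref{eq: additional 300} exactly as in the proof of Lemma~\ref{lem: L and q} (condition~(iv) for $u'$ at the shifted index, translated through Corollary~\ref{c:computation_of_v_first_step} and $\LL{i}=\LL{i-1}(h')+1$), and your chain $\LL{\DD{\tk{k}}}=\DD{\LL{\tk{k}}}=\DD{\LL{h(1)-k}}$ followed by Lemma~\ref{lem: addition D and wh} is the paper's \eqref{eq:additional 170}, with your single application of that lemma at $i=\LL{h(1)-k}$, $j=h(1)+q_l$ replacing the paper's intermediate step $\DD{\LL{h(1)-k}}\le \DD{h(1)+q}<\w{h}^{-1}(h(1)+q)$ --- a harmless variant. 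Note that to close this case you also need $(\bar{u}'\w{h})^{-1}(h(1)-k)=\DD{\tk{k}}$, i.e.\ \eqref{eq: first equations} via Lemmas~\ref{l:fixed_part_by_u_prime} and \ref{l:computation_of_w(h)_for_h(1)-k}; you use this tacitly and should cite it.

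The genuine gap is the case $k=0$, which you have correctly isolated but not closed. The crux $\LL{1}=\DD{\LL{h(1)}}$ is exactly the paper's \eqref{eq:additional 180}, and it does not follow merely by ``invoking'' Remark~\ref{rem: the modified principle}: the remark only matches the stable/two-step patterns, while converting such a match into an identity of the form $\LL{\,\cdot\,}=\DD{\LL{\,\cdot\,}}$ is the nontrivial bookkeeping carried out in Lemmas~\ref{lem: D and +} and \ref{lem: D and L}, and those lemmas cannot be applied at $i=h(1)$: their hypotheses force $\DD{i}\ge1$ (whereas $\DD{h(1)}=0$) and require $[i-1,\LL{i}]\subseteq J_{h(1)}$, hence $\lp{h}\ge1$. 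Moreover the remark itself assumes $s_i(\xi_h)=\xi_h$ down to $i=h(1)-1$, which fails precisely when $\lp{h}=0$; and that is not a marginal situation, since $\lp{h}=0$ forces $\Mh=0$, so the entire lemma then reduces to the very case you leave to ``a separate short direct argument'' that is never supplied. The paper closes $k=0$ without the modified principle, by an elementary split: if $h(1)<h(2)$, then \eqref{eq: vanishing coefficient} at $i=h(1)$ gives $h(h(1)+1)=h(h(1))+1$, hence $\LL{h(1)}=\LL{h(1)+1}$, and Lemma~\ref{lem: D and L} applied at $i=h(1)+1$ (where $\DD{h(1)+1}=1$ and $[h(1),\LL{h(1)+1}]\subseteq J_{h(1)}$ by Lemma~\ref{lem: L and q}) yields \eqref{eq:additional 180}; if $h(1)=h(2)$, then $\LL{1}=2$, $h$ is stable at $h(1)+1$ by \eqref{eq: vanishing coefficient}, so $\LL{h(1)}=h(1)+1$ and $\DD{\LL{h(1)}}=\DD{h(1)+1}=2=\LL{1}$. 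Until you either reproduce such an argument or actually carry out the pattern-matching in the degenerate situation $\DD{h(1)}=0$ (including $\lp{h}=0$), the $k=0$ case of the lemma is not proved.
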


\begin{proof}
By \eqref{eq:additional 60}, our claim is the same as 
\begin{align*}
\LL{(\bar{u}'  \w{h})^{-1} (h(1) -k)} \le \w{h}^{-1} (h(1) +q)
\end{align*}
for $q \in \{q_1,q_2,\ldots,q_{\m{k}} \}$. We first consider the case $k \ge 1$. In this case, we know from \eqref{eq: first equations} that
\begin{align*}
(\bar{u}'  \w{h})^{-1} (h(1) -k) = \DD{\tk{k}}.
\end{align*}
Hence, letting $r\coloneqq\tk{k}$, what we need to prove is 
\begin{align}\label{eq:additional 150}
\LL{\DD{r}} \le \w{h}^{-1} (h(1) +q).
\end{align}
We know from Lemma~\ref{lem: L and q} and \eqref{eq:additional 70} that $[r-1, \LL{r}]=[r-1,\LL{h(1)-k}] \subseteq J_{h(1)}$, and that $h$ is stable at $r$
so that we have $\LL{\DD{r}} = \DD{\LL{r}}$ by Lemma~\ref{lem: D and L}. Since $\LL{r} =\LL{h(1)-k} \le h(1) +q$ by \eqref{eq:additional 70} and \eqref{eq: additional 300}, it follows that
\begin{align}\label{eq:additional 170}
\LL{\DD{r}} = \DD{\LL{r}} \le \DD{h(1)+q} \le \w{h}^{-1}(h(1)+q),
\end{align}
where the right-most inequality follows from Lemma~\ref{lem: addition D and wh}.
This is exactly \eqref{eq:additional 150}, as desired.

We next consider the case $k = 0$. In this case, we have $(\bar{u}'  \w{h})^{-1} (h(1))=1$ by Lemma~\ref{l:fixed_part_by_u_prime}, and hence what we need to prove is 
\begin{align}\label{eq:additional 185}
\LL{1} \le \w{h}^{-1} (h(1) +q)
\end{align}
for $q \in \{q_1,q_2,\ldots,q_{\m{0}} \}$ as above. 
Recall that we have $[h(1),\LL{h(1)}]\subseteq J_{h(1)}$ from Lemma~\ref{lem: L and q}. 
Thus if we have
\begin{align}\label{eq:additional 180}
\LL{1} = \DD{\LL{h(1)}},
\end{align}
then the argument used in  the case $k \ge 1$ (i.e.\ the argument proving \eqref{eq:additional 170}) works to conclude \eqref{eq:additional 185} in this case as well.
Hence let us prove \eqref{eq:additional 180} in what follows.

If $h(1)<h(2)$, then we have $\DD{h(1)}=0$ and $\DD{h(1)+1}=1$. Since 
$s_{h(1)} (\xi_h) = \xi_h$, this and \eqref{eq: vanishing coefficient} means that $h(h(1)+1)=h(h(1))+1$. Hence we obtain $\LL{h(1)}=\LL{h(1)+1}$ by the definition \eqref{eq: def of Li}. This implies that $\DD{\LL{h(1)}}=\DD{\LL{h(1)+1}}$, and hence \eqref{eq:additional 180} follows from Lemma~\ref{lem: D and L} in this case.

If $h(1)=h(2)$, then we have $\LL{1}=2$.  Also, 
it follows that $\DD{h(1)+1}=\DD{h(1)}+2$ in this case, and hence that $h$ is stable at $h(1)+1$ by (5.9) since $s_{h(1)} (\xi_h) = \xi_h$.
Thus we obtain $\LL{h(1)}=h(1)+1$ by the definition \eqref{eq: def of Li}, and the right-hand side of \eqref{eq:additional 180} is equal to $\DD{h(1)+1}=2$ which agrees with the left-hand side $\LL{1}=2$ so that \eqref{eq:additional 180} follows in this case as well.
\end{proof}

We now prove condition (ii) for $u$ by using 
Proposition~\ref{l:not_contain_changing_points} and Lemma~\ref{l:evaluation_of_L_for_changing_points} as declared.

\begin{proposition}\label{p:correction_on_parabolic_completed_Case_2}
Condition {\rm (ii)}  holds for $u$, that is, the equality $(u \w{h})_J=u_J$ holds.
\end{proposition}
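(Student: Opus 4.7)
The plan is to fix $1 \le i \le n-1$ with $s_i(\xi_h) = \xi_h$ and verify, for every pair $j_1, j_2 \in J_i$, that $u(j_1) < u(j_2)$ holds if and only if $u\w{h}(j_1) < u\w{h}(j_2)$. The argument splits according to whether $J_i$ equals $J_{h(1)}$ or is disjoint from it. The key tool in both branches is Lemma~\ref{lem: the modifying permutation preserves the order}, which identifies exactly when the cyclic prefactor $v \coloneqq v_\Mh \cdots v_0$ in $u = v \bar{u}'$ reverses the relative order of two values $y_1 < y_2$: precisely when $y_1 = h(1) - k$ for some $0 \le k \le \Mh$ and $y_2 = h(1) + l$ for some $1 \le l \le \m{k}$.

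In the case $J_i = J_{h(1)}$, equation~\eqref{eq: additional 380} already gives $u(j_1) < u(j_2) \Leftrightarrow \bar{u}'\w{h}(j_1) < \bar{u}'\w{h}(j_2)$, so the task reduces to checking that $v$ preserves the order of $\bar{u}'\w{h}(j_1)$ and $\bar{u}'\w{h}(j_2)$. Proposition~\ref{l:not_contain_changing_points} forbids $\bar{u}'\w{h}(j) = h(1) - k$ for every $j \in J_{h(1)}$ and every $0 \le k \le \Mh$, so the forbidden configuration of Lemma~\ref{lem: the modifying permutation preserves the order} cannot arise.

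In the case $J_i \cap J_{h(1)} = \emptyset$, since $\bar{u}' \in \mathfrak{S}_J$ preserves $J_i$ set-wise and $v$ acts trivially outside $J_{h(1)}$, we have $u(j) = \bar{u}'(j)$ for every $j \in J_i$. The Case~1 argument from the proof of Proposition~\ref{p:correction_on_parabolic_completed}, applied via the corresponding $J'$-component of $h'$ and using condition~(ii) for $u'$, then gives $u(j_1) < u(j_2) \Leftrightarrow \bar{u}'\w{h}(j_1) < \bar{u}'\w{h}(j_2)$. Again the remaining task is to see that $v$ preserves the order of these latter values.

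The main obstacle is this last reduction. One must rule out the existence of $j_1, j_2 \in J_i$ with $\{\bar{u}'\w{h}(j_1), \bar{u}'\w{h}(j_2)\} = \{h(1) - k, h(1) + l\}$ for admissible $k$ and $l$. Suppose such $j, j' \in J_i$ exist with $\bar{u}'\w{h}(j) = h(1) - k$ and $\bar{u}'\w{h}(j') = h(1) + l$; then Lemma~\ref{l:evaluation_of_L_for_changing_points} gives $\LL{j} \le j'$. When $k = 0$, Lemma~\ref{l:fixed_part_by_u_prime} forces $j = 1$, so $J_i = J_1$; Lemma~\ref{l:connected_component_containing_1} then says $h$ is strictly increasing on $J_1$, whence $\LL{1} > \max J_1 \ge j'$, a contradiction. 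When $k \ge 1$, Lemma~\ref{l:computation_of_w(h)_for_h(1)-k} locates $j = \DD{\tk{k}}$, and the plan is to combine Lemmas~\ref{lem: D and +} and~\ref{lem: D and L} with the principle of similar shapes applied to the internal structure of $J_i$ to conclude $\LL{j} > \max J_i$, yielding the desired contradiction.
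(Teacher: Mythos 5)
Your proposal follows the same architecture as the paper's proof: reduce everything to showing that the prefactor $v \coloneqq v_{\Mh}\cdots v_1 v_0$ preserves the relative order of the values $\bar{u}'\w{h}(j)$, $j\in J_i$; settle the case $J_i=J_{h(1)}$ via \eqref{eq: additional 380}, Proposition~\ref{l:not_contain_changing_points} and Lemma~\ref{lem: the modifying permutation preserves the order}; and in the case $J_i\cap J_{h(1)}=\emptyset$, exclude a ``bad pair'' $h(1)-k$, $h(1)+l$ with $1\le l\le \m{k}$ both occurring in $\bar{u}'\w{h}(J_i)$, using Lemma~\ref{l:evaluation_of_L_for_changing_points}. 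Invoking Lemma~\ref{lem: the modifying permutation preserves the order} once for the full product $v$ (instead of the paper's step-by-step passage through $v_0,v_1,\ldots$), and observing directly that $u=\bar{u}'$ on $J_i$ when $J_i\cap J_{h(1)}=\emptyset$, are harmless simplifications; your $k=0$ subcase is also correct.

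The gap is the $k\ge 1$ subcase, which you leave as a ``plan'': you claim that Lemmas~\ref{lem: D and +} and~\ref{lem: D and L} together with the principle of similar shapes ``applied to the internal structure of $J_i$'' will yield $\LL{j}>\max J_i$, but you do not show how, and these are not the tools that do this job — on the relevant range the principle of similar shapes degenerates, since $\DD{a}=0$ for all $a\le h(1)$. What is actually needed (and what the paper does at this point) is a localization step you never address: by Lemma~\ref{l:position_of_changing_values} the position $j=(\bar{u}'\w{h})^{-1}(h(1)-k)$ satisfies $j\le h(1)-\lp{h}=\min J_{h(1)}$, and since $J_i$ is an interval disjoint from $J_{h(1)}$ containing $j$, it lies entirely to the left, $J_i\subseteq[1,h(1)-1]$. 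Then $\DD{\cdot}\equiv 0$ on $J_i$, so \eqref{eq: vanishing coefficient} forces $h(m+1)=h(m)+2$ for every $m\in[i-\lb{i},\,i+\rb{i}]$; hence $h$ is stable at no position of $J_i$ beyond its left endpoint, while $h$ is stable at $\LL{j}>j$ by the definition \eqref{eq: def of Li}, giving $\LL{j}>i+\rb{i}+1=\max J_i$ and the desired contradiction with $\LL{j}\le j'\le\max J_i$ (this is exactly the paper's \eqref{eq:strictly_increasing_not_containing_h(1)}--\eqref{eq: additional 560}). Without this step — in particular without Lemma~\ref{l:position_of_changing_values} — your sketch of the decisive subcase does not close, so as written the proof is incomplete.
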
 

\begin{proof}
Take $1 \le i \le n -1$ such that $s_i (\xi_h) = \xi_h$. It suffices to prove that for $j_1, j_2 \in J_i$, $u \w{h} (j_1) < u \w{h} (j_2)$ if and only if $u (j_1) < u (j_2)$.

We first consider the case $h(1) \in J_i$. 
Recall from \eqref{eq: additional 380} that we have $u (j_1) < u (j_2)$ if and only if $\bar{u}'  \w{h} (j_1) < \bar{u}'  \w{h} (j_2)$. 
Now, Proposition~\ref{l:not_contain_changing_points} shows that $\bar{u}'  \w{h}(J_i)$ does not contain the numbers $h(1)-k$ for $0\le k\le \Mh$. This means that the order of the numbers in $\bar{u}'  \w{h}(J_i)$ are the same as that of $u \w{h}(J_i)$ by Lemma~\ref{lem: the modifying permutation preserves the order}. Hence it follows that $u \w{h} (j_1) < u \w{h} (j_2)$ if and only if $u (j_1) < u (j_2)$, as desired.

We next consider the case $h(1) \notin J_i$. In this case, the proof of Proposition~\ref{p:correction_on_parabolic_completed} implies that $\bar{u}'  \w{h} (j_1) < \bar{u}'  \w{h} (j_2)$ if and only if $\bar{u}'  (j_1) < \bar{u}'  (j_2)$. We need to prove that the permutation $v_{\Mh} \cdots v_1 v_0$ preserves this equivalence. 
Since $J_i\cap J_{h(1)}=\emptyset$ in this case, Lemma~\ref{l:fixed_part_by_u_prime} implies that
$\bar{u}'  (J_i)$ does not contain $h(1) -k$ for any $0 \le k \le \Mh$. This means that $\bar{u}'  (j_1) < \bar{u}'  (j_2)$ if and only if $u (j_1) < u (j_2)$ by Lemma~\ref{lem: the modifying permutation preserves the order}. Hence if 
$\bar{u}'  \w{h}(J_i)$ also does not contain $h(1) -k$ for any $0 \le k \le \Mh$, then the assertion follows immediately. Thus we may assume that 
\begin{align}\label{eq:additional 260}
h(1) -k \in \bar{u}'  \w{h}(J_i) \quad \text{for some $0 \le k \le \Mh$}
\end{align}
in the following.
This means that $(\bar{u}'  \w{h})(J_i)$ contains some numbers which are increased by $v_{\Mh}\cdots v_1 v_0$. Under this assumption, let us prove that 
\begin{equation}\label{eq:additional 270}
\begin{split}
&(v_{k-1}\cdots v_1v_0) \bar{u}'  \w{h} (j) 
\notin \{(h(1)-k)+1, (h(1)-k)+2, \ldots, h(1) -k +\m{k}\} \\
&\hspace{160pt} \text{for all $j\in J_i$ and $0 \le k \le \Mh$ satisfying \eqref{eq:additional 260}}.
\end{split}
\end{equation}
Equivalently, if $k$ satisfies \eqref{eq:additional 260}, then the set $(v_{k-1}\cdots v_1v_0)\bar{u}'  \w{h}(J_i)$ does not contain any number $\ell$ satisfying $h(1)-k< \ell\le v_k(h(1)-k)$ so that $v_k$ preserves the order of the numbers in $(v_{k-1}\cdots v_1v_0)\bar{u}'  \w{h}(J_i)$.
Hence if \eqref{eq:additional 270} is proved, then it follows that $v_{\Mh}\cdots v_1 v_0$ preserves the order of the numbers in $\bar{u}'  \w{h}(J_i)$, and we obtain the assertion of this proposition.

We prove \eqref{eq:additional 270} by using Lemma~\ref {l:evaluation_of_L_for_changing_points} in what follows.
By \eqref{eq:additional 260} and Lemma~\ref{l:position_of_changing_values}, we know that $J_i$ lies left to $J_{h(1)}$ in the standard listing of $[n]$, where it holds that $J_i\cap J_{h(1)}=\emptyset$. In particular, we have $J_i
\subseteq [1,h(1)-1]$. This implies that
\begin{equation}\label{eq:strictly_increasing_not_containing_h(1)}
\begin{aligned}
h(j+1)=h(j)+2 
\qquad 
(i -\lb{i} \le j \le i + \rb{i})
\end{aligned}
\end{equation}
by \eqref{eq: vanishing coefficient} since $\DD{a}=0$ for $1\le a \le h(1)$.
\begin{figure}[htbp]
{\unitlength 0.1in%
\begin{picture}(44.0000,23.0000)(11.2000,-33.1000)%
%
\special{sh 1.000}%
\special{ia 2700 1500 28 28 0.0000000 6.2831853}%
\special{pn 8}%
\special{ar 2700 1500 28 28 0.0000000 6.2831853}%
%
\special{sh 1.000}%
\special{ia 2900 1900 28 28 0.0000000 6.2831853}%
\special{pn 8}%
\special{ar 2900 1900 28 28 0.0000000 6.2831853}%
%
\special{sh 1.000}%
\special{ia 4100 2700 28 28 0.0000000 6.2831853}%
\special{pn 8}%
\special{ar 4100 2700 28 28 0.0000000 6.2831853}%
%
\special{pn 8}%
\special{pa 2500 1100}%
\special{pa 2500 3030}%
\special{dt 0.045}%
\put(22.4000,-32.4000){\makebox(0,0)[lb]{$i-k_{i,-}$}}%
\put(38.4000,-32.4000){\makebox(0,0)[lb]{$i+k_i+1$}}%
%
\special{pn 8}%
\special{pa 1120 1010}%
\special{pa 5520 1010}%
\special{pa 5520 3310}%
\special{pa 1120 3310}%
\special{pa 1120 1010}%
\special{ip}%
\put(34.0000,-23.2000){\makebox(0,0)[lb]{$\ddots$}}%
%
\special{pn 8}%
\special{pa 4000 2400}%
\special{pa 4000 2800}%
\special{fp}%
\special{pa 4000 2800}%
\special{pa 4200 2800}%
\special{fp}%
%
\special{pn 8}%
\special{pa 4100 1100}%
\special{pa 4100 3030}%
\special{dt 0.045}%
%
\special{pn 8}%
\special{pa 2400 1200}%
\special{pa 2600 1200}%
\special{fp}%
\special{pa 2600 1200}%
\special{pa 2600 1600}%
\special{fp}%
\special{pa 2600 1600}%
\special{pa 2800 1600}%
\special{fp}%
\special{pa 2800 1600}%
\special{pa 2800 2000}%
\special{fp}%
\special{pa 2800 2000}%
\special{pa 3000 2000}%
\special{fp}%
\end{picture}}%
\caption{The shape of $h$ on $J_i=[i -\lb{i}, i+\rb{i}+1]$.}
\label{pic:h for Ji}
\end{figure}
See Figure \ref{pic:h for Ji}.
We claim that 
\begin{align}\label{eq:additional 280}
(\bar{u}'  \w{h})^{-1} (h(1) -k) = i -\lb{i}.
\end{align}
To see this, we take cases. 
If $k\ge 1$, then we see from Lemma~\ref{l:fixed_part_by_u_prime} that
\begin{align*}
(\bar{u}'  \w{h})^{-1} (h(1) -k)=\w{h}^{-1} (h(1) -k),
\end{align*}
and we know that $h$ is stable at $\w{h}^{-1} (h(1) -k)$ since $h(1) -k<h(1)$. Thus \eqref{eq:strictly_increasing_not_containing_h(1)} (see Figure \ref{pic:h for Ji}) now implies that 
$\w{h}^{-1} (h(1) -k)=i -\lb{i}$ since $i -\lb{i}$ is the unique position in $J_i$ where $h$ can be stable. Hence we obtain \eqref{eq:additional 280} in this case.
If $k=0$, then \eqref{eq:additional 260} means that $1\in J_i$ since $h(1)=\bar{u}'  \w{h}(1)$ by Lemma~\ref{l:fixed_part_by_u_prime}, and hence we have $(\bar{u}'  \w{h})^{-1} (h(1) -k) = 1 = i -\lb{i}$. Namely, \eqref{eq:additional 280} holds in this case as well.

From \eqref{eq:strictly_increasing_not_containing_h(1)}, it also follows that $i +\rb{i} +1 < \LL{i -\lb{i}}$.
This is because  $\LL{i -\lb{i}}$ is greater than $i -\lb{i}$ and $h$ must be stable at $\LL{i -\lb{i}}$.
Now, \eqref{eq:additional 280} means that 
\[
i +\rb{i} +1 <\LL{i-\lb{i}}=\LL{(\bar{u}'  \w{h})^{-1}(h(1)-k)}.
\]
Hence Lemma~\ref{l:evaluation_of_L_for_changing_points} now implies that \begin{align*}
(\bar{u}' \w{h})^{-1} (h(1) +l) \notin J_i
\quad \text{for $1\le l\le \m{k}$}.
\end{align*}
We can rewrite this as
\begin{align}\label{eq: additional 560}
 \bar{u}'  \w{h} (j) \notin \{h(1)+1, h(1) +2, \ldots, h(1) +\m{k}\}
\quad \text{for $j \in J_i$}.
\end{align}
If $k = 0$, then this is precisely \eqref{eq:additional 270}. Hence we may assume that $k \ge 1$. Then \eqref{eq: additional 560} implies that
\begin{align}\label{eq: additional 570}
v_0 (\bar{u}'  \w{h} (j)) \notin \{h(1), h(1)+1, \ldots, h(1)-1 +\m{k}\}
\end{align}
since $h(1) +\m{k}\le h(1) +\m{0}$.
If $k = 1$, then this proves \eqref{eq:additional 270}. Hence we may assume that $k \ge 2$. Then \eqref{eq: additional 570} implies that
\begin{align*}
v_1 v_0 (\bar{u}'  \w{h} (j)) \notin \{h(1)-1, h(1), \ldots, h(1)-2 +\m{k}\}
\end{align*}
since we have $h(1)-1 +\m{k}\le h(1)-1 +\m{1}$. 
It is clear that we can continue this argument to see \eqref{eq:additional 270}.
\end{proof} 

\begin{proposition} 
Condition {\rm (iii)} holds for $u$.
That is, if $s_i (\xi_h) = \xi_h$ and $h$ is strictly increasing on $J_i$, then 
$u(j) = j$ for all $j\in J_i$.
\end{proposition}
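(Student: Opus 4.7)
The plan is to split on whether $h(1) \in J_i$ and, in each case, to reduce either to the induction hypothesis for $u'$ or to a contradiction with Lemma~\ref{lem: L and q}.

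First I would handle the case $h(1) \notin J_i$. Because the parts of $J$ are pairwise disjoint, $J_i$ is entirely contained either in $\{1, \ldots, h(1) -1\}$ or in $\{h(1) +1, \ldots, n\}$; in either event $J_i \cap J_{h(1)} = \emptyset$, and since each $v_k$ $(0 \le k \le \Mh)$ acts only on $\{h(1) -k, \ldots, h(1) -k +\m{k}\} \subseteq J_{h(1)}$, the composite $v_{\Mh} \cdots v_1 v_0$ fixes $J_i$ pointwise. It therefore suffices to verify $\bar{u}' (j) = j$ on $J_i$. If $J_i \subseteq \{1, \ldots, h(1) -1\}$, this is immediate from Lemma~\ref{l:fixed_part_by_u_prime}. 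If instead $J_i \subseteq \{h(1) +1, \ldots, n\}$, then $i \ne h(1)$ and $1 \notin J_i$, so Lemma~\ref{l:calculation_of_xi_for_h_prime} gives $s_{i -1} (\xi_{h'}) = \xi_{h'}$ with $J'_{i -1} = \{j -1 \mid j \in J_i\}$; the hypothesis that $h$ is strictly increasing on $J_i$ translates, via $h'(k) = h(k+1) -1$, to strict increase of $h'$ on $J'_{i -1}$. Condition~(iii) for $u'$ then yields $u'(k) = k$ for $k \in J'_{i -1}$, and \eqref{eq: two permutations} gives $\bar{u}' (j) = u'(j -1) +1 = j$ on $J_i$.

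Next I would handle the remaining case $h(1) \in J_i$, so that $i = h(1)$ and $J_i = J_{h(1)}$. The plan here is to show this case does not occur under the strict-increase hypothesis, which makes the conclusion vacuous. By Lemma~\ref{p:not_containing_1} we have $1 \notin J_{h(1)}$. Strict increase on $J_{h(1)} = [h(1) -\lp{h},\, h(1) +\rp{h} +1]$ forces $h(j) \ne h(j+1)$ for all $h(1) -\lp{h} \le j \le h(1) +\rp{h}$. Hence for every $0 \le k \le \lp{h}$, the first $j \ge (h(1) -k)^{(+\infty)}$ at which $h$ is stable must lie strictly beyond $h(1) +\rp{h}$---either because $(h(1) -k)^{(+\infty)}$ itself already exceeds $h(1) +\rp{h}$, or because strict increase rules out any stability inside $J_{h(1)}$. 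Thus $L_{h(1) -k} \ge h(1) +\rp{h} +2$. Specializing to $k = \Mh$ directly contradicts Lemma~\ref{lem: L and q}.

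The hard part will be controlling the behaviour of the iterated $(+)$-operation in the second case: one must check that the $(+\infty)$-limit cannot spoil the bound by landing on a special position, which is why it is safest to split on whether $(h(1) -\Mh)^{(+\infty)}$ lies inside $J_{h(1)}$ or beyond it, and verify the strict lower bound $L_{h(1) -\Mh} > h(1) +\rp{h} +1$ separately in the two situations. Once this bound is secured, Lemma~\ref{lem: L and q} supplies the contradiction and completes the argument.
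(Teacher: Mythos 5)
Your proof is correct and takes essentially the paper's route: for $J_i \neq J_{h(1)}$ you get $\bar{u}'(j)=j$ from Lemma~\ref{l:fixed_part_by_u_prime} or condition (iii) for $u'$ and use that $v_{\Mh}\cdots v_1v_0$ only permutes $J_{h(1)}$, while for $J_i = J_{h(1)}$ you show the strict-increase hypothesis forces $\LL{h(1)-k} > h(1)+\rp{h}+1$ and contradict Lemma~\ref{lem: L and q} (with $k=\Mh$, which is available in Case 2-b). The refinement you flag as the ``hard part'' (splitting on where $(h(1)-\Mh)^{(+\infty)}$ lands) is unnecessary: since $h$ is stable at $\LL{h(1)-k}$ and $\LL{h(1)-k} > h(1)-k \ge h(1)-\lp{h}$, the bound $\LL{h(1)-k} \le h(1)+\rp{h}+1$ would place a stable position with both relevant columns inside $J_{h(1)}$, directly contradicting strict increase, which is exactly the paper's one-line conclusion.
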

\begin{proof}
To begin with, we show that $J_{h(1)}$ does not satisfy the assumption of condition (iii). For that purpose, assume that the assumption of condition (iii) holds on $J_{h(1)}$, that is, 
\[h(h(1)-\lp{h})<h(h(1)-\lp{h}+1)<\cdots <h(h(1)+\rp{h})<h(h(1)+\rp{h}+1).\]
This means that 
\[
h(1)+\rp{h}+1< \LL{h(1)-k} \quad (0\le k\le \lp{h})
\]
since $h(1)-k<\LL{h(1)-k}$ and $h$ must be stable at $L_{h(1)-k}$, but this contradicts Lemma~\ref{lem: L and q}. 

Assume that the assumption of condition (iii) holds on $J_{i}\ (\ne J_{h(1)})$.
The same argument as that in the proof of Proposition~\ref{p:condition_iv_for_u} shows that 
\begin{align}\label{eq: additional 390}
\bar{u}'(j)=j \quad\text{for} \quad j\in J_i.
\end{align}
Now recall that $u=v_{\Mh}\cdots v_1v_0\bar{u}'$ by definition, and that $v_{\Mh}\cdots v_1v_0\in\mathfrak{S}_J$ is in fact a permutation on $J_{h(1)}$ by definition. Since $J_{i}\cap J_{h(1)}=\emptyset$, it follows that $v_{\Mh}\cdots v_1v_0$ is trivial on $J_{i}$. This and  \eqref{eq: additional 390} mean that $u(j)=j$ for $j\in J_i$.
\end{proof}

\begin{proposition}
Condition {\rm (iv)} holds for $u$. That is, 
for $1 \le i \le n-1$ and $i +1 \le j < \LL{i} $, we have
$u \w{h} (i) < u \w{h} (j)$.
\end{proposition}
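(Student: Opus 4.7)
The plan is to reduce condition (iv) for $u = v_{\Mh} \cdots v_1 v_0 \bar{u}'$ to the analogous statement for $\bar{u}' \w{h}$, and then to show that the modification by $v_{\Mh} \cdots v_1 v_0$ cannot destroy the relevant inequalities. The key input is Lemma~\ref{lem: the modifying permutation preserves the order}, which pinpoints exactly how the composition $v_{\Mh} \cdots v_1 v_0$ can reverse an order, together with Lemma~\ref{l:evaluation_of_L_for_changing_points}, which controls the positions where $\bar{u}' \w{h}$ takes the critical values $h(1) - k$ and $h(1) + l$.

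First, I would establish condition (iv) for $\bar{u}' \w{h}$, i.e.\ that $\bar{u}' \w{h}(i) < \bar{u}' \w{h}(j)$ for $1 \le i \le n-1$ and $i+1 \le j < \LL{i}$. This follows by the same argument as in Proposition~\ref{p:condition_v_easy_case}, which does not use the hypothesis $s_{h(1)}(\xi_h) \ne \xi_h$ of Case 1: for $i \ge 2$, apply condition (iv) of the induction hypothesis to $u'$ together with the identity $\LL{i} = \LL{i-1}^\prime + 1$ and Corollary~\ref{c:computation_of_v_first_step}; for $i = 1$, use Lemma~\ref{lem: wh and L} to get $\w{h}(1) = h(1) < \w{h}(j)$ and then Lemma~\ref{l:fixed_part_by_u_prime} (noting that $\bar{u}'$ preserves $\{1, \ldots, h(1)\}$ and its complement).

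Next, to pass from $\bar{u}' \w{h}$ to $u \w{h}$, I invoke Lemma~\ref{lem: the modifying permutation preserves the order}. Writing $a = \bar{u}' \w{h}(i)$ and $b = \bar{u}' \w{h}(j)$, we already know $a < b$, and the desired inequality $u \w{h}(i) < u \w{h}(j)$ would fail only if $a = h(1) - k$ and $b = h(1) + l$ for some $0 \le k \le \Mh$ and $1 \le l \le m_k$. The main (and only) step is to rule out this configuration under the hypothesis $i + 1 \le j < \LL{i}$.

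Assume for contradiction that $\bar{u}' \w{h}(i) = h(1) - k$ with $0 \le k \le \Mh$. Then $i = (\bar{u}' \w{h})^{-1}(h(1) - k)$, and Lemma~\ref{l:evaluation_of_L_for_changing_points} yields
\[
\LL{i} = \LL{(\bar{u}' \w{h})^{-1}(h(1) - k)} \le (\bar{u}' \w{h})^{-1}(h(1) + l)
\]
for every $1 \le l \le m_k$. Combined with the assumption $j < \LL{i}$, this gives $j < (\bar{u}' \w{h})^{-1}(h(1) + l)$, hence $\bar{u}' \w{h}(j) \neq h(1) + l$ for all such $l$. This excludes the only possibility for the order to be flipped, so $u \w{h}(i) < u \w{h}(j)$, completing the proof. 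The hard part is essentially packaged into Lemma~\ref{l:evaluation_of_L_for_changing_points}; once it is available, condition (iv) for $u$ drops out immediately.
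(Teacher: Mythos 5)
Your proposal is correct and follows essentially the same route as the paper: first establish the inequality for $\bar{u}'\w{h}$ by the argument of Proposition~\ref{p:condition_v_easy_case}, then use Lemma~\ref{lem: the modifying permutation preserves the order} to see that the only way $v_{\Mh}\cdots v_1 v_0$ could flip the order is the configuration $\bar{u}'\w{h}(i)=h(1)-k$, $\bar{u}'\w{h}(j)=h(1)+l$, which Lemma~\ref{l:evaluation_of_L_for_changing_points} rules out since it forces $\LL{i}\le j$, contradicting $j<\LL{i}$. The paper phrases this as a proof by contradiction, but the logical content and the key lemmas invoked are identical.
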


\begin{proof}
The same argument as that in the proof of Proposition~\ref{p:condition_v_easy_case} shows that 
\begin{align*}
\bar{u}' \w{h}(i)<\bar{u}' \w{h}(j)\quad\text{for}\quad 1 \le i \le n-1 \text{ and } i+1\leq j< \LL{i}.
\end{align*}

We now prove that $u \w{h} (i) < u \w{h} (j)$. Assume for a contradiction that this does not hold. Since $u \w{h} = v_M \cdots v_1 v_0 \bar{u}' \w{h}$, we see by Lemma \ref{lem: the modifying permutation preserves the order} that $\bar{u}' \w{h}(i) = h(1)-k$ and $\bar{u}' \w{h}(j)=h(1)+ l$ for some $0\leq k\leq \Mh$ and $1\leq l\leq \m{k}$, which implies that 
\begin{align*}
&i = (\bar{u}' \w{h})^{-1}(h(1)-k),\\
&j = (\bar{u}' \w{h})^{-1}(h(1)+ l).
\end{align*}
Hence it follows from Lemma~\ref{l:evaluation_of_L_for_changing_points} that $\LL{i} \leq j$ which contradicts the assumption $j<\LL{i}$ of condition (iv), as desired.
\end{proof}

\subsection{A pair of illustrating examples}\label{subsec: Example}
In this subsection, we give an example of a pair of nef Hessenberg functions which illustrate the argument in Case 2-b in Section \ref{subsect: Proof of Thm B}. 
Let $n=20$, and $h\colon[20]\rightarrow[20]$ the Hessenberg function depicted in Figure \ref{pic:example}, that is, 
\[
h = (9,10,10,11,12,12,13,15,17,18,18,19,19, 20, 20, 20, 20, 20, 20, 20).
\]
Let $h'\colon[19]\rightarrow[19]$ be the Hessenberg function given by $h' (i) \coloneqq h(i +1) -1$ for $1 \le i \le 19$ as in Section \ref{subsect: Proof of Thm B} (see Figure \ref{pic:example}):
\[
h' = (9, 9, 10,11,11,12,14,16,17,17,18,18,19,19,19,19,19,19,19).
\]

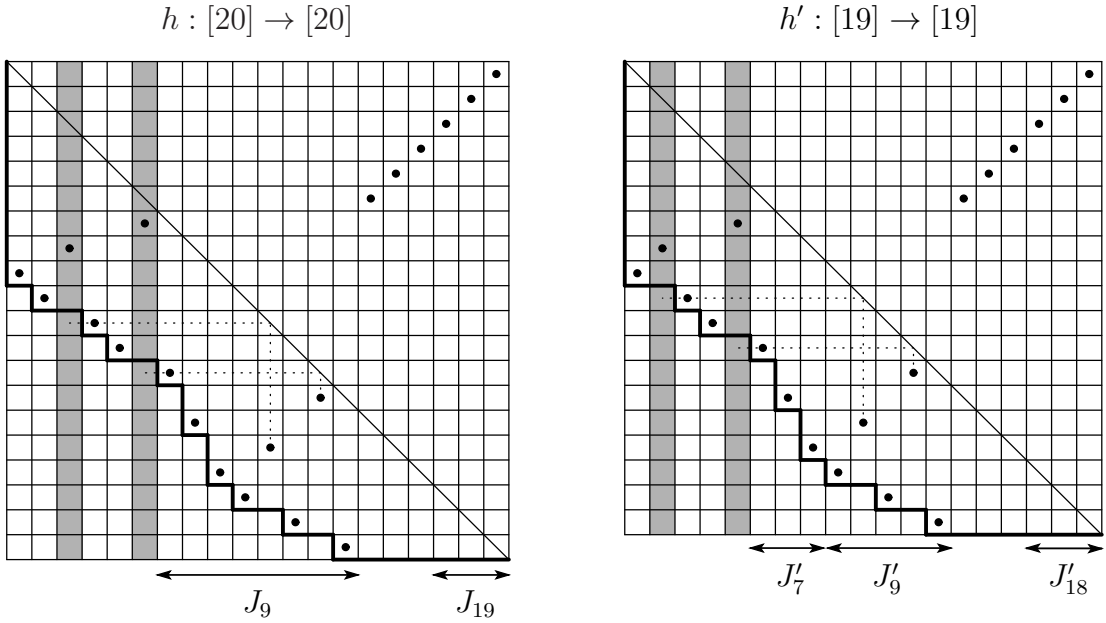
\begin{figure}[htbp]
{\unitlength 0.1in%
\begin{picture}(62.0000,31.0000)(8.0000,-40.1000)%
\put(18.0000,-10.7000){\makebox(0,0)[lb]{$h:[20]\rightarrow[20]$}}%
%
\special{pn 0}%
\special{sh 0.300}%
\special{pa 1650 1200}%
\special{pa 1780 1200}%
\special{pa 1780 3800}%
\special{pa 1650 3800}%
\special{pa 1650 1200}%
\special{ip}%
\special{pn 8}%
\special{pa 1650 1200}%
\special{pa 1780 1200}%
\special{pa 1780 3800}%
\special{pa 1650 3800}%
\special{pa 1650 1200}%
\special{ip}%
%
\special{pn 0}%
\special{sh 0.300}%
\special{pa 1260 1200}%
\special{pa 1390 1200}%
\special{pa 1390 3800}%
\special{pa 1260 3800}%
\special{pa 1260 1200}%
\special{ip}%
\special{pn 8}%
\special{pa 1260 1200}%
\special{pa 1390 1200}%
\special{pa 1390 3800}%
\special{pa 1260 3800}%
\special{pa 1260 1200}%
\special{ip}%
%
\special{pn 8}%
\special{pa 1000 1200}%
\special{pa 3600 1200}%
\special{fp}%
\special{pa 3600 1200}%
\special{pa 3600 3800}%
\special{fp}%
\special{pa 3600 3800}%
\special{pa 1000 3800}%
\special{fp}%
\special{pa 1000 3800}%
\special{pa 1000 1200}%
\special{fp}%
%
\special{pn 8}%
\special{pa 1130 1200}%
\special{pa 1130 3800}%
\special{fp}%
%
\special{pn 8}%
\special{pa 1260 1200}%
\special{pa 1260 3800}%
\special{fp}%
%
\special{pn 8}%
\special{pa 1390 1200}%
\special{pa 1390 3800}%
\special{fp}%
%
\special{pn 8}%
\special{pa 1520 1200}%
\special{pa 1520 3800}%
\special{fp}%
%
\special{pn 8}%
\special{pa 1650 1200}%
\special{pa 1650 3800}%
\special{fp}%
%
\special{pn 8}%
\special{pa 1780 1200}%
\special{pa 1780 3800}%
\special{fp}%
%
\special{pn 8}%
\special{pa 1910 1200}%
\special{pa 1910 3800}%
\special{fp}%
%
\special{pn 8}%
\special{pa 2040 1200}%
\special{pa 2040 3800}%
\special{fp}%
%
\special{pn 8}%
\special{pa 2170 1200}%
\special{pa 2170 3800}%
\special{fp}%
%
\special{pn 8}%
\special{pa 2300 1200}%
\special{pa 2300 3800}%
\special{fp}%
%
\special{pn 8}%
\special{pa 2430 1200}%
\special{pa 2430 3800}%
\special{fp}%
%
\special{pn 8}%
\special{pa 2560 1200}%
\special{pa 2560 3800}%
\special{fp}%
%
\special{pn 8}%
\special{pa 2690 1200}%
\special{pa 2690 3800}%
\special{fp}%
%
\special{pn 8}%
\special{pa 2820 1200}%
\special{pa 2820 3800}%
\special{fp}%
%
\special{pn 8}%
\special{pa 2950 1200}%
\special{pa 2950 3800}%
\special{fp}%
%
\special{pn 8}%
\special{pa 3080 1200}%
\special{pa 3080 3800}%
\special{fp}%
%
\special{pn 8}%
\special{pa 3210 1200}%
\special{pa 3210 3800}%
\special{fp}%
%
\special{pn 8}%
\special{pa 3340 1200}%
\special{pa 3340 3800}%
\special{fp}%
%
\special{pn 8}%
\special{pa 3470 1200}%
\special{pa 3470 3800}%
\special{fp}%
%
\special{pn 8}%
\special{pa 1000 1330}%
\special{pa 3600 1330}%
\special{fp}%
%
\special{pn 8}%
\special{pa 1000 1460}%
\special{pa 3600 1460}%
\special{fp}%
%
\special{pn 8}%
\special{pa 1000 1590}%
\special{pa 3600 1590}%
\special{fp}%
%
\special{pn 8}%
\special{pa 1000 1720}%
\special{pa 3600 1720}%
\special{fp}%
%
\special{pn 8}%
\special{pa 1000 1850}%
\special{pa 3600 1850}%
\special{fp}%
%
\special{pn 8}%
\special{pa 1000 1980}%
\special{pa 3600 1980}%
\special{fp}%
%
\special{pn 8}%
\special{pa 1000 2110}%
\special{pa 3600 2110}%
\special{fp}%
%
\special{pn 8}%
\special{pa 1000 2240}%
\special{pa 3600 2240}%
\special{fp}%
%
\special{pn 8}%
\special{pa 1000 2370}%
\special{pa 3600 2370}%
\special{fp}%
%
\special{pn 8}%
\special{pa 1000 2500}%
\special{pa 3600 2500}%
\special{fp}%
%
\special{pn 8}%
\special{pa 1000 2630}%
\special{pa 3600 2630}%
\special{fp}%
%
\special{pn 8}%
\special{pa 1000 2760}%
\special{pa 3600 2760}%
\special{fp}%
%
\special{pn 8}%
\special{pa 1000 2890}%
\special{pa 3600 2890}%
\special{fp}%
%
\special{pn 8}%
\special{pa 1000 3020}%
\special{pa 3600 3020}%
\special{fp}%
%
\special{pn 8}%
\special{pa 1000 3150}%
\special{pa 3600 3150}%
\special{fp}%
%
\special{pn 8}%
\special{pa 1000 3280}%
\special{pa 3600 3280}%
\special{fp}%
%
\special{pn 8}%
\special{pa 1000 3410}%
\special{pa 3600 3410}%
\special{fp}%
%
\special{pn 8}%
\special{pa 1000 3540}%
\special{pa 3600 3540}%
\special{fp}%
%
\special{pn 8}%
\special{pa 1000 3670}%
\special{pa 3600 3670}%
\special{fp}%
%
\special{pn 20}%
\special{pa 1000 1200}%
\special{pa 1000 2370}%
\special{fp}%
\special{pa 1000 2370}%
\special{pa 1130 2370}%
\special{fp}%
\special{pa 1130 2370}%
\special{pa 1130 2500}%
\special{fp}%
\special{pa 1130 2500}%
\special{pa 1390 2500}%
\special{fp}%
\special{pa 1390 2500}%
\special{pa 1390 2630}%
\special{fp}%
\special{pa 1390 2630}%
\special{pa 1520 2630}%
\special{fp}%
\special{pa 1520 2630}%
\special{pa 1520 2760}%
\special{fp}%
\special{pa 1520 2760}%
\special{pa 1780 2760}%
\special{fp}%
\special{pa 1780 2760}%
\special{pa 1780 2890}%
\special{fp}%
\special{pa 1780 2890}%
\special{pa 1910 2890}%
\special{fp}%
\special{pa 1910 2890}%
\special{pa 1910 3150}%
\special{fp}%
\special{pa 1910 3150}%
\special{pa 2040 3150}%
\special{fp}%
\special{pa 2040 3150}%
\special{pa 2040 3410}%
\special{fp}%
\special{pa 2040 3410}%
\special{pa 2170 3410}%
\special{fp}%
\special{pa 2170 3410}%
\special{pa 2170 3540}%
\special{fp}%
\special{pa 2170 3540}%
\special{pa 2430 3540}%
\special{fp}%
\special{pa 2430 3540}%
\special{pa 2430 3670}%
\special{fp}%
\special{pa 2430 3670}%
\special{pa 2690 3670}%
\special{fp}%
\special{pa 2690 3670}%
\special{pa 2690 3800}%
\special{fp}%
\special{pa 2690 3800}%
\special{pa 3600 3800}%
\special{fp}%
%
\special{sh 1.000}%
\special{ia 1065 2305 18 18 0.0000000 6.2831853}%
\special{pn 8}%
\special{ar 1065 2305 18 18 0.0000000 6.2831853}%
%
\special{sh 1.000}%
\special{ia 1195 2435 18 18 0.0000000 6.2831853}%
\special{pn 8}%
\special{ar 1195 2435 18 18 0.0000000 6.2831853}%
%
\special{sh 1.000}%
\special{ia 1455 2565 18 18 0.0000000 6.2831853}%
\special{pn 8}%
\special{ar 1455 2565 18 18 0.0000000 6.2831853}%
%
\special{sh 1.000}%
\special{ia 1585 2695 18 18 0.0000000 6.2831853}%
\special{pn 8}%
\special{ar 1585 2695 18 18 0.0000000 6.2831853}%
%
\special{sh 1.000}%
\special{ia 1845 2825 18 18 0.0000000 6.2831853}%
\special{pn 8}%
\special{ar 1845 2825 18 18 0.0000000 6.2831853}%
%
\special{sh 1.000}%
\special{ia 1975 3085 18 18 0.0000000 6.2831853}%
\special{pn 8}%
\special{ar 1975 3085 18 18 0.0000000 6.2831853}%
%
\special{sh 1.000}%
\special{ia 2105 3345 18 18 0.0000000 6.2831853}%
\special{pn 8}%
\special{ar 2105 3345 18 18 0.0000000 6.2831853}%
%
\special{sh 1.000}%
\special{ia 2235 3475 18 18 0.0000000 6.2831853}%
\special{pn 8}%
\special{ar 2235 3475 18 18 0.0000000 6.2831853}%
%
\special{sh 1.000}%
\special{ia 2495 3605 18 18 0.0000000 6.2831853}%
\special{pn 8}%
\special{ar 2495 3605 18 18 0.0000000 6.2831853}%
%
\special{sh 1.000}%
\special{ia 2755 3735 18 18 0.0000000 6.2831853}%
\special{pn 8}%
\special{ar 2755 3735 18 18 0.0000000 6.2831853}%
%
\special{sh 1.000}%
\special{ia 1325 2175 18 18 0.0000000 6.2831853}%
\special{pn 8}%
\special{ar 1325 2175 18 18 0.0000000 6.2831853}%
%
\special{sh 1.000}%
\special{ia 1715 2045 18 18 0.0000000 6.2831853}%
\special{pn 8}%
\special{ar 1715 2045 18 18 0.0000000 6.2831853}%
%
\special{sh 1.000}%
\special{ia 2365 3215 18 18 0.0000000 6.2831853}%
\special{pn 8}%
\special{ar 2365 3215 18 18 0.0000000 6.2831853}%
%
\special{sh 1.000}%
\special{ia 2625 2955 18 18 0.0000000 6.2831853}%
\special{pn 8}%
\special{ar 2625 2955 18 18 0.0000000 6.2831853}%
%
\special{sh 1.000}%
\special{ia 2885 1915 18 18 0.0000000 6.2831853}%
\special{pn 8}%
\special{ar 2885 1915 18 18 0.0000000 6.2831853}%
%
\special{sh 1.000}%
\special{ia 3015 1785 18 18 0.0000000 6.2831853}%
\special{pn 8}%
\special{ar 3015 1785 18 18 0.0000000 6.2831853}%
%
\special{sh 1.000}%
\special{ia 3145 1655 18 18 0.0000000 6.2831853}%
\special{pn 8}%
\special{ar 3145 1655 18 18 0.0000000 6.2831853}%
%
\special{sh 1.000}%
\special{ia 3275 1525 18 18 0.0000000 6.2831853}%
\special{pn 8}%
\special{ar 3275 1525 18 18 0.0000000 6.2831853}%
%
\special{sh 1.000}%
\special{ia 3405 1395 18 18 0.0000000 6.2831853}%
\special{pn 8}%
\special{ar 3405 1395 18 18 0.0000000 6.2831853}%
%
\special{sh 1.000}%
\special{ia 3535 1265 18 18 0.0000000 6.2831853}%
\special{pn 8}%
\special{ar 3535 1265 18 18 0.0000000 6.2831853}%
%
\special{pn 8}%
\special{pa 1000 1200}%
\special{pa 3600 3800}%
\special{fp}%
\put(22.2000,-41.0000){\makebox(0,0)[lb]{$J_{9}$}}%
%
\special{pn 8}%
\special{pa 2365 3215}%
\special{pa 2365 2565}%
\special{dt 0.045}%
\special{pa 2365 2565}%
\special{pa 1325 2565}%
\special{dt 0.045}%
%
\special{pn 8}%
\special{pa 2625 2955}%
\special{pa 2625 2825}%
\special{dt 0.045}%
\special{pa 2625 2825}%
\special{pa 1715 2825}%
\special{dt 0.045}%
%
\special{pn 8}%
\special{pa 1780 3880}%
\special{pa 2820 3880}%
\special{fp}%
\special{sh 1}%
\special{pa 2820 3880}%
\special{pa 2753 3860}%
\special{pa 2767 3880}%
\special{pa 2753 3900}%
\special{pa 2820 3880}%
\special{fp}%
%
\special{pn 8}%
\special{pa 2820 3880}%
\special{pa 1780 3880}%
\special{fp}%
\special{sh 1}%
\special{pa 1780 3880}%
\special{pa 1847 3900}%
\special{pa 1833 3880}%
\special{pa 1847 3860}%
\special{pa 1780 3880}%
\special{fp}%
%
\special{pn 0}%
\special{sh 0.300}%
\special{pa 4720 1200}%
\special{pa 4850 1200}%
\special{pa 4850 3670}%
\special{pa 4720 3670}%
\special{pa 4720 1200}%
\special{ip}%
\special{pn 8}%
\special{pa 4720 1200}%
\special{pa 4850 1200}%
\special{pa 4850 3670}%
\special{pa 4720 3670}%
\special{pa 4720 1200}%
\special{ip}%
%
\special{pn 0}%
\special{sh 0.300}%
\special{pa 4330 1200}%
\special{pa 4460 1200}%
\special{pa 4460 3670}%
\special{pa 4330 3670}%
\special{pa 4330 1200}%
\special{ip}%
\special{pn 8}%
\special{pa 4330 1200}%
\special{pa 4460 1200}%
\special{pa 4460 3670}%
\special{pa 4330 3670}%
\special{pa 4330 1200}%
\special{ip}%
%
\special{pn 8}%
\special{pa 4200 1200}%
\special{pa 4200 3670}%
\special{fp}%
%
\special{pn 8}%
\special{pa 4330 1200}%
\special{pa 4330 3670}%
\special{fp}%
%
\special{pn 8}%
\special{pa 4460 1200}%
\special{pa 4460 3670}%
\special{fp}%
%
\special{pn 8}%
\special{pa 4590 1200}%
\special{pa 4590 3670}%
\special{fp}%
%
\special{pn 8}%
\special{pa 4720 1200}%
\special{pa 4720 3670}%
\special{fp}%
%
\special{pn 8}%
\special{pa 4850 1200}%
\special{pa 4850 3670}%
\special{fp}%
%
\special{pn 8}%
\special{pa 4980 1200}%
\special{pa 4980 3670}%
\special{fp}%
%
\special{pn 8}%
\special{pa 5110 1200}%
\special{pa 5110 3670}%
\special{fp}%
%
\special{pn 8}%
\special{pa 5240 1200}%
\special{pa 5240 3670}%
\special{fp}%
%
\special{pn 8}%
\special{pa 5370 1200}%
\special{pa 5370 3670}%
\special{fp}%
%
\special{pn 8}%
\special{pa 5500 1200}%
\special{pa 5500 3670}%
\special{fp}%
%
\special{pn 8}%
\special{pa 5630 1200}%
\special{pa 5630 3670}%
\special{fp}%
%
\special{pn 8}%
\special{pa 5760 1200}%
\special{pa 5760 3670}%
\special{fp}%
%
\special{pn 8}%
\special{pa 5890 1200}%
\special{pa 5890 3670}%
\special{fp}%
%
\special{pn 8}%
\special{pa 6020 1200}%
\special{pa 6020 3670}%
\special{fp}%
%
\special{pn 8}%
\special{pa 6150 1200}%
\special{pa 6150 3670}%
\special{fp}%
%
\special{pn 8}%
\special{pa 6280 1200}%
\special{pa 6280 3670}%
\special{fp}%
%
\special{pn 8}%
\special{pa 6410 1200}%
\special{pa 6410 3670}%
\special{fp}%
%
\special{pn 8}%
\special{pa 6540 1200}%
\special{pa 6540 3670}%
\special{fp}%
%
\special{sh 1.000}%
\special{ia 4265 2305 18 18 0.0000000 6.2831853}%
\special{pn 8}%
\special{ar 4265 2305 18 18 0.0000000 6.2831853}%
%
\special{sh 1.000}%
\special{ia 4525 2435 18 18 0.0000000 6.2831853}%
\special{pn 8}%
\special{ar 4525 2435 18 18 0.0000000 6.2831853}%
%
\special{sh 1.000}%
\special{ia 4655 2565 18 18 0.0000000 6.2831853}%
\special{pn 8}%
\special{ar 4655 2565 18 18 0.0000000 6.2831853}%
%
\special{sh 1.000}%
\special{ia 4915 2695 18 18 0.0000000 6.2831853}%
\special{pn 8}%
\special{ar 4915 2695 18 18 0.0000000 6.2831853}%
%
\special{sh 1.000}%
\special{ia 5045 2955 18 18 0.0000000 6.2831853}%
\special{pn 8}%
\special{ar 5045 2955 18 18 0.0000000 6.2831853}%
%
\special{sh 1.000}%
\special{ia 5175 3215 18 18 0.0000000 6.2831853}%
\special{pn 8}%
\special{ar 5175 3215 18 18 0.0000000 6.2831853}%
%
\special{sh 1.000}%
\special{ia 5305 3345 18 18 0.0000000 6.2831853}%
\special{pn 8}%
\special{ar 5305 3345 18 18 0.0000000 6.2831853}%
%
\special{sh 1.000}%
\special{ia 5565 3475 18 18 0.0000000 6.2831853}%
\special{pn 8}%
\special{ar 5565 3475 18 18 0.0000000 6.2831853}%
%
\special{sh 1.000}%
\special{ia 5825 3605 18 18 0.0000000 6.2831853}%
\special{pn 8}%
\special{ar 5825 3605 18 18 0.0000000 6.2831853}%
%
\special{sh 1.000}%
\special{ia 4395 2175 18 18 0.0000000 6.2831853}%
\special{pn 8}%
\special{ar 4395 2175 18 18 0.0000000 6.2831853}%
%
\special{sh 1.000}%
\special{ia 4785 2045 18 18 0.0000000 6.2831853}%
\special{pn 8}%
\special{ar 4785 2045 18 18 0.0000000 6.2831853}%
%
\special{sh 1.000}%
\special{ia 5435 3085 18 18 0.0000000 6.2831853}%
\special{pn 8}%
\special{ar 5435 3085 18 18 0.0000000 6.2831853}%
%
\special{sh 1.000}%
\special{ia 5695 2825 18 18 0.0000000 6.2831853}%
\special{pn 8}%
\special{ar 5695 2825 18 18 0.0000000 6.2831853}%
%
\special{sh 1.000}%
\special{ia 5955 1915 18 18 0.0000000 6.2831853}%
\special{pn 8}%
\special{ar 5955 1915 18 18 0.0000000 6.2831853}%
%
\special{sh 1.000}%
\special{ia 6085 1785 18 18 0.0000000 6.2831853}%
\special{pn 8}%
\special{ar 6085 1785 18 18 0.0000000 6.2831853}%
%
\special{sh 1.000}%
\special{ia 6215 1655 18 18 0.0000000 6.2831853}%
\special{pn 8}%
\special{ar 6215 1655 18 18 0.0000000 6.2831853}%
%
\special{sh 1.000}%
\special{ia 6345 1525 18 18 0.0000000 6.2831853}%
\special{pn 8}%
\special{ar 6345 1525 18 18 0.0000000 6.2831853}%
%
\special{sh 1.000}%
\special{ia 6475 1395 18 18 0.0000000 6.2831853}%
\special{pn 8}%
\special{ar 6475 1395 18 18 0.0000000 6.2831853}%
%
\special{sh 1.000}%
\special{ia 6605 1265 18 18 0.0000000 6.2831853}%
\special{pn 8}%
\special{ar 6605 1265 18 18 0.0000000 6.2831853}%
%
\special{pn 8}%
\special{pa 5435 3085}%
\special{pa 5435 2435}%
\special{dt 0.045}%
\special{pa 5435 2435}%
\special{pa 4395 2435}%
\special{dt 0.045}%
%
\special{pn 8}%
\special{pa 5695 2825}%
\special{pa 5695 2695}%
\special{dt 0.045}%
\special{pa 5695 2695}%
\special{pa 4785 2695}%
\special{dt 0.045}%
%
\special{pn 20}%
\special{pa 4200 1200}%
\special{pa 4200 2370}%
\special{fp}%
\special{pa 4200 2370}%
\special{pa 4460 2370}%
\special{fp}%
\special{pa 4460 2370}%
\special{pa 4460 2500}%
\special{fp}%
\special{pa 4460 2500}%
\special{pa 4590 2500}%
\special{fp}%
\special{pa 4590 2500}%
\special{pa 4590 2630}%
\special{fp}%
\special{pa 4590 2630}%
\special{pa 4850 2630}%
\special{fp}%
\special{pa 4850 2630}%
\special{pa 4850 2760}%
\special{fp}%
\special{pa 4850 2760}%
\special{pa 4980 2760}%
\special{fp}%
\special{pa 4980 2760}%
\special{pa 4980 3020}%
\special{fp}%
\special{pa 4980 3020}%
\special{pa 5110 3020}%
\special{fp}%
\special{pa 5110 3020}%
\special{pa 5110 3280}%
\special{fp}%
\special{pa 5110 3280}%
\special{pa 5240 3280}%
\special{fp}%
\special{pa 5240 3280}%
\special{pa 5240 3410}%
\special{fp}%
\special{pa 5240 3410}%
\special{pa 5370 3410}%
\special{fp}%
\special{pa 5370 3410}%
\special{pa 5500 3410}%
\special{fp}%
\special{pa 5500 3410}%
\special{pa 5500 3540}%
\special{fp}%
\special{pa 5500 3540}%
\special{pa 5760 3540}%
\special{fp}%
\special{pa 5760 3540}%
\special{pa 5760 3670}%
\special{fp}%
\special{pa 5760 3670}%
\special{pa 6670 3670}%
\special{fp}%
%
\special{pn 8}%
\special{pa 6670 1200}%
\special{pa 4200 1200}%
\special{fp}%
%
\special{pn 8}%
\special{pa 6670 1330}%
\special{pa 4200 1330}%
\special{fp}%
%
\special{pn 8}%
\special{pa 6670 1460}%
\special{pa 4200 1460}%
\special{fp}%
%
\special{pn 8}%
\special{pa 6670 1590}%
\special{pa 4200 1590}%
\special{fp}%
%
\special{pn 8}%
\special{pa 6670 1720}%
\special{pa 4200 1720}%
\special{fp}%
%
\special{pn 8}%
\special{pa 6670 1850}%
\special{pa 4200 1850}%
\special{fp}%
%
\special{pn 8}%
\special{pa 6670 1980}%
\special{pa 4200 1980}%
\special{fp}%
%
\special{pn 8}%
\special{pa 6670 2110}%
\special{pa 4200 2110}%
\special{fp}%
%
\special{pn 8}%
\special{pa 6670 2240}%
\special{pa 4200 2240}%
\special{fp}%
%
\special{pn 8}%
\special{pa 6670 2370}%
\special{pa 4200 2370}%
\special{fp}%
%
\special{pn 8}%
\special{pa 6670 2500}%
\special{pa 4200 2500}%
\special{fp}%
%
\special{pn 8}%
\special{pa 6670 2630}%
\special{pa 4200 2630}%
\special{fp}%
%
\special{pn 8}%
\special{pa 6670 2760}%
\special{pa 4200 2760}%
\special{fp}%
%
\special{pn 8}%
\special{pa 6670 2890}%
\special{pa 4200 2890}%
\special{fp}%
%
\special{pn 8}%
\special{pa 6670 3020}%
\special{pa 4200 3020}%
\special{fp}%
%
\special{pn 8}%
\special{pa 6670 3150}%
\special{pa 4200 3150}%
\special{fp}%
%
\special{pn 8}%
\special{pa 6670 3280}%
\special{pa 4200 3280}%
\special{fp}%
%
\special{pn 8}%
\special{pa 6670 3410}%
\special{pa 4200 3410}%
\special{fp}%
%
\special{pn 8}%
\special{pa 6670 3540}%
\special{pa 4200 3540}%
\special{fp}%
%
\special{pn 8}%
\special{pa 6670 3670}%
\special{pa 4200 3670}%
\special{fp}%
%
\special{pn 8}%
\special{pa 6670 1200}%
\special{pa 6670 3670}%
\special{fp}%
\put(49.8000,-39.8000){\makebox(0,0)[lb]{$J'_{7}$}}%
\put(54.8000,-39.8000){\makebox(0,0)[lb]{$J'_{9}$}}%
%
\special{pn 8}%
\special{pa 4850 3740}%
\special{pa 5230 3740}%
\special{fp}%
\special{sh 1}%
\special{pa 5230 3740}%
\special{pa 5163 3720}%
\special{pa 5177 3740}%
\special{pa 5163 3760}%
\special{pa 5230 3740}%
\special{fp}%
%
\special{pn 8}%
\special{pa 5890 3740}%
\special{pa 5250 3740}%
\special{fp}%
\special{sh 1}%
\special{pa 5250 3740}%
\special{pa 5317 3760}%
\special{pa 5303 3740}%
\special{pa 5317 3720}%
\special{pa 5250 3740}%
\special{fp}%
%
\special{pn 8}%
\special{pa 5250 3740}%
\special{pa 5890 3740}%
\special{fp}%
\special{sh 1}%
\special{pa 5890 3740}%
\special{pa 5823 3720}%
\special{pa 5837 3740}%
\special{pa 5823 3760}%
\special{pa 5890 3740}%
\special{fp}%
%
\special{pn 8}%
\special{pa 5230 3740}%
\special{pa 4850 3740}%
\special{fp}%
\special{sh 1}%
\special{pa 4850 3740}%
\special{pa 4917 3760}%
\special{pa 4903 3740}%
\special{pa 4917 3720}%
\special{pa 4850 3740}%
\special{fp}%
%
\special{pn 8}%
\special{pa 3210 3880}%
\special{pa 3600 3880}%
\special{fp}%
\special{sh 1}%
\special{pa 3600 3880}%
\special{pa 3533 3860}%
\special{pa 3547 3880}%
\special{pa 3533 3900}%
\special{pa 3600 3880}%
\special{fp}%
%
\special{pn 8}%
\special{pa 3600 3880}%
\special{pa 3210 3880}%
\special{fp}%
\special{sh 1}%
\special{pa 3210 3880}%
\special{pa 3277 3900}%
\special{pa 3263 3880}%
\special{pa 3277 3860}%
\special{pa 3210 3880}%
\special{fp}%
\put(33.2000,-41.0000){\makebox(0,0)[lb]{$J_{19}$}}%
\put(63.9000,-39.8000){\makebox(0,0)[lb]{$J'_{18}$}}%
%
\special{pn 8}%
\special{pa 6280 3743}%
\special{pa 6670 3743}%
\special{fp}%
\special{sh 1}%
\special{pa 6670 3743}%
\special{pa 6603 3723}%
\special{pa 6617 3743}%
\special{pa 6603 3763}%
\special{pa 6670 3743}%
\special{fp}%
%
\special{pn 8}%
\special{pa 6670 3740}%
\special{pa 6280 3740}%
\special{fp}%
\special{sh 1}%
\special{pa 6280 3740}%
\special{pa 6347 3760}%
\special{pa 6333 3740}%
\special{pa 6347 3720}%
\special{pa 6280 3740}%
\special{fp}%
\put(50.0000,-10.7000){\makebox(0,0)[lb]{$h':[19]\rightarrow[19]$}}%
%
\special{pn 8}%
\special{pa 800 910}%
\special{pa 7000 910}%
\special{pa 7000 4010}%
\special{pa 800 4010}%
\special{pa 800 910}%
\special{ip}%
%
\special{pn 8}%
\special{pa 4200 1200}%
\special{pa 6670 3670}%
\special{fp}%
\end{picture}}%
\caption{The boundary paths and pivots of $\w{h}$ and $\w{h'}$.}
\label{pic:example}
\end{figure}

The weights of anti-canonical bundles of $\Hess{S}{h}$ and  $\Hess{S}{h'}$ are given by
\begin{align*}
&\xi_h = \varpi_1+2\varpi_2+\varpi_3+\varpi_4+2\varpi_5+\varpi_6\hspace{20pt}\hspace{28.5pt}+2\varpi_{14}+\varpi_{15}+2\varpi_{16}+\varpi_{17}, \\
&\xi_{h'} = \hspace{26pt}2\varpi_1+\varpi_2+\varpi_3+2\varpi_4+\varpi_5\hspace{10pt}+\varpi_8\hspace{10pt}+2\varpi_{13}+\varpi_{14}+2\varpi_{15}+\varpi_{16},
\end{align*}
where $\varpi_8=\varpi_{h(1)-1}$ (cf.\ Lemma~\ref{l:calculation_of_xi_for_h_prime}).
In particular, this provides a pair of examples in Case 2-b since the coefficient of $\varpi_9=\varpi_{h(1)}$ in $\xi_h$ is $0$.
From this, we see that
\begin{align*}
J = \{J_9, J_{19}\} \ \ \text{with} \ \ J_9=\{7,8,\ldots,14\} , J_{19}=\{18,19,20\},
\end{align*}
and that
\begin{align*}
J' = \{J'_7, J'_9, J'_{18}\} \ \ \text{with} \ \ J'_7=\{6,7,8\} , J'_9=\{9,10,\ldots,13\} , J'_{18} = \{17,18,19\}.
\end{align*}
In one-line notation, we have 
\begin{align*}
\hspace{17pt}
\w{h'} = \ 
 \hspace{4pt}9 \hspace{12pt} 8 \hspace{8pt} 10 \hspace{6pt} 11 \hspace{9pt} 7 \hspace{8pt} 12 \hspace{6pt} 14 \hspace{6pt} 16 \hspace{6pt} 17 \hspace{6pt} 15 \hspace{6pt} 18 \hspace{6pt} 13 \hspace{6pt} 19 \hspace{9pt} 6 \hspace{12pt} 5 \hspace{12pt} 4 \hspace{12pt} 3 \hspace{12pt} 2 \hspace{12pt} 1 
\end{align*}
(see Figure \ref{pic:example}).
Take $u'\in\mathfrak{S}_{J'}$ as
\begin{align*}
\hspace{31pt}
u' = \ 
 \hspace{3pt} 1 \hspace{12pt} 2 \hspace{12pt} 3 \hspace{12pt} 4 \hspace{11pt} 5 \hspace{9pt} \fbox{6 \hspace{4pt} 7 \hspace{4pt} 8} \hspace{2pt} \fbox{13 \hspace{-2pt} 10\hspace{2pt} 12\hspace{5pt} 9\hspace{4pt} 11 \hspace{-5pt}} \hspace{4pt} 14 \hspace{6pt} 15 \hspace{6pt} 16 \hspace{6pt} \fbox{\hspace{-2pt}19 \hspace{-3pt} 18 \hspace{-3pt} 17 \hspace{-5pt}},
\end{align*}
where we emphasized the positions of $J'_7, J'_9, J'_{18}$ by enclosing the numbers of $\w{h'}$ on $J'$.
Since these two equalities imply that
\begin{align*}
\hspace{14pt}
u' \w{h'} = \ 
 13 \hspace{10pt} 8 \hspace{8pt} 10 \hspace{6pt} 12 \hspace{9pt} 7 \hspace{8pt} \fbox{9 \hspace{0pt} 14 \hspace{-2pt} 16} \hspace{2pt} \fbox{\hspace{-2pt}19 \hspace{-2pt} 15\hspace{2pt} 18\hspace{3pt} 11\hspace{2pt} 17 \hspace{-6pt}} \hspace{8pt} 6 \hspace{11pt} 5 \hspace{11pt} 4 \hspace{9pt} \fbox{\hspace{0pt}3 \hspace{4pt} 2 \hspace{4pt} 1 \hspace{-4pt}},
\end{align*}
it is now straightforward to verify that $u'$ satisfies conditions (i)-(iv) directly. 
We also have
\begin{align*}
\hspace{6pt}
\w{h} = \ 
 \hspace{4pt}9 \hspace{8pt} 10 \hspace{10pt} 8 \hspace{8pt} 11 \hspace{6pt} 12 \hspace{9pt} 7 \hspace{8pt} 13 \hspace{6pt} 15 \hspace{6pt} 17 \hspace{6pt} 18\hspace{6pt} 16\hspace{6pt} 19\hspace{6pt} 14 \hspace{6pt} 20 \hspace{9pt} 6 \hspace{12pt} 5 \hspace{11pt} 4 \hspace{12pt} 3 \hspace{12pt} 2 \hspace{12pt} 1,
\end{align*}
and hence it follows from \eqref{eq: two permutations} that
\begin{align*}
\bar{u}' &= \ 
 \hspace{3pt}1 \hspace{12pt} 2 \hspace{12pt} 3 \hspace{12pt} 4 \hspace{11pt} 5 \hspace{12pt} 6 \hspace{9pt} \fbox{7 \hspace{4pt} 8 \hspace{4pt} 9 \hspace{1pt} 14\hspace{2pt} 11\hspace{2pt} 13\hspace{2pt} 10 \hspace{-2pt} 12 \hspace{-5pt}} \hspace{4pt} 15 \hspace{5pt} 16 \hspace{6pt} 17 \hspace{5pt} \fbox{\hspace{-1pt}20 \hspace{-4pt} 19 \hspace{-3pt} 18 \hspace{-5pt}}, \\
\bar{u}' \w{h} &= \ 
 \hspace{3pt}9 \hspace{9pt} 14 \hspace{9pt} 8 \hspace{9pt} 11 \hspace{6pt} 13 \hspace{9pt} 7 \hspace{8pt} \fbox{\hspace{-6pt} 10 \hspace{-2pt} 15 \hspace{-2pt} 17 \hspace{-2pt} 20\hspace{2pt} 16\hspace{2pt} 19\hspace{2pt} 12 \hspace{-2pt} 18 \hspace{-5pt}} \hspace{7pt} 6 \hspace{11pt} 5 \hspace{11pt} 4 \hspace{9pt} \fbox{\hspace{0pt}3 \hspace{4pt} 2 \hspace{4pt} 1 \hspace{-4pt}},
\end{align*}
where we emphasized $J_9=J_{h(1)}$ and $J_{19}$ by enclosing the numbers of $\w{h}$ on $J$.
From this, we see that $\Mh=1<2=\lp{h}$, and that $\m{0}=2$, $\m{1}=1$, $\m{2}=0$ (see Figure~\ref{pic:example_mi}).
\begin{figure}[htbp]
{\unitlength 0.1in%
\begin{picture}(41.2000,26.6000)(13.9000,-39.6000)%
%
\special{sh 1.000}%
\special{ia 2553 1962 23 23 0.0000000 6.2831853}%
\special{pn 8}%
\special{ar 2553 1962 23 23 0.0000000 6.2831853}%
%
\special{sh 1.000}%
\special{ia 2719 2793 24 24 0.0000000 6.2831853}%
\special{pn 8}%
\special{ar 2719 2793 24 24 0.0000000 6.2831853}%
%
\special{sh 1.000}%
\special{ia 2886 3126 23 23 0.0000000 6.2831853}%
\special{pn 8}%
\special{ar 2886 3126 23 23 0.0000000 6.2831853}%
%
\special{sh 1.000}%
\special{ia 3550 2295 23 23 0.0000000 6.2831853}%
\special{pn 8}%
\special{ar 3550 2295 23 23 0.0000000 6.2831853}%
%
\special{sh 1.000}%
\special{ia 3218 2960 23 23 0.0000000 6.2831853}%
\special{pn 8}%
\special{ar 3218 2960 23 23 0.0000000 6.2831853}%
%
\special{sh 1.000}%
\special{ia 3717 3292 23 23 0.0000000 6.2831853}%
\special{pn 8}%
\special{ar 3717 3292 23 23 0.0000000 6.2831853}%
%
\special{sh 1.000}%
\special{ia 3384 3458 24 24 0.0000000 6.2831853}%
\special{pn 8}%
\special{ar 3384 3458 24 24 0.0000000 6.2831853}%
%
\special{sh 1.000}%
\special{ia 3052 3625 23 23 0.0000000 6.2831853}%
\special{pn 8}%
\special{ar 3052 3625 23 23 0.0000000 6.2831853}%
%
\special{pn 8}%
\special{pa 2470 3791}%
\special{pa 2968 3791}%
\special{fp}%
\special{sh 1}%
\special{pa 2968 3791}%
\special{pa 2901 3771}%
\special{pa 2915 3791}%
\special{pa 2901 3811}%
\special{pa 2968 3791}%
\special{fp}%
\special{pa 2968 3791}%
\special{pa 2470 3791}%
\special{fp}%
\special{sh 1}%
\special{pa 2470 3791}%
\special{pa 2537 3811}%
\special{pa 2523 3791}%
\special{pa 2537 3771}%
\special{pa 2470 3791}%
\special{fp}%
%
\special{pn 8}%
\special{pa 2968 3791}%
\special{pa 3800 3791}%
\special{fp}%
\special{sh 1}%
\special{pa 3800 3791}%
\special{pa 3733 3771}%
\special{pa 3747 3791}%
\special{pa 3733 3811}%
\special{pa 3800 3791}%
\special{fp}%
\special{pa 3800 3791}%
\special{pa 2968 3791}%
\special{fp}%
\special{sh 1}%
\special{pa 2968 3791}%
\special{pa 3035 3811}%
\special{pa 3021 3791}%
\special{pa 3035 3771}%
\special{pa 2968 3791}%
\special{fp}%
\put(31.3500,-14.6600){\makebox(0,0)[lb]{$\bar{u}'\w{h}$}}%
%
\special{pn 8}%
\special{pa 2968 1300}%
\special{pa 2968 3960}%
\special{fp}%
\put(26.1000,-39.9000){\makebox(0,0)[lb]{$J_{9}^-$}}%
\put(32.7500,-39.9000){\makebox(0,0)[lb]{$J_{9}^+$}}%
%
\special{pn 8}%
\special{pa 2270 1970}%
\special{pa 3801 1970}%
\special{dt 0.045}%
%
\special{pn 8}%
\special{pa 2270 2800}%
\special{pa 4571 2800}%
\special{dt 0.045}%
%
\special{pn 8}%
\special{pa 2270 3130}%
\special{pa 5321 3130}%
\special{dt 0.045}%
\put(47.8000,-22.1000){\makebox(0,0)[lb]{$m_1=1$}}%
\put(40.4000,-17.9000){\makebox(0,0)[lb]{$m_2=0$}}%
\put(55.1000,-23.8000){\makebox(0,0)[lb]{$m_0=2$}}%
\put(38.9000,-19.4000){\makebox(0,0)[lb]{\rotatebox{90}{$\underbrace{\hspace{30pt}}$}}}%
\put(46.4000,-27.7000){\makebox(0,0)[lb]{\rotatebox{90}{$\underbrace{\hspace{90pt}}$}}}%
\put(53.7000,-31.0000){\makebox(0,0)[lb]{\rotatebox{90}{$\underbrace{\hspace{114pt}}$}}}%
%
\special{pn 8}%
\special{pa 2083 1300}%
\special{pa 2083 3960}%
\special{fp}%
\special{sh 1}%
\special{pa 2083 3960}%
\special{pa 2103 3893}%
\special{pa 2083 3907}%
\special{pa 2063 3893}%
\special{pa 2083 3960}%
\special{fp}%
%
\special{pn 8}%
\special{pa 2042 3627}%
\special{pa 2126 3627}%
\special{fp}%
\put(18.5500,-36.7800){\makebox(0,0)[lb]{20}}%
%
\special{pn 8}%
\special{pa 2042 1798}%
\special{pa 2126 1798}%
\special{fp}%
\put(13.9000,-18.8500){\makebox(0,0)[lb]{$h(1)=9$}}%
%
\special{pn 8}%
\special{pa 2040 1970}%
\special{pa 2124 1970}%
\special{fp}%
\put(18.5000,-20.3000){\makebox(0,0)[lb]{10}}%
\end{picture}}%
\caption{The positions of $1$'s for $\bar{u}' \w{h}$.}
\label{pic:example_mi}
\end{figure}
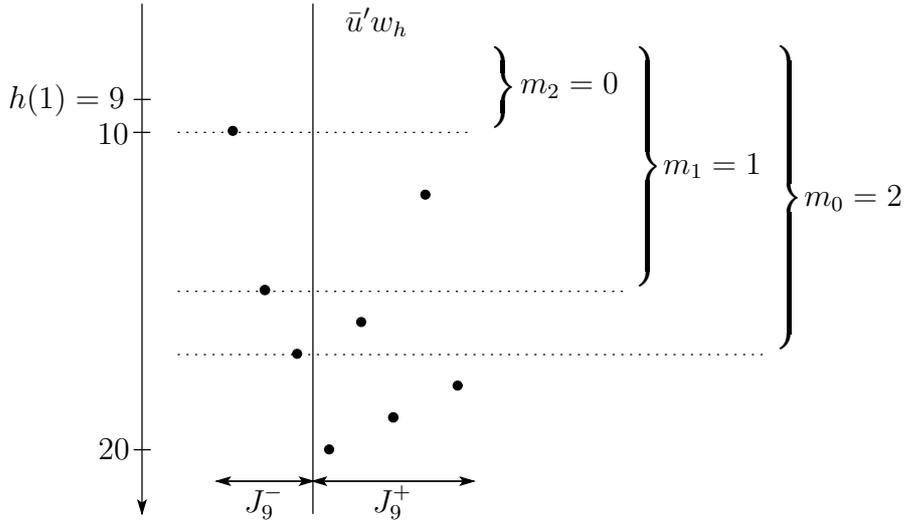
According to the definition of $u$ in Case 2-b, we let $u\coloneqq v_1v_0\bar{u}' =(s_{8})(s_{10}s_{9})\bar{u}' \in \mathfrak{S}_J$. It then follows that
\begin{align*}
\hspace{17pt}
 u = \ 
 \hspace{3pt}1 \hspace{12pt} 2 \hspace{12pt} 3 \hspace{12pt} 4 \hspace{11pt} 5 \hspace{12pt} 6 \hspace{9pt} \fbox{7 \hspace{4pt} 9 \hspace{1pt} 11 \hspace{-2pt} 14\hspace{2pt} 10\hspace{2pt} 13 \hspace{1pt} 8 \hspace{0pt} 12 \hspace{-5pt}} \hspace{4pt} 15 \hspace{5pt} 16 \hspace{6pt} 17 \hspace{6pt} \fbox{\hspace{-1pt}20 \hspace{-3pt} 19 \hspace{-3pt} 18 \hspace{-5pt}}.
\end{align*}
The modification from $\bar{u}' $ to $u=v_1v_0\bar{u}' $ makes
the order of the numbers of $u$ on $J_{h(1)}$ be the same as the order of the numbers of $\bar{u}' \w{h}$ on $J_{h(1)}$. We visualize this in Figure~\ref{pic:example_modification}.
\begin{figure}[htbp]
{\unitlength 0.1in%
\begin{picture}(58.2000,22.6000)(5.9000,-31.6000)%
%
\special{sh 1.000}%
\special{ia 1699 1466 24 24 0.0000000 6.2831853}%
\special{pn 8}%
\special{ar 1699 1466 24 24 0.0000000 6.2831853}%
%
\special{sh 1.000}%
\special{ia 1865 1633 23 23 0.0000000 6.2831853}%
\special{pn 8}%
\special{ar 1865 1633 23 23 0.0000000 6.2831853}%
%
\special{sh 1.000}%
\special{ia 2032 1798 23 23 0.0000000 6.2831853}%
\special{pn 8}%
\special{ar 2032 1798 23 23 0.0000000 6.2831853}%
%
\special{sh 1.000}%
\special{ia 2697 1965 23 23 0.0000000 6.2831853}%
\special{pn 8}%
\special{ar 2697 1965 23 23 0.0000000 6.2831853}%
%
\special{sh 1.000}%
\special{ia 2365 2131 23 23 0.0000000 6.2831853}%
\special{pn 8}%
\special{ar 2365 2131 23 23 0.0000000 6.2831853}%
%
\special{sh 1.000}%
\special{ia 2863 2298 23 23 0.0000000 6.2831853}%
\special{pn 8}%
\special{ar 2863 2298 23 23 0.0000000 6.2831853}%
%
\special{sh 1.000}%
\special{ia 2531 2464 23 23 0.0000000 6.2831853}%
\special{pn 8}%
\special{ar 2531 2464 23 23 0.0000000 6.2831853}%
%
\special{sh 1.000}%
\special{ia 2198 2630 23 23 0.0000000 6.2831853}%
\special{pn 8}%
\special{ar 2198 2630 23 23 0.0000000 6.2831853}%
%
\special{sh 1.000}%
\special{ia 4119 1465 23 23 0.0000000 6.2831853}%
\special{pn 8}%
\special{ar 4119 1465 23 23 0.0000000 6.2831853}%
%
\special{sh 1.000}%
\special{ia 4285 1797 24 24 0.0000000 6.2831853}%
\special{pn 8}%
\special{ar 4285 1797 24 24 0.0000000 6.2831853}%
%
\special{sh 1.000}%
\special{ia 4452 2130 23 23 0.0000000 6.2831853}%
\special{pn 8}%
\special{ar 4452 2130 23 23 0.0000000 6.2831853}%
%
\special{sh 1.000}%
\special{ia 5116 1631 24 24 0.0000000 6.2831853}%
\special{pn 8}%
\special{ar 5116 1631 24 24 0.0000000 6.2831853}%
%
\special{sh 1.000}%
\special{ia 4785 1964 23 23 0.0000000 6.2831853}%
\special{pn 8}%
\special{ar 4785 1964 23 23 0.0000000 6.2831853}%
%
\special{sh 1.000}%
\special{ia 5283 2296 24 24 0.0000000 6.2831853}%
\special{pn 8}%
\special{ar 5283 2296 24 24 0.0000000 6.2831853}%
%
\special{sh 1.000}%
\special{ia 4950 2462 24 24 0.0000000 6.2831853}%
\special{pn 8}%
\special{ar 4950 2462 24 24 0.0000000 6.2831853}%
%
\special{sh 1.000}%
\special{ia 4618 2629 23 23 0.0000000 6.2831853}%
\special{pn 8}%
\special{ar 4618 2629 23 23 0.0000000 6.2831853}%
%
\special{pn 8}%
\special{pa 4030 2986}%
\special{pa 4528 2986}%
\special{fp}%
\special{sh 1}%
\special{pa 4528 2986}%
\special{pa 4461 2966}%
\special{pa 4475 2986}%
\special{pa 4461 3006}%
\special{pa 4528 2986}%
\special{fp}%
\special{pa 4528 2986}%
\special{pa 4030 2986}%
\special{fp}%
\special{sh 1}%
\special{pa 4030 2986}%
\special{pa 4097 3006}%
\special{pa 4083 2986}%
\special{pa 4097 2966}%
\special{pa 4030 2986}%
\special{fp}%
%
\special{pn 8}%
\special{pa 1610 2987}%
\special{pa 2109 2987}%
\special{fp}%
\special{sh 1}%
\special{pa 2109 2987}%
\special{pa 2042 2967}%
\special{pa 2056 2987}%
\special{pa 2042 3007}%
\special{pa 2109 2987}%
\special{fp}%
\special{pa 2109 2987}%
\special{pa 1610 2987}%
\special{fp}%
\special{sh 1}%
\special{pa 1610 2987}%
\special{pa 1677 3007}%
\special{pa 1663 2987}%
\special{pa 1677 2967}%
\special{pa 1610 2987}%
\special{fp}%
%
\special{pn 8}%
\special{pa 2109 2987}%
\special{pa 2941 2987}%
\special{fp}%
\special{sh 1}%
\special{pa 2941 2987}%
\special{pa 2874 2967}%
\special{pa 2888 2987}%
\special{pa 2874 3007}%
\special{pa 2941 2987}%
\special{fp}%
\special{pa 2941 2987}%
\special{pa 2109 2987}%
\special{fp}%
\special{sh 1}%
\special{pa 2109 2987}%
\special{pa 2176 3007}%
\special{pa 2162 2987}%
\special{pa 2176 2967}%
\special{pa 2109 2987}%
\special{fp}%
%
\special{pn 8}%
\special{pa 4528 2986}%
\special{pa 5360 2986}%
\special{fp}%
\special{sh 1}%
\special{pa 5360 2986}%
\special{pa 5293 2966}%
\special{pa 5307 2986}%
\special{pa 5293 3006}%
\special{pa 5360 2986}%
\special{fp}%
\special{pa 5360 2986}%
\special{pa 4528 2986}%
\special{fp}%
\special{sh 1}%
\special{pa 4528 2986}%
\special{pa 4595 3006}%
\special{pa 4581 2986}%
\special{pa 4595 2966}%
\special{pa 4528 2986}%
\special{fp}%
%
\special{pn 8}%
\special{pa 2032 1798}%
\special{pa 2032 2214}%
\special{dt 0.045}%
\special{sh 1}%
\special{pa 2032 2214}%
\special{pa 2052 2147}%
\special{pa 2032 2161}%
\special{pa 2012 2147}%
\special{pa 2032 2214}%
\special{fp}%
%
\special{pn 8}%
\special{pa 1865 1633}%
\special{pa 1865 2049}%
\special{dt 0.045}%
\special{sh 1}%
\special{pa 1865 2049}%
\special{pa 1885 1982}%
\special{pa 1865 1996}%
\special{pa 1845 1982}%
\special{pa 1865 2049}%
\special{fp}%
\put(22.8000,-10.6000){\makebox(0,0)[lb]{$\bar{u}'$}}%
\put(47.0000,-10.9000){\makebox(0,0)[lb]{$u=v_1v_0\bar{u}'$}}%
%
\special{pn 8}%
\special{pa 2110 900}%
\special{pa 2110 3160}%
\special{fp}%
%
\special{pn 8}%
\special{pa 4530 900}%
\special{pa 4530 3160}%
\special{fp}%
%
\special{pn 8}%
\special{pa 1280 900}%
\special{pa 1280 3160}%
\special{fp}%
\special{sh 1}%
\special{pa 1280 3160}%
\special{pa 1300 3093}%
\special{pa 1280 3107}%
\special{pa 1260 3093}%
\special{pa 1280 3160}%
\special{fp}%
%
\special{pn 8}%
\special{pa 1242 1466}%
\special{pa 1326 1466}%
\special{fp}%
\put(11.1200,-15.1600){\makebox(0,0)[lb]{7}}%
%
\special{pn 8}%
\special{pa 1240 2630}%
\special{pa 1324 2630}%
\special{fp}%
\put(10.4000,-26.7000){\makebox(0,0)[lb]{14}}%
%
\special{pn 8}%
\special{pa 1242 1798}%
\special{pa 1326 1798}%
\special{fp}%
\put(5.9000,-18.8500){\makebox(0,0)[lb]{$h(1)=9$}}%
\put(34.8100,-22.6600){\makebox(0,0)[lb]{$\leadsto$}}%
\put(60.8000,-15.3000){\makebox(0,0)[lb]{$1$}}%
\put(57.4000,-13.4000){\makebox(0,0)[lb]{$0$}}%
\put(64.1000,-16.8000){\makebox(0,0)[lb]{$2$}}%
\put(55.9000,-14.4000){\makebox(0,0)[lb]{\rotatebox{90}{$\underbrace{\hspace{10pt}}$}}}%
\put(59.4000,-17.8000){\makebox(0,0)[lb]{\rotatebox{90}{$\underbrace{\hspace{46pt}}$}}}%
\put(62.7000,-21.1000){\makebox(0,0)[lb]{\rotatebox{90}{$\underbrace{\hspace{70pt}}$}}}%
%
\special{pn 8}%
\special{pa 4010 1800}%
\special{pa 5541 1800}%
\special{dt 0.045}%
%
\special{pn 8}%
\special{pa 4010 2130}%
\special{pa 5541 2130}%
\special{dt 0.045}%
%
\special{pn 8}%
\special{pa 4010 1460}%
\special{pa 5541 1460}%
\special{dt 0.045}%
\put(34.1000,-20.6000){\makebox(0,0)[lb]{$v_1 v_0$}}%
%
\special{pn 8}%
\special{pa 1610 1630}%
\special{pa 2941 1630}%
\special{ip}%
%
\special{pn 8}%
\special{pa 1610 1800}%
\special{pa 2941 1800}%
\special{ip}%
%
\special{pn 8}%
\special{pa 1610 1470}%
\special{pa 2941 1470}%
\special{dt 0.045}%
%
\special{pn 8}%
\special{pa 1610 2050}%
\special{pa 2941 2050}%
\special{dt 0.045}%
%
\special{pn 8}%
\special{pa 1610 2230}%
\special{pa 2941 2230}%
\special{dt 0.045}%
%
\special{pn 8}%
\special{pa 1240 1630}%
\special{pa 1324 1630}%
\special{fp}%
\put(11.1200,-16.8600){\makebox(0,0)[lb]{8}}%
\put(17.6000,-31.9000){\makebox(0,0)[lb]{$J_{9}^-$}}%
\put(24.2500,-31.9000){\makebox(0,0)[lb]{$J_{9}^+$}}%
\put(41.8000,-31.9000){\makebox(0,0)[lb]{$J_{9}^-$}}%
\put(48.4500,-31.9000){\makebox(0,0)[lb]{$J_{9}^+$}}%
\end{picture}}%
\caption{The positions of $1$'s for $\bar{u}' $ and $u$.}
\label{pic:example_modification}
\end{figure}
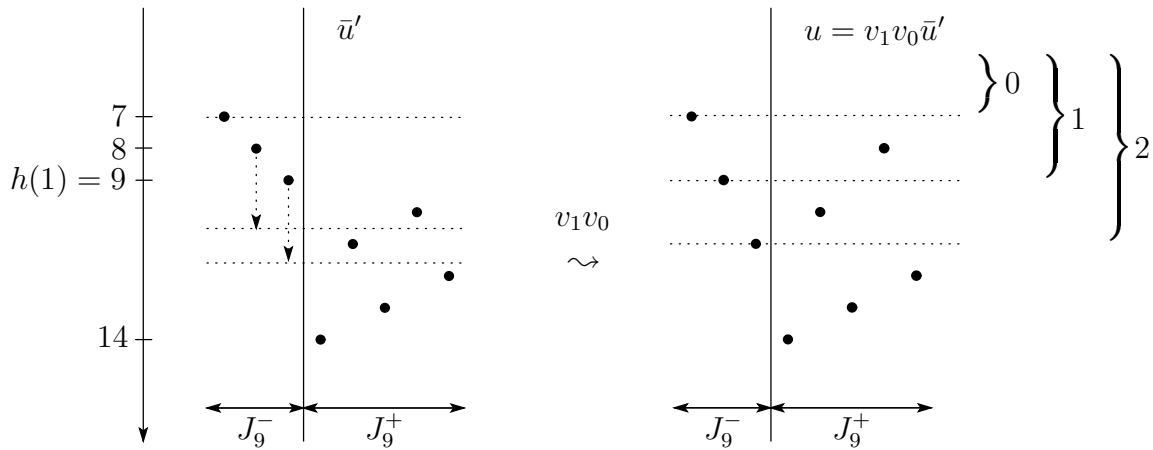

Now $u\w{h}$ is given by
\begin{align*}
\hspace{2pt}
u \w{h} = \ 
 \hspace{1pt}11 \hspace{6pt} 14 \hspace{9pt} 9 \hspace{8pt} 10 \hspace{6pt} 13 \hspace{9pt} 7 \hspace{8pt} \fbox{\hspace{-3pt} 8 \hspace{0pt} 15 \hspace{-2pt} 17 \hspace{-2pt} 20\hspace{2pt} 16\hspace{2pt} 19\hspace{2pt} 12 \hspace{-2pt} 18 \hspace{-5pt}} \hspace{7pt} 6 \hspace{11pt} 5 \hspace{11pt} 4 \hspace{9pt} \fbox{\hspace{0pt}3 \hspace{4pt} 2 \hspace{4pt} 1 \hspace{-4pt}},
\end{align*}
and one can verify that $u$ satisfies conditions (i)-(iv).

\smallskip

\end{document}